\def\thm@space@setup{%
  \thm@preskip=\parskip \thm@postskip=0pt 
}
\renewenvironment{proof}[1][\proofname]{\par
  \vspace{-\topsep}
  \pushQED{\qed}%
  \normalfont
  \topsep0pt \partopsep0pt 
  \trivlist
  \item[\hskip\labelsep
        \itshape
    #1\@addpunct{.}]\ignorespaces
}{%
  \popQED\endtrivlist\@endpefalse
}
\def\subsection{\@startsection{subsection}{3}%
  \z@{.4\linespacing\@plus.0\linespacing}{.1\linespacing}%
  {\normalfont\bfseries}}
\def\subsubsection{\@startsection{subsubsection}{3}%
  \z@{.4\linespacing\@plus.0\linespacing}{.1\linespacing}%
  {\normalfont\bfseries}}
\renewcommand{\paragraph}{\@startsection{paragraph}{4}{0ex}%
    {-3.25ex plus -1ex minus -0.2ex}%
    {1.5ex plus 0.2ex}%
    {\normalfont\normalsize\bfseries}}
\begin{document}
\bibliographystyle{plainnat}
\linespread{1.6}

\abovedisplayskip=0pt
\abovedisplayshortskip=0pt
\belowdisplayskip=0pt
\belowdisplayshortskip=0pt
\abovecaptionskip=0pt
\belowcaptionskip=0pt

\newcounter{Lcount}
\newcommand{\squishlisttwo}{
\begin{list}{\alph{Lcount}. }
 { \usecounter{Lcount}
 \setlength{\itemsep}{0pt}
 \setlength{\parsep}{0pt}
 \setlength{\topsep}{0pt}
 \setlength{\partopsep}{0pt}
 \setlength{\leftmargin}{2em}
 \setlength{\labelwidth}{1.5em}
 \setlength{\labelsep}{0.5em} } }

\newcommand{\squishend}{
 \end{list} }

\newtheorem{lemma}{Lemma}[subsection]
\newtheorem{definition}{Definition}[section]
\newtheorem{remarks}{Remark}[section]
\newtheorem{remark}{Remark}[subsection]
\newtheorem{theorem}{Theorem}[subsection]
\newtheorem{corollary}[lemma]{Corollary}
\newtheorem{proposition}[lemma]{Proposition}
\numberwithin{equation}{subsection}
\newtheorem{axiom}[lemma]{Axiom}
\newtheorem*{acknowledgements}{Acknowledgments}

\newcommand{\R}{\mbox{$\Bbb R$}} \newcommand{\C}{\mbox{$\Bbb C$}}\newcommand{\F}{\mbox{$\Bbb F$}}
\newcommand{\N}{\mbox{$\Bbb N$}}
\newcommand{\Z}{\mbox{$\Bbb Z$}} \def\g{\mathfrak{g}} \def\q{\mathfrak{q}}
\def\h{\mathfrak{h}} \def\c{\mathfrak{c}} \def\d{\mathfrak{d}}
\def\m{\mathfrak{m}} \def\n{\mathfrak{n}} \def\i{\mathfrak{i}}
\def\l{\mathfrak{l}} \def\s{\mathfrak{s}}
\def\L{\mathscr{L}}
\def\r{\mathscr{R}}

\mbox{}

\vskip 1cm

\title[Solvable extensions of quasi-filiform algebras]{Solvable extensions of the naturally graded quasi-filiform Leibniz algebra
of second type $\mathcal{L}^4$}\maketitle
\begin{center}
{A. Shabanskaya}

Department of Mathematics, Adrian College, 110 S Madison St.,
Adrian, MI 49221, USA

ashabanskaya@adrian.edu
\end{center}
\begin{abstract} For a sequence of the naturally graded quasi-filiform Leibniz algebra
of second type $\mathcal{L}^4$ introduced by Camacho, G\'{o}mez, Gonz\'{a}lez and Omirov, all the possible right solvable indecomposable
extensions over the field $\C$ are constructed.
 \end{abstract}

AMS Subject Classification: 17A30, 17A32, 17A36, 17A60, 17B30\\

Keywords: Leibniz algebra, solvability, nilpotency, nilradical, nil-independence, derivation.

\vskip 6cm

\newpage

\section{Introduction} 
Leibniz algebras were discovered by Bloch in 1965 \cite{B} who called them $D-$ algebras. Later on they were considered by Loday and Cuvier \cite{ C, Lo, L, LP} as
a non-antisymmetric analogue of Lie algebras. It makes every Lie algebra be a Leibniz algebra, but the converse is not true.
 Exactly Loday named them Leibniz algebras after Gottfried Wilhelm Leibniz.
 
Since then many analogs of important theorems in Lie theory were found to be true for Leibniz algebras, such as
the analogue of Levi's theorem which was proved by Barnes \cite{Ba}. He showed that any finite-dimensional complex Leibniz algebra is decomposed into a semidirect sum of the solvable radical and a semisimple Lie algebra. 
Therefore the biggest challenge in the classification problem of finite-dimensional complex Leibniz algebras is to study the solvable part. And to classify solvable Leibniz algebras, we need nilpotent Leibniz algebras as their nilradicals, same as in the case of Lie algebras \cite{Mub3}.

Every Leibniz algebra satisfies a generalized version of the Jacobi identity called the Leibniz identity.
 There are two Leibniz identities: the left and the right.
 We call Leibniz algebras right Leibniz algebras if they satisfy the right Leibniz identity
 and left, if they satisfy the left. A left Leibniz algebra is not necessarily
 a right Leibniz algebra \cite{D}.
  
Leibniz algebras inherit an important property of Lie algebras which is that the right (left)
multiplication operator of a right (left) Leibniz algebra is a derivation \cite{CL}. Besides the algebra of right (left) multiplication operators
is endowed with a structure of a Lie algebra by means of the commutator \cite{CL}.  
 Also the quotient
algebra by the two-sided ideal generated by the square elements of a Leibniz algebra is a Lie algebra \cite{O},
where such ideal is the minimal, abelian and in the case of
non-Lie Leibniz algebras it is always non trivial.

It is possible to find solvable Leibniz algebras in any finite dimension working with the sequence of nilpotent Leibniz algebras in any finite dimension and their ``nil-independent''
derivations. This method for Lie algebras is based on what was shown by
Mubarakzyanov in \cite{Mub3}: the dimension of the complimentary vector space to the nilradical does not exceed the number of 
nil-independent derivations of the nilradical. This result 
was extended to Leibniz algebras by Casas, Ladra, Omirov and Karimjanov \cite{CLOK} with the help
of \cite{AO}. Besides, similarly to the case of Lie algebras, for a solvable Leibniz
algebra $L$ we also have the inequality $\dim\n\i\l(L)\geq\frac{1}{2}\dim L$ \cite{Mub3}. 
There is the following work performed over the field of characteristic zero using 
this method: Casas, Ladra, Omirov and Karimjanov classified solvable Leibniz algebras with null-filiform nilradical \cite{CLOK};
Omirov and his colleagues Casas, Khudoyberdiyev, Ladra, Karimjanov, Camacho and Masutova classified solvable Leibniz algebras whose nilradicals are a direct sum of null-filiform algebras \cite{KL}, naturally graded filiform \cite{CL, LadraMO}, triangular \cite{KK} and finally filiform \cite{COM}.
Bosko-Dunbar, Dunbar, Hird and Stagg attempted to classify left solvable Leibniz algebras with Heisenberg nilradical \cite{BDHS}.
Left and right solvable extensions of $\mathcal{R}_{18}$ \cite{AOR}, $\mathcal{L}^1,\mathcal{L}^2$ and $\mathcal{L}^3$ \cite{CGGO} over the field of real numbers were 
found by Shabanskaya in \cite{ShA, Sha, ShabA}.

The starting point of the present article is a naturally graded quasi-filiform non Lie Leibniz algebra of the second type
$\mathcal{L}^4,(n\geq4)$ in the notation of \cite{CGGO}.
This algebra is left and right at the same time and an associative when $n=4$.

Naturally graded quasi-filiform Leibniz algebras in any finite dimension over $\C$ were studied by Camacho, G\'{o}mez, Gonz\'{a}lez,
Omirov \cite{CGGO}. They found six such algebras of the first type, where two of them depend on a parameter and eight algebras of the second type with one of them
depending on a parameter. 

This paper continues a work on finding all solvable extensions of quasi-filiform Leibniz algebras over the field of complex numbers. For
a sequence $\mathcal{L}^4$ such extensions of codimension at most two are
possible.

The paper is organized as follows: in Section \ref{Pr} we give some basic definitions,
in Section \ref{Con} we show what involves constructing solvable Leibniz algebras with a given nilradical.
In Section \ref{NilL4} we describe the nilpotent sequence $\mathcal{L}^4$ and give the summary of the results stated in theorems, which could be found in
the remaining Section \ref{Classification}.
 
As regards notation, we use $\langle e_1,e_2,...,e_r \rangle$ to
denote the $r$-dimensional subspace generated by
$e_1,e_2,...,e_r,$ where $r\in\N$. Besides $\g$
and $l$
are used to denote solvable right and left Leibniz algebras, respectively.

Throughout the paper all the algebras are finite dimensional over the field of complex numbers and
if the bracket is not given, then it is assumed to be zero, except the brackets for the
nilradical, which most of the time are not given (see Remark \ref{remark5.1}) to save space. 

In the tables an ordered triple is a shorthand notation
for a derivation property of the multiplication operators, which is either $\r_z\left([x,y]\right)=
[\r_z(x),y]+[x,\r_z(y)]$ or $\L_z\left([x,y]\right)=
[\L_z(x),y]+[x,\L_z(y)]$. We also assign $\r_{e_{n+1}}:=\r$ and $\L_{e_{n+1}}:=\L.$

We use Maple software to compute the Leibniz identity, the ``absorption'' (see \cite{Shab, ST} and
Section \ref{Solvable left Leibniz algebras}), the change
of basis for solvable Leibniz algebras in some particular dimensions, which are generalized and proved in an arbitrary finite dimension.

\section{Preliminaries}\label{Pr}
We give some Basic definitions encountered working with Leibniz algebras.
\begin{definition}
\begin{enumerate}[noitemsep, topsep=0pt]
\item[1.] A vector space L over a field F with a bilinear operation\\
$[-,-]:\,L\rightarrow L$
is called a Leibniz algebra if for any $x,\,y,\,z\in\,L$ the Leibniz identity
\begin{equation}\nonumber [[x,y],z]=[[x,z],y]+[x,[y,z]]\end{equation}
holds. This Leibniz identity is known as the right Leibniz identity and we call $L$
in this case a right Leibniz algebra.
\item[2.] There exists the version corresponding to the left Leibniz identity
\begin{equation}\nonumber
[[x,y],z]=[x,[y,z]]-[y,[x,z]],
\end{equation}
and a Leibniz algebra L is called a left Leibniz algebra.
\end{enumerate}
\end{definition}
\begin{remarks}
In addition, if $L$ satisfies $[x,x]=0$ for every $x\in L$, then it is a Lie algebra.
Therefore every Lie algebra is a Leibniz algebra, but the converse is not true.
\end{remarks}
\begin{definition}
The two-sided ideal $C(L)=\{x\in L:\,[x,y]=[y,x]=0\}$ is said to be the center of $L.$
\end{definition}
\begin{definition}
A linear map $d:\,L\rightarrow L$ of a Leibniz algebra $L$ is a derivation
if for all $x,\,y\in L$
\begin{equation}
\nonumber d([x,y])=[d(x),y]+[x,d(y)].
\end{equation}
\end{definition}
If $L$ is a right Leibniz algebra and $x\in L,$ then the right multiplication operator $\r_x:\,L\rightarrow L$ defined as $\r_x(y)=[y,x],\,y\in L$
is a derivation (for a left Leibniz algebra $L$ with $x\in L,$ the left multiplication operator $\L_x:\,L\rightarrow L,\,\L_x(y)=[x,y],\,y\in L$ is a derivation).

Any right Leibniz algebra $L$ is associated with the algebra of right multiplications $\r(L)=\{\r_x\,|\,x\in L\}$
endowed with the structure of a Lie algebra by means of the commutator $[\r_x,\r_y]=\r_x\r_y-\r_y\r_x=\r_{[y,x]},$
which defines an antihomomorphism between $L$ and $\r(L).$

For a left Leibniz algebra $L,$ the corresponding algebra of left multiplications $\L(L)=\{\L_x\,|\,x\in L\}$
is endowed with the structure of a Lie algebra by means of the commutator as well $[\L_x,\L_y]=\L_x\L_y-\L_y\L_x=\L_{[x,y]}.$
In this case we have a homomorphism between $L$ and $\L(L)$.
\begin{definition}
Let $d_1,d_2,...,d_n$ be derivations of a Leibniz algebra $L.$ The derivations $d_1,d_2,...,d_n$ are said to be ``nil-independent''
if $\alpha_1d_1+\alpha_2d_2+\alpha_3d_3+\ldots+\alpha_nd_n$ is not nilpotent for any scalars $\alpha_1,\alpha_2,...,\alpha_n\in F,$
otherwise they are ``nil-dependent''.
\end{definition}
\begin{definition}
For a given Leibniz algebra $L,$ we define the sequence of two-sided ideals as follows:
\begin{equation}
\nonumber L^0=L,\,L^{k+1}=[L^k,L],\,(k\geq0)\qquad\qquad L^{(0)}=L,\,L^{(k+1)}=[L^{(k)},L^{(k)}],\,(k\geq0),
\end{equation}
which are the lower central series and the derived series of $L,$ respectively.

A Leibniz algebra $L$ is said to be nilpotent (solvable) if there exists $m\in\N$ such that $L^m=0$ ($L^{(m)}=0$).
The minimal such number $m$ is said to be the index of nilpotency (solvability). 
\end{definition}
\begin{definition}
A Leibniz algebra $L$ is called a quasi-filiform if $L^{n-3}\neq0$ and $L^{n-2}=0,$ where $\dim(L)=n.$ 
\end{definition}

\section{Constructing solvable Leibniz algebras with a given nilradical}\label{Con}

Every solvable Leibniz algebra $L$ contains a unique maximal nilpotent
ideal called the nilradical and denoted $\n\i\l(L)$ such that
$\dim\n\i\l(L)\geq\frac{1}{2}\dim(L)$ \cite{Mub3}. Let us consider
the problem of constructing solvable Leibniz algebras
$L$ with a given nilradical $N=\n\i\l(L)$. Suppose
$\{e_1,e_2,e_3,e_4,...,e_n\}$ is a basis for the nilradical and
$\{e_{n+1},...,e_{p}\}$ is a basis for a subspace complementary to
the nilradical.

If $L$ is a solvable Leibniz algebra \cite{AO}, then
\begin{equation}\label{Nil}[L,L]\subseteq N \end{equation} and we have
the following structure equations
\begin{equation}\label{Algebra}[e_i, e_j ] =C_{ij}^ke_k, [e_a, e_i]
=A^k_{ai}e_k, [e_i,e_a]=A^k_{ia}e_k, [e_a, e_b] =B^k_{ab} e_k, \end{equation} where $1\leq i,
j, k, m\leq n$ and $n+1\leq a, b\leq p$.

\subsection{Solvable right Leibniz algebras}  Calculations show that satisfying the right Leibniz identity is equivalent to the
following conditions: 
\begin{equation}\label{RLeibniz}
A_{ai}^kC_{kj}^m=A_{aj}^kC_{ki}^m+C_{ij}^kA_{ak}^m,\,A_{ia}^kC_{kj}^m=C_{ij}^kA_{ka}^m+A_{aj}^kC_{ik}^m,\,
C_{ij}^kA_{ka}^m=A_{ia}^kC_{kj}^m+A_{ja}^kC_{ik}^m,
\end{equation}
 \begin{equation}\label{RRLeibniz}
 B_{ab}^kC_{ki}^m=A_{ai}^kA_{kb}^m+A_{bi}^kA_{ak}^m,\,A_{ai}^kA_{kb}^m=B_{ab}^kC_{ki}^m+A_{ib}^kA_{ak}^m,\,
 B_{ab}^kC_{ik}^m=A_{kb}^mA_{ia}^k-A_{ka}^mA_{ib}^k.
\end{equation} 
 Then the entries of the matrices
$A_a,$ which are $(A_i^k)_a$, must satisfy the equations $(\ref{RLeibniz})$ obtained from
all the possible Leibniz identities between the triples $\{e_a,e_i,e_j\}.$

Since $N$ is the nilradical of $L$, no nontrivial linear
combination of the matrices $A_a,\,(n+1\leq a\leq p)$ is nilpotent
i.e. the matrices $A_a$ must be ``nil-independent'' \cite{CLOK, Mub3}.

Let us now consider the right multiplication operator $\r_{e_a}$ and restrict it
to $N$, $(n+1\leq a\leq p).$ We shall
get outer derivations of the nilradical $N=\n\i\l(L)$
\cite{CLOK}. Then finding the matrices $A_a$ is the same as
finding outer derivations $\r_{e_a}$ of $N.$ Further the commutators $[\r_{e_b},\r_{e_a}]=\r_{[e_a,e_b]},\,(n+1\leq a,b\leq
p)$ due to $(\ref{Nil})$ consist of inner derivations of
$N$. So those commutators give the structure constants $B_{ab}^k$ as shown in the last equation of $(\ref{RRLeibniz})$ but only up to the elements in the center of the nilradical
 $N$, because if $e_i,\,(1\leq i\leq n)$ is in the center of $N,$ then $\left(\r_{e_i}\right)_{|_N}=0,$ where $\left(\r_{e_i}\right)_{|_{N}}$
 is an inner derivation of the nilradical. 
\subsection{Solvable left Leibniz algebras}\label{Solvable left Leibniz algebras} Satisfying the left Leibniz identity, we have
 \begin{equation}\label{LLeibniz}
A_{ai}^kC_{jk}^m=A_{ja}^kC_{ki}^m+C_{ji}^kA_{ak}^m,\,A_{ia}^kC_{jk}^m=C_{ji}^kA_{ka}^m+A_{ja}^kC_{ik}^m,\,
C_{ij}^kA_{ak}^m=A_{ai}^kC_{kj}^m+A_{aj}^kC_{ik}^m,
\end{equation}
 \begin{equation}\label{LLLeibniz}
 B_{ab}^kC_{ik}^m=A_{ia}^kA_{kb}^m+A_{ib}^kA_{ak}^m,\,A_{ib}^kA_{ak}^m=B_{ab}^kC_{ik}^m+A_{ai}^kA_{kb}^m,\,
 B_{ab}^kC_{ki}^m=A_{ak}^mA_{bi}^k-A_{bk}^mA_{ai}^k,
\end{equation} 
and the entries of the matrices
$A_a,$ which are $(A_a)_i^k$, must satisfy the equations $(\ref{LLeibniz})$.
Similarly $A_a=\left(\L_{e_a}\right)_{|_{N}},\,(n+1\leq a\leq p)$ are outer derivations and
the commutators $[\L_{e_a},\L_{e_b}]=\L_{[e_a,e_b]}$ give the structure constants $B_{ab}^k,$ but only up to the elements in the center of the nilradical
 $N$.

Once the left or right Leibniz identities are satisfied in the most general possible way and the outer derivations are found: 

\begin{enumerate}[noitemsep, topsep=0pt]
 \item[(i)]
We can carry out the technique of ``absorption'' \cite{Shab, ST}, which means we can
simplify a solvable Leibniz algebra without affecting the nilradical in $(\ref{Algebra})$ applying the transformation
\begin{equation}
\nonumber e^{\prime}_i=e_i,\,(1\leq i\leq n),\,e^{\prime}_a=e_a+\sum_{k=1}^nd^ke_k,\,(n+1\leq a\leq p).
\end{equation}
\item[(ii)] We change basis without affecting the
nilradical in $(\ref{Algebra})$ to remove all the possible parameters and simplify the algebra. \end{enumerate}

\section{The nilpotent sequence $\mathcal{L}^4$}\label{NilL4}
In $\mathcal{L}^4,(n\geq4)$ the positive
integer $n$ denotes the dimension of the algebra. The center
of this algebra is $C(\mathcal{L}^4)=\langle e_{2},e_n \rangle$. $\mathcal{L}^4$ can be described explicitly as follows: in the
basis $\{e_1,e_2,e_3,e_4,\ldots,e_n\}$ it has only the following
non-zero brackets:  
\begin{equation}
\begin{array}{l}
\displaystyle  [e_1,e_1]=e_{2},[e_i,e_1]=e_{i+1},(3\leq i\leq n-1),[e_1,e_3]=2e_2-e_4,[e_3,e_3]=e_2,\\
\displaystyle [e_1,e_j]=-e_{j+1},(4\leq j\leq n-1,n\geq4).
\end{array} 
\label{L4}
\end{equation}
 The
dimensions of the ideals in the characteristic
series are
\begin{equation}\nonumber DS=[n,n-2,0],
LS=[n,n-2,n-4,n-5,n-6,...,0],(n\geq4).\end{equation}

A quasi-filiform Leibniz algebra  $\mathcal{L}^4$ was introduced by Camacho, G\'{o}mez, Gonz\'{a}lez and Omirov
in \cite{CGGO}.
This algebra is served as the nilradical for the
left and right solvable indecomposable extensions we construct in this
paper.

 It is shown below that solvable right (left) Leibniz algebras
 with the nilradical $\mathcal{L}^4$ only exist for $\dim\g=n+1$ and $\dim\g=n+2$($\dim l=n+1$ and $\dim l=n+2$).
 
 Right solvable extensions with a codimension
one nilradical $\mathcal{L}^4$ are found following the steps in
Theorems \ref{TheoremRL4}, \ref{TheoremRL4Absorption} and \ref{RL4(Change of Basis)}  with the main result summarized in Theorem \ref{RL4(Change of Basis)}, where
it is shown there are eight such algebras: $\g_{n+1,i},(1\leq i\leq 4,n\geq4),$
$\g_{5,5},\g_{5,6},\g_{5,7}$ and $\g_{5,8}$. It is noticed that $\g_{n+1,1}$ is left as well when $a=0,$
$\g_{5,5}$ is left when $b=-1$ and $\g_{n+1,4},\g_{5,7},\g_{5,8}$ are right and left Leibniz algebras
at the same time. 
There are four solvable indecomposable right Leibniz algebras with a codimension
two nilradical: $\g_{n+2,1},(n\geq5),\g_{6,2},\g_{6,3}$ and $\g_{6,4}$ stated in Theorem \ref{RCodim2L4}, where none of them is left.

We follow the steps in
Theorems \ref{TheoremLL4}, \ref{TheoremLL4Absorption} and \ref{LL4(Change of Basis)} to find codimension one left solvable extensions. 
We notice in Theorem \ref{LL4(Change of Basis)}, that we have eight of them as well: $l_{n+1,1},l_{n+1,2},l_{n+1,3},$
 $\g_{n+1,4},l_{5,5},l_{5,6},\g_{5,7}$ and $\g_{5,8},$ such that $l_{n+1,1}$ is right when $a=0$ and
$l_{5,5}$ is right when $b=-1.$
We find four solvable indecomposable left Leibniz algebras with a codimension two nilradical as well: $l_{n+2,1},(n\geq5),l_{6,2},l_{6,3}$ and $l_{6,4}$ stated in Theorem \ref{LCodim2L4}, where none of them is right.

 \section{Classification of solvable indecomposable Leibniz algebras with
a nilradical $\mathcal{L}^4$}\label{Classification}

Our goal in this Section is to find all possible right and left solvable indecomposable extensions
of the nilpotent Leibniz algebra $\mathcal{L}^4,$ which serves
as the nilradical of the extended algebra.
\begin{remarks}\label{remark5.1} It is assumed throughout this section that
solvable indecomposable right and left Leibniz algebras have the nilradical $\mathcal{L}^4$; however, most of the time, the brackets of the nilradical will be omitted.
\end{remarks}
\subsection{Solvable indecomposable right Leibniz algebras with a nilradical $\mathcal{L}^4$}
\subsubsection{Codimension one solvable extensions of $\mathcal{L}^4$}
 The nilpotent Leibniz algebra $\mathcal{L}^4$ is defined in $(\ref{L4})$. Suppose $\{e_{n+1}\}$
 is in the complementary subspace to the nilradical $\mathcal{L}^4$ and $\g$ is the corresponding solvable right Leibniz algebra.
 Since $[\g,\g]\subseteq \mathcal{L}^4,$ we have the following:
\begin{equation}
\left\{
\begin{array}{l}
\displaystyle [e_1,e_1]=e_{2},[e_i,e_1]=e_{i+1},(3\leq i\leq n-1),[e_1,e_3]=2e_2-e_4,[e_3,e_3]=e_2,\\
\displaystyle [e_1,e_j]=-e_{j+1},(4\leq j\leq n-1),[e_r,e_{n+1}]=\sum_{s=1}^na_{s,r}e_s,[e_{n+1},e_k]=\sum_{s=1}^nb_{s,k}e_s,\\
\displaystyle (1\leq k\leq n,1\leq r\leq n+1,n\geq4).
\end{array} 
\right.
\label{BRLeibniz}
\end{equation}
\begin{theorem}\label{TheoremRL4} We set $a_{1,1}:=a$ and $a_{3,3}:=b$ in $(\ref{BRLeibniz})$. To satisfy the right Leibniz identity, there are the following cases based on the conditions involving parameters,
each gives a continuous family of solvable Leibniz algebras:
\begin{enumerate}[noitemsep, topsep=0pt]
\item[(1)] If $a_{1,3}=0, b\neq-a,a\neq0,b\neq0,(n=4)$ or $b\neq(3-n)a,a\neq0,b\neq0,(n\geq5),$ then we have the following brackets for the algebra:
\begin{equation}
\left\{
\begin{array}{l}
\displaystyle  \nonumber [e_1,e_{n+1}]=ae_1+A_{2,1}e_2+(b-a)e_3+\sum_{k=4}^n{a_{k,1}e_k},[e_2,e_{n+1}]=2be_2,
[e_3,e_{n+1}]=a_{2,3}e_2+be_3+\\
\displaystyle \sum_{k=4}^n{a_{k,3}e_k},[e_4,e_{n+1}]=(b-a)e_2+(a+b)e_4+\sum_{k=5}^n{a_{k-1,3}e_k},
[e_{i},e_{n+1}]=\left((i-3)a+b\right)e_{i}+\\
\displaystyle \sum_{k=i+1}^n{a_{k-i+3,3}e_k},
[e_{n+1},e_{n+1}]=a_{2,n+1}e_2,[e_{n+1},e_1]=-ae_1+b_{2,1}e_2+(a-b)e_3-\sum_{k=4}^n{a_{k,1}e_k},\\
\displaystyle [e_{n+1},e_3]=(a_{2,3}+2a_{4,3})e_2-be_3-\sum_{k=4}^n{a_{k,3}e_k},[e_{n+1},e_4]=(a+b)(e_2-e_4)-\sum_{k=5}^n{a_{k-1,3}e_k},\\
\displaystyle [e_{n+1},e_i]=\left((3-i)a-b\right)e_i-\sum_{k=i+1}^n{a_{k-i+3,3}e_k},(5\leq i\leq n),\\
\displaystyle where\,\,A_{2,1}:=\frac{(2b-a)b_{2,1}-2b\cdot a_{4,1}+2(a-b)(a_{2,3}+a_{4,3})}{a}.
\end{array} 
\right.
\end{equation} 
\item[(2)] If $a_{1,3}=0,b:=-a,a\neq0,(n=4)$ or $b:=(3-n)a,a\neq0,(n\geq5),$
then
\begin{equation}
\left\{
\begin{array}{l}
\displaystyle  \nonumber [e_1,e_{n+1}]=ae_1+A_{2,1}e_2+(2-n)ae_3+\sum_{k=4}^n{a_{k,1}e_k},[e_2,e_{n+1}]=2(3-n)ae_2,\\
\displaystyle [e_3,e_{n+1}]=a_{2,3}e_2+(3-n)ae_3+\sum_{k=4}^n{a_{k,3}e_k},[e_4,e_{n+1}]=(2-n)ae_2+(4-n)ae_4+\\
\displaystyle \sum_{k=5}^n{a_{k-1,3}e_k},[e_{i},e_{n+1}]=(i-n)ae_{i}+\sum_{k=i+1}^n{a_{k-i+3,3}e_k},
[e_{n+1},e_{n+1}]=a_{2,n+1}e_2+a_{n,n+1}e_n,\\
\displaystyle [e_{n+1},e_1]=-ae_1+b_{2,1}e_2+(n-2)ae_3-\sum_{k=4}^n{a_{k,1}e_k},[e_{n+1},e_3]=(a_{2,3}+2a_{4,3})e_2+\\
\displaystyle (n-3)ae_3-\sum_{k=4}^n{a_{k,3}e_k},[e_{n+1},e_4]=(4-n)ae_2+(n-4)ae_4-\sum_{k=5}^n{a_{k-1,3}e_k},\\
\displaystyle [e_{n+1},e_i]=(n-i)ae_i-\sum_{k=i+1}^n{a_{k-i+3,3}e_k},(5\leq i\leq n-1),\\
\displaystyle where\,\,A_{2,1}:=(5-2n)b_{2,1}+2(n-3)a_{4,1}+2(n-2)(a_{2,3}+a_{4,3}).
\end{array} 
\right.
\end{equation} 
 \item[(3)] If $a_{1,3}=0,a=0,b\neq0,(n=4)$ or $a=0$ and $b\neq0,(n\geq5),$ then
\begin{equation}
\left\{
\begin{array}{l}
\displaystyle  \nonumber [e_1,e_{n+1}]=a_{2,1}e_2+be_3+\sum_{k=4}^n{a_{k,1}e_k},[e_2,e_{n+1}]=2be_2,
[e_3,e_{n+1}]=a_{2,3}e_2+be_3+\sum_{k=4}^n{a_{k,3}e_k},\\
\displaystyle [e_4,e_{n+1}]=b(e_2+e_4)+\sum_{k=5}^n{a_{k-1,3}e_k},
[e_{i},e_{n+1}]=be_{i}+\sum_{k=i+1}^n{a_{k-i+3,3}e_k},\\
\displaystyle [e_{n+1},e_{n+1}]=a_{2,n+1}e_2,
[e_{n+1},e_1]=\left(a_{2,3}+a_{4,1}+a_{4,3}\right)e_2-be_3-\sum_{k=4}^n{a_{k,1}e_k},\\
\displaystyle [e_{n+1},e_3]=(a_{2,3}+2a_{4,3})e_2-be_3-\sum_{k=4}^n{a_{k,3}e_k},[e_{n+1},e_4]=b(e_2-e_4)-\sum_{k=5}^n{a_{k-1,3}e_k},\\
\displaystyle [e_{n+1},e_i]=-be_i-\sum_{k=i+1}^n{a_{k-i+3,3}e_k},(5\leq i\leq n).
\end{array} 
\right.
\end{equation} 
   \allowdisplaybreaks
\item[(4)] If $a_{1,3}=0,b=0,a\neq0,(n=4)$ or $b=0,a\neq0,(n\geq5),$ then
\begin{equation}
\left\{
\begin{array}{l}
\displaystyle  \nonumber [e_1,e_{n+1}]=ae_1+\left(a_{2,3}-b_{2,1}+b_{2,3}\right)e_2-ae_3+\sum_{k=4}^n{a_{k,1}e_k},
[e_3,e_{n+1}]=a_{2,3}e_2+\sum_{k=4}^n{a_{k,3}e_k},\\
\displaystyle [e_4,e_{n+1}]=-ae_2+ae_4+\sum_{k=5}^n{a_{k-1,3}e_k},
[e_{i},e_{n+1}]=(i-3)ae_{i}+\sum_{k=i+1}^n{a_{k-i+3,3}e_k},\\
\displaystyle 
[e_{n+1},e_{n+1}]=a_{2,n+1}e_2,[e_{n+1},e_1]=-ae_1+b_{2,1}e_2+ae_3-\sum_{k=4}^n{a_{k,1}e_k},[e_{n+1},e_3]=b_{2,3}e_2-\\
\displaystyle \sum_{k=4}^n{a_{k,3}e_k},[e_{n+1},e_4]=ae_2-ae_4-\sum_{k=5}^n{a_{k-1,3}e_k},[e_{n+1},e_i]=(3-i)ae_i-\sum_{k=i+1}^n{a_{k-i+3,3}e_k},\\
\displaystyle (5\leq i\leq n).
\end{array} 
\right.
\end{equation} 
In the remaining cases $a_{1,3}:=c.$
\item[(5)] If $b\neq-a,a\neq0,b\neq-c,c\neq0,$ then
\begin{equation}
\left\{
\begin{array}{l}
\displaystyle  \nonumber [e_1,e_{5}]=ae_1+A_{2,1}e_2+(b+c-a)e_3+a_{4,1}e_4,[e_2,e_{5}]=2(b+c)e_2,
[e_3,e_{5}]=ce_1+a_{2,3}e_2+\\
\displaystyle be_3+A_{4,3}e_4,[e_4,e_{5}]=(2c+b-a)e_2+(a+b)e_4,[e_{5},e_{5}]=a_{2,5}e_2,[e_{5},e_1]=-ae_1+b_{2,1}e_2+\\
\displaystyle (a-b-c)e_3-a_{4,1}e_4,[e_{5},e_3]=-ce_1+b_{2,3}e_2-be_3-A_{4,3}e_4,[e_{5},e_4]=(a+b)e_2-(a+b)e_4,\\
\displaystyle where\,\,A_{2,1}:=a_{2,3}-b_{2,1}+b_{2,3}-\frac{(b+c)(a_{2,3}-2b_{2,1}+2a_{4,1}+b_{2,3})}{a}\,\,and \\
\displaystyle 
A_{4,3}:=\frac{(c-a)a_{2,3}+(a+c)b_{2,3}-2c(b_{2,1}-a_{4,1})}{2a},
\end{array} 
\right.
\end{equation} 
$\r_{e_{5}}=\left[\begin{smallmatrix}
 a & 0 & c & 0\\
  A_{2,1} & 2(b+c) & a_{2,3}& 2c+b-a \\
  b+c-a & 0 & b & 0\\
  a_{4,1} & 0 &  A_{4,3} & a+b
\end{smallmatrix}\right].$
\item[(6)] If $b:=-a,a\neq0,a\neq c,c\neq0,$ then
\begin{equation}
\left\{
\begin{array}{l}
\displaystyle  \nonumber [e_1,e_{5}]=ae_1+A_{2,1}e_2+(c-2a)e_3+a_{4,1}e_4,[e_2,e_{5}]=2(c-a)e_2,
[e_3,e_{5}]=ce_1+a_{2,3}e_2-\\
\displaystyle ae_3+A_{4,3}e_4,[e_4,e_{5}]=2(c-a)e_2,[e_{5},e_{5}]=a_{2,5}e_2+a_{4,5}e_4,[e_{5},e_1]=-ae_1+b_{2,1}e_2+\\
\displaystyle (2a-c)e_3-a_{4,1}e_4,[e_{5},e_3]=-ce_1+b_{2,3}e_2+ae_3-A_{4,3}e_4,\\
\displaystyle where\,\,A_{2,1}:=2a_{2,3}-3b_{2,1}+2b_{2,3}+2a_{4,1}-\frac{c(a_{2,3}-2b_{2,1}+2a_{4,1}+b_{2,3})}{a}\,\,and \\
\displaystyle 
A_{4,3}:=\frac{(c-a)a_{2,3}+(a+c)b_{2,3}-2c(b_{2,1}-a_{4,1})}{2a},
\end{array} 
\right.
\end{equation} 
$\r_{e_{5}}=\left[\begin{smallmatrix}
 a & 0 & c & 0\\
  A_{2,1} & 2(c-a) & a_{2,3}& 2(c-a) \\
 c-2a & 0 &-a & 0\\
  a_{4,1} & 0 &  A_{4,3} &0
\end{smallmatrix}\right].$
\item[(7)] If $b:=-c,c\neq0,a\neq c,a\neq0,$ then
\begin{equation}
\left\{
\begin{array}{l}
\displaystyle  \nonumber [e_1,e_{5}]=ae_1+\left(a_{2,3}-b_{2,1}+b_{2,3}\right)e_2-ae_3+a_{4,1}e_4,
[e_3,e_{5}]=ce_1+a_{2,3}e_2-ce_3+a_{4,3}e_4,\\
\displaystyle [e_4,e_{5}]=(c-a)e_2+(a-c)e_4,[e_{5},e_{5}]=a_{2,5}e_2,[e_{5},e_1]=-ae_1+b_{2,1}e_2+ae_3-a_{4,1}e_4,\\
\displaystyle [e_{5},e_3]=-ce_1+b_{2,3}e_2+ce_3-a_{4,3}e_4,[e_{5},e_4]=(a-c)e_2+(c-a)e_4,
\end{array} 
\right.
\end{equation} 
$\r_{e_{5}}=\left[\begin{smallmatrix}
 a & 0 & c & 0\\
  a_{2,3}-b_{2,1}+b_{2,3} & 0 & a_{2,3}& c-a \\
  -a & 0 & -c & 0\\
  a_{4,1} & 0 &  a_{4,3} & a-c
\end{smallmatrix}\right].$
\item[(8)] If $c:=a,b:=-a,a\neq0,$ then\footnote{The outer derivation $\r_{e_{5}}$ is nilpotent, so we do not consider this case any further.}
\begin{equation}
\left\{
\begin{array}{l}
\displaystyle  \nonumber [e_1,e_{5}]=ae_1+\left(a_{2,3}-b_{2,1}+b_{2,3}\right)e_2-ae_3+a_{4,1}e_4,
[e_3,e_{5}]=ae_1+a_{2,3}e_2-ae_3+a_{4,3}e_4,\\
\displaystyle [e_{5},e_{5}]=a_{2,5}e_2+a_{4,5}e_4,[e_{5},e_1]=-ae_1+b_{2,1}e_2+ae_3-\left(a_{4,1}-a_{4,3}-b_{4,3}\right)e_4,\\
\displaystyle [e_{5},e_3]=-ae_1+b_{2,3}e_2+ae_3+b_{4,3}e_4,
\end{array} 
\right.
\end{equation} 
$\r_{e_{5}}=\left[\begin{smallmatrix}
 a & 0 & a & 0\\
 a_{2,3}-b_{2,1}+b_{2,3}& 0& a_{2,3}& 0 \\
  -a & 0 & -a & 0\\
  a_{4,1} & 0 &  a_{4,3} & 0
\end{smallmatrix}\right].$
\item[(9)] If $a=0,b=0,c\neq0,$ then
\begin{equation}
\left\{
\begin{array}{l}
\displaystyle  \nonumber [e_1,e_{5}]=\left(3b_{2,1}-4a_{4,1}-2a_{2,3}-2a_{4,3}\right)e_2+ce_3+a_{4,1}e_4,[e_2,e_{5}]=2ce_2,
[e_3,e_{5}]=ce_1+\\
\displaystyle a_{2,3}e_2+a_{4,3}e_4,[e_4,e_{5}]=2ce_2,[e_{5},e_{5}]=a_{2,5}e_2+a_{4,5}e_4,[e_{5},e_1]=b_{2,1}e_2-ce_3-a_{4,1}e_4,\\
\displaystyle [e_{5},e_3]=-ce_1+\left(2b_{2,1}-2a_{4,1}-a_{2,3}\right)e_2-a_{4,3}e_4,
\end{array} 
\right.
\end{equation} 
$\r_{e_{5}}=\left[\begin{smallmatrix}
 0 & 0 & c & 0\\
  3b_{2,1}-4a_{4,1}-2a_{2,3}-2a_{4,3} & 2c & a_{2,3}& 2c\\
  c & 0 & 0 & 0\\
  a_{4,1} & 0 &  a_{4,3} & 0
\end{smallmatrix}\right].$
\item[(10)] If $a=0,b\neq0,b\neq-c,c\neq0,$ then
\begin{equation}
\left\{
\begin{array}{l}
\displaystyle  \nonumber [e_1,e_{5}]=a_{2,1}e_2+(b+c)e_3+a_{4,1}e_4,[e_2,e_{5}]=2(b+c)e_2,
[e_3,e_{5}]=ce_1+a_{2,3}e_2+be_3+\\
\displaystyle A_{4,3}e_4,[e_4,e_{5}]=(2c+b)e_2+be_4,[e_{5},e_{5}]=a_{2,5}e_2,[e_{5},e_1]=\left(a_{4,1}+\frac{a_{2,3}+b_{2,3}}{2}\right)e_2-\\
\displaystyle (b+c)e_3-a_{4,1}e_4,[e_{5},e_3]=-ce_1+b_{2,3}e_2-be_3-A_{4,3}e_4,[e_{5},e_4]=b(e_2-e_4),\\
\displaystyle where\,\,A_{4,3}:=\frac{(3c+2b)b_{2,3}-(2b+c)a_{2,3}-2c(a_{2,1}+a_{4,1})}{4(b+c)},
\end{array} 
\right.
\end{equation} 
$\r_{e_{5}}=\left[\begin{smallmatrix}
 0 & 0 & c & 0\\
  a_{2,1} & 2(b+c) & a_{2,3}& 2c+b \\
  b+c & 0 & b & 0\\
  a_{4,1} & 0 &  A_{4,3} & b
\end{smallmatrix}\right].$
\item[(11)] If $a=0,b:=-c,c\neq0,$ then
\begin{equation}
\left\{
\begin{array}{l}
\displaystyle  \nonumber [e_1,e_{5}]=a_{2,1}e_2+a_{4,1}e_4,
[e_3,e_{5}]=ce_1+a_{2,3}e_2-ce_3+a_{4,3}e_4,[e_4,e_{5}]=ce_2-ce_4,\\
\displaystyle [e_{5},e_{5}]=a_{2,5}e_2,[e_{5},e_1]=\left(a_{2,3}+b_{2,3}-a_{2,1}\right)e_2-a_{4,1}e_4,[e_{5},e_3]=-ce_1+b_{2,3}e_2+ce_3-\\
\displaystyle a_{4,3}e_4,[e_{5},e_4]=-ce_2+ce_4,
\end{array} 
\right.
\end{equation} 
$\r_{e_{5}}=\left[\begin{smallmatrix}
 0 & 0 & c & 0\\
  a_{2,1} & 0 & a_{2,3}& c \\
  0 & 0 &-c & 0\\
  a_{4,1} & 0 &  a_{4,3} & -c
\end{smallmatrix}\right].$
\end{enumerate}
\end{theorem}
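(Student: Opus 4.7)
The plan is to enforce the right Leibniz identity (\ref{RLeibniz}) systematically on the general ansatz (\ref{BRLeibniz}). The key observation is that $\r_{e_{n+1}}$ restricted to $\mathcal{L}^4$ must be a derivation, so the first step is to compute the derivation algebra of $\mathcal{L}^4$ by applying $\r_{e_{n+1}}([x,y])=[\r_{e_{n+1}}(x),y]+[x,\r_{e_{n+1}}(y)]$ to each nonzero bracket of $\mathcal{L}^4$ in (\ref{L4}). The bracket $[e_1,e_1]=e_2$ determines the second column of the matrix $\left(a_{s,j}\right)$ in terms of the first; the chain $[e_i,e_1]=e_{i+1}$ for $3\leq i\leq n-1$ propagates $a_{s,i+1}$ from $a_{s,i}$ and $a_{s,1}$ recursively; and $[e_3,e_3]=e_2$ together with $[e_1,e_3]=2e_2-e_4$ couples the $e_3$-column to the $e_1$-column. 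The outcome is that the derivation depends on $a:=a_{1,1}$, $b:=a_{3,3}$, $c:=a_{1,3}$ and a handful of off-diagonal entries $a_{k,1},a_{k,3}$, with the further consequence that for $n\geq 5$ the chain forces $c=0$, so $c$ survives as an independent parameter only when $n=4$.

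The second step is to pin down the brackets $[e_{n+1},e_k]$ encoded by the coefficients $b_{s,k}$. For this I would use the second and third equalities of (\ref{RLeibniz}) applied to triples $\{e_i,e_{n+1},e_j\}$ and $\{e_i,e_j,e_{n+1}\}$ built from nontrivial brackets of $\mathcal{L}^4$. These relations express almost every $b_{s,k}$ in terms of the $a_{s,i}$ already obtained; because $e_2$ and $e_n$ are central in $\mathcal{L}^4$, the entries $b_{2,1}$ and $b_{2,3}$ (and in one degenerate $n=4$ subcase also $b_{4,3}$) remain genuinely free. Finally, $[e_{n+1},e_{n+1}]$ is constrained by the Leibniz identity on $\{e_{n+1},e_{n+1},e_i\}$ to lie in the center $\langle e_2,e_n\rangle$, and $a_{n,n+1}$ vanishes except on the critical locus $b=(3-n)a$, as visible in case~(2).

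The third step is the case distinction. Back-substitution produces formulas such as $A_{2,1}$ and $A_{4,3}$ that carry denominators of the form $a$, $b+a$ (i.e.\ $b+(n-3)a$ for general $n$), $b+c$, $a-c$, and so on. Wherever one of these denominators vanishes, the underlying linear system must be re-solved rather than merely substituted into, and the branches that result are exactly cases (1)--(4) within the stratum $c=0$ (available for all $n\geq 4$) and cases (5)--(11) within the stratum $c\neq 0$ (forcing $n=4$). In case (8) one observes from the matrix $\r_{e_5}$ that it is nilpotent; such an extension violates the nil-independence requirement of Section \ref{Con} and so is discarded at this stage.

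The main obstacle is organizational rather than conceptual: the number of Leibniz triples to check is large, and each branch point introduced by a vanishing denominator multiplies the number of subcases to treat. Keeping track of which substitutions are permissible in which stratum, and extracting the genuinely free parameters from those that have been eliminated, is where the bookkeeping is heaviest; this is precisely where the symbolic computation in Maple, noted in the introduction, is essentially indispensable. The role of the written argument is then to certify that every branch of the parameter tree has been reached and that, within each case, the formulas listed in the statement do satisfy all instances of (\ref{RLeibniz}).
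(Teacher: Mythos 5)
Your proposal follows essentially the same route as the paper: the paper's proof is exactly the systematic application of the right Leibniz identities organized in Table \ref{Right(L4)} — steps $1.$--$6.$ impose the derivation property of $\r_{e_{n+1}}$ on the nilradical brackets (with step $2.$, available only for $n\geq5$, forcing $a_{1,3}=0$ just as you observe), steps $7.$--$15.$ determine the $b_{s,k}$, and steps $16.$--$19.$ handle $[e_{n+1},e_{n+1}]$ and the branch points at the vanishing of $a$, $b$, $b+(n-3)a$, $b+c$, etc., with case (8) discarded by nilpotency of $\r_{e_5}$ exactly as you say. Your description of which parameters survive ($b_{2,1}$, $b_{2,3}$, and $b_{4,3}$ in the degenerate $n=4$ subcase) and of the role of Maple in the bookkeeping matches the paper's treatment.
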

\vskip 5pt
\begin{proof}
\begin{enumerate}[noitemsep, topsep=0pt]
\item[(1)] For $(n\geq5)$ 
 the proof is given in Table \ref{Right(L4)}. For $(n=4)$ we work out the following identities: $1.,3.-5.,7.,8.,10.,12.,13.$(or $15.$), $16.-19.$
 \item[(2)] For $(n\geq5),$ we apply the same identities
given in Table \ref{Right(L4)}, except $17.$ For $(n=4)$,
the identities are as follows: $1.,3.-5.,7.,8.,10.,12.,15.,16.-19.$
 \item[(3)] For $(n\geq5),$ same identities except $18.$ and $19.$
applying instead
$\r_{e_3}\left([e_{n+1},e_{n+1}]\right)=[\r_{e_3}(e_{n+1}),e_{n+1}]+[e_{n+1},\r_{e_3}(e_{n+1})]$ and
$\r[e_{n+1},e_{1}]=[\r(e_{n+1}),e_{1}]+[e_{n+1},\r(e_{1})]$.
For $(n=4)$, same identities as in case $(1),$ except $18.$ and $19.$
applying two identities given above.
\item[(4)] Same identities as in case $(1).$
\item[(5)] We apply the following identities:
$1.,3.-5.,7.,8.,10.,12.,13.,16.,17.,
\r_{e_3}\left([e_5,e_5]\right)=[\r_{e_3}(e_5),e_5]+[e_5,\r_{e_3}(e_5)],18.,19.$
\item[(6)] We apply the same identities as in $(5)$,
except $17.$ and $18.$
\item[(7)] Same as in $(5),$ except $19.$
\item[(8)] We apply the same identities as in $(5),$ except $17.,18.$ and $19.$
\item[(9)] Same identities as in $(5),$ except $17.$ and $18.$
\item[(10)] We have the identities:
$1.,3.-5.,7.,8.,10.,12.,13.,16.,17.,19., \r_{e_5}\left([e_5,e_1]\right)=[\r_{e_5}(e_5),e_1]+[e_5,\r_{e_5}(e_1)],
\r_{e_3}\left([e_5,e_5]\right)=[\r_{e_3}(e_5),e_5]+[e_5,\r_{e_3}(e_5)].$
\item[(11)] Same identities as in $(5),$ except $19.$
\end{enumerate}
\end{proof} 
\begin{table}[h!]
\caption{Right Leibniz identities in case $(1)$ in Theorem \ref{TheoremRL4}, ($n\geq5$).}
\label{Right(L4)}
\begin{tabular}{lp{2.4cm}p{12cm}}
\hline
\scriptsize Steps &\scriptsize Ordered triple &\scriptsize
Result\\ \hline
\scriptsize $1.$ &\scriptsize $\r[e_1,e_{1}]$ &\scriptsize
$a_{1,2}=0,a_{3,1}:=\frac{1}{2}a_{2,2}-a,a_{k,2}=0,(3\leq k\leq n)$
$\implies$ $[e_1,e_{n+1}]=ae_1+a_{2,1}e_2+\left(\frac{1}{2}a_{2,2}-a\right)e_3+\sum_{k=4}^n{a_{k,1}e_k},[e_2,e_{n+1}]=a_{2,2}e_2.$\\ \hline
\scriptsize $2.$ &\scriptsize $\r[e_{3},e_{i}]$ &\scriptsize
$a_{1,3}=a_{1,i}=a_{3,i}=0$
$\implies$ $[e_3,e_{n+1}]=a_{2,3}e_2+be_3+\sum_{k=4}^n{a_{k,3}e_k},[e_i,e_{n+1}]=a_{2,i}e_2+\sum_{k=4}^n{a_{k,i}e_k},(4\leq i\leq n-1).$\\ \hline
\scriptsize $3.$ &\scriptsize $\r[e_{3},e_{n}]$ &\scriptsize
$a_{1,n}=a_{3,n}=0$
$\implies$ $[e_{n},e_{n+1}]=a_{2,n}e_2+\sum_{k=4}^n{a_{k,n}e_k}.$\\ \hline
\scriptsize $4.$ &\scriptsize $\r[e_{3},e_{3}]$ &\scriptsize
$a_{2,2}:=2b$
$\implies$ $[e_{1},e_{n+1}]=ae_1+a_{2,1}e_2+(b-a)e_3+\sum_{k=4}^n{a_{k,1}e_k},
[e_2,e_{n+1}]=2be_2.$\\ \hline
\scriptsize $5.$ &\scriptsize $\r[e_1,e_{3}]$ &\scriptsize
$a_{2,4}:=b-a,a_{4,4}:=a+b,a_{k,4}:=a_{k-1,3},(5\leq k\leq n)$
$\implies$ $[e_4,e_{n+1}]=(b-a)e_2+(a+b)e_4+\sum_{k=5}^n{a_{k-1,3}e_k}.$\\ \hline
\scriptsize $6.$ &\scriptsize $\r[e_{1},e_{i}]$ &\scriptsize
$a_{2,i+1}=a_{4,i+1}=0,a_{i+1,i+1}:=(i-2)a+b,a_{k,i+1}:=a_{k-1,i},(5\leq k\leq n,k\neq i+1,4\leq i\leq n-1),$
where $i$ is fixed
$\implies$ $[e_{j},e_{n+1}]=\left((j-3)a+b\right)e_j+\sum_{k=j+1}^n{a_{k-j+3,3}e_k},(5\leq j\leq n).$\\ \hline
\scriptsize $7.$ &\scriptsize $\r_{e_1}\left([e_{n+1},e_{1}]\right)$ &\scriptsize
$[e_{n+1},e_2]=0$
$\implies$ $b_{k,2}=0,(1\leq k\leq n).$\\ \hline
\scriptsize $8.$ &\scriptsize $\r_{e_1}\left([e_3,e_{n+1}]\right)$ &\scriptsize
$b_{1,1}:=-a,b_{3,1}:=a-b$
$\implies$ $[e_{n+1},e_1]=-ae_1+b_{2,1}e_2+(a-b)e_3+\sum_{k=4}^{n}b_{k,1}e_k.$\\ \hline
\scriptsize $9.$ &\scriptsize $\r_{e_1}\left([e_{1},e_{n+1}]\right)$ &\scriptsize
$b_{k-1,1}:=-a_{k-1,1},(5\leq k\leq n,n\geq5)$
$\implies$ $[e_{n+1},e_1]=-ae_1+b_{2,1}e_2+(a-b)e_3-\sum_{k=4}^{n-1}{a_{k,1}e_k}+b_{n,1}e_n.$ \\ \hline
\scriptsize $10.$ &\scriptsize $\r_{e_3}\left([e_{1},e_{n+1}]\right)$ &\scriptsize
$b_{1,3}=0,b_{3,3}:=-b,b_{k-1,3}:=-a_{k-1,3},(5\leq k\leq n)$
$\implies$ $[e_{n+1},e_{3}]=b_{2,3}e_2-be_3-\sum_{k=4}^{n-1}{a_{k,3}e_k}+b_{n,3}e_n.$\\ \hline
\scriptsize $11.$ &\scriptsize $\r_{e_i}\left([e_{1},e_{n+1}]\right)$ &\scriptsize
$b_{3,i}=0\implies b_{1,i}=0;b_{i,i}:=(3-i)a-b, b_{k-1,i}=0,(5\leq k\leq i),$ $b_{k-1,i}:=-a_{k-i+2,3},(i+2\leq k\leq n),$
where $i$ is fixed.
$\implies$  $[e_{n+1},e_i]=b_{2,i}e_2+\left((3-i)a-b\right)e_i-\sum_{k=i+1}^{n-1}{a_{k-i+3,3}e_k}+b_{n,i}e_n,(4\leq i\leq n-1).$ \\ \hline
\scriptsize $12.$ &\scriptsize $\r_{e_{n}}\left([e_{1},e_{n+1}]\right)$ &\scriptsize
$b_{3,n}=0\implies b_{1,n}=0;b_{k-1,n}=0,(5\leq k\leq n)$
$\implies$ $[e_{n+1},e_{n}]=b_{2,n}e_2+b_{n,n}e_n.$\\ \hline
\scriptsize $13.$ &\scriptsize $\r_{e_{1}}\left([e_{n+1},e_{3}]\right)$ &\scriptsize
$b_{2,4}:=a+b,b_{n,4}:=-a_{n-1,3}$
$\implies$ $[e_{n+1},e_{4}]=(a+b)e_2-(a+b)e_4-\sum_{k=5}^n{a_{k-1,3}e_k}.$\\ \hline
\scriptsize $14.$ &\scriptsize $\r_{e_{1}}\left([e_{n+1},e_{i}]\right)$ &\scriptsize
$b_{2,i+1}=0,b_{n,i+1}:=-a_{n-i+2,3},(4\leq i\leq n-2)$
$\implies$ $[e_{n+1},e_{j}]=\left((3-j)a-b\right)e_j-\sum_{k=j+1}^n{a_{k-j+3,3}e_k},(5\leq j\leq n-1).$\\ \hline
\scriptsize $15.$ &\scriptsize $\r_{e_{1}}\left([e_{n+1},e_{n-1}]\right)$ &\scriptsize
$b_{2,n}=0,b_{n,n}:=(3-n)a-b$
$\implies$ $[e_{n+1},e_{n}]=\left((3-n)a-b\right)e_n.$ Combining with $14.,$
$[e_{n+1},e_{i}]=\left((3-i)a-b\right)e_i-\sum_{k=i+1}^n{a_{k-i+3,3}e_k},(5\leq i\leq n).$\\ \hline
\scriptsize $16.$ &\scriptsize $\r[e_{1},e_{n+1}]$ &\scriptsize
$a_{3,n+1}=0\implies a_{1,n+1}=0; a_{k-1,n+1}=0,(5\leq k\leq n)$
$\implies$ $[e_{n+1},e_{n+1}]=a_{2,n+1}e_2+a_{n,n+1}e_n.$\\ \hline
\scriptsize $17.$ &\scriptsize $\r[e_{n+1},e_{n+1}]$ &\scriptsize
$a_{n,n+1}=0,(b\neq(3-n)a)$
$\implies$ $[e_{n+1},e_{n+1}]=a_{2,n+1}e_2.$\\ \hline
\scriptsize $18.$ &\scriptsize $\r[e_{n+1},e_{3}]$ &\scriptsize
$b_{n,3}:=-a_{n,3},(a\neq0),b_{2,3}:=a_{2,3}+2a_{4,3},(b\neq0)$
$\implies$ $[e_{n+1},e_{3}]=\left(a_{2,3}+2a_{4,3}\right)e_2-be_3-\sum_{k=4}^{n}{a_{k,3}e_k}.$\\ \hline
\scriptsize $19.$ &\scriptsize $\r_{e_1}\left([e_{n+1},e_{n+1}]\right)$ &\scriptsize
$b_{n,1}:=-a_{n,1},(a\neq0),A_{2,1}:=\frac{(2b-a)b_{2,1}-2b\cdot a_{4,1}+2(a-b)(a_{2,3}+a_{4,3})}{a}\implies$
$[e_{n+1},e_1]=-ae_1+b_{2,1}e_2+(a-b)e_3-\sum_{k=4}^{n}{a_{k,1}e_k},
[e_{1},e_{n+1}]=ae_1+A_{2,1}e_2+(b-a)e_3+\sum_{k=4}^n{a_{k,1}e_k}.$
\\ \hline
\end{tabular}
\end{table}
\begin{theorem}\label{TheoremRL4Absorption} Applying the technique of ``absorption'' (see Section \ref{Solvable left Leibniz algebras}), we can further simplify the algebras 
in each of the cases in Theorem \ref{TheoremRL4} as follows:
\begin{enumerate}[noitemsep, topsep=0pt]
\allowdisplaybreaks
\item[(1)] If $a_{1,3}=0, b\neq-a,a\neq0,b\neq0,(n=4)$ or $b\neq(3-n)a,a\neq0,b\neq0,(n\geq5),$ then we have the following brackets for the algebra:
\begin{equation}
\left\{
\begin{array}{l}
\displaystyle  \nonumber [e_1,e_{n+1}]=ae_1+\mathcal{A}_{2,1}e_2+(b-a)e_3,[e_2,e_{n+1}]=2be_2,[e_3,e_{n+1}]=a_{2,3}e_2+be_3+\sum_{k=5}^n{a_{k,3}e_k},\\
\displaystyle [e_4,e_{n+1}]=(b-a)e_2+(a+b)e_4+\sum_{k=6}^n{a_{k-1,3}e_k},[e_{i},e_{n+1}]=\left((i-3)a+b\right)e_{i}+\\
\displaystyle \sum_{k=i+2}^n{a_{k-i+3,3}e_k},[e_{n+1},e_1]=-ae_1+b_{2,1}e_2+(a-b)e_3,[e_{n+1},e_3]=a_{2,3}e_2-be_3-\sum_{k=5}^n{a_{k,3}e_k},\\
\displaystyle [e_{n+1},e_4]=(a+b)\left(e_2-e_4\right)-\sum_{k=6}^n{a_{k-1,3}e_k},[e_{n+1},e_i]=\left((3-i)a-b\right)e_i-\sum_{k=i+2}^n{a_{k-i+3,3}e_k},\\
\displaystyle (5\leq i\leq n); where\,\,\mathcal{A}_{2,1}:=\frac{(2b-a)b_{2,1}+2(a-b)a_{2,3}}{a}.
\end{array} 
\right.
\end{equation} 
\item[(2)] If $a_{1,3}=0,b:=-a,a\neq0,(n=4)$ or $b:=(3-n)a,a\neq0,(n\geq5),$
then the brackets for the algebra are  
\begin{equation}
\left\{
\begin{array}{l}
\displaystyle  \nonumber [e_1,e_{n+1}]=ae_1+\mathcal{A}_{2,1}e_2+(2-n)ae_3,[e_2,e_{n+1}]=2(3-n)ae_2,[e_3,e_{n+1}]=a_{2,3}e_2+\\
\displaystyle (3-n)ae_3+\sum_{k=5}^n{a_{k,3}e_k},[e_4,e_{n+1}]=(2-n)ae_2+(4-n)ae_4+\sum_{k=6}^n{a_{k-1,3}e_k},\\
\displaystyle [e_{i},e_{n+1}]=(i-n)ae_{i}+\sum_{k=i+2}^n{a_{k-i+3,3}e_k},[e_{n+1},e_{n+1}]=a_{n,n+1}e_n,\\
\displaystyle [e_{n+1},e_1]=-ae_1+b_{2,1}e_2+(n-2)ae_3,[e_{n+1},e_3]=a_{2,3}e_2+(n-3)ae_3-\sum_{k=5}^n{a_{k,3}e_k},\\
\displaystyle[e_{n+1},e_4]=(4-n)a(e_2-e_4)-\sum_{k=6}^n{a_{k-1,3}e_k},[e_{n+1},e_i]=(n-i)ae_i-\sum_{k=i+2}^n{a_{k-i+3,3}e_k},\\
\displaystyle(5\leq i\leq n);where\,\,\mathcal{A}_{2,1}:=(5-2n)b_{2,1}+2(n-2)a_{2,3}.
\end{array} 
\right.
\end{equation} 
\item[(3)] If $a_{1,3}=0,a=0,b\neq0,(n=4)$ or $a=0$ and $b\neq0,(n\geq5),$ then
\begin{equation}
\left\{
\begin{array}{l}
\displaystyle  \nonumber [e_1,e_{n+1}]=a_{2,1}e_2+be_3,[e_2,e_{n+1}]=2be_2,
[e_3,e_{n+1}]=a_{2,3}e_2+be_3+\sum_{k=5}^n{a_{k,3}e_k},\\
\displaystyle [e_4,e_{n+1}]=b\left(e_2+e_4\right)+\sum_{k=6}^n{a_{k-1,3}e_k},
[e_{i},e_{n+1}]=be_{i}+\sum_{k=i+2}^n{a_{k-i+3,3}e_k},\\
\displaystyle 
[e_{n+1},e_1]=a_{2,3}e_2-be_3,[e_{n+1},e_3]=a_{2,3}e_2-be_3-\sum_{k=5}^n{a_{k,3}e_k},\\
\displaystyle [e_{n+1},e_4]=b\left(e_2-e_4\right)-\sum_{k=6}^n{a_{k-1,3}e_k},[e_{n+1},e_i]=-be_i-\sum_{k=i+2}^n{a_{k-i+3,3}e_k},(5\leq i\leq n).
\end{array} 
\right.
\end{equation} 
\allowdisplaybreaks
\item[(4)] If $a_{1,3}=0,b=0,a\neq0,(n=4)$ or $b=0,a\neq0,(n\geq5),$ then
\begin{equation}
\left\{
\begin{array}{l}
\displaystyle  \nonumber [e_1,e_{n+1}]=ae_1+\left(a_{2,3}-b_{2,1}+b_{2,3}\right)e_2-ae_3,
[e_3,e_{n+1}]=a_{2,3}e_2+\sum_{k=5}^n{a_{k,3}e_k},\\
\displaystyle [e_4,e_{n+1}]=-a(e_2-e_4)+\sum_{k=6}^n{a_{k-1,3}e_k},
[e_{i},e_{n+1}]=(i-3)ae_{i}+\sum_{k=i+2}^n{a_{k-i+3,3}e_k},\\
\displaystyle 
[e_{n+1},e_{n+1}]=a_{2,n+1}e_2,[e_{n+1},e_1]=-ae_1+b_{2,1}e_2+ae_3,[e_{n+1},e_3]=b_{2,3}e_2-\sum_{k=5}^n{a_{k,3}e_k},\\
\displaystyle [e_{n+1},e_4]=a(e_2-e_4)-\sum_{k=6}^n{a_{k-1,3}e_k},[e_{n+1},e_i]=(3-i)ae_i-\sum_{k=i+2}^n{a_{k-i+3,3}e_k},(5\leq i\leq n).
\end{array} 
\right.
\end{equation} 
In the remaining cases $a_{1,3}:=c.$
\item[(5)] If $b\neq-a,a\neq0,b\neq-c,c\neq0,$ then
\begin{equation}
\left\{
\begin{array}{l}
\displaystyle  \nonumber [e_1,e_{5}]=ae_1+\mathcal{A}_{2,1}e_2+(b+c-a)e_3,[e_2,e_{5}]=2(b+c)e_2,
[e_3,e_{5}]=ce_1+a_{2,3}e_2+be_3,\\
\displaystyle [e_4,e_{5}]=(2c+b-a)e_2+(a+b)e_4,[e_{5},e_1]=-ae_1+\mathcal{B}_{2,1}e_2+(a-b-c)e_3,\\
\displaystyle [e_{5},e_3]=-ce_1+\mathcal{B}_{2,3}e_2-be_3,[e_{5},e_4]=(a+b)(e_2-e_4),\\
\displaystyle where\,\,\mathcal{A}_{2,1}:=\frac{(3a-2b-3c)a_{2,3}+(a-2b-3c)b_{2,3}}{2a},\,\,\mathcal{B}_{2,1}:=\frac{(a-c)a_{2,3}-(a+c)b_{2,3}}{2a},\\
\displaystyle 
\mathcal{B}_{2,3}:=\frac{(a-c)a_{2,3}-c\cdot b_{2,3}}{a},
\end{array} 
\right.
\end{equation} 
$\r_{e_{5}}=\left[\begin{smallmatrix}
 a & 0 & c & 0\\
 \mathcal{A}_{2,1} & 2(b+c) & a_{2,3}& 2c+b-a \\
  b+c-a & 0 & b & 0\\
 0 & 0 &  0 & a+b
\end{smallmatrix}\right].$
\item[(6)] If $b:=-a,a\neq0,a\neq c,c\neq0,$ then
\begin{equation}
\left\{
\begin{array}{l}
\displaystyle  \nonumber [e_1,e_{5}]=ae_1+\mathcal{A}_{2,1}e_2+(c-2a)e_3,[e_2,e_{5}]=2(c-a)e_2,
[e_3,e_{5}]=ce_1+a_{2,3}e_2-ae_3,\\
\displaystyle [e_4,e_{5}]=2(c-a)e_2,[e_{5},e_{5}]=a_{4,5}e_4,[e_{5},e_1]=-ae_1+\mathcal{B}_{2,1}e_2+(2a-c)e_3,\\
\displaystyle [e_{5},e_3]=-ce_1+\mathcal{B}_{2,3}e_2+ae_3;where\,\,\mathcal{A}_{2,1}:=\frac{(5a-3c)a_{2,3}+(3a-3c)b_{2,3}}{2a},\\
\displaystyle \mathcal{B}_{2,1}:=\frac{(a-c)a_{2,3}-(a+c)b_{2,3}}{2a},\mathcal{B}_{2,3}:=\frac{(a-c)a_{2,3}-c\cdot b_{2,3}}{a},
\end{array} 
\right.
\end{equation} 
$\r_{e_{5}}=\left[\begin{smallmatrix}
 a & 0 & c & 0\\
  \mathcal{A}_{2,1} & 2(c-a) &a_{2,3}& 2(c-a) \\
 c-2a & 0 &-a & 0\\
  0 & 0 &  0 &0
\end{smallmatrix}\right].$
\item[(7)] If $b:=-c,c\neq0,a\neq c,a\neq0,$ then
\begin{equation}
\left\{
\begin{array}{l}
\displaystyle  \nonumber [e_1,e_{5}]=ae_1+\left(a_{2,3}-b_{2,1}+b_{2,3}\right)e_2-ae_3,
[e_3,e_{5}]=ce_1+a_{2,3}e_2-ce_3,\\
\displaystyle [e_4,e_{5}]=(c-a)\left(e_2-e_4\right),[e_{5},e_{5}]=a_{2,5}e_2,[e_{5},e_1]=-ae_1+b_{2,1}e_2+ae_3,\\
\displaystyle [e_{5},e_3]=-ce_1+b_{2,3}e_2+ce_3,[e_{5},e_4]=(a-c)\left(e_2-e_4\right),
\end{array} 
\right.
\end{equation} 
$\r_{e_{5}}=\left[\begin{smallmatrix}
 a & 0 & c & 0\\
  a_{2,3}-b_{2,1}+b_{2,3} & 0 &a_{2,3}& c-a \\
  -a & 0 & -c & 0\\
 0 & 0 & 0 & a-c
\end{smallmatrix}\right].$
\item[(8)] If $a=0,b=0,c\neq0,$ then
\begin{equation}
\left\{
\begin{array}{l}
\displaystyle  \nonumber [e_1,e_{5}]=\left(3b_{2,1}-2a_{2,3}\right)e_2+ce_3,[e_2,e_{5}]=2ce_2,
[e_3,e_{5}]=ce_1+a_{2,3}e_2,[e_4,e_{5}]=2ce_2,\\
\displaystyle [e_{5},e_{5}]=a_{4,5}e_4,[e_{5},e_1]=b_{2,1}e_2-ce_3, [e_{5},e_3]=-ce_1+\left(2b_{2,1}-a_{2,3}\right)e_2,
\end{array} 
\right.
\end{equation} 
$\r_{e_{5}}=\left[\begin{smallmatrix}
 0 & 0 & c & 0\\
  3b_{2,1}-2a_{2,3}& 2c & a_{2,3}& 2c\\
  c & 0 & 0 & 0\\
  0 & 0 &  0 & 0
\end{smallmatrix}\right].$
\item[(9)] If $a=0,b\neq0,b\neq-c,c\neq0,$ then
\begin{equation}
\left\{
\begin{array}{l}
\displaystyle  \nonumber [e_1,e_{5}]=\mathcal{A}_{2,1}e_2+(b+c)e_3,[e_2,e_{5}]=2(b+c)e_2,
[e_3,e_{5}]=ce_1+a_{2,3}e_2+be_3,\\
\displaystyle [e_4,e_{5}]=(2c+b)e_2+be_4,[e_{5},e_1]=\mathcal{B}_{2,1}e_2-(b+c)e_3,[e_{5},e_3]=-ce_1+\mathcal{B}_{2,3}e_2-be_3,\\
\displaystyle [e_{5},e_4]=b(e_2-e_4); where\,\,\mathcal{A}_{2,1}:=\frac{(2b+c)a_{2,3}-(2b+3c)b_{2,3}}{4(b+c)},\\
\displaystyle 
\mathcal{B}_{2,1}:=\frac{(4b+3c)a_{2,3}-c\cdot b_{2,3}}{4(b+c)},\mathcal{B}_{2,3}:=\frac{(2b+c)a_{2,3}-c\cdot b_{2,3}}{2(b+c)},
\end{array} 
\right.
\end{equation} 
$\r_{e_{5}}=\left[\begin{smallmatrix}
 0 & 0 & c & 0\\
  \mathcal{A}_{2,1}& 2(b+c) & a_{2,3}& 2c+b \\
  b+c & 0 & b & 0\\
  0 & 0 & 0 & b
\end{smallmatrix}\right].$
\item[(10)] If $a=0,b:=-c,c\neq0,$ then
\begin{equation}
\left\{
\begin{array}{l}
\displaystyle  \nonumber [e_1,e_{5}]=a_{2,1}e_2,
[e_3,e_{5}]=ce_1+a_{2,3}e_2-ce_3,[e_4,e_{5}]=c(e_2-e_4),[e_{5},e_{5}]=a_{2,5}e_2,\\
\displaystyle [e_{5},e_1]=\left(a_{2,3}+b_{2,3}-a_{2,1}\right)e_2,[e_{5},e_3]=-ce_1+b_{2,3}e_2+ce_3,[e_{5},e_4]=-c(e_2-e_4),
\end{array} 
\right.
\end{equation} 
$\r_{e_{5}}=\left[\begin{smallmatrix}
 0 & 0 & c & 0\\
  a_{2,1} & 0 & a_{2,3}& c \\
  0 & 0 &-c & 0\\
 0 & 0 &  0 & -c
\end{smallmatrix}\right].$

\end{enumerate}
\end{theorem}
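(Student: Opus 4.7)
The plan is to carry out, for each case of Theorem \ref{TheoremRL4}, the change of basis
\[
e'_i = e_i,\ (1 \leq i \leq n),\qquad e'_{n+1} = e_{n+1} + \sum_{k=1}^n d^k e_k,
\]
as described in Section \ref{Solvable left Leibniz algebras}. Since $e_1,\dots,e_n$ are fixed and the added piece lies in the nilradical, the brackets $(\ref{L4})$ are unchanged; only the brackets involving $e_{n+1}$ are affected. First I would expand $[e_i, e'_{n+1}]$, $[e'_{n+1}, e_i]$ and $[e'_{n+1}, e'_{n+1}]$ using bilinearity and the nilradical relations $(\ref{L4})$. Because $\mathcal{L}^4$ is nilpotent, the contribution of the $d^k$ to each structure constant is a linear expression whose coefficients are linear in the parameters already fixed in Theorem \ref{TheoremRL4}, producing a triangular system that can be solved in closed form.

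Second, in each case I would choose the $d^k$ so as to cancel as many of the free parameters of Theorem \ref{TheoremRL4} as possible. The relations $[e_i,e_1]=e_{i+1}$ for $3\leq i\leq n-1$ together with $[e_1,e_1]=e_2$ and $[e_1,e_3]=2e_2-e_4$ allow absorption of every coefficient $a_{k,1}\,(4\leq k\leq n)$ in $[e_1,e_{n+1}]$ by a suitable $d^{k-1}$, while $[e_3,e_3]=e_2$ together with $[e_1,e_3]=2e_2-e_4$ removes $a_{4,3}$ from $[e_3,e_{n+1}]$; the cascade $[e_j,e_1]=e_{j+1}$ simultaneously removes the corresponding entries from $[e_j,e_{n+1}]$, which explains why the sums in Theorem \ref{TheoremRL4Absorption} start at $k=5$ or $k=6$ rather than $k=4$ or $k=5$. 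In case (2) the extra nilpotent direction also lets one absorb $a_{2,n+1}$ from $[e_{n+1},e_{n+1}]$, and in the $n=4$ cases (5)--(10) only a small number of $d^k$ remain, each of which is used to kill one of $a_{4,1}$, $a_{4,3}$, or a component of $[e_{n+1},e_{n+1}]$. Substituting the chosen $d^k$ back produces the residual $e_2$-coefficients, which are then collected and relabeled as $\mathcal{A}_{2,1}$, $\mathcal{B}_{2,1}$, $\mathcal{B}_{2,3}$, with explicit formulas read off from the substitution.

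Finally, I would check consistency: by the right Leibniz identities already verified in Theorem \ref{TheoremRL4}, any simplification of $[e_i,e_{n+1}]$ automatically propagates to $[e_{n+1},e_i]$, so it suffices to verify one chain and read off the other. The main obstacle I expect is not the absorption itself but the case split it forces: the denominators $a$, $b$, $a+b$, $b+c$ and similar expressions appear in solving for $d^k$, and each vanishing condition removes one degree of freedom, forcing a separate sub-case. Keeping track of which parameters can be killed in each branch, and verifying that the new formulas $\mathcal{A}_{2,1}$, $\mathcal{B}_{2,1}$, $\mathcal{B}_{2,3}$ and the matrices $\r_{e_5}$ displayed in cases (5)--(10) agree with the result of the chosen substitution, constitutes the bulk of the (mechanical) bookkeeping.
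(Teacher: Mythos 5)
Your proposal is correct and follows essentially the same route as the paper: the paper's proof applies exactly the absorption transformations you describe (a shift $e'_{n+1}=e_{n+1}-a_{4,3}e_1$ to clear the subdiagonal entries, $e'_{n+1}=e_{n+1}+\sum_{k=3}^{n-1}a_{k+1,1}e_k$ to clear the first-column entries $a_{k,1}$, and a shift by a multiple of $e_2$ (or, in the $n=4$ cases, of $e_1,e_2,e_3$) to clear $[e_{n+1},e_{n+1}]$ whenever $[e_2,e_{n+1}]\neq 0$), then collects and relabels the residual $e_2$-coefficients as $\mathcal{A}_{2,1},\mathcal{B}_{2,1},\mathcal{B}_{2,3}$. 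One small correction: the removal of $a_{4,3}$ from the $(i,i-1)^{st}$ positions is effected by the translate $e_{n+1}\mapsto e_{n+1}-a_{4,3}e_1$ through the relations $[e_i,e_1]=e_{i+1}$, not through $[e_3,e_3]=e_2$ and $[e_1,e_3]=2e_2-e_4$ as you state, and the left brackets $[e_{n+1},e_i]$ must still be recomputed under the transformation rather than read off automatically.
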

\begin{proof}
\begin{enumerate}[noitemsep, topsep=0pt]
\item[(1)] The right (a derivation)
and left (not a derivation) multiplication operators  restricted to the nilradical are given below:
$$\r_{e_{n+1}}=\left[\begin{smallmatrix}
 a & 0 & 0 & 0&0&&\cdots &0& \cdots&0 & 0&0 \\
  A_{2,1} & 2b & a_{2,3}& b-a &0& & \cdots &0&\cdots  & 0& 0&0\\
  b-a & 0 & b & 0 & 0& &\cdots &0&\cdots &0& 0&0 \\
  a_{4,1} & 0 &  a_{4,3} & a+b &0 & &\cdots&0&\cdots &0 & 0&0\\
  a_{5,1} & 0 & a_{5,3} & a_{4,3} & 2a+b  &&\cdots &0 &\cdots&0 & 0&0 \\
 \boldsymbol{\cdot} & \boldsymbol{\cdot} & \boldsymbol{\cdot} & a_{5,3} & a_{4,3}  &\ddots& &\vdots &&\vdots & \vdots&\vdots \\
  \vdots & \vdots & \vdots &\vdots &\vdots  &\ddots& \ddots&\vdots &&\vdots & \vdots&\vdots \\
  a_{i,1} & 0 & a_{i,3} & a_{i-1,3} & a_{i-2,3}  &\cdots&a_{4,3}& (i-3)a+b&\cdots&0 & 0&0\\
   \vdots  & \vdots  & \vdots &\vdots &\vdots&&\vdots &\vdots &&\vdots  & \vdots&\vdots \\
 a_{n-1,1} & 0 & a_{n-1,3}& a_{n-2,3}& a_{n-3,3}&\cdots &a_{n-i+3,3} &a_{n-i+2,3}&\cdots&a_{4,3} &(n-4)a+b& 0\\
 a_{n,1} & 0 & a_{n,3}& a_{n-1,3}& a_{n-2,3}&\cdots &a_{n-i+4,3}  &a_{n-i+3,3}&\cdots&a_{5,3} &a_{4,3}& (n-3)a+b
\end{smallmatrix}\right],$$
$$\L_{e_{n+1}}=\left[\begin{smallmatrix}
 -a & 0 & 0 & 0&0&&\cdots &0& \cdots&0 & 0&0 \\
  b_{2,1} & 0 & a_{2,3}+2a_{4,3}& a+b &0& & \cdots &0&\cdots  & 0& 0&0\\
  a-b & 0 & -b & 0 & 0& &\cdots &0&\cdots &0& 0&0 \\
  -a_{4,1} & 0 &  -a_{4,3} & -a-b &0 & &\cdots&0&\cdots &0 & 0&0\\
  -a_{5,1} & 0 & -a_{5,3} & -a_{4,3} & -2a-b  &&\cdots &0 &\cdots&0 & 0&0 \\
 \boldsymbol{\cdot} & \boldsymbol{\cdot} & \boldsymbol{\cdot} & -a_{5,3} & -a_{4,3}  &\ddots& &\vdots &&\vdots & \vdots&\vdots \\
  \vdots & \vdots & \vdots &\vdots &\vdots  &\ddots& \ddots&\vdots &&\vdots & \vdots&\vdots \\
  -a_{i,1} & 0 & -a_{i,3} & -a_{i-1,3} & -a_{i-2,3}  &\cdots&-a_{4,3}& (3-i)a-b&\cdots&0 & 0&0\\
   \vdots  & \vdots  & \vdots &\vdots &\vdots&&\vdots &\vdots &&\vdots  & \vdots&\vdots \\
 -a_{n-1,1} & 0 & -a_{n-1,3}& -a_{n-2,3}& -a_{n-3,3}&\cdots &-a_{n-i+3,3} &-a_{n-i+2,3}&\cdots&-a_{4,3} &(4-n)a-b& 0\\
 -a_{n,1} & 0 & -a_{n,3}& -a_{n-1,3}& -a_{n-2,3}&\cdots &-a_{n-i+4,3}  &-a_{n-i+3,3}&\cdots&-a_{5,3} &-a_{4,3}& (3-n)a-b
\end{smallmatrix}\right].$$
\allowdisplaybreaks
\begin{itemize}
\item The transformation $e^{\prime}_k=e_k,(1\leq k\leq n),e^{\prime}_{n+1}=e_{n+1}-a_{4,3}e_1$
removes $a_{4,3}$ in $\r_{e_{n+1}}$ and $-a_{4,3}$ in $\L_{e_{n+1}}$ from the $(i,i-1)^{st}$ positions, where $(4\leq i\leq n),$ 
 but it affects other entries as well,
such as
the entry in the $(2,1)^{st}$ position in $\r_{e_{n+1}}$ and $\L_{e_{n+1}},$
which we change to $A_{2,1}-a_{4,3}$ and $b_{2,1}-a_{4,3},$ respectively.
It also changes the entry in the $(2,3)^{rd}$ position in $\L_{e_{n+1}}$ to 
$a_{2,3}.$
At the same time, it affects the coefficient in front of $e_2$ in the bracket $[e_{n+1},e_{n+1}],$ which we change back to $a_{2,n+1}$.

\item Applying the transformation $e^{\prime}_i=e_i,(1\leq i\leq n),e^{\prime}_{n+1}=e_{n+1}+\sum_{k=3}^{n-1}a_{k+1,1}e_{k},$
we remove $a_{k+1,1}$ in $\r_{e_{n+1}}$ and $-a_{k+1,1}$ in $\L_{e_{n+1}}$ from the entries in the $(k+1,1)^{st}$
positions, where $(3\leq k\leq n-1).$ It changes the entry in the $(2,1)^{st}$ position in
$\r_{e_{n+1}}$ to $A_{2,1}+2a_{4,1}-a_{4,3},$ the entries in the $(2,3)^{rd}$ positions in $\r_{e_{n+1}}$
and $\L_{e_{n+1}}$ to $a_{2,3}+a_{4,1}.$ 
It also affects the coefficient in front of $e_2$ in $[e_{n+1},e_{n+1}],$ which
we rename back by $a_{2,n+1}.$ We assign $a_{2,3}+a_{4,1}:=a_{2,3}$ and $b_{2,1}-a_{4,3}:=b_{2,1}.$ Then
$A_{2,1}+2a_{4,1}-a_{4,3}:=\frac{(2b-a)b_{2,1}+2(a-b)a_{2,3}}{a}.$
\item The transformation $e^{\prime}_j=e_j,(1\leq j\leq n),e^{\prime}_{n+1}=e_{n+1}-\frac{a_{2,n+1}}{2b}e_2$ 
removes the coefficient $a_{2,n+1}$ in front of $e_2$ in $[e_{n+1},e_{n+1}]$ and we prove the result.
\end{itemize}
\item[(2)] The right (a derivation) and left (not a derivation) multiplication operators restricted to the nilradical are as follows:
$$\r_{e_{n+1}}=\left[\begin{smallmatrix}
 a & 0 & 0 & 0&0&&\cdots &0& \cdots&0 & 0&0 \\
  A_{2,1} & 2(3-n)a & a_{2,3}& (2-n)a &0& & \cdots &0&\cdots  & 0& 0&0\\
  (2-n)a & 0 & (3-n)a & 0 & 0& &\cdots &0&\cdots &0& 0&0 \\
  a_{4,1} & 0 &  a_{4,3} & (4-n)a &0 & &\cdots&0&\cdots &0 & 0&0\\
  a_{5,1} & 0 & a_{5,3} & a_{4,3} & (5-n)a  &&\cdots &0 &\cdots&0 & 0&0 \\
 \boldsymbol{\cdot} & \boldsymbol{\cdot} & \boldsymbol{\cdot} & a_{5,3} & a_{4,3}  &\ddots& &\vdots &&\vdots & \vdots&\vdots \\
  \vdots & \vdots & \vdots &\vdots &\vdots  &\ddots& \ddots&\vdots &&\vdots & \vdots&\vdots \\
  a_{i,1} & 0 & a_{i,3} & a_{i-1,3} & a_{i-2,3}  &\cdots&a_{4,3}& (i-n)a&\cdots&0 & 0&0\\
   \vdots  & \vdots  & \vdots &\vdots &\vdots&&\vdots &\vdots &&\vdots  & \vdots&\vdots \\
 a_{n-1,1} & 0 & a_{n-1,3}& a_{n-2,3}& a_{n-3,3}&\cdots &a_{n-i+3,3} &a_{n-i+2,3}&\cdots&a_{4,3} &-a& 0\\
 a_{n,1} & 0 & a_{n,3}& a_{n-1,3}& a_{n-2,3}&\cdots &a_{n-i+4,3}  &a_{n-i+3,3}&\cdots&a_{5,3} &a_{4,3}& 0
\end{smallmatrix}\right],$$
$$\L_{e_{n+1}}=\left[\begin{smallmatrix}
 -a & 0 & 0 & 0&0&&\cdots &0& \cdots&0 & 0&0 \\
  b_{2,1} & 0 & a_{2,3}+2a_{4,3}& (4-n)a &0& & \cdots &0&\cdots  & 0& 0&0\\
  (n-2)a & 0 & (n-3)a & 0 & 0& &\cdots &0&\cdots &0& 0&0 \\
  -a_{4,1} & 0 &  -a_{4,3} & (n-4)a &0 & &\cdots&0&\cdots &0 & 0&0\\
  -a_{5,1} & 0 & -a_{5,3} & -a_{4,3} & (n-5)a  &&\cdots &0 &\cdots&0 & 0&0 \\
 \boldsymbol{\cdot} & \boldsymbol{\cdot} & \boldsymbol{\cdot} & -a_{5,3} & -a_{4,3}  &\ddots& &\vdots &&\vdots & \vdots&\vdots \\
  \vdots & \vdots & \vdots &\vdots &\vdots  &\ddots& \ddots&\vdots &&\vdots & \vdots&\vdots \\
  -a_{i,1} & 0 & -a_{i,3} & -a_{i-1,3} & -a_{i-2,3}  &\cdots&-a_{4,3}& (n-i)a&\cdots&0 & 0&0\\
   \vdots  & \vdots  & \vdots &\vdots &\vdots&&\vdots &\vdots &&\vdots  & \vdots&\vdots \\
 -a_{n-1,1} & 0 & -a_{n-1,3}& -a_{n-2,3}& -a_{n-3,3}&\cdots &-a_{n-i+3,3} &-a_{n-i+2,3}&\cdots&-a_{4,3} &a& 0\\
 -a_{n,1} & 0 & -a_{n,3}& -a_{n-1,3}& -a_{n-2,3}&\cdots &-a_{n-i+4,3}  &-a_{n-i+3,3}&\cdots&-a_{5,3} &-a_{4,3}& 0
\end{smallmatrix}\right].$$
\begin{itemize}
\item The transformation $e^{\prime}_k=e_k,(1\leq k\leq n),e^{\prime}_{n+1}=e_{n+1}-a_{4,3}e_1$
removes $a_{4,3}$ in $\r_{e_{n+1}}$ and $-a_{4,3}$ in $\L_{e_{n+1}}$ from the $(i,i-1)^{st}$ positions, where $(4\leq i\leq n),$ 
 but it affects other entries as well,
such as
the entry in the $(2,1)^{st}$ position in $\r_{e_{n+1}}$ and $\L_{e_{n+1}},$
which we change to $A_{2,1}-a_{4,3}$ and $b_{2,1}-a_{4,3},$ respectively.
It also changes the entry in the $(2,3)^{rd}$ position in $\L_{e_{n+1}}$ to 
$a_{2,3}.$
At the same time, it affects the coefficient in front of $e_2$ in the bracket $[e_{n+1},e_{n+1}],$ which we change back to $a_{2,n+1}$.

\item Then we apply the transformation $e^{\prime}_i=e_i,(1\leq i\leq n),e^{\prime}_{n+1}=e_{n+1}+\sum_{k=3}^{n-1}a_{k+1,1}e_{k}$
to remove $a_{k+1,1}$ in $\r_{e_{n+1}}$ and $-a_{k+1,1}$ in $\L_{e_{n+1}}$ from the entries in the $(k+1,1)^{st}$
positions, where $(3\leq k\leq n-1).$ It changes the entry in the $(2,1)^{st}$ position in
$\r_{e_{n+1}}$ to $A_{2,1}+2a_{4,1}-a_{4,3},$ the entries in the $(2,3)^{rd}$ positions in $\r_{e_{n+1}}$
and $\L_{e_{n+1}}$ to $a_{2,3}+a_{4,1}.$ 
It also affects the coefficient in front of $e_2$ in $[e_{n+1},e_{n+1}],$ which
we rename back by $a_{2,n+1}.$ We assign $a_{2,3}+a_{4,1}:=a_{2,3}$ and $b_{2,1}-a_{4,3}:=b_{2,1}.$ Then $A_{2,1}+2a_{4,1}-a_{4,3}:=
(5-2n)b_{2,1}+2(n-2)a_{2,3},$ which we set to be $\mathcal{A}_{2,1}.$
\item Applying the transformation $e^{\prime}_j=e_j,(1\leq j\leq n),e^{\prime}_{n+1}=e_{n+1}+\frac{a_{2,n+1}}{2(n-3)a}e_2,$
we remove the coefficient $a_{2,n+1}$ in front of $e_2$ in $[e_{n+1},e_{n+1}]$ and prove the result.
\end{itemize}
\item[(3)] The right (a derivation) and left (not a derivation) multiplication operators 
restricted to the nilradical are
$$\r_{e_{n+1}}=\left[\begin{smallmatrix}
 0 & 0 & 0 & 0&0&&\cdots &0& \cdots&0 & 0&0 \\
  a_{2,1} & 2b & a_{2,3}& b &0& & \cdots &0&\cdots  & 0& 0&0\\
  b & 0 & b & 0 & 0& &\cdots &0&\cdots &0& 0&0 \\
  a_{4,1} & 0 &  a_{4,3} & b &0 & &\cdots&0&\cdots &0 & 0&0\\
  a_{5,1} & 0 & a_{5,3} & a_{4,3} & b  &&\cdots &0 &\cdots&0 & 0&0 \\
 \boldsymbol{\cdot} & \boldsymbol{\cdot} & \boldsymbol{\cdot} & a_{5,3} & a_{4,3}  &\ddots& &\vdots &&\vdots & \vdots&\vdots \\
  \vdots & \vdots & \vdots &\vdots &\vdots  &\ddots& \ddots&\vdots &&\vdots & \vdots&\vdots \\
  a_{i,1} & 0 & a_{i,3} & a_{i-1,3} & a_{i-2,3}  &\cdots&a_{4,3}& b&\cdots&0 & 0&0\\
   \vdots  & \vdots  & \vdots &\vdots &\vdots&&\vdots &\vdots &&\vdots  & \vdots&\vdots \\
 a_{n-1,1} & 0 & a_{n-1,3}& a_{n-2,3}& a_{n-3,3}&\cdots &a_{n-i+3,3} &a_{n-i+2,3}&\cdots&a_{4,3} &b& 0\\
 a_{n,1} & 0 & a_{n,3}& a_{n-1,3}& a_{n-2,3}&\cdots &a_{n-i+4,3}  &a_{n-i+3,3}&\cdots&a_{5,3} &a_{4,3}& b
\end{smallmatrix}\right],$$
$$\L_{e_{n+1}}=\left[\begin{smallmatrix}
 0 & 0 & 0 & 0&0&&\cdots &0& \cdots&0 & 0&0 \\
  a_{2,3}+a_{4,1}+a_{4,3} & 0 & a_{2,3}+2a_{4,3}& b &0& & \cdots &0&\cdots  & 0& 0&0\\
  -b & 0 & -b & 0 & 0& &\cdots &0&\cdots &0& 0&0 \\
  -a_{4,1} & 0 &  -a_{4,3} & -b &0 & &\cdots&0&\cdots &0 & 0&0\\
  -a_{5,1} & 0 & -a_{5,3} & -a_{4,3} & -b  &&\cdots &0 &\cdots&0 & 0&0 \\
 \boldsymbol{\cdot} & \boldsymbol{\cdot} & \boldsymbol{\cdot} & -a_{5,3} & -a_{4,3}  &\ddots& &\vdots &&\vdots & \vdots&\vdots \\
  \vdots & \vdots & \vdots &\vdots &\vdots  &\ddots& \ddots&\vdots &&\vdots & \vdots&\vdots \\
  -a_{i,1} & 0 & -a_{i,3} & -a_{i-1,3} & -a_{i-2,3}  &\cdots&-a_{4,3}& -b&\cdots&0 & 0&0\\
   \vdots  & \vdots  & \vdots &\vdots &\vdots&&\vdots &\vdots &&\vdots  & \vdots&\vdots \\
 -a_{n-1,1} & 0 & -a_{n-1,3}& -a_{n-2,3}& -a_{n-3,3}&\cdots &-a_{n-i+3,3} &-a_{n-i+2,3}&\cdots&-a_{4,3} &-b& 0\\
 -a_{n,1} & 0 & -a_{n,3}& -a_{n-1,3}& -a_{n-2,3}&\cdots &-a_{n-i+4,3}  &-a_{n-i+3,3}&\cdots&-a_{5,3} &-a_{4,3}&-b
\end{smallmatrix}\right].$$
\begin{itemize}
\item The transformation $e^{\prime}_k=e_k,(1\leq k\leq n),e^{\prime}_{n+1}=e_{n+1}-a_{4,3}e_1$
removes $a_{4,3}$ in $\r_{e_{n+1}}$ and $-a_{4,3}$ in $\L_{e_{n+1}}$ from the $(i,i-1)^{st}$ positions, where $(4\leq i\leq n),$ 
 but it affects other entries as well,
such as
the entry in the $(2,1)^{st}$ position in $\r_{e_{n+1}}$ and $\L_{e_{n+1}},$
which we change to $a_{2,1}-a_{4,3}$ and $a_{2,3}+a_{4,1},$ respectively.
It also changes the entry in the $(2,3)^{rd}$ position in $\L_{e_{n+1}}$ to 
$a_{2,3}.$
At the same time, it affects the coefficient in front of $e_2$ in the bracket $[e_{n+1},e_{n+1}],$ which we change back to $a_{2,n+1}$.

\item Applying the transformation $e^{\prime}_i=e_i,(1\leq i\leq n),e^{\prime}_{n+1}=e_{n+1}+\sum_{k=3}^{n-1}a_{k+1,1}e_{k},$
we remove $a_{k+1,1}$ in $\r_{e_{n+1}}$ and $-a_{k+1,1}$ in $\L_{e_{n+1}}$ from the entries in the $(k+1,1)^{st}$
positions, where $(3\leq k\leq n-1).$ It changes the entry in the $(2,1)^{st}$ position in
$\r_{e_{n+1}}$ to $a_{2,1}+2a_{4,1}-a_{4,3},$ the entries in the $(2,3)^{rd}$ positions in $\r_{e_{n+1}}$
and $\L_{e_{n+1}}$ to $a_{2,3}+a_{4,1}.$ 
It also affects the coefficient in front of $e_2$ in $[e_{n+1},e_{n+1}],$ which
we rename back by $a_{2,n+1}.$ Then we assign $a_{2,1}+2a_{4,1}-a_{4,3}:=a_{2,1}$ and $a_{2,3}+a_{4,1}:=a_{2,3}.$
\item The transformation $e^{\prime}_j=e_j,(1\leq j\leq n),e^{\prime}_{n+1}=e_{n+1}-\frac{a_{2,n+1}}{2b}e_2$ 
removes the coefficient $a_{2,n+1}$ in front of $e_2$ in $[e_{n+1},e_{n+1}]$ and we prove the result.
\end{itemize}
\item[(4)] The right (a derivation) and left (not a derivation) multiplication operators 
restricted to the nilradical are given below:
$$\r_{e_{n+1}}=\left[\begin{smallmatrix}
 a & 0 & 0 & 0&0&&\cdots &0& \cdots&0 & 0&0 \\
  a_{2,3}-b_{2,1}+b_{2,3} & 0 & a_{2,3}& -a &0& & \cdots &0&\cdots  & 0& 0&0\\
  -a & 0 & 0 & 0 & 0& &\cdots &0&\cdots &0& 0&0 \\
  a_{4,1} & 0 &  a_{4,3} & a &0 & &\cdots&0&\cdots &0 & 0&0\\
  a_{5,1} & 0 & a_{5,3} & a_{4,3} & 2a  &&\cdots &0 &\cdots&0 & 0&0 \\
 \boldsymbol{\cdot} & \boldsymbol{\cdot} & \boldsymbol{\cdot} & a_{5,3} & a_{4,3}  &\ddots& &\vdots &&\vdots & \vdots&\vdots \\
  \vdots & \vdots & \vdots &\vdots &\vdots  &\ddots& \ddots&\vdots &&\vdots & \vdots&\vdots \\
  a_{i,1} & 0 & a_{i,3} & a_{i-1,3} & a_{i-2,3}  &\cdots&a_{4,3}& (i-3)a&\cdots&0 & 0&0\\
   \vdots  & \vdots  & \vdots &\vdots &\vdots&&\vdots &\vdots &&\vdots  & \vdots&\vdots \\
 a_{n-1,1} & 0 & a_{n-1,3}& a_{n-2,3}& a_{n-3,3}&\cdots &a_{n-i+3,3} &a_{n-i+2,3}&\cdots&a_{4,3} &(n-4)a& 0\\
 a_{n,1} & 0 & a_{n,3}& a_{n-1,3}& a_{n-2,3}&\cdots &a_{n-i+4,3}  &a_{n-i+3,3}&\cdots&a_{5,3} &a_{4,3}&(n-3)a
\end{smallmatrix}\right],$$
$$\L_{e_{n+1}}=\left[\begin{smallmatrix}
 -a & 0 & 0 & 0&0&&\cdots &0& \cdots&0 & 0&0 \\
  b_{2,1} & 0 & b_{2,3}& a &0& & \cdots &0&\cdots  & 0& 0&0\\
  a & 0 & 0 & 0 & 0& &\cdots &0&\cdots &0& 0&0 \\
  -a_{4,1} & 0 &  -a_{4,3} & -a &0 & &\cdots&0&\cdots &0 & 0&0\\
  -a_{5,1} & 0 & -a_{5,3} & -a_{4,3} & -2a  &&\cdots &0 &\cdots&0 & 0&0 \\
 \boldsymbol{\cdot} & \boldsymbol{\cdot} & \boldsymbol{\cdot} & -a_{5,3} & -a_{4,3}  &\ddots& &\vdots &&\vdots & \vdots&\vdots \\
  \vdots & \vdots & \vdots &\vdots &\vdots  &\ddots& \ddots&\vdots &&\vdots & \vdots&\vdots \\
  -a_{i,1} & 0 & -a_{i,3} & -a_{i-1,3} & -a_{i-2,3}  &\cdots&-a_{4,3}& (3-i)a&\cdots&0 & 0&0\\
   \vdots  & \vdots  & \vdots &\vdots &\vdots&&\vdots &\vdots &&\vdots  & \vdots&\vdots \\
 -a_{n-1,1} & 0 & -a_{n-1,3}& -a_{n-2,3}& -a_{n-3,3}&\cdots &-a_{n-i+3,3} &-a_{n-i+2,3}&\cdots&-a_{4,3} &(4-n)a& 0\\
 -a_{n,1} & 0 & -a_{n,3}& -a_{n-1,3}& -a_{n-2,3}&\cdots &-a_{n-i+4,3}  &-a_{n-i+3,3}&\cdots&-a_{5,3} &-a_{4,3}&(3-n)a
\end{smallmatrix}\right].$$
\begin{itemize}
\item The transformation $e^{\prime}_k=e_k,(1\leq k\leq n),e^{\prime}_{n+1}=e_{n+1}-a_{4,3}e_1$
removes $a_{4,3}$ in $\r_{e_{n+1}}$ and $-a_{4,3}$ in $\L_{e_{n+1}}$ from the entries in the $(i,i-1)^{st}$ positions, where $(4\leq i\leq n),$ 
 but it affects other entries as well,
such as
the entry in the $(2,1)^{st}$ position in $\r_{e_{n+1}}$ and $\L_{e_{n+1}},$
which we change to $a_{2,3}-b_{2,1}-a_{4,3}+b_{2,3}$ and $b_{2,1}-a_{4,3},$ respectively.
It also changes the entry in the $(2,3)^{rd}$ position in $\L_{e_{n+1}}$ to 
$b_{2,3}-2a_{4,3}.$
At the same time, it affects the coefficient in front of $e_2$ in the bracket $[e_{n+1},e_{n+1}],$ which we change back to $a_{2,n+1}$.

\item Applying the transformation $e^{\prime}_i=e_i,(1\leq i\leq n),e^{\prime}_{n+1}=e_{n+1}+\sum_{k=3}^{n-1}a_{k+1,1}e_{k},$
we remove $a_{k+1,1}$ in $\r_{e_{n+1}}$ and $-a_{k+1,1}$ in $\L_{e_{n+1}}$ from the entries in the $(k+1,1)^{st}$
positions, where $(3\leq k\leq n-1).$ It changes the entry in the $(2,1)^{st}$ position in
$\r_{e_{n+1}}$ to $2a_{4,1}+a_{2,3}-b_{2,1}-a_{4,3}+b_{2,3},$ the entries in the $(2,3)^{rd}$ positions in $\r_{e_{n+1}}$
and $\L_{e_{n+1}}$ to $a_{2,3}+a_{4,1}$ and $b_{2,3}+a_{4,1}-2a_{4,3},$ respectively. 
It also affects the coefficient in front of $e_2$ in $[e_{n+1},e_{n+1}],$ which
we rename back by $a_{2,n+1}.$ We assign $a_{2,3}+a_{4,1}:=a_{2,3},b_{2,1}-a_{4,3}:=b_{2,1}$
and $b_{2,3}+a_{4,1}-2a_{4,3}:=b_{2,3}.$ Then $2a_{4,1}+a_{2,3}-b_{2,1}-a_{4,3}+b_{2,3}:=a_{2,3}-b_{2,1}+b_{2,3}.$
\end{itemize}
\item[(5)] We apply the transformation
$e^{\prime}_i=e_i,(1\leq i\leq 4),e^{\prime}_5=e_5-A_{4,3}e_1+\frac{1}{2(b+c)}(A_{2,1}A_{4,3}+2a_{4,1}A_{4,3}+b_{2,1}A_{4,3}-A^2_{4,3}-a_{2,3}a_{4,1}-a^2_{4,1}-a_{4,1}b_{2,3}-a_{2,5})e_2+a_{4,1}e_3$
and then we assign $a_{2,3}+a_{4,1}:=a_{2,3}$ and $b_{2,3}-2b_{2,1}+a_{4,1}:=b_{2,3}.$
\item[(6)] 
The transformation we apply is
$e^{\prime}_i=e_i,(1\leq i\leq 4),e^{\prime}_5=e_5-A_{4,3}e_1+\frac{1}{2(c-a)}(A_{2,1}A_{4,3}+2a_{4,1}A_{4,3}+b_{2,1}A_{4,3}-A^2_{4,3}-a_{2,3}a_{4,1}-a^2_{4,1}-a_{4,1}b_{2,3}-a_{2,5})e_2+a_{4,1}e_3.$
We assign $a_{2,3}+a_{4,1}:=a_{2,3}$ and $b_{2,3}-2b_{2,1}+a_{4,1}:=b_{2,3}.$
\item[(7)] We apply the transformation
$e^{\prime}_i=e_i,(1\leq i\leq 4),e^{\prime}_5=e_5-a_{4,3}e_1+a_{4,1}e_3$ and rename the coefficient in front of $e_2$
in $[e_5,e_5]$ back by $a_{2,5}.$ Then we assign $a_{2,3}+a_{4,1}:=a_{2,3},b_{2,1}-a_{4,3}:=b_{2,1}$ and $b_{2,3}+a_{4,1}-2a_{4,3}:=b_{2,3}.$
\item[(8)] We apply
$e^{\prime}_i=e_i,(1\leq i\leq 4),e^{\prime}_5=e_5-a_{4,3}e_1-\frac{1}{2c}(2a_{2,3}a_{4,3}-a^2_{4,1}+2a_{4,1}a_{4,3}+2a_{4,1}b_{2,1}+3a_{4,3}^2-4a_{4,3}b_{2,1}+a_{2,5})e_2+a_{4,1}e_3.$
We assign $a_{2,3}+a_{4,1}:=a_{2,3}$ and $b_{2,1}-a_{4,3}:=b_{2,1}.$
 \item[(9)] The transformation is
 $e^{\prime}_i=e_i,(1\leq i\leq 4),$
 $e^{\prime}_5=e_5-A_{4,3}e_1+\frac{1}{4(b+c)}(2a_{2,1}A_{4,3}-2a_{2,3}a_{4,1}+a_{2,3}A_{4,3}-2a^2_{4,1}+6a_{4,1}A_{4,3}-2a_{4,1}b_{2,3}-2A^2_{4,3}+b_{2,3}A_{4,3}-2a_{2,5})e_2+a_{4,1}e_3.$
Then we assign $a_{2,3}+a_{4,1}:=a_{2,3}$ and $b_{2,3}-2a_{2,1}-3a_{4,1}:=b_{2,3}.$
\item[(10)] We apply the transformation
$e^{\prime}_i=e_i,(1\leq i\leq 4),e^{\prime}_5=e_5-a_{4,3}e_1+a_{4,1}e_3$ and rename the coefficient in front of $e_2$
in $[e_5,e_5]$ back by $a_{2,5}.$ We assign $a_{2,3}+a_{4,1}:=a_{2,3},a_{2,1}+2a_{4,1}-a_{4,3}:=a_{2,1}$
and $b_{2,3}+a_{4,1}-2a_{4,3}:=b_{2,3}.$
\end{enumerate}
\end{proof}
\allowdisplaybreaks
 \begin{theorem}\label{RL4(Change of Basis)} There are eight solvable
indecomposable right Leibniz algebras up to isomorphism with a codimension one nilradical
$\mathcal{L}^4,(n\geq4),$ which are given below:
\begin{equation}
\begin{array}{l}
\displaystyle \nonumber (i)\,\,\, \g_{n+1,1}: [e_1,e_{n+1}]=e_1+(a-1)e_3,[e_2,e_{n+1}]=2ae_2,[e_3,e_{n+1}]=ae_3,\\
\displaystyle [e_4,e_{n+1}]=(a-1)e_2+(a+1)e_4,[e_{i},e_{n+1}]=\left(a+i-3\right)e_{i},[e_{n+1},e_1]=-e_1-(a-1)e_3,\\
\displaystyle[e_{n+1},e_3]=-ae_3,[e_{n+1},e_4]=(a+1)\left(e_2-e_4\right),[e_{n+1},e_i]=\left(3-i-a\right)e_i,(5\leq i\leq n),\\
\displaystyle  \nonumber(ii)\,\,\, \g_{n+1,2}:[e_1,e_{n+1}]=e_1+(2-n)e_3,[e_2,e_{n+1}]=2(3-n)e_2, [e_3,e_{n+1}]=(3-n)e_3,\\
\displaystyle [e_4,e_{n+1}]=(2-n)e_2+(4-n)e_4,[e_{i},e_{n+1}]=(i-n)e_{i},[e_{n+1},e_{n+1}]=e_n,\\
\displaystyle [e_{n+1},e_1]=-e_1+(n-2)e_3,[e_{n+1},e_3]=(n-3)e_3,[e_{n+1},e_4]=(4-n)\left(e_2-e_4\right),\\
\displaystyle [e_{n+1},e_i]=(n-i)e_i,(5\leq i\leq n),\\
\displaystyle (iii)\,\,\,\g_{n+1,3}:
[e_1,e_{n+1}]=e_3,[e_2,e_{n+1}]=2e_2,[e_4,e_{n+1}]=e_2,[e_{i},e_{n+1}]=e_{i}+\epsilon e_{i+2}+\\
\displaystyle \sum_{k=i+3}^n{b_{k-i-2}e_k},[e_{n+1},e_1]=-e_3,[e_{n+1},e_4]=e_2,[e_{n+1},e_i]=-e_i-\epsilon e_{i+2}-\sum_{k=i+3}^n{b_{k-i-2}e_k},\\
\displaystyle (\epsilon=0,1,3\leq i\leq n),\\
\displaystyle \nonumber(iv)\,\,\,\g_{n+1,4}: [e_1,e_{n+1}]=e_1-e_3,
[e_3,e_{n+1}]=de_2,[e_4,e_{n+1}]=e_4-e_2,[e_{i},e_{n+1}]=(i-3)e_{i},\\
\displaystyle 
[e_{n+1},e_{n+1}]=\epsilon e_2,[e_{n+1},e_1]=-e_1+\left(d+f\right)e_2+e_3,[e_{n+1},e_3]=fe_2, [e_{n+1},e_4]=e_2-e_4,\\
\displaystyle [e_{n+1},e_i]=(3-i)e_i,(5\leq i\leq n;\epsilon=0,1; if\,\,\epsilon=0,\,\,then\,\,d^2+f^2\neq0),\\  
\displaystyle \nonumber(v)\,\,\,\g_{5,5}: [e_1,e_{5}]=ae_1+(b-a+1)e_3,[e_2,e_{5}]=2(b+1)e_2,
[e_3,e_{5}]=e_1+be_3,\\
\displaystyle [e_4,e_{5}]=(b-a+2)e_2+(a+b)e_4,[e_{5},e_1]=-ae_1+(a-b-1)e_3, [e_{5},e_3]=-e_1-be_3,\\
\displaystyle [e_{5},e_4]=(a+b)(e_2-e_4),(if\,\,b=-1,then\,\,a\neq1),\\
\displaystyle \nonumber(vi)\,\,\g_{5,6}: [e_1,e_{5}]=ae_1+(1-2a)e_3,[e_2,e_{5}]=2(1-a)e_2,
[e_3,e_{5}]=e_1-ae_3,\\
\displaystyle [e_4,e_{5}]=2(1-a)e_2,[e_{5},e_{5}]=e_4,[e_{5},e_1]=-ae_1+(2a-1)e_3,[e_{5},e_3]=ae_3-e_1, (a\neq1),\\
\displaystyle \nonumber(vii)\,\,\g_{5,7}:[e_1,e_{5}]=a(e_1-e_3),
[e_3,e_{5}]=e_1+fe_2-e_3,[e_4,e_{5}]=(1-a)\left(e_2-e_4\right),\\
\displaystyle [e_{5},e_{5}]=\epsilon e_2,[e_{5},e_1]=-ae_1+\left(d+f\right)e_2+ae_3,[e_{5},e_3]=-e_1+de_2+e_3,\\
\displaystyle [e_{5},e_4]=(a-1)\left(e_2-e_4\right),(\epsilon=0,1;if\,\,\epsilon=0,then\,\,d^2+f^2\neq0;a\neq1),\\
\displaystyle  \nonumber(viii)\,\g_{5,8}: [e_1,e_{5}]=ce_2,
[e_3,e_{5}]=e_1-e_3,[e_4,e_{5}]=e_2-e_4,[e_{5},e_{5}]=\epsilon e_2,\\
\displaystyle [e_{5},e_1]=\left(c+d\right)e_2,[e_{5},e_3]=-e_1+\left(d+2c\right)e_2+e_3,[e_{5},e_4]=e_4-e_2,(c\neq0,\epsilon=0,1).
\end{array} 
\end{equation} 
\end{theorem}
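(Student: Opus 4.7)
The plan is to take each of the eleven reduced families produced by Theorem~\ref{TheoremRL4Absorption} and apply a further change of basis that preserves the nilradical $\mathcal{L}^4$ in order to normalize the surviving continuous parameters. The allowed transformations form the automorphism group $\mathrm{Aut}(\mathcal{L}^4)$. Because the nilradical brackets $[e_1,e_1]=e_2$, $[e_3,e_3]=e_2$, $[e_1,e_3]=2e_2-e_4$, $[e_i,e_1]=e_{i+1}$ and $[e_1,e_j]=-e_{j+1}$ must be preserved, the relevant automorphisms are generated by a common scaling $e_1\mapsto \alpha e_1$, $e_3\mapsto \alpha e_3$ (forced by the quadratic relations), the induced scalings $e_k\mapsto \alpha^{k-1}e_k$ for $k\geq 2$, a few shears such as $e_1\mapsto e_1+\beta e_3$ and $e_3\mapsto e_3+\gamma e_1$ (compatible only under sign/coefficient constraints forced by the $e_4$--relation), and the freedom to add central elements ($e_2$ and $e_n$) to $e_{n+1}$. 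Finally $e_{n+1}$ itself may be rescaled.

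The execution then goes case by case. In case~(1) I would rescale $e_{n+1}\mapsto a^{-1}e_{n+1}$ to normalize $a=1$, then use the shears in $e_1,e_3$ and additions to $e_{n+1}$ of the central $e_2$ to kill $b_{2,1}$, $a_{2,3}$ and the tail $a_{k,3}$, leaving exactly the one-parameter family $\g_{n+1,1}$ in the renamed parameter $a$. Case~(2) is analogous but the condition $b=(3-n)a$ blocks the absorption of the center-valued term $a_{n,n+1}e_n$ in $[e_{n+1},e_{n+1}]$; one then rescales to fix the coefficient to $1$, producing $\g_{n+1,2}$. In case~(4) I would normalize $a=1$, then show that only the pair $(d,f)$ coming from the constants on $e_2$ in $[e_3,e_{n+1}]$ and $[e_{n+1},e_3]$ together with the discrete invariant $\epsilon\in\{0,1\}$ arising from the central term $a_{2,n+1}e_2$ survive, and that indecomposability rules out $(d,f)=(0,0)$ when $\epsilon=0$, giving $\g_{n+1,4}$. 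Case~(3) is the subtlest of the general-$n$ families: with $a=0$ the shears in $e_1$ lose their power to eliminate the $a_{k,3}$, so after scaling $b=1$ one is left with a discrete $\epsilon\in\{0,1\}$ (from the leading off-diagonal term) and a sequence $b_1,\dots,b_{n-i-2}$ of genuine moduli; this is $\g_{n+1,3}$. For cases~(5)--(11), which apply only at $n=4$ because they require the extra coupling $a_{1,3}=c\neq 0$, I would use the enlarged freedom in dimension four to normalize $c$ together with one of $a$ or $b$, eliminate absorbable constants, and verify that the remaining data collapses into the four families $\g_{5,5},\g_{5,6},\g_{5,7},\g_{5,8}$ (with the stated parameter exclusions coming from the degeneracy loci where an eigenvalue of $\r_{e_5}$ vanishes and the algebra decomposes).

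The hard part is twofold. First, for each case I must verify that no further hidden basis change exists that would identify two of the eight families, or that would normalize the continuous parameter in $\g_{n+1,1}$ (or $a,b$ in the $\g_{5,k}$) any further; this amounts to showing that the ratio of eigenvalues of $\r_{e_{n+1}}|_N$ is an isomorphism invariant not removable by any automorphism of $\mathcal{L}^4$ composed with a rescaling of $e_{n+1}$. Second, I have to verify \emph{indecomposability} at each boundary of the parameter region: the exclusions $b\neq-1$ in $\g_{5,5}$, $a\neq 1$ in $\g_{5,6}$ and $\g_{5,7}$, and $c\neq 0$ in $\g_{5,8}$ all arise because at those values the complementary generator splits off as an abelian factor, and such degenerations must be accounted for, not declared isomorphic to the generic member. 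The bookkeeping of which transformations change which structure constant (already visible in the long matrix displays of Theorem~\ref{TheoremRL4Absorption}) is the calculational bottleneck; once carried out consistently, the eight listed algebras exhaust the classification.
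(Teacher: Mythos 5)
Your overall strategy coincides with the paper's: go case by case through the reduced families of Theorem~\ref{TheoremRL4Absorption}, apply nilradical-preserving changes of basis (shears of $e_1,e_3$ by central elements, the substitutions $e_i\mapsto e_i-\tfrac{a_{k-i+3,3}}{(k-i)a}e_k$, additions of $e_2,e_n$ to $e_{n+1}$, and a rescaling of $e_{n+1}$) to normalize the surviving parameters, and track which families fold into which. Your structural observations are correct: in case (2) the relation $b=(3-n)a$ is exactly what prevents absorbing $a_{n,n+1}e_n$, in case (3) the vanishing of $a$ is what leaves the moduli $\epsilon,b_1,\dots,b_{n-5}$, and in case (4) only $(d,f,\epsilon)$ survive. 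Two points need correction, one of them substantive. First, a bookkeeping slip: the absorption theorem leaves ten families, not eleven (case (8) of Theorem~\ref{TheoremRL4} was already discarded because its outer derivation is nilpotent). Second, and more importantly, your explanation of the parameter exclusions is wrong: for $\g_{5,5}$ with $b=-1,a=1$, for $\g_{5,6}$ and $\g_{5,7}$ with $a=1$, the adjoined generator does \emph{not} split off as an abelian direct factor (e.g.\ in $\g_{5,6}$ one still has $[e_5,e_5]=e_4\neq0$); rather, at these values $\r_{e_5}$ restricted to the nilradical becomes nilpotent, so the whole extension is nilpotent and $\mathcal{L}^4$ is no longer its nilradical — this is a violation of nil-independence, not of indecomposability, and the verification you propose would be checking the wrong condition. (The exclusion $c\neq0$ in $\g_{5,8}$ is different again: $c=0$ is not degenerate but simply reproduces $\g_{5,7}$ with $a=0$.) Finally, be aware that the bulk of the paper's proof is precisely the merging you defer to "verification": showing that cases (4), (8), (9), (10) are limiting cases of the families produced by (1), (6), (5), (7) respectively, which is what collapses ten families to eight; a complete write-up must exhibit those identifications explicitly.
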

\vskip 20pt    
\begin{proof}
One applies the change of basis transformations keeping the nilradical $\mathcal{L}^4$ given in $(\ref{L4})$ unchanged.
\begin{enumerate}[noitemsep, topsep=1pt]
\allowdisplaybreaks
\item[(1)] We have the right (a derivation) and the left (not a derivation) multiplication operators restricted to the nilradical given below:
$$\r_{e_{n+1}}=\left[\begin{smallmatrix}
 a & 0 & 0 & 0&0&0&\cdots && 0&0 & 0\\
  \mathcal{A}_{2,1} & 2b & a_{2,3}& b-a &0&0 & \cdots &&0  & 0& 0\\
  b-a & 0 & b & 0 & 0&0 &\cdots &&0 &0& 0\\
  0 & 0 &  0 & a+b &0 &0 &\cdots&&0 &0 & 0\\
 0 & 0 & a_{5,3} & 0 & 2a+b  &0&\cdots & &0&0 & 0\\
  0 & 0 &\boldsymbol{\cdot} & a_{5,3} & 0 &3a+b&\cdots & &0&0 & 0\\
    0 & 0 &\boldsymbol{\cdot} & \boldsymbol{\cdot} & \ddots &0&\ddots &&\vdots&\vdots &\vdots\\
  \vdots & \vdots & \vdots &\vdots &  &\ddots&\ddots &\ddots &\vdots&\vdots & \vdots\\
 0 & 0 & a_{n-2,3}& a_{n-3,3}& \cdots&\cdots &a_{5,3}&0&(n-5)a+b &0& 0\\
0 & 0 & a_{n-1,3}& a_{n-2,3}& \cdots&\cdots &\boldsymbol{\cdot}&a_{5,3}&0 &(n-4)a+b& 0\\
 0 & 0 & a_{n,3}& a_{n-1,3}& \cdots&\cdots &\boldsymbol{\cdot}&\boldsymbol{\cdot}&a_{5,3} &0& (n-3)a+b
\end{smallmatrix}\right],$$
 $$\L_{e_{n+1}}=\left[\begin{smallmatrix}
 -a & 0 & 0 & 0&0&0&\cdots && 0&0 & 0\\
  b_{2,1} & 0 & a_{2,3}& a+b &0&0 & \cdots &&0  & 0& 0\\
  a-b & 0 & -b & 0 & 0&0 &\cdots &&0 &0& 0\\
  0 & 0 &  0 & -a-b &0 &0 &\cdots&&0 &0 & 0\\
 0 & 0 & -a_{5,3} & 0 & -2a-b  &0&\cdots & &0&0 & 0\\
  0 & 0 &\boldsymbol{\cdot} & -a_{5,3} & 0 &-3a-b&\cdots & &0&0 & 0\\
    0 & 0 &\boldsymbol{\cdot} & \boldsymbol{\cdot} & \ddots &0&\ddots &&\vdots&\vdots &\vdots\\
  \vdots & \vdots & \vdots &\vdots &  &\ddots&\ddots &\ddots &\vdots&\vdots & \vdots\\
 0 & 0 & -a_{n-2,3}& -a_{n-3,3}& \cdots&\cdots &-a_{5,3}&0&(5-n)a-b &0& 0\\
0 & 0 & -a_{n-1,3}& -a_{n-2,3}& \cdots&\cdots &\boldsymbol{\cdot}&-a_{5,3}&0 &(4-n)a-b& 0\\
 0 & 0 & -a_{n,3}& -a_{n-1,3}& \cdots&\cdots &\boldsymbol{\cdot}&\boldsymbol{\cdot}&-a_{5,3} &0& (3-n)a-b
\end{smallmatrix}\right].$$
\begin{itemize}[noitemsep, topsep=0pt]
\allowdisplaybreaks 
\item We apply the transformation $e^{\prime}_1=e_1,e^{\prime}_2=e_2,e^{\prime}_i=e_i-\frac{a_{k-i+3,3}}{(k-i)a}e_k,(3\leq i\leq n-2,
i+2\leq k\leq n,n\geq5),e^{\prime}_{j}=e_{j},(n-1\leq j\leq n+1),$ where $k$ is fixed, renaming all the affected entries back.
This transformation removes $a_{5,3},a_{6,3},...,a_{n,3}$ in $\r_{e_{n+1}}$ and $-a_{5,3},-a_{6,3},...,-a_{n,3}$ in $\L_{e_{n+1}}.$
Besides it introduces the entries in the $(5,1)^{st},(6,1)^{st},...,(n,1)^{st}$ positions in $\r_{e_{n+1}}$ and $\L_{e_{n+1}},$ 
which we set to be $a_{5,1},a_{6,1},...,a_{n,1}$ and $-a_{5,1},-a_{6,1},...,-a_{n,1},$ respectively.

\noindent $(I)$ Suppose $b\neq\frac{a}{2}.$
\item The transformation $e^{\prime}_1=e_1+\frac{1}{a-2b}\left(\mathcal{A}_{2,1}+\frac{(b-a)a_{2,3}}{b}\right)e_2,e^{\prime}_2=e_2,
e^{\prime}_3=e_3-\frac{a_{2,3}}{b}e_2,e^{\prime}_{i}=e_{i},
(4\leq i\leq n,n\geq4),e^{\prime}_{n+1}=e_{n+1}-a_{5,1}e_2+\sum_{k=4}^{n-1}a_{k+1,1}e_{k}$
removes $\mathcal{A}_{2,1}$ and $b_{2,1}$ from the $(2,1)^{st}$ positions in $\r_{e_{n+1}}$ and $\L_{e_{n+1}},$
respectively. It also removes $a_{2,3}$
from the $(2,3)^{rd}$ positions in $\r_{e_{n+1}}$ and $\L_{e_{n+1}}$ as well as 
$a_{k+1,1}$ in $\r_{e_{n+1}}$ and $-a_{k+1,1}$ in $\L_{e_{n+1}}$ from the entries in the $(k+1,1)^{st}$
positions, where $(4\leq k\leq n-1)$.
\begin{remark}\label{Remark{a_{5,1}}}
If $n=4,$ then $a_{5,1}=0$ and the same is in case $(II).$
\end{remark}
\item Then we scale $a$ to unity applying the transformation $e^{\prime}_i=e_i,(1\leq i\leq n,n\geq4),e^{\prime}_{n+1}=\frac{e_{n+1}}{a}.$ 
Renaming $\frac{b}{a}$ by $b,$ we obtain a continuous family of Leibniz algebras:
\begin{equation}
\left\{
\begin{array}{l}
\displaystyle   [e_1,e_{n+1}]=e_1+(b-1)e_3,[e_2,e_{n+1}]=2be_2,[e_3,e_{n+1}]=be_3,\\
\displaystyle [e_4,e_{n+1}]=(b-1)e_2+(b+1)e_4,[e_{i},e_{n+1}]=\left(b+i-3\right)e_{i},\\
\displaystyle  [e_{n+1},e_1]=-e_1-(b-1)e_3,[e_{n+1},e_3]=-be_3,[e_{n+1},e_4]=(b+1)e_2-\\
\displaystyle (b+1)e_4,[e_{n+1},e_i]=\left(3-i-b\right)e_i,(5\leq i\leq n),\\
\displaystyle(b\neq0,b\neq\frac{1}{2},b\neq3-n).
\end{array} 
\right.
\label{g_{n+1,1}}
\end{equation} 
\noindent $(II)$ Suppose $b:=\frac{a}{2}.$ We have that $\mathcal{A}_{2,1}=a_{2,3}.$
\item The transformation $e^{\prime}_1=e_1-\frac{b_{2,1}+a_{2,3}}{a}e_2,e^{\prime}_2=e_2,
e^{\prime}_3=e_3-\frac{2a_{2,3}}{a}e_2,e^{\prime}_{i}=e_{i},
(4\leq i\leq n,n\geq4),e^{\prime}_{n+1}=e_{n+1}-a_{5,1}e_2+\sum_{k=4}^{n-1}a_{k+1,1}e_{k}$
removes $a_{2,3}$ from the $(2,1)^{st},(2,3)^{rd}$ positions in $\r_{e_{n+1}}$
and from the $(2,3)^{rd}$ position in $\L_{e_{n+1}}.$
It also removes $b_{2,1}$ from the $(2,1)^{st}$ position in $\L_{e_{n+1}}$ as well as
$a_{k+1,1}$ in $\r_{e_{n+1}}$ and $-a_{k+1,1}$ in $\L_{e_{n+1}}$ from the entries in the $(k+1,1)^{st}$
positions, where $(4\leq k\leq n-1)$.
\item To scale $a$ to unity, we apply the transformation $e^{\prime}_i=e_i,(1\leq i\leq n,n\geq4),e^{\prime}_{n+1}=\frac{e_{n+1}}{a}$ 
and obtain a limiting case of $(\ref{g_{n+1,1}})$ with $b=\frac{1}{2}$ given below:
\begin{equation}
\left\{
\begin{array}{l}
\displaystyle  \nonumber [e_1,e_{n+1}]=e_1-\frac{e_3}{2},[e_2,e_{n+1}]=e_2,[e_3,e_{n+1}]=\frac{e_3}{2},[e_4,e_{n+1}]=-\frac{e_2}{2}+\frac{3e_4}{2},\\
\displaystyle [e_{i},e_{n+1}]=\left(i-\frac{5}{2}\right)e_{i},[e_{n+1},e_1]=-e_1+\frac{e_3}{2},[e_{n+1},e_3]=-\frac{e_3}{2},\\
\displaystyle [e_{n+1},e_4]=\frac{3}{2}\left(e_2-e_4\right),[e_{n+1},e_i]=\left(\frac{5}{2}-i\right)e_i,(5\leq i\leq n).
\end{array} 
\right.
\end{equation} 
\end{itemize}
\item[(2)] We have the right (a derivation) and the left (not a derivation) multiplication operators restricted to the nilradical are as follows:
$$\r_{e_{n+1}}\left[\begin{smallmatrix}
 a & 0 & 0 & 0&0&0&\cdots && 0&0 & 0\\
 \mathcal{A}_{2,1} & 2(3-n)a & a_{2,3}& (2-n)a &0&0 & \cdots &&0  & 0& 0\\
  (2-n)a & 0 & (3-n)a & 0 & 0&0 &\cdots &&0 &0& 0\\
  0 & 0 &  0 & (4-n)a &0 &0 &\cdots&&0 &0 & 0\\
 0 & 0 & a_{5,3} & 0 & (5-n)a  &0&\cdots & &0&0 & 0\\
  0 & 0 &\boldsymbol{\cdot} & a_{5,3} & 0 &(6-n)a&\cdots & &0&0 & 0\\
    0 & 0 &\boldsymbol{\cdot} & \boldsymbol{\cdot} & \ddots &0&\ddots &&\vdots&\vdots &\vdots\\
  \vdots & \vdots & \vdots &\vdots &  &\ddots&\ddots &\ddots &\vdots&\vdots & \vdots\\
 0 & 0 & a_{n-2,3}& a_{n-3,3}& \cdots&\cdots &a_{5,3}&0&-2a &0& 0\\
0 & 0 & a_{n-1,3}& a_{n-2,3}& \cdots&\cdots &\boldsymbol{\cdot}&a_{5,3}&0 &-a& 0\\
 0 & 0 & a_{n,3}& a_{n-1,3}& \cdots&\cdots &\boldsymbol{\cdot}&\boldsymbol{\cdot}&a_{5,3} &0&0
\end{smallmatrix}\right],$$
 $$\L_{e_{n+1}}=\left[\begin{smallmatrix}
 -a & 0 & 0 & 0&0&0&\cdots && 0&0 & 0\\
  b_{2,1} & 0 & a_{2,3}& (4-n)a &0&0 & \cdots &&0  & 0& 0\\
  (n-2)a & 0 & (n-3)a & 0 & 0&0 &\cdots &&0 &0& 0\\
  0 & 0 &  0 & (n-4)a &0 &0 &\cdots&&0 &0 & 0\\
 0 & 0 & -a_{5,3} & 0 & (n-5)a  &0&\cdots & &0&0 & 0\\
  0 & 0 &\boldsymbol{\cdot} & -a_{5,3} & 0 &(n-6)a&\cdots & &0&0 & 0\\
    0 & 0 &\boldsymbol{\cdot} & \boldsymbol{\cdot} & \ddots &0&\ddots &&\vdots&\vdots &\vdots\\
  \vdots & \vdots & \vdots &\vdots &  &\ddots&\ddots &\ddots &\vdots&\vdots & \vdots\\
 0 & 0 & -a_{n-2,3}& -a_{n-3,3}& \cdots&\cdots &-a_{5,3}&0&2a &0& 0\\
0 & 0 & -a_{n-1,3}& -a_{n-2,3}& \cdots&\cdots &\boldsymbol{\cdot}&-a_{5,3}&0 &a& 0\\
 0 & 0 & -a_{n,3}& -a_{n-1,3}& \cdots&\cdots &\boldsymbol{\cdot}&\boldsymbol{\cdot}&-a_{5,3} &0&0
\end{smallmatrix}\right].$$

\begin{itemize}[noitemsep, topsep=0pt]
\allowdisplaybreaks 
\item We apply the transformation $e^{\prime}_1=e_1,e^{\prime}_2=e_2,e^{\prime}_i=e_i-\frac{a_{k-i+3,3}}{(k-i)a}e_k,(3\leq i\leq n-2,
i+2\leq k\leq n,n\geq5),e^{\prime}_{j}=e_{j},(n-1\leq j\leq n+1),$ where $k$ is fixed, renaming all the affected entries back.
This transformation removes $a_{5,3},a_{6,3},...,a_{n,3}$ in $\r_{e_{n+1}}$ and $-a_{5,3},-a_{6,3},...,-a_{n,3}$ in $\L_{e_{n+1}}.$
Besides it introduces the entries in the $(5,1)^{st},(6,1)^{st},...,(n,1)^{st}$ positions in $\r_{e_{n+1}}$ and $\L_{e_{n+1}},$ 
which we set to be $a_{5,1},a_{6,1},...,a_{n,1}$ and $-a_{5,1},-a_{6,1},...,-a_{n,1},$ respectively.

\item The transformation $e^{\prime}_1=e_1+\frac{1}{(2n-5)a}\left(\mathcal{A}_{2,1}+\frac{(n-2)a_{2,3}}{n-3}\right)e_2,e^{\prime}_2=e_2,
e^{\prime}_3=e_3+\frac{a_{2,3}}{(n-3)a}e_2,e^{\prime}_{i}=e_{i},
(4\leq i\leq n,n\geq4),e^{\prime}_{n+1}=e_{n+1}-a_{5,1}e_2+\sum_{k=4}^{n-1}a_{k+1,1}e_{k}$
removes $\mathcal{A}_{2,1}$ and $b_{2,1}$ from the $(2,1)^{st}$ positions in $\r_{e_{n+1}}$ and $\L_{e_{n+1}},$
respectively. It also removes $a_{2,3}$
from the $(2,3)^{rd}$ positions in $\r_{e_{n+1}}$ and $\L_{e_{n+1}};$ 
$a_{k+1,1}$ and $-a_{k+1,1}$ from the entries in the $(k+1,1)^{st}$
positions in $\r_{e_{n+1}}$ and $\L_{e_{n+1}}$, respectively, where $(4\leq k\leq n-1)$. (See Remark \ref{Remark{a_{5,1}}}).
\item To scale $a$ to unity, we apply the transformation $e^{\prime}_i=e_i,(1\leq i\leq n),e^{\prime}_{n+1}=\frac{e_{n+1}}{a}$ 
renaming the coefficient $\frac{a_{n,n+1}}{a^2}$ in front of $e_n$ in $[e_{n+1},e_{n+1}]$ back by $a_{n,n+1}.$ We obtain a Leibniz algebra:
\begin{equation}
\left\{
\begin{array}{l}
\displaystyle  \nonumber [e_1,e_{n+1}]=e_1+(2-n)e_3,[e_2,e_{n+1}]=2(3-n)e_2, [e_3,e_{n+1}]=(3-n)e_3,\\
\displaystyle [e_4,e_{n+1}]=(2-n)e_2+(4-n)e_4,[e_{i},e_{n+1}]=(i-n)e_{i},[e_{n+1},e_{n+1}]=a_{n,n+1}e_n,\\
\displaystyle [e_{n+1},e_1]=-e_1+(n-2)e_3,[e_{n+1},e_3]=(n-3)e_3,[e_{n+1},e_4]=(4-n)\left(e_2-e_4\right),\\
\displaystyle [e_{n+1},e_i]=(n-i)e_i,(5\leq i\leq n).
\end{array} 
\right.
\end{equation} 
 \end{itemize}
If $a_{n,n+1}=0,$ then we have a limiting case of (\ref{g_{n+1,1}}) with $b=3-n$. If $a_{n,n+1}\neq0,$
then $a_{n,n+1}=re^{i\phi}$ and we apply the transformation
$e^{\prime}_j=\left(re^{i\phi}\right)^{\frac{j}{n-2}}e_j,(1\leq j\leq 2),e^{\prime}_k=\left(re^{i\phi}\right)^{\frac{k-2}{n-2}}e_k,(3\leq k\leq n),
e^{\prime}_{n+1}=e_{n+1}$
to scale $a_{n,n+1}$ to $1$. We have the algebra $\g_{n+1,2}$ given below:
\begin{equation}
\left\{
\begin{array}{l}
\displaystyle  \nonumber [e_1,e_{n+1}]=e_1+(2-n)e_3,[e_2,e_{n+1}]=2(3-n)e_2, [e_3,e_{n+1}]=(3-n)e_3,\\
\displaystyle [e_4,e_{n+1}]=(2-n)e_2+(4-n)e_4,[e_{i},e_{n+1}]=(i-n)e_{i},[e_{n+1},e_{n+1}]=e_n,\\
\displaystyle [e_{n+1},e_1]=-e_1+(n-2)e_3,[e_{n+1},e_3]=(n-3)e_3,[e_{n+1},e_4]=(4-n)\left(e_2-e_4\right),\\
\displaystyle [e_{n+1},e_i]=(n-i)e_i,(5\leq i\leq n).
\end{array} 
\label{g_{n+1,2}}
\right.
\end{equation} 
\item[(3)] We have the right (a derivation) and the left (not a derivation) multiplication operators restricted to the nilradical are as follows:
$$\r_{e_{n+1}}=\left[\begin{smallmatrix}
 0 & 0 & 0 & 0&0&0&\cdots && 0&0 & 0\\
  a_{2,1} & 2b & a_{2,3}& b &0&0 & \cdots &&0  & 0& 0\\
  b & 0 & b & 0 & 0&0 &\cdots &&0 &0& 0\\
  0 & 0 &  0 & b &0 &0 &\cdots&&0 &0 & 0\\
 0 & 0 & a_{5,3} & 0 & b  &0&\cdots & &0&0 & 0\\
  0 & 0 &\boldsymbol{\cdot} & a_{5,3} & 0 &b&\cdots & &0&0 & 0\\
    0 & 0 &\boldsymbol{\cdot} & \boldsymbol{\cdot} & \ddots &0&\ddots &&\vdots&\vdots &\vdots\\
  \vdots & \vdots & \vdots &\vdots &  &\ddots&\ddots &\ddots &\vdots&\vdots & \vdots\\
 0 & 0 & a_{n-2,3}& a_{n-3,3}& \cdots&\cdots &a_{5,3}&0&b &0& 0\\
0 & 0 & a_{n-1,3}& a_{n-2,3}& \cdots&\cdots &\boldsymbol{\cdot}&a_{5,3}&0 &b& 0\\
 0 & 0 & a_{n,3}& a_{n-1,3}& \cdots&\cdots &\boldsymbol{\cdot}&\boldsymbol{\cdot}&a_{5,3} &0& b
\end{smallmatrix}\right],$$
 $$\L_{e_{n+1}}=\left[\begin{smallmatrix}
 0 & 0 & 0 & 0&0&0&\cdots && 0&0 & 0\\
  a_{2,3} & 0 & a_{2,3}& b &0&0 & \cdots &&0  & 0& 0\\
  -b & 0 & -b & 0 & 0&0 &\cdots &&0 &0& 0\\
  0 & 0 &  0 & -b &0 &0 &\cdots&&0 &0 & 0\\
 0 & 0 & -a_{5,3} & 0 & -b  &0&\cdots & &0&0 & 0\\
  0 & 0 &\boldsymbol{\cdot} & -a_{5,3} & 0 &-b&\cdots & &0&0 & 0\\
    0 & 0 &\boldsymbol{\cdot} & \boldsymbol{\cdot} & \ddots &0&\ddots &&\vdots&\vdots &\vdots\\
  \vdots & \vdots & \vdots &\vdots &  &\ddots&\ddots &\ddots &\vdots&\vdots & \vdots\\
 0 & 0 & -a_{n-2,3}& -a_{n-3,3}& \cdots&\cdots &-a_{5,3}&0&-b &0& 0\\
0 & 0 & -a_{n-1,3}& -a_{n-2,3}& \cdots&\cdots &\boldsymbol{\cdot}&-a_{5,3}&0 &-b& 0\\
 0 & 0 & -a_{n,3}& -a_{n-1,3}& \cdots&\cdots &\boldsymbol{\cdot}&\boldsymbol{\cdot}&-a_{5,3} &0&-b
\end{smallmatrix}\right].$$
\begin{itemize}[noitemsep, topsep=0pt]
\allowdisplaybreaks 
\item Applying the transformation $e^{\prime}_1=e_1-\frac{a_{2,1}+a_{2,3}}{2b}e_2,e^{\prime}_2=e_2,
e^{\prime}_3=e_3-\frac{a_{2,3}}{b}e_2,e^{\prime}_{i}=e_{i},
(4\leq i\leq n+1),$
we
remove $a_{2,1}$ from the $(2,1)^{st}$ position in $\r_{e_{n+1}}$ and 
$a_{2,3}$ from the $(2,1)^{st},$ the $(2,3)^{rd}$ positions in
$\L_{e_{n+1}}$ and from the $(2,3)^{rd}$ position in $\r_{e_{n+1}}$ keeping other entries unchanged.

\item To scale $b$ to unity, we apply the transformation $e^{\prime}_i=e_i,(1\leq i\leq n),e^{\prime}_{n+1}=\frac{e_{n+1}}{b}.$ 
Then we rename $\frac{a_{5,3}}{b},\frac{a_{6,3}}{b},...,\frac{a_{n,3}}{b}$ by $a_{5,3},a_{6,3},...,a_{n,3},$ respectively.
We obtain a Leibniz algebra
\begin{equation}
\left\{
\begin{array}{l}
\displaystyle  \nonumber [e_1,e_{n+1}]=e_3,[e_2,e_{n+1}]=2e_2,[e_4,e_{n+1}]=e_2,[e_{i},e_{n+1}]=e_{i}+\sum_{k=i+2}^n{a_{k-i+3,3}e_k},\\
\displaystyle [e_{n+1},e_1]=-e_3,[e_{n+1},e_4]=e_2,[e_{n+1},e_i]=-e_i-\sum_{k=i+2}^n{a_{k-i+3,3}e_k},(3\leq i\leq n).
\end{array} 
\right.
\end{equation}
 If $a_{5,3}\neq0,(n\geq5),$ then $a_{5,3}=re^{i\phi}$ and applying
the transformation $e^{\prime}_j=\left(re^{i\phi}\right)^{\frac{j}{2}}e_j,(1\leq j\leq 2),e^{\prime}_k=\left(re^{i\phi}\right)^{\frac{k-2}{2}}e_k,
(3\leq k\leq n),e^{\prime}_{n+1}=e_{n+1},$ we scale $a_{5,3}$ to $1.$ We also rename all the affected entries back
and then we rename $a_{6,3},...,a_{n,3}$ by $b_1,...,b_{n-5},$ respectively.
We combine with the case when $a_{5,3}=0$ and obtain a Leibniz algebra $\g_{n+1,3}$ given below:
\begin{equation}
\left\{
\begin{array}{l}
\displaystyle  \nonumber [e_1,e_{n+1}]=e_3,[e_2,e_{n+1}]=2e_2,[e_4,e_{n+1}]=e_2,[e_{i},e_{n+1}]=e_{i}+\epsilon e_{i+2}+\sum_{k=i+3}^n{b_{k-i-2}e_k},\\
\displaystyle [e_{n+1},e_1]=-e_3,[e_{n+1},e_4]=e_2,[e_{n+1},e_i]=-e_i-\epsilon e_{i+2}-\sum_{k=i+3}^n{b_{k-i-2}e_k},\\
\displaystyle (\epsilon=0,1,3\leq i\leq n).
\end{array} 
\right.
\end{equation}
\begin{remark}
If $n=4,$ then $\epsilon=0.$
\end{remark}
\end{itemize}
\item[(4)] We have the right (a derivation) and the left (not a derivation) multiplication operators restricted to the nilradical are as follows:
$$\r_{e_{n+1}}=\left[\begin{smallmatrix}
 a & 0 & 0 & 0&0&0&\cdots && 0&0 & 0\\
 a_{2,3}-b_{2,1}+b_{2,3} & 0 & a_{2,3}& -a &0&0 & \cdots &&0  & 0& 0\\
  -a & 0 & 0 & 0 & 0&0 &\cdots &&0 &0& 0\\
  0 & 0 &  0 & a &0 &0 &\cdots&&0 &0 & 0\\
 0 & 0 & a_{5,3} & 0 & 2a  &0&\cdots & &0&0 & 0\\
  0 & 0 &\boldsymbol{\cdot} & a_{5,3} & 0 &3a&\cdots & &0&0 & 0\\
    0 & 0 &\boldsymbol{\cdot} & \boldsymbol{\cdot} & \ddots &0&\ddots &&\vdots&\vdots &\vdots\\
  \vdots & \vdots & \vdots &\vdots &  &\ddots&\ddots &\ddots &\vdots&\vdots & \vdots\\
 0 & 0 & a_{n-2,3}& a_{n-3,3}& \cdots&\cdots &a_{5,3}&0&(n-5)a &0& 0\\
0 & 0 & a_{n-1,3}& a_{n-2,3}& \cdots&\cdots &\boldsymbol{\cdot}&a_{5,3}&0 &(n-4)a& 0\\
 0 & 0 & a_{n,3}& a_{n-1,3}& \cdots&\cdots &\boldsymbol{\cdot}&\boldsymbol{\cdot}&a_{5,3} &0& (n-3)a
\end{smallmatrix}\right],$$
 $$\L_{e_{n+1}}=\left[\begin{smallmatrix}
 -a & 0 & 0 & 0&0&0&\cdots && 0&0 & 0\\
  b_{2,1}& 0 & b_{2,3}& a &0&0 & \cdots &&0  & 0& 0\\
  a & 0 & 0 & 0 & 0&0 &\cdots &&0 &0& 0\\
  0 & 0 &  0 & -a &0 &0 &\cdots&&0 &0 & 0\\
 0 & 0 & -a_{5,3} & 0 & -2a  &0&\cdots & &0&0 & 0\\
  0 & 0 &\boldsymbol{\cdot} & -a_{5,3} & 0 &-3a&\cdots & &0&0 & 0\\
    0 & 0 &\boldsymbol{\cdot} & \boldsymbol{\cdot} & \ddots &0&\ddots &&\vdots&\vdots &\vdots\\
  \vdots & \vdots & \vdots &\vdots &  &\ddots&\ddots &\ddots &\vdots&\vdots & \vdots\\
 0 & 0 & -a_{n-2,3}& -a_{n-3,3}& \cdots&\cdots &-a_{5,3}&0&(5-n)a &0& 0\\
0 & 0 & -a_{n-1,3}& -a_{n-2,3}& \cdots&\cdots &\boldsymbol{\cdot}&-a_{5,3}&0 &(4-n)a& 0\\
 0 & 0 & -a_{n,3}& -a_{n-1,3}& \cdots&\cdots &\boldsymbol{\cdot}&\boldsymbol{\cdot}&-a_{5,3} &0& (3-n)a
\end{smallmatrix}\right].$$
\begin{itemize}[noitemsep, topsep=0pt]
\allowdisplaybreaks 
\item We apply the transformation $e^{\prime}_1=e_1,e^{\prime}_2=e_2,e^{\prime}_i=e_i-\frac{a_{k-i+3,3}}{(k-i)a}e_k,(3\leq i\leq n-2,
i+2\leq k\leq n,n\geq5),e^{\prime}_{j}=e_{j},(n-1\leq j\leq n+1),$ where $k$ is fixed, renaming all the affected entries back.
This transformation removes $a_{5,3},a_{6,3},...,a_{n,3}$ in $\r_{e_{n+1}}$ and $-a_{5,3},-a_{6,3},...,-a_{n,3}$ in $\L_{e_{n+1}}.$
Besides it introduces the entries in the $(5,1)^{st},(6,1)^{st},...,(n,1)^{st}$ positions in $\r_{e_{n+1}}$ and $\L_{e_{n+1}},$ 
which we set to be $a_{5,1},a_{6,1},...,a_{n,1}$ and $-a_{5,1},-a_{6,1},...,-a_{n,1},$ respectively.

\item The transformation $e^{\prime}_1=e_1+\frac{a_{2,3}-b_{2,1}+b_{2,3}}{a}e_2,e^{\prime}_{i}=e_{i},
(2\leq i\leq n,n\geq4),e^{\prime}_{n+1}=e_{n+1}+\sum_{k=4}^{n-1}a_{k+1,1}e_{k}$
removes $a_{2,3}-b_{2,1}+b_{2,3}$ from the $(2,1)^{st}$ position in $\r_{e_{n+1}}$.
It changes the entry in the $(2,1)^{st}$ position in $\L_{e_{n+1}}$ to $a_{2,3}+b_{2,3}.$
It also removes
$a_{k+1,1}$ and $-a_{k+1,1}$ from the entries in the $(k+1,1)^{st}$
positions, where $(4\leq k\leq n-1)$ in $\r_{e_{n+1}}$ and $\L_{e_{n+1}},$ respectively.

\item We assign $a_{2,3}:=d$ and $b_{2,3}:=f$
and
then we scale $a$ to unity applying the transformation $e^{\prime}_i=e_i,(1\leq i\leq n,n\geq4),e^{\prime}_{n+1}=\frac{e_{n+1}}{a}.$ 
Renaming $\frac{d}{a},\frac{f}{a}$ and $\frac{a_{2,n+1}}{a^2}$ by $d,f$ and $a_{2,n+1},$ respectively, we obtain a right and left Leibniz algebra:
\begin{equation}
\left\{
\begin{array}{l}
\displaystyle  \nonumber [e_1,e_{n+1}]=e_1-e_3,
[e_3,e_{n+1}]=de_2,[e_4,e_{n+1}]=e_4-e_2,[e_{i},e_{n+1}]=(i-3)e_{i},\\
\displaystyle 
[e_{n+1},e_{n+1}]=a_{2,n+1}e_2,[e_{n+1},e_1]=-e_1+\left(d+f\right)e_2+e_3,[e_{n+1},e_3]=fe_2,\\
\displaystyle [e_{n+1},e_4]=e_2-e_4,[e_{n+1},e_i]=(3-i)e_i,(5\leq i\leq n),
\end{array} 
\right.
\end{equation} 
which is a limiting case of (\ref{g_{n+1,1}}) with $b=0,$ when $d=f=a_{2,n+1}=0$. Altogether (\ref{g_{n+1,1}}) and all its limiting
cases after replacing $b$ with $a$ give us a Leibniz algebra $\g_{n+1,1}$ given below:
\begin{equation}
\left\{
\begin{array}{l}
\displaystyle \nonumber  [e_1,e_{n+1}]=e_1+(a-1)e_3,[e_2,e_{n+1}]=2ae_2,[e_3,e_{n+1}]=ae_3,\\
\displaystyle [e_4,e_{n+1}]=(a-1)e_2+(a+1)e_4,[e_{i},e_{n+1}]=\left(a+i-3\right)e_{i},[e_{n+1},e_1]=-e_1-\\
\displaystyle (a-1)e_3,[e_{n+1},e_3]=-ae_3,[e_{n+1},e_4]=(a+1)\left(e_2-e_4\right),[e_{n+1},e_i]=\left(3-i-a\right)e_i,\\
\displaystyle(5\leq i\leq n).
\end{array} 
\right.
\end{equation} 
It remains to consider a continuous family of Leibniz algebras given below
 and scale any nonzero entries as much as possible.
\begin{equation}
\left\{
\begin{array}{l}
\displaystyle  \nonumber [e_1,e_{n+1}]=e_1-e_3,
[e_3,e_{n+1}]=de_2,[e_4,e_{n+1}]=e_4-e_2,[e_{i},e_{n+1}]=(i-3)e_{i},\\
\displaystyle 
[e_{n+1},e_{n+1}]=a_{2,n+1}e_2,[e_{n+1},e_1]=-e_1+\left(d+f\right)e_2+e_3,[e_{n+1},e_3]=fe_2,\\
\displaystyle [e_{n+1},e_4]=e_2-e_4,[e_{n+1},e_i]=(3-i)e_i,(a_{2,n+1}^2+d^2+f^2\neq0,5\leq i\leq n),
\end{array} 
\right.
\end{equation} 
 If $a_{2,n+1}\neq0,$ then $a_{2,n+1}=re^{i\phi}$ and applying
the transformation $e^{\prime}_j=\left(re^{i\phi}\right)^{\frac{j}{2}}e_j,(1\leq j\leq 2),e^{\prime}_k=\left(re^{i\phi}\right)^{\frac{k-2}{2}}e_k,$
$(3\leq k\leq n,n\geq4),e^{\prime}_{n+1}=e_{n+1},$ we scale it to $1.$ We also rename all the affected entries back.
Then we combine with the case when $a_{2,n+1}=0$ and obtain a right and left Leibniz algebra $\g_{n+1,4}$ given below:
\begin{equation}
\left\{
\begin{array}{l}
\displaystyle  \nonumber [e_1,e_{n+1}]=e_1-e_3,
[e_3,e_{n+1}]=de_2,[e_4,e_{n+1}]=e_4-e_2,[e_{i},e_{n+1}]=(i-3)e_{i},\\
\displaystyle 
[e_{n+1},e_{n+1}]=\epsilon e_2,[e_{n+1},e_1]=-e_1+\left(d+f\right)e_2+e_3,[e_{n+1},e_3]=fe_2,\\
\displaystyle  [e_{n+1},e_4]=e_2-e_4,[e_{n+1},e_i]=(3-i)e_i,\\
\displaystyle(5\leq i\leq n;\epsilon=0,1; if\,\,\epsilon=0,\,\,then\,\,d^2+f^2\neq0).
\end{array} 
\right.
\end{equation} 
\end{itemize}
\item[(5)] Applying the transformation
$e^{\prime}_1=e_1+\frac{(b+2c-2a)a_{2,3}+(b+2c)b_{2,3}}{2a(b+c)}e_2,e^{\prime}_2=e_2,$
$e^{\prime}_3=e_3+\frac{(c-2a)a_{2,3}+c\cdot b_{2,3}}{2a(b+c)}e_2,
e^{\prime}_4=e_4,e^{\prime}_5=\frac{e_5}{c}$ and renaming
$\frac{a}{c}$ and $\frac{b}{c}$ by $a$ and $b,$ respectively, we obtain a continuous family of Leibniz algebras given below:
\begin{equation}
\left\{
\begin{array}{l}
\displaystyle  [e_1,e_{5}]=ae_1+(b-a+1)e_3,[e_2,e_{5}]=2(b+1)e_2,
[e_3,e_{5}]=e_1+be_3,\\
\displaystyle [e_4,e_{5}]=(b-a+2)e_2+(a+b)e_4,[e_{5},e_1]=-ae_1+(a-b-1)e_3,\\
\displaystyle [e_{5},e_3]=-e_1-be_3,[e_{5},e_4]=(a+b)(e_2-e_4),(b\neq-a,a\neq0,b\neq-1).
\end{array} 
\right.
\label{g_{5,5}}
\end{equation} 
\item[(6)] We
apply the transformation
$e^{\prime}_1=e_1+\frac{(3a-2c)a_{2,3}+(a-2c)b_{2,3}}{2a(a-c)}e_2,
e^{\prime}_2=e_2,$
$e^{\prime}_3=e_3+\frac{(2a-c)a_{2,3}-c\cdot b_{2,3}}{2a(a-c)}e_2,e^{\prime}_4=e_4,e^{\prime}_5=\frac{e_5}{c}$
and rename
$\frac{a}{c}$ and $\frac{a_{4,5}}{c^2}$ by $a$ and $a_{4,5},$ respectively,
to obtain a Leibniz algebra given below:
\begin{equation}
\left\{
\begin{array}{l}
\displaystyle  [e_1,e_{5}]=ae_1+(1-2a)e_3,[e_2,e_{5}]=2(1-a)e_2,
[e_3,e_{5}]=e_1-ae_3,\\
\displaystyle [e_4,e_{5}]=2(1-a)e_2,[e_{5},e_{5}]=a_{4,5}e_4,[e_{5},e_1]=-ae_1+(2a-1)e_3,\\
\displaystyle [e_{5},e_3]=ae_3-e_1,(a\neq0,a\neq1),
\end{array} 
\label{general1}
\right.
\end{equation} 
which is a limiting case of $(\ref{g_{5,5}})$ with $b:=-a$ if $a_{4,5}=0.$
If $a_{4,5}\neq0,$ then $a_{4,5}=re^{i\phi}$. To scale $a_{4,5}$ to $1,$
we apply the transformation
$e^{\prime}_1=\sqrt{r}e^{i\frac{\phi}{2}}e_1,e^{\prime}_2=re^{i\phi}e_2,e^{\prime}_3=\sqrt{r}e^{i\frac{\phi}{2}}e_3,e^{\prime}_4=re^{i\phi}e_4,e^{\prime}_{5}=e_{5}$
and obtain a Leibniz algebra given below:
\begin{equation}
\left\{
\begin{array}{l}
\displaystyle  [e_1,e_{5}]=ae_1+(1-2a)e_3,[e_2,e_{5}]=2(1-a)e_2,
[e_3,e_{5}]=e_1-ae_3,\\
\displaystyle [e_4,e_{5}]=2(1-a)e_2,[e_{5},e_{5}]=e_4,[e_{5},e_1]=-ae_1+(2a-1)e_3,\\
\displaystyle [e_{5},e_3]=-e_1+ae_3,(a\neq0,a\neq1).
\end{array} 
\right.
\label{g_{5,6}}
\end{equation} 
\item[(7)] We apply the transformation
$e^{\prime}_1=e_1+\frac{a_{2,3}-b_{2,1}+b_{2,3}}{a}e_2,e^{\prime}_i=e_i,(2\leq i\leq 5)$
and assign $d:=\frac{a\cdot b_{2,3}+c\left(a_{2,3}-b_{2,1}+b_{2,3}\right)}{a},$
$f:=\frac{a\cdot a_{2,3}-c\left(a_{2,3}-b_{2,1}+b_{2,3}\right)}{a}$ to obtain a continuous family
of Leibniz algebras:
\begin{equation}
\left\{
\begin{array}{l}
\displaystyle  \nonumber [e_1,e_{5}]=a(e_1-e_3),
[e_3,e_{5}]=ce_1+fe_2-ce_3,[e_4,e_{5}]=(c-a)(e_2-e_4),[e_{5},e_{5}]=a_{2,5}e_2,\\
\displaystyle [e_{5},e_1]=-ae_1+\left(d+f\right)e_2+ae_3,[e_{5},e_3]=-ce_1+de_2+ce_3,[e_{5},e_4]=(a-c)(e_2-e_4).
\end{array} 
\right.
\end{equation} 
Then we continue with the transformation $e^{\prime}_i=e_i,(1\leq i\leq 4),e^{\prime}_5=\frac{e_5}{c}$ renaming
$\frac{d}{c},\frac{f}{c},\frac{a}{c}$ and $\frac{a_{2,5}}{c^2}$
by $d,f,a$ and $a_{2,5},$ respectively,
to obtain a Leibniz algebra:
\begin{equation}
\left\{
\begin{array}{l}
\displaystyle   [e_1,e_{5}]=a(e_1-e_3),
[e_3,e_{5}]=e_1+fe_2-e_3,[e_4,e_{5}]=(1-a)\left(e_2-e_4\right),\\
\displaystyle [e_{5},e_{5}]=a_{2,5}e_2,[e_{5},e_1]=-ae_1+\left(d+f\right)e_2+ae_3,[e_{5},e_3]=-e_1+de_2+e_3,\\
\displaystyle [e_{5},e_4]=(a-1)\left(e_2-e_4\right),(a\neq0,a\neq 1).
\end{array} 
\right.
\label{general}
\end{equation} 
If $d=f=a_{2,5}=0,$ then we have a limiting case of $(\ref{g_{5,5}})$ with $b=-1.$
If $a_{2,5}\neq0,$ then we apply
the same transformation as we did to scale $a_{4,5}$ to $1.$ We also rename all the affected entries back.
Then we combine with the case when $a_{2,5}=0$ and obtain a Leibniz algebra given below:
\begin{equation}
\left\{
\begin{array}{l}
\displaystyle  [e_1,e_{5}]=a(e_1-e_3),
[e_3,e_{5}]=e_1+fe_2-e_3,[e_4,e_{5}]=(1-a)\left(e_2-e_4\right),\\
\displaystyle [e_{5},e_{5}]=\epsilon e_2,[e_{5},e_1]=-ae_1+\left(d+f\right)e_2+ae_3,[e_{5},e_3]=-e_1+de_2+e_3,\\
\displaystyle [e_{5},e_4]=(a-1)\left(e_2-e_4\right),(\epsilon=0,1;if\,\,\epsilon=0,then\,\,d^2+f^2\neq0),\\
\displaystyle (a\neq0,a\neq 1).
\end{array} 
\label{g_{5,7}}
\right.
\end{equation}
\begin{remark}\label{Remark{g_{5,7}}}
We notice by applying the transformation
$e^{\prime}_1=e_1+fe_2,e^{\prime}_i=e_i,(2\leq i\leq 5)$ that this algebra $(\ref{g_{5,7}})$
is isomorphic to
\begin{equation}
\left\{
\begin{array}{l}
\displaystyle \nonumber [e_1,e_{5}]=ae_1-afe_2-ae_3,
[e_3,e_{5}]=e_1-e_3,[e_4,e_{5}]=(1-a)\left(e_2-e_4\right),[e_{5},e_{5}]=\epsilon e_2,\\
\displaystyle [e_{5},e_1]=-ae_1+\left(af+d+f\right)e_2+ae_3,[e_{5},e_3]=-e_1+(d+f)e_2+e_3,\\
\displaystyle [e_{5},e_4]=(a-1)\left(e_2-e_4\right),(\epsilon=0,1;if\,\,\epsilon=0,then\,\,d^2+f^2\neq0),(a\neq0,a\neq 1).
\end{array} 
\right.
\end{equation}
\end{remark} 
\item[(8)] Applying the transformation
$e^{\prime}_1=e_1+\frac{a_{2,3}-2b_{2,1}}{c}e_2,e^{\prime}_2=e_2,
e^{\prime}_3=e_3-\frac{b_{2,1}}{c}e_2,e^{\prime}_4=e_4,e^{\prime}_5=\frac{e_5}{c}$
and renaming $\frac{a_{4,5}}{c^2}$ back by $a_{4,5},$ we obtain a Leibniz algebra:
\begin{equation}
\left\{
\begin{array}{l}
\displaystyle  \nonumber [e_1,e_{5}]=e_3,[e_2,e_{5}]=2e_2,
[e_3,e_{5}]=e_1,[e_4,e_{5}]=2e_2,[e_{5},e_{5}]=a_{4,5}e_4,[e_{5},e_1]=-e_3,\\
\displaystyle [e_{5},e_3]=-e_1,
\end{array} 
\right.
\end{equation} 
which is a limiting case of $(\ref{general1})$ with $a=0$ and at the same time a limiting case of $(\ref{g_{5,5}})$
with $b:=-a$ if $a_{4,5}=0.$ Therefore we only consider the case when $a_{4,5}\neq0.$
One applies the transformation below $(\ref{general1})$ to scale $a_{4,5}$ to $1$ and we have a limiting case of $(\ref{g_{5,6}})$
with $a=0.$ Altogether $(\ref{g_{5,6}})$ and all its limiting cases give us the algebra $g_{5,6}$ in full generality:
\begin{equation}
\left\{
\begin{array}{l}
\displaystyle \nonumber [e_1,e_{5}]=ae_1+(1-2a)e_3,[e_2,e_{5}]=2(1-a)e_2,
[e_3,e_{5}]=e_1-ae_3,[e_4,e_{5}]=2(1-a)e_2,\\
\displaystyle [e_{5},e_{5}]=e_4,[e_{5},e_1]=-ae_1+(2a-1)e_3,[e_{5},e_3]=-e_1+ae_3,
\end{array} 
\right.
\end{equation} 
where $a\neq1,$ otherwise nilpotent.
\item[(9)] If $a=0,b\neq0,b\neq-c,c\neq0,(n=4),$ then we apply the transformation 
$e^{\prime}_1=e_1+\frac{(b+2c)b_{2,3}-(3b+2c)a_{2,3}}{(b+c)^2}e_2,
e^{\prime}_2=e_2,e^{\prime}_3=e_3-\frac{\mathcal{B}_{2,1}}{b+c}e_2,e^{\prime}_4=e_4,
e^{\prime}_5=\frac{e_5}{c}.$ We rename $\frac{b}{c}$ by $b$ and obtain a Leibniz algebra:
\begin{equation}
\left\{
\begin{array}{l}
\displaystyle  \nonumber [e_1,e_{5}]=(b+1)e_3,[e_2,e_{5}]=2(b+1)e_2,
[e_3,e_{5}]=e_1+be_3,[e_4,e_{5}]=(b+2)e_2+be_4,\\
\displaystyle [e_{5},e_1]=(-b-1)e_3,[e_{5},e_3]=-e_1-be_3,[e_{5},e_4]=b(e_2-e_4),(b\neq0,b\neq-1),
\end{array} 
\right.
\end{equation}
which is a limiting case of $(\ref{g_{5,5}})$ with $a=0.$ Altogether $(\ref{g_{5,5}})$ and all its
limiting cases give us the algebra $g_{5,5}$ in full generality:
\begin{equation}
\left\{
\begin{array}{l}
\displaystyle \nonumber  [e_1,e_{5}]=ae_1+(b-a+1)e_3,[e_2,e_{5}]=2(b+1)e_2,
[e_3,e_{5}]=e_1+be_3,\\
\displaystyle [e_4,e_{5}]=(b-a+2)e_2+(a+b)e_4,[e_{5},e_1]=-ae_1+(a-b-1)e_3,[e_{5},e_3]=-e_1-be_3,\\
\displaystyle [e_{5},e_4]=(a+b)(e_2-e_4),
\end{array} 
\right.
\end{equation}
where if $b=-1,$ then $a\neq1,$ otherwise nilpotent.
\item[(10)] We apply the transformation
$e^{\prime}_1=e_1+\frac{a_{2,3}}{c}e_2,e^{\prime}_i=e_i,(2\leq i\leq 4),e^{\prime}_5=\frac{e_5}{c}$
and rename $\frac{a_{2,1}}{c},\frac{a_{2,3}}{c},\frac{b_{2,3}}{c}$ and $\frac{a_{2,5}}{c^2}$ by
$a_{2,1},a_{2,3},b_{2,3}$ and $a_{2,5},$ respectively, to
obtain a Leibniz algebra:
\begin{equation}
\left\{
\begin{array}{l}
\displaystyle  \nonumber [e_1,e_{5}]=a_{2,1}e_2,
[e_3,e_{5}]=e_1-e_3,[e_4,e_{5}]=e_2-e_4,[e_{5},e_{5}]=a_{2,5}e_2,\\
\displaystyle [e_{5},e_1]=\left(a_{2,3}+b_{2,3}-a_{2,1}\right)e_2,[e_{5},e_3]=-e_1+\left(a_{2,3}+b_{2,3}\right)e_2+e_3,[e_{5},e_4]=e_4-e_2.
\end{array} 
\right.
\end{equation} 
We scale $a_{2,5}$ to $1$ if nonzero.
Combining with the case when $a_{2,5}=0$ and renaming all the affected entries back, we have the algebra:
\begin{equation}
\left\{
\begin{array}{l}
\displaystyle  \nonumber [e_1,e_{5}]=a_{2,1}e_2,
[e_3,e_{5}]=e_1-e_3,[e_4,e_{5}]=e_2-e_4,[e_{5},e_{5}]=\epsilon e_2,\\
\displaystyle [e_{5},e_1]=\left(a_{2,3}+b_{2,3}-a_{2,1}\right)e_2,[e_{5},e_3]=-e_1+\left(a_{2,3}+b_{2,3}\right)e_2+e_3,[e_{5},e_4]=e_4-e_2,\\
\displaystyle (\epsilon=0,1).
\end{array} 
\right.
\end{equation}
If $a_{2,1}=0,$ then we obtain a limiting case of $(\ref{general})$ and $(\ref{g_{5,7}})$ with $a=0$ according to Remark \ref{Remark{g_{5,7}}}.
Altogether $(\ref{g_{5,7}})$ and all its limiting cases, give us the algebra $g_{5,7}$ in full generality:
\begin{equation}
\left\{
\begin{array}{l}
\displaystyle \nonumber [e_1,e_{5}]=a(e_1-e_3),
[e_3,e_{5}]=e_1+fe_2-e_3,[e_4,e_{5}]=(1-a)\left(e_2-e_4\right),[e_{5},e_{5}]=\epsilon e_2,\\
\displaystyle [e_{5},e_1]=-ae_1+\left(d+f\right)e_2+ae_3,[e_{5},e_3]=-e_1+de_2+e_3,[e_{5},e_4]=(a-1)\left(e_2-e_4\right),\\
\displaystyle (\epsilon=0,1;if\,\,\epsilon=0,then\,\,d^2+f^2\neq0),
\end{array} 
\right.
\end{equation}
where $a\neq1,$ otherwise nilpotent.

If $c:=a_{2,1}\neq0$ and we assign $d:=a_{2,3}+b_{2,3}-2a_{2,1},$ then
we obtain a Leibniz algebra $g_{5,8}$ given below:
\begin{equation}
\left\{
\begin{array}{l}
\displaystyle  \nonumber [e_1,e_{5}]=ce_2,
[e_3,e_{5}]=e_1-e_3,[e_4,e_{5}]=e_2-e_4,[e_{5},e_{5}]=\epsilon e_2,[e_{5},e_1]=\left(c+d\right)e_2,\\
\displaystyle [e_{5},e_3]=-e_1+\left(d+2c\right)e_2+e_3,[e_{5},e_4]=e_4-e_2,(c\neq0,\epsilon=0,1).
\end{array} 
\right.
\end{equation}
 \end{enumerate}
\end{proof}

\subsubsection{Codimension two and three solvable extensions of $\mathcal{L}^4$}\label{Two&Three}
The non-zero inner derivations of $\mathcal{L}^4,(n\geq4)$ are
given by
 \[
\r_{e_1}=\left[\begin{smallmatrix}
0&0 & 0 & 0 & \cdots & 0 & 0  & 0 \\
1&0 & 0 & 0 & \cdots & 0 & 0  & 0 \\
0&0 & 0 & 0 & \cdots & 0 & 0 & 0 \\
 0&0 & 1 & 0 & \cdots & 0 & 0 & 0 \\
0& 0 & 0 & 1 & \cdots & 0 & 0 & 0 \\
\vdots& \vdots  & \vdots  & \vdots  & \ddots & \vdots & \vdots & \vdots\\
 0& 0 & 0 & 0&\cdots & 1 & 0 &0\\
  0& 0 & 0 & 0&\cdots & 0 & 1 &0
\end{smallmatrix}\right],\r_{e_3}=\left[\begin{smallmatrix}
0&0 & 0 & 0 & \cdots & 0 \\
2&0 & 1 & 0 & \cdots & 0  \\
0&0 & 0 & 0 & \cdots & 0 \\
 -1&0 & 0 & 0 & \cdots & 0\\
0& 0 & 0 & 0 & \cdots & 0\\
\vdots& \vdots  & \vdots  & \vdots  &  & \vdots\\
  0& 0 & 0 & 0&\cdots & 0
\end{smallmatrix}\right],
\r_{e_i}=-E_{i+1,1}=\left[\begin{smallmatrix} 0 & 0&0&\cdots &  0 \\
 0& 0&0&\cdots &  0 \\
  0 & 0&0&\cdots &  0 \\
    0 & 0&0&\cdots &  0 \\
 \vdots &\vdots &\vdots& & \vdots\\
 -1 &0&0& \cdots & 0\\
  \vdots &\vdots &\vdots& & \vdots\\
  \boldsymbol{\cdot} & 0&0&\cdots &  0
 \end{smallmatrix}\right]\,(4\leq i\leq n-1),\] where $E_{i+1,1}$ is the $n\times n$ matrix that has $1$ in the 
$(i+1,1)^{st}$
position and all other entries are zero.
 \begin{remark}\label{NumberOuterDerivations}
If we have four or more outer derivations, then they are
``nil-dependent''(see Section \ref{Pr}.).
Therefore the solvable algebras we are constructing might be of codimension at most three. (As Section \ref{Codim3} shows, we have at most two outer
derivations.)
\end{remark}
\begin{center}\textit{General approach to find right solvable Leibniz
algebras with a codimension two nilradical $\mathcal{L}^4$.}\footnote{When we work with left Leibniz algebras, we first change the right multiplication operator to the left  everywhere and the right Leibniz identity to the left Leibniz identity in step $(iii).$ We also interchange
$s$ and  $r$ in the very left in steps $(i)$ and $(ii)$ as well. }
\end{center}
\begin{enumerate}[noitemsep, topsep=0pt]
\item[(i)] We consider $\r_{[e_r,e_s]}=[\r_{e_s},\r_{e_r}]=\r_{e_s}\r_{e_r}-\r_{e_r}\r_{e_s},\,(n+1\leq r\leq n+2,\,1\leq s\leq n+2)$
and compare with $c_1\r_{e_1}+\sum_{k=3}^{n-1}c_k\r_{e_k},$ because $e_{2}$
and $e_n$ are in the center of $\mathcal{L}^4,\,(n\geq4)$ defined in $(\ref{L4})$ to find all the unknown commutators.
\item[(ii)] We write down
$[e_{r},e_{s}],\,(n+1\leq r\leq n+2,\,1\leq s\leq n+2)$ including a linear combination of
$e_{2}$ and $e_n$ as well. We add the brackets of the nilradical $\mathcal{L}^4$ and outer derivations $\r_{e_{n+1}}$ and $\r_{e_{n+2}}.$
\item[(iii)] One satisfies the right Leibniz identity: $[[e_r,e_s],e_t]=[[e_r,e_t],e_s]+[e_r,[e_s,e_t]]$ 
or, equivalently, $\r_{e_t}\left([e_r,e_s]\right)=[\r_{e_t}(e_r),e_s]+[e_r,\r_{e_t}(e_s)],\,(1\leq r,s,t\leq n+2)$ for all the brackets obtained in step $(ii).$
\item[(iv)] Then we carry out the technique of ``absorption'' (see Section \ref{Solvable left Leibniz algebras}) to remove some parameters to simplify the algebra. 
\item[(v)]  We apply the change of basis transformations without affecting the nilradical to remove as many parameters as possible.
\end{enumerate}
\paragraph{Codimension two solvable extensions of $\mathcal{L}^4,(n=4)$}\label{Twodim(n=4)}
There are the following cases based on the conditions involving parameters $a,b$ and $c$:
\begin{enumerate}
\item[(1)] $b\neq-a,a\neq0,b\neq-c,$ 
\item[(2)] $b:=-a,a\neq0,a\neq c,$
\item[(3)] $b:=-c,a\neq c,a\neq0,$
\item[(4)] $a=0,b\neq0,b\neq-c.$
\end{enumerate}
\noindent (1) (a) One could set $\left(
\begin{array}{c}
  a^1 \\
 b^1\\
 c^1
\end{array}\right)=\left(
\begin{array}{c}
  1\\
 a\\
 0
\end{array}\right)$ and $\left(
\begin{array}{c}
  a^2 \\
 b^2\\
 c^2
\end{array}\right)=\left(
\begin{array}{c}
  1\\
 b\\
 1
\end{array}\right),(a\neq-1,a\neq0,b\neq-1).$ Therefore the vector space of outer derivations as $4\times 4$
matrices is as follows:
$$\r_{e_{5}}=\begin{array}{llll} \left[\begin{matrix}
 1 & 0 & 0 & 0\\
 \frac{3-2a}{2}a_{2,3}+\frac{1-2a}{2}b_{2,3} & 2a & a_{2,3}& a-1\\
  a-1 & 0 & a & 0 \\
  0 & 0 &  0 & a+1
 \end{matrix}\right]
\end{array},$$
$$\r_{e_{6}}=\begin{array}{llll} \left[\begin{matrix}
 1 & 0 & 1 & 0\\
-b\cdot \alpha_{2,3}-(1+b)\beta_{2,3} & 2(b+1) & \alpha_{2,3}& b+1\\
  b & 0 & b & 0 \\
  0 & 0 &  0 & b+1
    \end{matrix}\right]
\end{array}.$$
\noindent $(i)$ Considering $\r_{[e_5,e_6]},$ we obtain that $a:=1$ and
$\alpha_{2,3}:=\left(b+\frac{3}{2}\right)a_{2,3}+\frac{b_{2,3}}{2}.$ Since $b\neq-1$, it follows that $\beta_{2,3}:=\frac{b_{2,3}}{2}-\left(b+\frac{1}{2}\right)a_{2,3}$ 
and $\r_{[e_5,e_6]}=0.$ As a result,
$$\r_{e_{5}}=\begin{array}{llll} \left[\begin{matrix}
 1 & 0 & 0 & 0\\
 \frac{a_{2,3}-b_{2,3}}{2} & 2 & a_{2,3}& 0\\
  0 & 0 & 1 & 0 \\
  0 & 0 &  0 & 2
 \end{matrix}\right]
\end{array},$$
$$\r_{e_{6}}=\begin{array}{llll} \left[\begin{matrix}
 1 & 0 & 1 & 0\\
 \frac{a_{2,3}}{2}-\left(b+\frac{1}{2}\right)b_{2,3} & 2(b+1) &  \left(b+\frac{3}{2}\right)a_{2,3}+\frac{b_{2,3}}{2}& b+1\\
  b & 0 & b & 0 \\
  0 & 0 &  0 & b+1
\end{matrix}\right]
\end{array},(b\neq-1).$$
Further, we find the following commutators:
\allowdisplaybreaks
\begin{equation}
\left\{
\begin{array}{l}
\displaystyle  \nonumber \r_{[e_{5},e_{1}]}=-\r_{e_1},\r_{[e_{5},e_2]}=0,\r_{[e_{5},e_3]}=-\r_{e_3},\r_{[e_{5},e_i]}=0,\r_{[e_{6},e_1]}=-\r_{e_1}-
b\r_{e_3},\\
\displaystyle \r_{[e_{6},e_2]}=0,\r_{[e_{6},e_{3}]}=-\r_{e_1}-b\r_{e_3}, \r_{[e_{6},e_i]}=0,(b\neq-1,4\leq i\leq 6).
\end{array} 
\right.
\end{equation} 
\noindent $(ii)$ We include a linear combination of $e_2$ and $e_4$:
\begin{equation}
\left\{
\begin{array}{l}
\displaystyle  \nonumber [e_{5},e_{1}]=-e_1+c_{2,1}e_2+c_{4,1}e_4,[e_{5},e_2]=c_{2,2}e_2+c_{4,2}e_4,
[e_{5},e_3]=c_{2,3}e_2-e_3+c_{4,3}e_4,\\
\displaystyle [e_{5},e_i]=c_{2,i}e_2+c_{4,i}e_4,[e_{6},e_1]=-e_1+d_{2,1}e_2-
be_3+d_{4,1}e_4,[e_{6},e_2]=d_{2,2}e_2+d_{4,2}e_4,\\
\displaystyle [e_{6},e_{3}]=-e_1+d_{2,3}e_2-be_3+d_{4,3}e_4, [e_{6},e_i]=d_{2,i}e_2+d_{4,i}e_4,(b\neq-1,4\leq i\leq 6).
\end{array} 
\right.
\end{equation} 
Besides we have the brackets from $\mathcal{L}^4$ and from outer derivations $\r_{e_{5}}$ and $\r_{e_{6}}$ as well.

\noindent $(iii)$ We satisfy the right Leibniz identity, which is shown in Table \ref{RightCodimTwo(L4,(n=4))}.

\begin{table}[h!]
\caption{Right Leibniz identities in case (1) (a) with a nilradical $\mathcal{L}^4,(n=4)$.}
\label{RightCodimTwo(L4,(n=4))}
\begin{tabular}{lp{2.4cm}p{12cm}}
\hline
\scriptsize Steps &\scriptsize Ordered triple &\scriptsize
Result\\ \hline
\scriptsize $1.$ &\scriptsize $\r_{e_1}\left([e_{5},e_{1}]\right)$ &\scriptsize
$[e_{5},e_2]=0$
$\implies$ $c_{2,2}=c_{4,2}=0.$\\ \hline
\scriptsize $2.$ &\scriptsize $\r_{e_1}\left([e_{6},e_{1}]\right)$ &\scriptsize
$[e_{6},e_2]=0$
$\implies$ $d_{2,2}=d_{4,2}=0.$\\ \hline
\scriptsize $3.$ &\scriptsize $\r_{e_3}\left([e_{5},e_{1}]\right)$ &\scriptsize
$c_{2,4}:=2,c_{4,4}:=-2$
$\implies$  $[e_{5},e_4]=2\left(e_2-e_4\right).$ \\ \hline
\scriptsize $4.$ &\scriptsize $\r_{e_3}\left([e_{6},e_{1}]\right)$ &\scriptsize
$d_{2,4}:=1+b,d_{4,4}:=-1-b$ 
$\implies$  $[e_{6},e_4]=(b+1)\left(e_2-e_4\right).$ \\ \hline
\scriptsize $5.$ &\scriptsize $\r_{e_{5}}\left([e_{5},e_{5}]\right)$ &\scriptsize
$c_{4,5}=0$
$\implies$  $[e_{5},e_{5}]=c_{2,5}e_{2}.$\\ \hline
\scriptsize $6.$ &\scriptsize $\r_{e_6}\left([e_{5},e_{6}]\right)$ &\scriptsize
$d_{4,6}=0$ 
$\implies$  $[e_{6},e_6]=d_{2,6}e_2.$   \\ \hline  
\scriptsize $7.$ &\scriptsize $\r_{e_5}\left([e_{5},e_{3}]\right)$ &\scriptsize
$c_{2,3}:=a_{2,3},c_{4,3}=0$  
$\implies$  $[e_{5},e_3]=a_{2,3}e_2-e_3.$   \\ \hline
\scriptsize $8.$ &\scriptsize $\r_{e_{6}}\left([e_{5},e_{3}]\right)$ &\scriptsize
 $c_{2,1}:=\frac{a_{2,3}-b_{2,3}}{2},$ $c_{4,1}=0$ 
$\implies$  $[e_{5},e_{1}]=-e_{1}+\frac{a_{2,3}-b_{2,3}}{2}e_2.$\\ \hline
\scriptsize $9.$ &\scriptsize $\r_{e_{1}}\left([e_{5},e_{6}]\right)$ &\scriptsize
$d_{4,1}=0$ 
$\implies$ $[e_{6},e_{1}]=-e_1+d_{2,1}e_2-be_3.$   \\ \hline
\scriptsize $10.$ &\scriptsize $\r_{e_{3}}\left([e_{5},e_{6}]\right)$ &\scriptsize
$d_{4,3}=0$
$\implies$ $[e_{6},e_{3}]=-e_1+d_{2,3}e_2-be_3.$ \\ \hline
\scriptsize $11.$ &\scriptsize $\r_{e_5}\left([e_{5},e_{6}]\right)$ &\scriptsize
 $d_{4,5}:=-c_{4,6}$ $\implies$ $c_{4,6}:=(b+1)c_{2,5}-c_{2,6}$
$\implies$  $[e_{5},e_6]=c_{2,6}e_2+\left((b+1)c_{2,5}-c_{2,6}\right)e_4,
[e_6,e_5]=d_{2,5}e_2+\left(c_{2,6}-(b+1)c_{2,5}\right)e_4$\\ \hline
\scriptsize $12.$ &\scriptsize $\r_{e_{5}}\left([e_{6},e_{3}]\right)$ &\scriptsize
$d_{2,3}:=\left(b+\frac{1}{2}\right)a_{2,3}-\frac{b_{2,3}}{2}$ 
$\implies$  $[e_{6},e_{3}]=-e_1+\left(\left(b+\frac{1}{2}\right)a_{2,3}-\frac{b_{2,3}}{2}\right)e_2-be_3.$   \\ \hline
\scriptsize $13.$ &\scriptsize $\r_{e_{6}}\left([e_{6},e_{3}]\right)$ &\scriptsize
$d_{2,1}:=\left(b+\frac{1}{2}\right)a_{2,3}-\frac{b_{2,3}}{2}$
$\implies$  $[e_{6},e_{1}]=-e_1+\left(\left(b+\frac{1}{2}\right)a_{2,3}-\frac{b_{2,3}}{2}\right)e_2-be_3.$ \\ \hline
\scriptsize $14.$ &\scriptsize $\r_{e_{6}}\left([e_{6},e_{5}]\right)$ &\scriptsize
$d_{2,6}:=(b+1)\left(c_{2,6}+d_{2,5}\right)-(b+1)^2c_{2,5}$
$\implies$ $[e_{6},e_6]=\left((b+1)\left(c_{2,6}+d_{2,5}\right)-(b+1)^2c_{2,5}\right)e_2.$\\ \hline
\end{tabular}
\end{table}
We obtain that $\r_{e_5}$ and $\r_{e_6}$ restricted to the nilradical do not change, but the remaining brackets are as follows:
\begin{equation}
\left\{
\begin{array}{l}
\displaystyle  \nonumber [e_{5},e_{1}]=-e_1+\frac{a_{2,3}-b_{2,3}}{2}e_2,
[e_{5},e_3]=a_{2,3}e_2-e_3,[e_{5},e_4]=2\left(e_2-e_4\right),[e_5,e_5]=c_{2,5}e_2,\\
\displaystyle [e_5,e_6]=c_{2,6}e_2+\left((b+1)c_{2,5}-c_{2,6}\right)e_4,
[e_{6},e_1]=-e_1+\left(\left(b+\frac{1}{2}\right)a_{2,3}-\frac{b_{2,3}}{2}\right)e_2-
be_3,\\
\displaystyle [e_{6},e_{3}]=-e_1+\left(\left(b+\frac{1}{2}\right)a_{2,3}-\frac{b_{2,3}}{2}\right)e_2-
be_3, [e_{6},e_4]=(b+1)\left(e_2-e_4\right),\\
\displaystyle [e_6,e_5]=d_{2,5}e_2+\left(c_{2,6}-(b+1)c_{2,5}\right)e_4,[e_{6},e_6]=\left((b+1)\left(c_{2,6}+d_{2,5}\right)-(b+1)^2c_{2,5}\right)e_2,\\
\displaystyle (b\neq-1).
\end{array} 
\right.
\end{equation} 
Altogether the nilradical $\mathcal{L}^4$ $(\ref{L4}),$ the outer derivations $\r_{e_{5}}$ and $\r_{e_{6}}$
 written in the bracket notation and the remaining brackets given above define a continuous family of Leibniz algebras
depending on the parameters.

\noindent $(iv)\&(v)$ We apply the following transformation: $e^{\prime}_1=e_1+\frac{b_{2,3}-a_{2,3}}{2}e_2,
e^{\prime}_2=e_2,e^{\prime}_3=e_3-a_{2,3}e_2,e^{\prime}_4=e_4,e^{\prime}_5=e_5-\frac{c_{2,5}}{2}e_2,
e^{\prime}_6=e_6-\frac{d_{2,5}}{2}e_2+\left(\frac{b+1}{2}c_{2,5}-\frac{c_{2,6}}{2}\right)e_4$
and obtain a Leibniz algebra $\g_{6,2}$ given below:
\begin{equation}
\left\{
\begin{array}{l}
\displaystyle  \nonumber [e_1,e_5]=e_1,[e_2,e_5]=2e_2,[e_3,e_5]=e_3,[e_4,e_5]=2e_4,[e_{5},e_{1}]=-e_1,[e_{5},e_3]=-e_3,\\
\displaystyle 
[e_{5},e_4]=2\left(e_2-e_4\right), [e_1,e_6]=e_1+be_3,[e_2,e_6]=2(b+1)e_2,[e_3,e_6]=e_1+be_3,\\
\displaystyle[e_4,e_6]=(b+1)\left(e_2+e_4\right),
[e_{6},e_1]=-e_1-be_3,[e_{6},e_{3}]=-e_1-be_3,\\
\displaystyle  [e_{6},e_4]=(b+1)\left(e_2-e_4\right),(b\neq-1).
\end{array} 
\right.
\end{equation}
\begin{remark}
We notice that if $b=-1$, then the outer derivation $\r_{e_6}$ is nilpotent.
\end{remark}
\noindent (1) (b) We set $\left(
\begin{array}{c}
  a^1 \\
 b^1\\
 c^1
\end{array}\right)=\left(
\begin{array}{c}
  1\\
 2\\
 c
\end{array}\right)$ and $\left(
\begin{array}{c}
  a^2 \\
 b^2\\
 c^2
\end{array}\right)=\left(
\begin{array}{c}
  1\\
 1\\
 d
\end{array}\right),(c\neq-2,d\neq-1).$ Therefore the vector space of outer derivations as $4\times 4$
matrices is as follows:
$$\r_{e_{5}}=\begin{array}{llll} \left[\begin{matrix}
 1 & 0 & c & 0\\
-\frac{1+3c}{2}a_{2,3}-\frac{3+3c}{2}b_{2,3} & 2(c+2) & a_{2,3}& 2c+1\\
 c+1 & 0 & 2 & 0 \\
  0 & 0 &  0 & 3
\end{matrix}\right]
\end{array},$$
$$\r_{e_{6}}=\begin{array}{llll} \left[\begin{matrix}
 1 & 0 & d & 0\\
\frac{1-3d}{2}\alpha_{2,3}-\frac{1+3d}{2}\beta_{2,3} & 2(d+1) & \alpha_{2,3}& 2d\\
  d & 0 & 1 & 0 \\
  0 & 0 &  0 & 2
  \end{matrix}\right]
\end{array}.$$
\noindent $(i)$ Considering $\r_{[e_5,e_6]},$ we obtain that $d=0$ and we have
the system of equations: 
$$\left\{ \begin{array}{ll}
a_{2,3}:=\left(\frac{3}{2}c+2\right)\alpha_{2,3}+\frac{c}{2}\beta_{2,3} {,}  \\
\left(\frac{3}{2}+c\right)\beta_{2,3}-\frac{1}{2}\alpha_{2,3}-\left(\frac{3}{2}c+\frac{1}{2}\right)a_{2,3}-\left(\frac{3}{2}c+\frac{3}{2}\right)b_{2,3}=0{.}
\end{array}
\right. $$ 
There are the following two cases:
\begin{enumerate}[noitemsep, topsep=0pt]
\item[(I)] If $c\neq-1,$ then $b_{2,3}:=-\frac{3c+2}{2}\alpha_{2,3}-\frac{c-2}{2}\beta_{2,3}$
and $\r_{[e_5,e_6]}=0.$ As a result,
$$\r_{e_{5}}=\begin{array}{llll} \left[\begin{matrix}
 1 & 0 & c & 0\\
\frac{\alpha_{2,3}}{2}-\frac{2c+3}{2}\beta_{2,3} & 2(c+2) & \left(\frac{3c}{2}+2\right)\alpha_{2,3}+\frac{c}{2}\beta_{2,3}& 2c+1\\
 c+1 & 0 & 2 & 0 \\
  0 & 0 &  0 & 3
 \end{matrix}\right]
\end{array},$$
$$\r_{e_{6}}=\begin{array}{llll} \left[\begin{matrix}
 1 & 0 & 0 & 0\\
\frac{\alpha_{2,3}-\beta_{2,3}}{2} & 2 & \alpha_{2,3}& 0\\
  0 & 0 & 1 & 0 \\
  0 & 0 &  0 & 2
   \end{matrix}\right]
\end{array},(c\neq-2).$$
\noindent Further, we find the following:
\allowdisplaybreaks
\begin{equation}
\left\{
\begin{array}{l}
\displaystyle  \nonumber \r_{[e_{5},e_{1}]}=-\r_{e_1}-(c+1)\r_{e_3},\r_{[e_{5},e_2]}=0,\r_{[e_{5},e_3]}=-c\r_{e_1}-2\r_{e_3},\r_{[e_{5},e_i]}=0,\\
\displaystyle \r_{[e_{6},e_1]}=-\r_{e_1},\r_{[e_{6},e_2]}=0,\r_{[e_{6},e_{3}]}=-\r_{e_3}, \r_{[e_{6},e_i]}=0,(c\neq-2,4\leq i\leq 6).
\end{array} 
\right.
\end{equation} 
\item[(II)] If $c=-1,$ then $a_{2,3}:=\frac{\alpha_{2,3}-\beta_{2,3}}{2}$
and $\r_{[e_5,e_6]}=0.$ It follows that
$$\r_{e_{5}}=\begin{array}{llll} \left[\begin{matrix}
 1 & 0 & -1 & 0\\
\frac{\alpha_{2,3}-\beta_{2,3}}{2} & 2& \frac{\alpha_{2,3}-\beta_{2,3}}{2}& -1\\
0 & 0 & 2 & 0 \\
  0 & 0 &  0 & 3
 \end{matrix}\right]
\end{array},
\r_{e_{6}}=\begin{array}{llll} \left[\begin{matrix}
 1 & 0 & 0 & 0\\
\frac{\alpha_{2,3}-\beta_{2,3}}{2} & 2 & \alpha_{2,3}& 0\\
  0 & 0 & 1 & 0 \\
  0 & 0 &  0 & 2
   \end{matrix}\right]
\end{array}$$
 and we have the following commutators:
\allowdisplaybreaks
\begin{equation}
\left\{
\begin{array}{l}
\displaystyle  \nonumber \r_{[e_{5},e_{1}]}=-\r_{e_1},\r_{[e_{5},e_2]}=0,\r_{[e_{5},e_3]}=\r_{e_1}-2\r_{e_3},\r_{[e_{5},e_i]}=0,\\
\displaystyle \r_{[e_{6},e_1]}=-\r_{e_1},\r_{[e_{6},e_2]}=0,\r_{[e_{6},e_{3}]}=-\r_{e_3}, \r_{[e_{6},e_i]}=0,(4\leq i\leq 6).
\end{array} 
\right.
\end{equation} 
\end{enumerate}
\noindent $(ii)$ We combine cases (I) and (II) together, include a linear combination of $e_2$ and $e_4$,
and have the following brackets:
\allowdisplaybreaks
\begin{equation}
\left\{
\begin{array}{l}
\displaystyle  \nonumber [e_{5},e_{1}]=-e_1+a_{2,1}e_2-(c+1)e_3+a_{4,1}e_4,[e_{5},e_2]=a_{2,2}e_2+a_{4,2}e_4,[e_{5},e_3]=-ce_1+\\
\displaystyle a_{2,3}e_2-2e_3+a_{4,3}e_4,[e_{5},e_i]=a_{2,i}e_2+a_{4,i}e_4,[e_{6},e_1]=-e_1+b_{2,1}e_2+b_{4,1}e_4,\\
\displaystyle [e_{6},e_2]=b_{2,2}e_2+b_{4,2}e_4,[e_{6},e_{3}]=b_{2,3}e_2-e_3+b_{4,3}e_4,[e_{6},e_i]=b_{2,i}e_2+b_{4,i}e_4,\\
\displaystyle(c\neq-2,4\leq i\leq 6).
\end{array} 
\right.
\end{equation}
Besides we have the brackets from $\mathcal{L}^4$ and from outer derivations $\r_{e_{5}}$ and $\r_{e_{6}}$ as well.

\noindent $(iii)$ To satisfy the right Leibniz identity, we refer to the identities given in Table \ref{RightCodimTwo(L4,(n=4))}. as much as possible.
As a result, the identities we apply are the following: $1.-6.,8.,$
$\r_{e_6}{\left([e_5,e_1]\right)}=[\r_{e_6}(e_5),e_1]+[e_5,\r_{e_6}(e_1)],$
$9.-11.,$ $\r_{e_6}\left([e_6,e_1]\right)=[\r_{e_6}(e_6),e_1]+[e_6,\r_{e_6}(e_1)],$ $13.$ and $14.$
We obtain that $\r_{e_5}$ and $\r_{e_6}$ restricted to the nilradical do not change, but the remaining brackets are the following:
\begin{equation}
\left\{
\begin{array}{l}
\displaystyle  \nonumber [e_{5},e_{1}]=-e_1+\left(\left(c+\frac{3}{2}\right)\alpha_{2,3}-\frac{\beta_{2,3}}{2}\right)e_2-(c+1)e_3,\\
\displaystyle [e_{5},e_3]=-ce_1+\left(\left(\frac{c}{2}+2\right)\alpha_{2,3}-\frac{c}{2}\beta_{2,3}\right)e_2-2e_3,[e_{5},e_4]=3\left(e_2-e_4\right),\\
\displaystyle [e_5,e_5]=(c+2)\left(a_{2,6}+a_{4,6}\right)e_2,[e_5,e_6]=a_{2,6}e_2+a_{4,6}e_4,[e_{6},e_1]=-e_1+\frac{\alpha_{2,3}-\beta_{2,3}}{2}e_2,\\
\displaystyle [e_{6},e_{3}]=\alpha_{2,3}e_2-e_3, [e_{6},e_4]=2\left(e_2-e_4\right),[e_6,e_5]=\left((c+2)b_{2,6}+a_{4,6}\right)e_2-a_{4,6}e_4,\\
\displaystyle [e_{6},e_6]=b_{2,6}e_2,(c\neq-2).
\end{array} 
\right.
\end{equation} 
Altogether the nilradical $\mathcal{L}^4$ $(\ref{L4}),$ the outer derivations $\r_{e_{5}}$ and $\r_{e_{6}}$
 written in the bracket notation and the remaining brackets given above define a continuous family of Leibniz algebras
depending on the parameters.

\noindent $(iv)\&(v)$ We apply the transformation: $e^{\prime}_1=e_1-\frac{\alpha_{2,3}-\beta_{2,3}}{2}e_2,
e^{\prime}_2=e_2,e^{\prime}_3=e_3-\alpha_{2,3}e_2,e^{\prime}_4=e_4,e^{\prime}_5=e_5-\frac{a_{2,6}}{2}e_2-\frac{a_{4,6}}{2}e_4,
e^{\prime}_6=e_6-\frac{b_{2,6}}{2}e_2$ and obtain a Leibniz algebra $\g_{6,3}$:
\begin{equation}
\left\{
\begin{array}{l}
\displaystyle  \nonumber [e_1,e_5]=e_1+(c+1)e_3,[e_2,e_5]=2(c+2)e_2,[e_3,e_5]=ce_1+2e_3,[e_4,e_5]=(2c+1)e_2+3e_4,\\
\displaystyle [e_{5},e_{1}]=-e_1-(c+1)e_3,[e_{5},e_3]=-ce_1-2e_3,[e_{5},e_4]=3\left(e_2-e_4\right),[e_1,e_6]=e_1,\\
\displaystyle [e_2,e_6]=2e_2,[e_3,e_6]=e_3,[e_4,e_6]=2e_4, [e_{6},e_1]=-e_1,[e_{6},e_{3}]=-e_3,[e_{6},e_4]=2\left(e_2-e_4\right),\\
\displaystyle  (c\neq-2).
\end{array} 
\right.
\end{equation} 

\noindent (1) (c) We set $\left(
\begin{array}{c}
  a^1 \\
 b^1\\
 c^1
\end{array}\right)=\left(
\begin{array}{c}
  a\\
 1\\
 0
\end{array}\right)$ and $\left(
\begin{array}{c}
  a^2 \\
 b^2\\
 c^2
\end{array}\right)=\left(
\begin{array}{c}
  b\\
 0\\
 1
\end{array}\right),(a,b\neq0,a\neq-1).$ Therefore the vector space of outer derivations as $4\times 4$
matrices is as follows:
$$\r_{e_{5}}=\begin{array}{llll} \left[\begin{matrix}
 a & 0 & 0 & 0\\
\frac{(3a-2)a_{2,3}+(a-2)b_{2,3}}{2a} & 2 & a_{2,3}& 1-a\\
 1-a & 0 & 1 & 0 \\
  0 & 0 &  0 & a+1
\end{matrix}\right]
\end{array},\r_{e_{6}}=\begin{array}{llll} \left[\begin{matrix}
 b & 0 & 1 & 0\\
\frac{(3b-3)\alpha_{2,3}+(b-3)\beta_{2,3}}{2b} & 2 & \alpha_{2,3}& 2-b\\
  1-b & 0 & 0 & 0 \\
  0 & 0 &  0 & b
  \end{matrix}\right]
\end{array}.$$
\noindent $(i)$ Considering $\r_{[e_5,e_6]},$ we obtain that $a:=1$ and we have
the system of equations: 
$$\left\{ \begin{array}{ll}
\alpha_{2,3}:=\frac{3a_{2,3}+b_{2,3}}{2} {,}  \\
b^2a_{2,3}+(b^2-2b)b_{2,3}+(3-3b)\alpha_{2,3}+(3-b)\beta_{2,3}=0{.}
\end{array}
\right. $$ 
There are the following two cases:
\begin{enumerate}[noitemsep, topsep=0pt]
\item[(I)] If $b\neq3,$ then $\beta_{2,3}:=\frac{(2b-3)a_{2,3}+(2b-1)b_{2,3}}{2}$
and $\r_{[e_5,e_6]}=0.$ Consequently,
$$\r_{e_{5}}=\begin{array}{llll} \left[\begin{matrix}
 1 & 0 & 0 & 0\\
\frac{a_{2,3}-b_{2,3}}{2} & 2 & a_{2,3}& 0\\
 0 & 0 & 1 & 0 \\
  0 & 0 &  0 & 2
 \end{matrix}\right]
\end{array},\r_{e_{6}}=\begin{array}{llll} \left[\begin{matrix}
 b & 0 & 1 & 0\\
\frac{b\cdot a_{2,3}+(b-2)b_{2,3}}{2} & 2 &\frac{3a_{2,3}+b_{2,3}}{2}& 2-b\\
 1-b & 0 & 0 & 0 \\
  0 & 0 &  0 & b
   \end{matrix}\right]
\end{array},(b\neq0).$$
\noindent Further, we find the commutators:
\allowdisplaybreaks
\begin{equation}
\left\{
\begin{array}{l}
\displaystyle  \nonumber \r_{[e_{5},e_{1}]}=-\r_{e_1},\r_{[e_{5},e_2]}=0,\r_{[e_{5},e_3]}=-\r_{e_3},\r_{[e_{5},e_i]}=0,\r_{[e_{6},e_1]}=-b\r_{e_1}+(b-1)\r_{e_3},\\
\displaystyle \r_{[e_{6},e_2]}=0,\r_{[e_{6},e_{3}]}=-\r_{e_1}, \r_{[e_{6},e_i]}=0,(b\neq0,4\leq i\leq 6).
\end{array} 
\right.
\end{equation} 
\item[(II)] If $b:=3,$ then $\r_{[e_5,e_6]}=0$ and $\r_{e_5},\r_{e_6}$ are as follows: 
$$\r_{e_{5}}=\begin{array}{llll} \left[\begin{matrix}
 1 & 0 & 0 & 0\\
\frac{a_{2,3}-b_{2,3}}{2} & 2& a_{2,3}& 0\\
0 & 0 & 1 & 0 \\
  0 & 0 &  0 & 2
 \end{matrix}\right]
\end{array},
\r_{e_{6}}=\begin{array}{llll} \left[\begin{matrix}
 3 & 0 & 1 & 0\\
\frac{3a_{2,3}+b_{2,3}}{2} & 2 & \frac{3a_{2,3}+b_{2,3}}{2}& -1\\
  -2 & 0 & 0 & 0 \\
  0 & 0 &  0 & 3
   \end{matrix}\right]
\end{array}.$$
We have the following commutators:
\allowdisplaybreaks
\begin{equation}
\left\{
\begin{array}{l}
\displaystyle  \nonumber \r_{[e_{5},e_{1}]}=-\r_{e_1},\r_{[e_{5},e_2]}=0,\r_{[e_{5},e_3]}=-\r_{e_3},\r_{[e_{5},e_i]}=0,\r_{[e_{6},e_1]}=-3\r_{e_1}+2\r_{e_3},\\
\displaystyle \r_{[e_{6},e_2]}=0,\r_{[e_{6},e_{3}]}=-\r_{e_1}, \r_{[e_{6},e_i]}=0,(4\leq i\leq 6).
\end{array} 
\right.
\end{equation} 
\end{enumerate}
\noindent $(ii)$ We combine cases (I) and (II) together and include a linear combination of $e_2$ and $e_4:$
\allowdisplaybreaks
\begin{equation}
\left\{
\begin{array}{l}
\displaystyle  \nonumber [e_{5},e_{1}]=-e_1+c_{2,1}e_2+c_{4,1}e_4,[e_{5},e_2]=c_{2,2}e_2+c_{4,2}e_4,[e_{5},e_3]=c_{2,3}e_2-e_3+c_{4,3}e_4,\\
\displaystyle [e_{5},e_i]=c_{2,i}e_2+c_{4,i}e_4,[e_{6},e_1]=-be_1+d_{2,1}e_2+(b-1)e_3+d_{4,1}e_4,[e_{6},e_2]=d_{2,2}e_2+d_{4,2}e_4,\\
\displaystyle [e_{6},e_{3}]=-e_1+d_{2,3}e_2+d_{4,3}e_4,[e_{6},e_i]=d_{2,i}e_2+d_{4,i}e_4,(b\neq0,4\leq i\leq 6).
\end{array} 
\right.
\end{equation}
We also have the brackets from $\mathcal{L}^4$ and from outer derivations $\r_{e_{5}}$ and $\r_{e_{6}}$ as well.

\noindent $(iii)$ To satisfy the right Leibniz identity, we apply the same identities as in Table \ref{RightCodimTwo(L4,(n=4))}. 
We have that $\r_{e_5}$ and $\r_{e_6}$ restricted to the nilradical do not change, but the remaining brackets are the following:
\begin{equation}
\left\{
\begin{array}{l}
\displaystyle  \nonumber [e_{5},e_{1}]=-e_1+\frac{a_{2,3}-b_{2,3}}{2}e_2, [e_{5},e_3]=a_{2,3}e_2-e_3,[e_{5},e_4]=2\left(e_2-e_4\right),[e_5,e_5]=c_{2,5}e_2,\\
\displaystyle [e_5,e_6]=c_{2,6}e_2+\left(c_{2,5}-c_{2,6}\right)e_4,[e_{6},e_1]=-be_1+\left(\frac{2-b}{2}a_{2,3}-\frac{b}{2}b_{2,3}\right)e_2+
(b-1)e_3,\\
\displaystyle [e_{6},e_{3}]=-e_1+\frac{a_{2,3}-b_{2,3}}{2}e_2, [e_{6},e_4]=b\left(e_2-e_4\right),
[e_6,e_5]=d_{2,5}e_2+\left(c_{2,6}-c_{2,5}\right)e_4,\\
\displaystyle [e_{6},e_6]=\left(d_{2,5}-c_{2,5}+c_{2,6}\right)e_2,(b\neq0).
\end{array} 
\right.
\end{equation} 
Altogether the nilradical $\mathcal{L}^4$ $(\ref{L4}),$ the outer derivations $\r_{e_{5}}$ and $\r_{e_{6}}$
 written in the bracket notation and the remaining brackets given above define a continuous family of Leibniz algebras
depending on the parameters.

\noindent $(iv)\&(v)$ We apply the transformation: $e^{\prime}_1=e_1+\frac{b_{2,3}-a_{2,3}}{2}e_2,
e^{\prime}_2=e_2,e^{\prime}_3=e_3-a_{2,3}e_2,e^{\prime}_4=e_4,e^{\prime}_5=e_5-\frac{c_{2,5}}{2}e_2,
e^{\prime}_6=e_6-\frac{d_{2,5}}{2}e_2+\frac{c_{2,5}-c_{2,6}}{2}e_4$ and obtain a Leibniz algebra $\g_{6,4}$:
\begin{equation}
\left\{
\begin{array}{l}
\displaystyle  \nonumber [e_1,e_5]=e_1,[e_2,e_5]=2e_2,[e_3,e_5]=e_3,[e_4,e_5]=2e_4,[e_{5},e_{1}]=-e_1,[e_{5},e_3]=-e_3,\\
\displaystyle [e_{5},e_4]=2\left(e_2-e_4\right),[e_1,e_6]=be_1+(1-b)e_3,[e_2,e_6]=2e_2,[e_3,e_6]=e_1,\\
\displaystyle [e_4,e_6]=(2-b)e_2+be_4,[e_{6},e_1]=-be_1+(b-1)e_3,[e_{6},e_{3}]=-e_1,[e_{6},e_4]=b\left(e_2-e_4\right),\\
\displaystyle  (b\neq0).
\end{array} 
\right.
\end{equation} 
\noindent $(2)$ In this case we consider $\r_{[e_5,e_6]}$ and compare with $c_1\r_{e_1}+c_3\r_{e_3},$
where 
$$\r_{e_{5}}=\begin{array}{llll} \left[\begin{matrix}
 a & 0 & c & 0\\
 \frac{(5a-3c)a_{2,3}+(3a-3c)b_{2,3}}{2a} & 2(c-a) &a_{2,3}& 2(c-a) \\
 c-2a & 0 &-a & 0\\
  0 & 0 &  0 &0
\end{matrix}\right]
\end{array},$$
$$\r_{e_{6}}=\begin{array}{llll} \left[\begin{matrix} \alpha & 0 & \gamma & 0\\
 \frac{(5\alpha-3\gamma)\alpha_{2,3}+(3\alpha-3\gamma)\beta_{2,3}}{2\alpha} & 2(\gamma-\alpha) &\alpha_{2,3}& 2(\gamma-\alpha) \\
 \gamma-2\alpha & 0 &-\alpha & 0\\
  0 & 0 &  0 &0
\end{matrix}\right]
\end{array}.$$
We obtain that $a\gamma-\alpha c=0,$ where $a,\alpha\neq0,$ which is impossible, because $\left(
\begin{array}{c}
  a\\
 c
\end{array}\right)$ and $\left(\begin{array}{c}
  \alpha\\
 \gamma
\end{array}\right)$ are supposed to be linearly independent.

\noindent $(3)$ Considering $\r_{[e_5,e_6]}$ and comparing with $c_1\r_{e_1}+c_3\r_{e_3},$
where 
$$\r_{e_{5}}=\begin{array}{llll} \left[\begin{matrix}
 a & 0 & c & 0\\
a_{2,3}-b_{2,1}+b_{2,3} & 0 &a_{2,3}& c-a\\
 -a & 0 &-c & 0\\
  0 & 0 &  0 &a-c
\end{matrix}\right]
\end{array},\r_{e_{6}}=\begin{array}{llll} \left[\begin{matrix} \alpha & 0 & \gamma & 0\\
\alpha_{2,3}-\beta_{2,1}+\beta_{2,3} & 0 &\alpha_{2,3}& \gamma-\alpha\\
-\alpha & 0 &-\gamma & 0\\
  0 & 0 &  0 &\alpha-\gamma
\end{matrix}\right]
\end{array},$$
we have to have that $a\gamma-\alpha c=0,$ where $a,\alpha\neq0,$ which is impossible.

\noindent $(4)$ Finally we consider $\r_{[e_5,e_6]}$ and compare with $c_1\r_{e_1}+c_3\r_{e_3}$ as well.
We obtain that $b\gamma-\beta c=0,$ where $b,\beta\neq0,$ which is impossible.

\paragraph{Codimension two solvable extensions of $\mathcal{L}^4,(n\geq5)$}
By taking a linear combination of $\r_{e_{n+1}}$ and $\r_{e_{n+2}}$ and keeping in mind that no nontrivial linear combination of the matrices $\r_{e_{n+1}}$ and $\r_{e_{n+2}}$ 
can be a nilpotent matrix, one could set $\left(
\begin{array}{c}
  a \\
 b
\end{array}\right)=\left(
\begin{array}{c}
  1\\
2
\end{array}\right)$ and $\left(
\begin{array}{c}
 \alpha \\
 \beta
\end{array}\right)=\left(
\begin{array}{c}
  1\\
 1
\end{array}\right).$
Therefore the vector space of outer derivations as $n \times n$ matrices is as follows:
{ $$\r_{e_{n+1}}=\left[\begin{smallmatrix}
 1 & 0 & 0 & 0&0&0&\cdots && 0&0 & 0\\
  3b_{2,1}-2a_{2,3} & 4 & a_{2,3}& 1 &0&0 & \cdots &&0  & 0& 0\\
  1 & 0 & 2 & 0 & 0&0 &\cdots &&0 &0& 0\\
  0 & 0 &  0 & 3 &0 &0 &\cdots&&0 &0 & 0\\
 0 & 0 & a_{5,3} & 0 & 4  &0&\cdots & &0&0 & 0\\
  0 & 0 &\boldsymbol{\cdot} & a_{5,3} & 0 &5&\cdots & &0&0 & 0\\
    0 & 0 &\boldsymbol{\cdot} & \boldsymbol{\cdot} & \ddots &0&\ddots &&\vdots&\vdots &\vdots\\
  \vdots & \vdots & \vdots &\vdots &  &\ddots&\ddots &\ddots &\vdots&\vdots & \vdots\\
 0 & 0 & a_{n-2,3}& a_{n-3,3}& \cdots&\cdots &a_{5,3}&0&n-3 &0& 0\\
0 & 0 & a_{n-1,3}& a_{n-2,3}& \cdots&\cdots &\boldsymbol{\cdot}&a_{5,3}&0 &n-2& 0\\
 0 & 0 & a_{n,3}& a_{n-1,3}& \cdots&\cdots &\boldsymbol{\cdot}&\boldsymbol{\cdot}&a_{5,3} &0& n-1
\end{smallmatrix}\right],$$}
{ $$ \r_{e_{n+2}}=\left[\begin{smallmatrix}
 1 & 0 & 0 & 0&0&0&\cdots && 0&0 & 0\\
  \beta_{2,1} & 2 & \alpha_{2,3}& 0 &0&0 & \cdots &&0  & 0& 0\\
  0 & 0 & 1 & 0 & 0&0 &\cdots &&0 &0& 0\\
  0 & 0 &  0 & 2 &0 &0 &\cdots&&0 &0 & 0\\
 0 & 0 & \alpha_{5,3} & 0 & 3  &0&\cdots & &0&0 & 0\\
  0 & 0 &\boldsymbol{\cdot} & \alpha_{5,3} & 0 &4&\cdots & &0&0 & 0\\
    0 & 0 &\boldsymbol{\cdot} & \boldsymbol{\cdot} & \ddots &0&\ddots &&\vdots&\vdots &\vdots\\
  \vdots & \vdots & \vdots &\vdots &  &\ddots&\ddots &\ddots &\vdots&\vdots & \vdots\\
 0 & 0 & \alpha_{n-2,3}& \alpha_{n-3,3}& \cdots&\cdots &\alpha_{5,3}&0&n-4 &0& 0\\
0 & 0 & \alpha_{n-1,3}& \alpha_{n-2,3}& \cdots&\cdots &\boldsymbol{\cdot}&\alpha_{5,3}&0 &n-3& 0\\
 0 & 0 & \alpha_{n,3}& \alpha_{n-1,3}& \cdots&\cdots &\boldsymbol{\cdot}&\boldsymbol{\cdot}&\alpha_{5,3} &0& n-2
\end{smallmatrix}\right].$$}

\noindent $(i)$ Considering $\r_{[e_{n+1},e_{n+2}]},$ which is the same as $[\r_{e_{n+2}},\r_{e_{n+1}}],$ we deduce that 
$\alpha_{i,3}:=a_{i,3},(5\leq i\leq n),$ $\alpha_{2,3}:=\frac{a_{2,3}}{2}$ and
$\beta_{2,1}:=b_{2,1}-\frac{a_{2,3}}{2}.$ As a result, the outer derivation $\r_{e_{n+2}}$
changes as follows:
$$\r_{e_{n+2}}=\left[\begin{smallmatrix}
 1 & 0 & 0 & 0&0&0&\cdots && 0&0 & 0\\
 b_{2,1}-\frac{a_{2,3}}{2} & 2 & \frac{a_{2,3}}{2}& 0 &0&0 & \cdots &&0  & 0& 0\\
  0 & 0 & 1 & 0 & 0&0 &\cdots &&0 &0& 0\\
  0 & 0 &  0 & 2 &0 &0 &\cdots&&0 &0 & 0\\
 0 & 0 & a_{5,3} & 0 & 3  &0&\cdots & &0&0 & 0\\
  0 & 0 &\boldsymbol{\cdot} & a_{5,3} & 0 &4&\cdots & &0&0 & 0\\
    0 & 0 &\boldsymbol{\cdot} & \boldsymbol{\cdot} & \ddots &0&\ddots &&\vdots&\vdots &\vdots\\
  \vdots & \vdots & \vdots &\vdots &  &\ddots&\ddots &\ddots &\vdots&\vdots & \vdots\\
 0 & 0 & a_{n-2,3}& a_{n-3,3}& \cdots&\cdots &a_{5,3}&0&n-4 &0& 0\\
0 & 0 & a_{n-1,3}& a_{n-2,3}& \cdots&\cdots &\boldsymbol{\cdot}&a_{5,3}&0 &n-3& 0\\
 0 & 0 & a_{n,3}& a_{n-1,3}& \cdots&\cdots &\boldsymbol{\cdot}&\boldsymbol{\cdot}&a_{5,3} &0& n-2
\end{smallmatrix}\right].$$
Altogether we find the following commutators:
\allowdisplaybreaks
\begin{equation}
\left\{
\begin{array}{l}
\displaystyle  \nonumber \r_{[e_{n+1},e_{1}]}=-\r_{e_1}-\r_{e_3},\r_{[e_{n+1},e_2]}=0,\r_{[e_{n+1},e_i]}=(1-i)\r_{e_i}-\sum_{k=i+2}^{n-1}{a_{k-i+3,3}\r_{e_k}},\\
\displaystyle \r_{[e_{n+1},e_j]}=0,(n\leq j\leq n+1),\r_{[e_{n+1},e_{n+2}]}=-\sum_{k=4}^{n-1}{a_{k+1,3}\r_{e_{k}}},\r_{[e_{n+2},e_1]}=-\r_{e_1},\\
\displaystyle \r_{[e_{n+2},e_2]}=0,\r_{[e_{n+2},e_{i}]}=(2-i)\r_{e_i}-\sum_{k=i+2}^{n-1}{a_{k-i+3,3}\r_{e_k}},(3\leq i\leq n-1),\\
\displaystyle \r_{[e_{n+2},e_n]}=0,\r_{[e_{n+2},e_{n+1}]}=\sum_{k=4}^{n-1}{a_{k+1,3}\r_{e_{k}}},\r_{[e_{n+2},e_{n+2}]}=0.
\end{array} 
\right.
\end{equation} 
\noindent $(ii)$ We include a linear combination of $e_2$ and $e_n:$
\begin{equation}
\left\{
\begin{array}{l}
\displaystyle  \nonumber [e_{n+1},e_{1}]=-e_1+c_{2,1}e_2-e_3+c_{n,1}e_n,[e_{n+1},e_2]=c_{2,2}e_2+c_{n,2}e_n,[e_{n+1},e_i]=c_{2,i}e_2+\\
\displaystyle(1-i)e_i-\sum_{k=i+2}^{n-1}{a_{k-i+3,3}e_k}+c_{n,i}e_n,[e_{n+1},e_j]=c_{2,j}e_2+c_{n,j}e_n,(n\leq j\leq n+1),\\
\displaystyle [e_{n+1},e_{n+2}]=c_{2,n+2}e_2-\sum_{k=4}^{n-1}{a_{k+1,3}e_{k}}+
c_{n,n+2}e_n,[e_{n+2},e_1]=-e_1+d_{2,1}e_2+d_{n,1}e_n,\\
\displaystyle [e_{n+2},e_2]=d_{2,2}e_2+d_{n,2}e_n, [e_{n+2},e_{i}]=d_{2,i}e_2+(2-i)e_i-\sum_{k=i+2}^{n-1}{a_{k-i+3,3}e_k}+d_{n,i}e_n,\\
\displaystyle (3\leq i\leq n-1),[e_{n+2},e_n]=d_{2,n}e_2+d_{n,n}e_n,[e_{n+2},e_{n+1}]=d_{2,n+1}e_2+\sum_{k=4}^{n-1}{a_{k+1,3}e_{k}}+\\
\displaystyle d_{n,n+1}e_n, [e_{n+2},e_{n+2}]=d_{2,n+2}e_2+d_{n,n+2}e_n.
\end{array} 
\right.
\end{equation} 
Besides we have the brackets from $\mathcal{L}^4$ and from outer derivations $\r_{e_{n+1}}$ and $\r_{e_{n+2}}$ as well.

\noindent $(iii)$ We satisfy the right Leibniz identity shown in Table \ref{RightCodimTwo(L4)}. We notice that $\r_{e_{n+1}}$ and $\r_{e_{n+2}}$ restricted to the nilradical do not change, but the remaining brackets are as follows:
\begin{equation}
\left\{
\begin{array}{l}
\displaystyle  \nonumber [e_{n+1},e_{1}]=-e_1+b_{2,1}e_2-e_3,[e_{n+1},e_3]=a_{2,3}e_2-2e_3-\sum_{k=5}^n{a_{k,3}e_k},
[e_{n+1},e_4]=3\left(e_2-e_4\right)-\\
\displaystyle \sum_{k=6}^n{a_{k-1,3}e_k},
 [e_{n+1},e_i]=(1-i)e_i-\sum_{k=i+2}^{n}{a_{k-i+3,3}e_k},[e_{n+1},e_{n+1}]=\left(2c_{2,n+2}-2a_{5,3}\right)e_2,\\
\displaystyle [e_{n+1},e_{n+2}]=c_{2,n+2}e_2-\sum_{k=4}^{n-1}{a_{k+1,3}e_{k}}+
c_{n,n+2}e_n,[e_{n+2},e_1]=-e_1+\left(b_{2,1}-\frac{a_{2,3}}{2}\right)e_2,\\
\displaystyle [e_{n+2},e_3]=\frac{a_{2,3}}{2}e_2-e_3-\sum_{k=5}^n{a_{k,3}e_k},
[e_{n+2},e_4]=2\left(e_2-e_4\right)-\sum_{k=6}^n{a_{k-1,3}e_k},\\
\displaystyle [e_{n+2},e_{i}]=(2-i)e_i-\sum_{k=i+2}^{n}{a_{k-i+3,3}e_k},
 (5\leq i\leq n),[e_{n+2},e_{n+1}]=d_{2,n+1}e_2+\sum_{k=4}^{n-1}{a_{k+1,3}e_{k}}-\\
\displaystyle c_{n,n+2}e_n, [e_{n+2},e_{n+2}]=\frac{d_{2,n+1}+a_{5,3}}{2}e_2.
\end{array} 
\right.
\end{equation} 

\begin{table}[h!]
\caption{Right Leibniz identities in the codimension two nilradical $\mathcal{L}^4,(n\geq5)$.}
\label{RightCodimTwo(L4)}
\begin{tabular}{lp{2.4cm}p{12cm}}
\hline
\scriptsize Steps &\scriptsize Ordered triple &\scriptsize
Result\\ \hline
\scriptsize $1.$ &\scriptsize $\r_{e_1}\left([e_{n+1},e_{1}]\right)$ &\scriptsize
$[e_{n+1},e_2]=0$
$\implies$ $c_{2,2}=c_{n,2}=0.$\\ \hline
\scriptsize $2.$ &\scriptsize $\r_{e_1}\left([e_{n+2},e_{1}]\right)$ &\scriptsize
$[e_{n+2},e_2]=0$
$\implies$ $d_{2,2}=d_{n,2}=0.$\\ \hline
\scriptsize $3.$ &\scriptsize $\r_{e_3}\left([e_{n+1},e_{1}]\right)$ &\scriptsize
$c_{2,4}:=3,c_{n,4}:=-a_{n-1,3},$ where $a_{4,3}=0$ 
$\implies$  $[e_{n+1},e_4]=3(e_2-e_4)-\sum_{k=6}^{n}{a_{k-1,3}e_k}.$ \\ \hline
\scriptsize $4.$ &\scriptsize $\r_{e_i}\left([e_{n+1},e_{1}]\right)$ &\scriptsize
$c_{2,i+1}=0,c_{n,i+1}:=-a_{n-i+2,3},(4\leq i\leq n-2),$ where $a_{4,3}=0$ 
$\implies$  $[e_{n+1},e_j]=\left(1-j\right)e_j-\sum_{k=j+2}^{n}{a_{k-j+3,3}e_k},(5\leq j\leq n-1).$ \\ \hline
\scriptsize $5.$ &\scriptsize $\r_{e_{n-1}}\left([e_{n+1},e_{1}]\right)$ &\scriptsize
$c_{2,n}=0,$ $c_{n,n}:=1-n$
$\implies$  $[e_{n+1},e_{n}]=\left(1-n\right)e_{n}.$ Altogether with $4.,$
  $[e_{n+1},e_i]=\left(1-i\right)e_i-\sum_{k=i+2}^{n}{a_{k-i+3,3}e_k},(5\leq i\leq n).$  \\ \hline
 \scriptsize $6.$ &\scriptsize $\r_{e_3}\left([e_{n+2},e_{1}]\right)$ &\scriptsize
$d_{2,4}:=2,d_{n,4}:=-a_{n-1,3},$ where $a_{4,3}=0$ 
$\implies$  $[e_{n+2},e_4]=2(e_2-e_4)-\sum_{k=6}^{n}{a_{k-1,3}e_k}.$   \\ \hline  
\scriptsize $7.$ &\scriptsize $\r_{e_i}\left([e_{n+2},e_{1}]\right)$ &\scriptsize
$d_{2,i+1}=0,d_{n,i+1}:=-a_{n-i+2,3},(4\leq i\leq n-2),$ where $a_{4,3}=0$ 
$\implies$  $[e_{n+2},e_j]=\left(2-j\right)e_j-\sum_{k=j+2}^{n}{a_{k-j+3,3}e_k},(5\leq j\leq n-1).$   \\ \hline
\scriptsize $8.$ &\scriptsize $\r_{e_{n-1}}\left([e_{n+2},e_{1}]\right)$ &\scriptsize
 $d_{2,n}=0,$ $d_{n,n}:=2-n$ 
$\implies$  $[e_{n+2},e_{n}]=\left(2-n\right)e_{n}.$ Combining with $7.,$
  $[e_{n+2},e_i]=\left(2-i\right)e_i-\sum_{k=i+2}^{n}{a_{k-i+3,3}e_k},(5\leq i\leq n).$   \\ \hline
\scriptsize $9.$ &\scriptsize $\r_{e_{n+1}}\left([e_{n+1},e_{n+1}]\right)$ &\scriptsize
$c_{n,n+1}=0$ 
$\implies$  $[e_{n+1},e_{n+1}]=c_{2,n+1}e_2.$   \\ \hline
\scriptsize $10.$ &\scriptsize $\r_{e_{n+2}}\left([e_{n+1},e_{n+2}]\right)$ &\scriptsize
$d_{n,n+2}=0$
$\implies$ $[e_{n+2},e_{n+2}]=d_{2,n+2}e_2.$ \\ \hline
\scriptsize $11.$ &\scriptsize $\r_{e_{n+1}}\left([e_{n+1},e_{3}]\right)$ &\scriptsize
 $c_{2,3}:=a_{2,3}, c_{n,3}:=-a_{n,3}$
$\implies$  $[e_{n+1},e_3]=a_{2,3}e_2-2e_3-\sum_{k=5}^n{a_{k,3}e_k}.$\\ \hline
\scriptsize $12.$ &\scriptsize $\r_{e_{n+2}}\left([e_{n+2},e_{3}]\right)$ &\scriptsize
$d_{2,3}:=\frac{a_{2,3}}{2},$ $d_{n,3}:=-a_{n,3}$ 
$\implies$  $[e_{n+2},e_{3}]=\frac{a_{2,3}}{2}e_2-e_3-\sum_{k=5}^n{a_{k,3}e_k}.$   \\ \hline
\scriptsize $13.$ &\scriptsize $\r_{e_{n+2}}\left([e_{n+1},e_{1}]\right)$ &\scriptsize
$c_{2,1}:=b_{2,1},c_{n,1}=0$
$\implies$  $[e_{n+1},e_{1}]=-e_1+b_{2,1}e_2-e_3$. \\ \hline
\scriptsize $14.$ &\scriptsize $\r_{e_{n+2}}\left([e_{n+2},e_{1}]\right)$ &\scriptsize
$d_{2,1}:=b_{2,1}-\frac{a_{2,3}}{2},d_{n,1}=0$
$\implies$ $[e_{n+2},e_1]=-e_1+\left(b_{2,1}-\frac{a_{2,3}}{2}\right)e_2.$ \\ \hline
\scriptsize $15.$ &\scriptsize $\r_{e_{n+1}}\left([e_{n+1},e_{n+2}]\right)$ &\scriptsize
$c_{2,n+1}:=2c_{2,n+2}-2a_{5,3},d_{n,n+1}:=-c_{n,n+2}$
$\implies$ $[e_{n+1},e_{n+1}]=\left(2c_{2,n+2}-2a_{5,3}\right)e_2,
[e_{n+2},e_{n+1}]=d_{2,n+1}e_2+\sum_{k=4}^{n-1}a_{k+1,3}e_k-c_{n,n+2}e_n.$ \\ \hline
\scriptsize $16.$ &\scriptsize $\r_{e_{n+2}}\left([e_{n+2},e_{n+1}]\right)$ &\scriptsize
$d_{2,n+2}:=\frac{d_{2,n+1}+a_{5,3}}{2}$
$\implies$ $[e_{n+2},e_{n+2}]=\frac{d_{2,n+1}+a_{5,3}}{2}e_2.$\\ \hline
\end{tabular}
\end{table}
Altogether the nilradical $\mathcal{L}^4$ $(\ref{L4}),$ the outer derivations $\r_{e_{n+1}}$ and $\r_{e_{n+2}}$
and the remaining brackets given above define a continuous family of solvable right Leibniz algebras
depending on the parameters. Then we apply the technique of ``absorption'' according to step $(iv)$.
\begin{itemize}[noitemsep, topsep=0pt]
\item First we apply the transformation $e^{\prime}_i=e_i,(1\leq i\leq n,n\geq5),e^{\prime}_{n+1}=e_{n+1}-\frac{c_{2,n+2}}{2}e_2,
e^{\prime}_{n+2}=e_{n+2}-\frac{d_{2,n+1}+a_{5,3}}{4}e_2$
to remove the coefficients $c_{2,n+2}$ and $\frac{d_{2,n+1}+a_{5,3}}{2}$ in front of $e_2$ in
$[e_{n+1},e_{n+2}]$ and $[e_{n+2},e_{n+2}],$ respectively.
This transformation changes the coefficient in front of $e_2$ in $[e_{n+1},e_{n+1}]$
and $[e_{n+2},e_{n+1}]$ to $-2a_{5,3}$ and $-a_{5,3},$ respectively.
\item Then we apply
$e^{\prime}_i=e_i,(1\leq i\leq n,n\geq5),e^{\prime}_{n+1}=e_{n+1}+\frac{a_{5,3}}{2}e_4,
e^{\prime}_{n+2}=e_{n+2}$ to remove the coefficients $-a_{5,3}$ and $-2a_{5,3}$
in front of $e_2$ in $[e_{n+2},e_{n+1}]$ and $[e_{n+1},e_{n+1}]$, respectively. Besides this transformation removes
$a_{5,3}$ and $-a_{5,3}$ in front of $e_4$ in $[e_{n+2},e_{n+1}]$ and  $[e_{n+1},e_{n+2}],$ respectively, and
changes the coefficients in front of $e_{k},(6\leq k\leq n-1)$
in $[e_{n+2},e_{n+1}]$ and $[e_{n+1},e_{n+2}]$ to $a_{k+1,3}-\frac{a_{5,3}}{2}a_{k-1,3}$ and 
$\frac{a_{5,3}}{2}a_{k-1,3}-a_{k+1,3},$ respectively. It also affects the coefficients in front $e_n,(n\geq6)$ in
$[e_{n+1},e_{n+2}]$ and $[e_{n+2},e_{n+1}],$ which we rename back by $c_{n,n+2}$ and $-c_{n,n+2},$
respectively. The following entries are introduced by the transformation: $-\frac{a_{5,3}}{2}$ and $\frac{a_{5,3}}{2}$ in the $(5,1)^{st},(n\geq5)$ position in $\r_{e_{n+1}}$ and $\L_{e_{n+1}},$
respectively.
\item
Applying the transformation $e^{\prime}_j=e_j,(1\leq j\leq n+1,n\geq6),
e^{\prime}_{n+2}=e_{n+2}-\sum_{k=5}^{n-1}{\frac{A_{k+1,3}}{k-1}e_k},$
where $A_{6,3}:=a_{6,3}$ and $A_{k+1,3}:=a_{k+1,3}-\frac{1}{2}a_{5,3}a_{k-1,3}-\sum_{i=7}^k{\frac{A_{i-1,3}a_{k-i+5,3}}{i-3}},$
$(6\leq k\leq n-1,n\geq7),$
we remove the coefficients $a_{6,3}$ and $-a_{6,3}$ in front of $e_5$ in $[e_{n+2},e_{n+1}]$
and $[e_{n+1},e_{n+2}],$ respectively. We also remove $a_{k+1,3}-\frac{a_{5,3}}{2}a_{k-1,3}$ and 
$\frac{a_{5,3}}{2}a_{k-1,3}-a_{k+1,3}$
in front of $e_k,(6\leq k\leq n-1)$ in $[e_{n+2},e_{n+1}]$ and $[e_{n+1},e_{n+2}],$ respectively.
This transformation introduces $\frac{a_{6,3}}{4}$ and $-\frac{a_{6,3}}{4}$ in the $(6,1)^{st},(n\geq6)$ position as well as
$\frac{A_{k+1,3}}{k-1}$ and $\frac{A_{k+1,3}}{1-k}$ in the $(k+1,1)^{st},(6\leq k\leq n-1)$ in $\r_{e_{n+2}}$
and $\L_{e_{n+2}},$ respectively. It also affects the coefficients in front $e_n,(n\geq7)$ in
$[e_{n+1},e_{n+2}]$ and $[e_{n+2},e_{n+1}],$ which we rename back by $c_{n,n+2}$ and $-c_{n,n+2},$
respectively.

\item Finally applying the transformation $e^{\prime}_i=e_i,(1\leq i\leq n+1,n\geq5),
e^{\prime}_{n+2}=e_{n+2}+\frac{c_{n,n+2}}{n-1}e_n,$
we remove $c_{n,n+2}$ and $-c_{n,n+2}$ in front of $e_n$
in $[e_{n+1},e_{n+2}]$ and $[e_{n+2},e_{n+1}],$ respectively,
without affecting other entries. We obtain that $\r_{e_{n+1}}$ and $\r_{e_{n+2}}$ are as follows:
 \end{itemize}
{$$\r_{e_{n+1}}=\left[\begin{smallmatrix}
 1 & 0 & 0 & 0&0&0&\cdots && 0&0 & 0\\
  3b_{2,1}-2a_{2,3} & 4 & a_{2,3}& 1 &0&0 & \cdots &&0  & 0& 0\\
  1 & 0 & 2 & 0 & 0&0 &\cdots &&0 &0& 0\\
  0 & 0 &  0 & 3 &0 &0 &\cdots&&0 &0 & 0\\
 -\frac{a_{5,3}}{2} & 0 & a_{5,3} & 0 & 4  &0&\cdots & &0&0 & 0\\
  0 & 0 &\boldsymbol{\cdot} & a_{5,3} & 0 &5&\cdots & &0&0 & 0\\
    0 & 0 &\boldsymbol{\cdot} & \boldsymbol{\cdot} & \ddots &0&\ddots &&\vdots&\vdots &\vdots\\
  \vdots & \vdots & \vdots &\vdots &  &\ddots&\ddots &\ddots &\vdots&\vdots & \vdots\\
 0 & 0 & a_{n-2,3}& a_{n-3,3}& \cdots&\cdots &a_{5,3}&0&n-3 &0& 0\\
0 & 0 & a_{n-1,3}& a_{n-2,3}& \cdots&\cdots &\boldsymbol{\cdot}&a_{5,3}&0 &n-2& 0\\
 0 & 0 & a_{n,3}& a_{n-1,3}& \cdots&\cdots &\boldsymbol{\cdot}&\boldsymbol{\cdot}&a_{5,3} &0& n-1
\end{smallmatrix}\right],$$}
$$\r_{e_{n+2}}=\left[\begin{smallmatrix}
 1 & 0 & 0 & 0&0&0&0&\cdots && 0&0 & 0\\
b_{2,1}-\frac{a_{2,3}}{2} & 2 & \frac{a_{2,3}}{2}& 0 &0&0 & 0&\cdots &&0  & 0& 0\\
  0 & 0 & 1 & 0 & 0&0 &0&\cdots &&0 &0& 0\\
  0 & 0 &  0 & 2 &0 &0 &0&\cdots&&0 &0 & 0\\
  0 & 0 & a_{5,3} & 0 & 3  &0&0&\cdots & &0&0 & 0\\
\frac{a_{6,3}}{4} & 0 &\boldsymbol{\cdot} & a_{5,3} & 0 &4&0&\cdots & &0&0 & 0\\
  \frac{A_{7,3}}{5} & 0 &\boldsymbol{\cdot} & \boldsymbol{\cdot} & a_{5,3}&0 &5& &&\vdots&\vdots &\vdots\\
      \frac{A_{8,3}}{6} & 0 &\boldsymbol{\cdot} & \boldsymbol{\cdot} &\boldsymbol{\cdot} &a_{5,3}&0 &\ddots&&\vdots&\vdots &\vdots\\
  \vdots & \vdots & \vdots &\vdots &  &&\ddots&\ddots &\ddots &\vdots&\vdots & \vdots\\
  \frac{A_{n-2,3}}{n-4} & 0 & a_{n-2,3}& a_{n-3,3}& \cdots&\cdots&\cdots &a_{5,3}&0&n-4 &0& 0\\
    \frac{A_{n-1,3}}{n-3} & 0 & a_{n-1,3}& a_{n-2,3}& \cdots&\cdots&\cdots &\boldsymbol{\cdot}&a_{5,3}&0 &n-3& 0\\
 \frac{A_{n,3}}{n-2} & 0 & a_{n,3}&a_{n-1,3}& \cdots&\cdots &\cdots&\boldsymbol{\cdot}&\boldsymbol{\cdot}&a_{5,3} &0& n-2
\end{smallmatrix}\right],(n\geq5).$$
The remaining brackets are given below:
\begin{equation}
\left\{
\begin{array}{l}
\displaystyle  \nonumber [e_{n+1},e_{1}]=-e_1+b_{2,1}e_2-e_3+\frac{a_{5,3}}{2}e_5,[e_{n+1},e_3]=a_{2,3}e_2-2e_3-\sum_{k=5}^n{a_{k,3}e_k},\\
\displaystyle [e_{n+1},e_4]=3\left(e_2-e_4\right)-\sum_{k=6}^n{a_{k-1,3}e_k},
 [e_{n+1},e_i]=(1-i)e_i-\sum_{k=i+2}^{n}{a_{k-i+3,3}e_k},\\
\displaystyle [e_{n+2},e_1]=-e_1+\left(b_{2,1}-\frac{a_{2,3}}{2}\right)e_2-\sum_{k=6}^n{\frac{A_{k,3}}{k-2}e_k},[e_{n+2},e_3]=\frac{a_{2,3}}{2}e_2-e_3-\sum_{k=5}^n{a_{k,3}e_k},\\
\displaystyle 
[e_{n+2},e_4]=2\left(e_2-e_4\right)-\sum_{k=6}^n{a_{k-1,3}e_k},[e_{n+2},e_{i}]=(2-i)e_i-\sum_{k=i+2}^{n}{a_{k-i+3,3}e_k},(5\leq i\leq n).
\end{array} 
\right.
\end{equation} 
\noindent $(v)$ Finally we apply the following two change of basis transformations:
\begin{itemize}[noitemsep, topsep=0pt]
\item $e^{\prime}_1=e_1-\left(b_{2,1}-\frac{a_{2,3}}{2}\right)e_2,e^{\prime}_2=e_2,
e^{\prime}_3=e_3-\frac{a_{2,3}}{2}e_2,
e^{\prime}_i=e_i-\sum_{k=i+2}^n{\frac{B_{k-i+3,3}}{k-i}e_k},$
$(3\leq i\leq n-2),e^{\prime}_{n-1}=e_{n-1},e^{\prime}_{n}=e_{n},e^{\prime}_{n+1}=e_{n+1},e^{\prime}_{n+2}=e_{n+2},$
where $B_{j,3}:=a_{j,3}-\sum_{k=7}^j{\frac{B_{k-2,3}a_{j-k+5,3}}{k-5}},(5\leq j\leq n)$.\\
This transformation removes $a_{5,3},a_{6,3},...,a_{n,3}$ in $\r_{e_{n+1}},$ $\r_{e_{n+2}}$
and $-a_{5,3},-a_{6,3},...,-a_{n,3}$ in $\L_{e_{n+1}},\L_{e_{n+2}}.$ It also removes $-\frac{a_{5,3}}{2}$
and $\frac{a_{5,3}}{2}$ from the $(5,1)^{st}$ positions in $\r_{e_{n+1}}$ and $\L_{e_{n+1}},$ respectively.
Besides it removes $3b_{2,1}-2a_{2,3}$ and $b_{2,1}$ from the $(2,1)^{st}$ positions in $\r_{e_{n+1}}$
and $\L_{e_{n+1}},$ respectively, as well as $b_{2,1}-\frac{a_{2,3}}{2}$ from the $(2,1)^{st}$ positions in $\r_{e_{n+2}}$
and $\L_{e_{n+2}}.$
The transformation also removes $a_{2,3}$ from the $(2,3)^{rd}$ positions in $\r_{e_{n+1}},$ $\L_{e_{n+1}}$ and
$\frac{a_{2,3}}{2}$ from the same positions in $\r_{e_{n+2}},$ $\L_{e_{n+2}}.$
This transformation introduces the entries in the $(i,1)^{st}$ positions in $\r_{e_{n+1}}$ and $\L_{e_{n+1}},$
which we set to be $a_{i,1}$ and $-a_{i,1},(6\leq i\leq n),$ respectively. The transformation affects the entries in the $(j,1)^{st},(8\leq j\leq n)$
positions in $\r_{e_{n+2}}$ and $\L_{e_{n+2}},$ but we rename all the entries in the $(i,1)^{st}$ positions by $\frac{i-3}{i-2}a_{i,1}$ in $\r_{e_{n+2}}$ and by $\frac{3-i}{i-2}a_{i,1},(6\leq i\leq n)$ in $\L_{e_{n+2}}.$
\item Finally applying the transformation $e_k^{\prime}=e_k,(1\leq k\leq n),e^{\prime}_{n+1}=e_{n+1}+\sum_{k=5}^{n-1}{a_{k+1,1}e_k},$

$e^{\prime}_{n+2}=e_{n+2}+\sum_{k=5}^{n-1}{\frac{k-2}{k-1}a_{k+1,1}e_k},$
we remove $a_{i,1}$ in $\r_{e_{n+1}}$ and $-a_{i,1}$ in $\L_{e_{n+1}}$ as well as
$\frac{i-3}{i-2}a_{i,1}$ and $\frac{3-i}{i-2}a_{i,1},(6\leq i\leq n)$ in $\r_{e_{n+2}}$ and $\L_{e_{n+2}},$ respectively. 
\end{itemize}
We obtain a Leibniz algebra $\g_{n+2,1}$ given below:
\begin{equation}
\left\{
\begin{array}{l}
\displaystyle  \nonumber [e_1,e_{n+1}]=e_1+e_3,[e_2,e_{n+1}]=4e_2,[e_3,e_{n+1}]=2e_3,
[e_4,e_{n+1}]=e_2+3e_4,\\
\displaystyle  [e_{i},e_{n+1}]=(i-1)e_i,[e_{n+1},e_{1}]=-e_1-e_3,[e_{n+1},e_3]=-2e_3, [e_{n+1},e_4]=3(e_2-e_4),\\
\displaystyle [e_{n+1},e_{i}]=(1-i)e_i,[e_{1},e_{n+2}]=e_1,[e_2,e_{n+2}]=2e_2,[e_j,e_{n+2}]=(j-2)e_j,(3\leq j\leq n),\\
\displaystyle [e_{n+2},e_1]=-e_1,[e_{n+2},e_3]=-e_3,[e_{n+2},e_4]=2(e_2-e_4),[e_{n+2},e_i]=(2-i)e_i,(5\leq i\leq n).\end{array} 
\right.
\end{equation} 
 We summarize a result 
in the following theorem: 
 \begin{theorem}\label{RCodim2L4} There are four solvable
indecomposable right Leibniz algebras up to isomorphism with a codimension two nilradical
$\mathcal{L}^4,(n\geq4),$ which are given below:
\begin{equation}
\begin{array}{l}
\displaystyle  \nonumber (i)\,\g_{n+2,1}: [e_1,e_{n+1}]=e_1+e_3,[e_2,e_{n+1}]=4e_2,[e_3,e_{n+1}]=2e_3,
[e_4,e_{n+1}]=e_2+3e_4,\\
\displaystyle  [e_{i},e_{n+1}]=(i-1)e_i,[e_{n+1},e_{1}]=-e_1-e_3,[e_{n+1},e_3]=-2e_3, [e_{n+1},e_4]=3(e_2-e_4),\\
\displaystyle [e_{n+1},e_{i}]=(1-i)e_i,[e_{1},e_{n+2}]=e_1,[e_2,e_{n+2}]=2e_2,[e_j,e_{n+2}]=(j-2)e_j,(3\leq j\leq n),\\
\displaystyle [e_{n+2},e_1]=-e_1,[e_{n+2},e_3]=-e_3,[e_{n+2},e_4]=2(e_2-e_4),[e_{n+2},e_i]=(2-i)e_i,(5\leq i\leq n),\\
\displaystyle (n\geq5),\\
\displaystyle  (ii)\,\g_{6,2}: [e_1,e_5]=e_1,[e_2,e_5]=2e_2,[e_3,e_5]=e_3,[e_4,e_5]=2e_4,[e_{5},e_{1}]=-e_1,[e_{5},e_3]=-e_3,\\
\displaystyle 
[e_{5},e_4]=2\left(e_2-e_4\right), [e_1,e_6]=e_1+be_3,[e_2,e_6]=2(b+1)e_2,[e_3,e_6]=e_1+be_3,\\
\displaystyle[e_4,e_6]=(b+1)\left(e_2+e_4\right),
[e_{6},e_1]=-e_1-be_3,[e_{6},e_{3}]=-e_1-be_3, [e_{6},e_4]=(b+1)\left(e_2-e_4\right),\\
\displaystyle (b\neq-1),\\
\displaystyle (iii)\, \g_{6,3}: [e_1,e_5]=e_1+(c+1)e_3,[e_2,e_5]=2(c+2)e_2,[e_3,e_5]=ce_1+2e_3,\\
\displaystyle [e_4,e_5]=(2c+1)e_2+3e_4,[e_{5},e_{1}]=-e_1-(c+1)e_3,[e_{5},e_3]=-ce_1-2e_3,[e_{5},e_4]=3\left(e_2-e_4\right),\\
\displaystyle [e_1,e_6]=e_1,[e_2,e_6]=2e_2,[e_3,e_6]=e_3,[e_4,e_6]=2e_4, [e_{6},e_1]=-e_1,[e_{6},e_{3}]=-e_3,\\
\displaystyle  [e_{6},e_4]=2\left(e_2-e_4\right),(c\neq-2),\\
\displaystyle  (iv)\,\g_{6,4}: [e_1,e_5]=e_1,[e_2,e_5]=2e_2,[e_3,e_5]=e_3,[e_4,e_5]=2e_4,[e_{5},e_{1}]=-e_1,[e_{5},e_3]=-e_3,\\
\displaystyle [e_{5},e_4]=2\left(e_2-e_4\right),[e_1,e_6]=be_1+(1-b)e_3,[e_2,e_6]=2e_2,[e_3,e_6]=e_1,[e_4,e_6]=(2-b)e_2+\\
\displaystyle be_4,[e_{6},e_1]=-be_1+(b-1)e_3,[e_{6},e_{3}]=-e_1,[e_{6},e_4]=b\left(e_2-e_4\right),(b\neq0).
\end{array} 
\end{equation} 
\end{theorem}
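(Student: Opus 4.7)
The plan is to follow the five-step general approach outlined just before the statement: for each of the codimension-two cases ($n=4$ split into the four parameter regimes (1)--(4), and $n\geq5$), fix a convenient basis of the two-dimensional complement to $\n\i\l(\g)=\mathcal{L}^4$, impose that $\r_{[e_{n+1},e_{n+2}]}$ is an inner derivation modulo $\langle e_2,e_n\rangle$, satisfy the right Leibniz identity, then absorb and change basis. The outer derivations $\r_{e_{n+1}},\r_{e_{n+2}}$ are already listed in Theorems \ref{TheoremRL4Absorption}, so step (i) reduces to linear algebra on those matrix families.

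For $n\geq5$ I would first exploit nil-independence: since the diagonal content of each outer derivation in case (1) of Theorem \ref{TheoremRL4Absorption} is a nonzero multiple of $(a,2b,b,a+b,2a+b,\ldots,(n-3)a+b)$, after a linear change one may take $(a,b)=(1,2)$ and $(\alpha,\beta)=(1,1)$. Evaluating $[\r_{e_{n+2}},\r_{e_{n+1}}]$ and matching against $c_1\r_{e_1}+\sum_{k=3}^{n-1}c_k\r_{e_k}$ (as in the displayed matrices in the excerpt) forces $\alpha_{i,3}=a_{i,3}$, $\alpha_{2,3}=\tfrac12 a_{2,3}$, $\beta_{2,1}=b_{2,1}-\tfrac12 a_{2,3}$, which already fixes $\r_{e_{n+2}}$. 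Then I would write down the commutators $[e_{n+j},e_k]$ up to central terms and grind through the right Leibniz identities following exactly the scheme of Table \ref{RightCodimTwo(L4)} (this is essentially bookkeeping once the format is set). After that the absorption steps described in the paragraph preceding the theorem strip off $a_{5,3},\ldots,a_{n,3}$, the parameters $c_{2,n+2},d_{2,n+1}$, and the center coefficients $c_{n,n+2}$, by the chain of four transformations of the form $e^{\prime}_{n+j}=e_{n+j}+\text{(correction)}$; finally one triangular change of basis $e'_i=e_i-\sum\frac{B_{k-i+3,3}}{k-i}e_k$ kills all remaining $a_{k,3}$ in the nilradical rows, giving $\g_{n+2,1}$.

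For $n=4$ the argument branches on the four parameter regimes. In cases (2), (3), (4) the main claim is a nonexistence statement: the two outer derivations $\r_{e_5},\r_{e_6}$ cannot be nil-independent. Using the explicit matrices from Theorem \ref{TheoremRL4Absorption}, the equation $\r_{[e_5,e_6]}\in\langle\r_{e_1},\r_{e_3}\rangle$ reduces (after reading off the $(1,1)$ and $(1,3)$ entries of the commutator) to the single determinantal condition $a\gamma-\alpha c=0$ (respectively $b\gamma-\beta c=0$ in case (4)), which contradicts linear independence of the column vectors parametrising the two derivations. In case (1) the argument splits further into subcases (1)(a)--(1)(c) depending on which two of the three diagonal directions $(a,b,c)=(1,a,0),(1,b,1),(1,2,c),(1,1,d),(a,1,0),(b,0,1)$ one picks; for each I would carry out steps (i)--(v) in parallel with the $n\geq5$ template, the only nontrivial additional feature being the appearance of the $e_4$-direction (since $n=4$ makes $e_2$ and $e_4$ the natural central targets in place of $e_2,e_n$). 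The resulting three algebras are $\g_{6,2},\g_{6,3},\g_{6,4}$.

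The main obstacle is keeping the nil-independence bookkeeping straight in case (1) for $n=4$: one must show that the three normalisations (1)(a), (1)(b), (1)(c) exhaust all inequivalent choices of a 2-plane in the outer derivation space, and that no further identifications between the three resulting algebras occur (the parameters $b$ in $\g_{6,2},\g_{6,4}$ and $c$ in $\g_{6,3}$ cannot be scaled away without breaking the normalisations). Everything else, in particular the Leibniz-identity tables and the absorption computations, is mechanical and is the ``Maple'' step referred to in the Introduction.
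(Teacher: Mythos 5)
Your proposal follows essentially the same route as the paper: the same normalizations of the two outer derivations (including the identical subcases (1)(a)--(c) for $n=4$ and the choice $(a,b)=(1,2)$, $(\alpha,\beta)=(1,1)$ for $n\geq5$), the same closure condition $\r_{[e_{n+1},e_{n+2}]}=[\r_{e_{n+2}},\r_{e_{n+1}}]\in\langle\r_{e_1},\r_{e_3},\ldots\rangle$ yielding both the determinantal obstructions $a\gamma-\alpha c=0$ in cases (2)--(4) and the constraints $\alpha_{i,3}=a_{i,3}$, $\alpha_{2,3}=\tfrac12 a_{2,3}$, $\beta_{2,1}=b_{2,1}-\tfrac12 a_{2,3}$, followed by the same Leibniz-identity tables, absorption, and change of basis. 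The exhaustiveness/non-redundancy point you flag for the $n=4$ subcases is treated at the same (largely implicit) level of detail in the paper itself, so your plan is faithful to the published argument.
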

\paragraph{Codimension three solvable extensions of $\mathcal{L}^4,(n=4)$}\label{Codim3}
Considering $\r_{[e_5,e_6]},\r_{[e_5,e_7]}$ and $\r_{[e_6,e_7]}$ and comparing with $c_1\r_{e_1}+c_3\r_{e_3},$
where 
$$\r_{e_{i}}=\begin{array}{llll} \left[\begin{matrix}
 a_i & 0 & c_i & 0\\
\frac{(3a_i-2b_i-3c_i){a_i}_{2,3}+(a_i-2b_i-3c_i){b_i}_{2,3}}{2a_i} & 2(b_i+c_i) &{a_i}_{2,3}& 2c_i+b_i-a_i \\
 b_i+c_i-a_i & 0 &b_i & 0\\
  0 & 0 &  0 &a_i+b_i
\end{matrix}\right]
\end{array},(5\leq i\leq7),$$
we have to have, respectively, that 
\begin{equation}\nonumber a_1c_2-a_2c_1-b_1c_2+b_2c_1=0,
a_1c_3-a_3c_1-b_1c_3+b_3c_1=0,
a_2c_3-a_3c_2-b_2c_3+b_3c_2=0.
\end{equation} 
However, it means that 
 $\left(
\begin{array}{c}
  a_1\\
 b_1\\
 c_1
\end{array}\right),\left(
\begin{array}{c}
  a_2\\
 b_2\\
 c_2
\end{array}\right)$ and $\left(
\begin{array}{c}
  a_3\\
 b_3\\
 c_3
\end{array}\right)$ are linearly dependent: one could see that
considering 
$\det\begin{pmatrix} a_1 & a_2 &a_3 \\ 
b_1 & b_2 &b_3\\
c_1 &c_2 &c_3
 \end{pmatrix}=a_1\left(b_2c_3-b_3c_2\right)-a_2\left(b_1c_3-b_3c_1\right)+a_3\left(b_1c_2-b_2c_1\right)=
 a_1\left(a_2c_3-a_3c_2\right)-a_2\left(a_1c_3-a_3c_1\right)+a_3\left(a_1c_2-a_2c_1\right)=0.$

\subsection{Solvable indecomposable left Leibniz algebras with a nilradical $\mathcal{L}^4$}
\subsubsection{Codimension one solvable extensions of $\mathcal{L}^4$}
Classification follows the same steps in theorems with the same cases per step.
\begin{theorem}\label{TheoremLL4} We set $a_{1,1}:=a$ and $a_{3,3}:=b$ in $(\ref{BRLeibniz})$. To satisfy the left Leibniz identity, there are the following cases based on the conditions involving parameters,
each gives a continuous family of solvable Leibniz algebras:
\begin{enumerate}[noitemsep, topsep=0pt]
\item[(1)] If $a_{1,3}=0, b\neq-a,a\neq0,b\neq0,(n=4)$ or $b\neq(3-n)a,a\neq0,b\neq0,(n\geq5),$ then we have the following brackets for the algebra:
\begin{equation}
\left\{
\begin{array}{l}
\displaystyle  \nonumber [e_1,e_{n+1}]=ae_1+a_{2,1}e_2+(b-a)e_3+\sum_{k=4}^n{a_{k,1}e_k},
[e_3,e_{n+1}]=a_{2,3}e_2+be_3+ \sum_{k=4}^n{a_{k,3}e_k},\\
\displaystyle[e_4,e_{n+1}]=(a+b)(e_4-e_2)+\sum_{k=5}^n{a_{k-1,3}e_k},
[e_{i},e_{n+1}]=\left((i-3)a+b\right)e_{i}+\sum_{k=i+1}^n{a_{k-i+3,3}e_k},\\
\displaystyle 
[e_{n+1},e_{n+1}]=a_{2,n+1}e_2,[e_{n+1},e_1]=-ae_1+B_{2,1}e_2+(a-b)e_3-\sum_{k=4}^n{a_{k,1}e_k},\\
\displaystyle [e_{n+1},e_2]=-2be_2,[e_{n+1},e_3]=(a_{2,3}+2a_{4,3})e_2-be_3-\sum_{k=4}^n{a_{k,3}e_k},[e_{n+1},e_4]=(a-b)e_2-\\
\displaystyle (a+b)e_4-\sum_{k=5}^n{a_{k-1,3}e_k},[e_{n+1},e_i]=\left((3-i)a-b\right)e_i-\sum_{k=i+1}^n{a_{k-i+3,3}e_k},(5\leq i\leq n),\\
\displaystyle where\,\,B_{2,1}:=\frac{(2b-a)a_{2,1}+2b\cdot a_{4,1}+2(a-b)(a_{2,3}+a_{4,3})}{a},
\end{array} 
\right.
\end{equation} 
$$\L_{e_{n+1}}=\left[\begin{smallmatrix}
 -a & 0 & 0 & 0&0&&\cdots &0& \cdots&0 & 0&0 \\
  B_{2,1} & -2b & a_{2,3}+2a_{4,3}& a-b &0& & \cdots &0&\cdots  & 0& 0&0\\
  a-b & 0 & -b & 0 & 0& &\cdots &0&\cdots &0& 0&0 \\
  -a_{4,1} & 0 &  -a_{4,3} & -a-b &0 & &\cdots&0&\cdots &0 & 0&0\\
  -a_{5,1} & 0 & -a_{5,3} & -a_{4,3} & -2a-b  &&\cdots &0 &\cdots&0 & 0&0 \\
 \boldsymbol{\cdot} & \boldsymbol{\cdot} & \boldsymbol{\cdot} & -a_{5,3} & -a_{4,3}  &\ddots& &\vdots &&\vdots & \vdots&\vdots \\
  \vdots & \vdots & \vdots &\vdots &\vdots  &\ddots& \ddots&\vdots &&\vdots & \vdots&\vdots \\
  -a_{i,1} & 0 & -a_{i,3} & -a_{i-1,3} & -a_{i-2,3}  &\cdots&-a_{4,3}& (3-i)a-b&\cdots&0 & 0&0\\
   \vdots  & \vdots  & \vdots &\vdots &\vdots&&\vdots &\vdots &&\vdots  & \vdots&\vdots \\
 -a_{n-1,1} & 0 & -a_{n-1,3}& -a_{n-2,3}& -a_{n-3,3}&\cdots &-a_{n-i+3,3} &-a_{n-i+2,3}&\cdots&-a_{4,3} &(4-n)a-b& 0\\
 -a_{n,1} & 0 & -a_{n,3}& -a_{n-1,3}& -a_{n-2,3}&\cdots &-a_{n-i+4,3}  &-a_{n-i+3,3}&\cdots&-a_{5,3} &-a_{4,3}& (3-n)a-b
\end{smallmatrix}\right].$$
\item[(2)] If $a_{1,3}=0,b:=-a,a\neq0,(n=4)$ or $b:=(3-n)a,a\neq0,(n\geq5),$
then the brackets for the algebra are  
\begin{equation}
\left\{
\begin{array}{l}
\displaystyle  \nonumber [e_1,e_{n+1}]=ae_1+a_{2,1}e_2-(n-2)ae_3+\sum_{k=4}^n{a_{k,1}e_k},
[e_3,e_{n+1}]=a_{2,3}e_2+(3-n)ae_3+\\
\displaystyle  \sum_{k=4}^n{a_{k,3}e_k},[e_4,e_{n+1}]=(n-4)a(e_2-e_4)+\sum_{k=5}^n{a_{k-1,3}e_k},
[e_{i},e_{n+1}]=\left(i-n\right)ae_{i}+\\
\displaystyle \sum_{k=i+1}^n{a_{k-i+3,3}e_k},
[e_{n+1},e_{n+1}]=a_{2,n+1}e_2+a_{n,n+1}e_n,[e_{n+1},e_1]=-ae_1+B_{2,1}e_2+\\
\displaystyle (n-2)ae_3-\sum_{k=4}^n{a_{k,1}e_k},[e_{n+1},e_2]=(2n-6)ae_2,[e_{n+1},e_3]=(a_{2,3}+2a_{4,3})e_2+\\
\displaystyle (n-3)ae_3-\sum_{k=4}^n{a_{k,3}e_k},[e_{n+1},e_4]=(n-2)ae_2+(n-4)ae_4-\sum_{k=5}^n{a_{k-1,3}e_k},\\
\displaystyle [e_{n+1},e_i]=\left(n-i\right)ae_i-\sum_{k=i+1}^n{a_{k-i+3,3}e_k},(5\leq i\leq n),\\
\displaystyle where\,\,B_{2,1}:=(5-2n)a_{2,1}+(6-2n)a_{4,1}+2(n-2)(a_{2,3}+a_{4,3}),
\end{array} 
\right.
\end{equation}
$$\L_{e_{n+1}}=\left[\begin{smallmatrix}
 -a & 0 & 0 & 0&0&&\cdots &0& \cdots&0 & 0&0 \\
  B_{2,1} & (2n-6)a & a_{2,3}+2a_{4,3}& (n-2)a &0& & \cdots &0&\cdots  & 0& 0&0\\
  (n-2)a & 0 & (n-3)a & 0 & 0& &\cdots &0&\cdots &0& 0&0 \\
  -a_{4,1} & 0 &  -a_{4,3} & (n-4)a &0 & &\cdots&0&\cdots &0 & 0&0\\
  -a_{5,1} & 0 & -a_{5,3} & -a_{4,3} & (n-5)a  &&\cdots &0 &\cdots&0 & 0&0 \\
 \boldsymbol{\cdot} & \boldsymbol{\cdot} & \boldsymbol{\cdot} & -a_{5,3} & -a_{4,3}  &\ddots& &\vdots &&\vdots & \vdots&\vdots \\
  \vdots & \vdots & \vdots &\vdots &\vdots  &\ddots& \ddots&\vdots &&\vdots & \vdots&\vdots \\
  -a_{i,1} & 0 & -a_{i,3} & -a_{i-1,3} & -a_{i-2,3}  &\cdots&-a_{4,3}& (n-i)a&\cdots&0 & 0&0\\
   \vdots  & \vdots  & \vdots &\vdots &\vdots&&\vdots &\vdots &&\vdots  & \vdots&\vdots \\
 -a_{n-1,1} & 0 & -a_{n-1,3}& -a_{n-2,3}& -a_{n-3,3}&\cdots &-a_{n-i+3,3} &-a_{n-i+2,3}&\cdots&-a_{4,3} &a& 0\\
 -a_{n,1} & 0 & -a_{n,3}& -a_{n-1,3}& -a_{n-2,3}&\cdots &-a_{n-i+4,3}  &-a_{n-i+3,3}&\cdots&-a_{5,3} &-a_{4,3}&0
\end{smallmatrix}\right].$$
 \item[(3)] If $a_{1,3}=0,a=0,b\neq0,(n=4)$ or $a=0$ and $b\neq0,(n\geq5),$ then
\begin{equation}
\left\{
\begin{array}{l}
\displaystyle  \nonumber [e_1,e_{n+1}]=\left(a_{2,3}+a_{4,3}-a_{4,1}\right)e_2+be_3+\sum_{k=4}^n{a_{k,1}e_k},
[e_3,e_{n+1}]=a_{2,3}e_2+be_3+\sum_{k=4}^n{a_{k,3}e_k},\\
\displaystyle [e_4,e_{n+1}]=b\left(e_4-e_2\right)+\sum_{k=5}^n{a_{k-1,3}e_k},
[e_{i},e_{n+1}]=be_{i}+\sum_{k=i+1}^n{a_{k-i+3,3}e_k},\\
\displaystyle [e_{n+1},e_{n+1}]=a_{2,n+1}e_2, 
[e_{n+1},e_1]=b_{2,1}e_2-be_3-\sum_{k=4}^n{a_{k,1}e_k},[e_{n+1},e_2]=-2be_2,\\
\displaystyle [e_{n+1},e_3]=(a_{2,3}+2a_{4,3})e_2-be_3-\sum_{k=4}^n{a_{k,3}e_k},[e_{n+1},e_4]=-b\left(e_2+e_4\right)-\sum_{k=5}^n{a_{k-1,3}e_k},\\
\displaystyle [e_{n+1},e_i]=-be_i-\sum_{k=i+1}^n{a_{k-i+3,3}e_k},(5\leq i\leq n),
\end{array} 
\right.
\end{equation} 
$$\L_{e_{n+1}}=\left[\begin{smallmatrix}
 0 & 0 & 0 & 0&0&&\cdots &0& \cdots&0 & 0&0 \\
  b_{2,1} & -2b & a_{2,3}+2a_{4,3}&-b &0& & \cdots &0&\cdots  & 0& 0&0\\
  -b & 0 & -b & 0 & 0& &\cdots &0&\cdots &0& 0&0 \\
  -a_{4,1} & 0 &  -a_{4,3} & -b &0 & &\cdots&0&\cdots &0 & 0&0\\
  -a_{5,1} & 0 & -a_{5,3} & -a_{4,3} & -b  &&\cdots &0 &\cdots&0 & 0&0 \\
 \boldsymbol{\cdot} & \boldsymbol{\cdot} & \boldsymbol{\cdot} & -a_{5,3} & -a_{4,3}  &\ddots& &\vdots &&\vdots & \vdots&\vdots \\
  \vdots & \vdots & \vdots &\vdots &\vdots  &\ddots& \ddots&\vdots &&\vdots & \vdots&\vdots \\
  -a_{i,1} & 0 & -a_{i,3} & -a_{i-1,3} & -a_{i-2,3}  &\cdots&-a_{4,3}& -b&\cdots&0 & 0&0\\
   \vdots  & \vdots  & \vdots &\vdots &\vdots&&\vdots &\vdots &&\vdots  & \vdots&\vdots \\
 -a_{n-1,1} & 0 & -a_{n-1,3}& -a_{n-2,3}& -a_{n-3,3}&\cdots &-a_{n-i+3,3} &-a_{n-i+2,3}&\cdots&-a_{4,3} &-b& 0\\
 -a_{n,1} & 0 & -a_{n,3}& -a_{n-1,3}& -a_{n-2,3}&\cdots &-a_{n-i+4,3}  &-a_{n-i+3,3}&\cdots&-a_{5,3} &-a_{4,3}&-b
\end{smallmatrix}\right].$$
   \allowdisplaybreaks
\item[(4)] If $a_{1,3}=0,b=0,a\neq0,(n=4)$ or $b=0,a\neq0,(n\geq5),$ then
\begin{equation}
\left\{
\begin{array}{l}
\displaystyle  \nonumber [e_1,e_{n+1}]=ae_1+a_{2,1}e_2-ae_3+\sum_{k=4}^n{a_{k,1}e_k},
[e_3,e_{n+1}]=a_{2,3}e_2+\sum_{k=4}^n{a_{k,3}e_k},\\
\displaystyle[e_4,e_{n+1}]=a(e_4-e_2)+\sum_{k=5}^n{a_{k-1,3}e_k},
[e_{i},e_{n+1}]=(i-3)ae_{i}+\sum_{k=i+1}^n{a_{k-i+3,3}e_k},\\
\displaystyle 
[e_{n+1},e_{n+1}]=a_{2,n+1}e_2,[e_{n+1},e_1]=-ae_1+\left(a_{2,3}-a_{2,1}+b_{2,3}\right)e_2+ae_3-\sum_{k=4}^n{a_{k,1}e_k},\\
\displaystyle [e_{n+1},e_3]=b_{2,3}e_2-\sum_{k=4}^n{a_{k,3}e_k},[e_{n+1},e_4]=a(e_2-e_4)-\sum_{k=5}^n{a_{k-1,3}e_k},\\
\displaystyle [e_{n+1},e_i]=(3-i)ae_i-\sum_{k=i+1}^n{a_{k-i+3,3}e_k},(5\leq i\leq n),
\end{array} 
\right.
\end{equation} 
$$\L_{e_{n+1}}=\left[\begin{smallmatrix}
 -a & 0 & 0 & 0&0&&\cdots &0& \cdots&0 & 0&0 \\
  a_{2,3}-a_{2,1}+b_{2,3} & 0 & b_{2,3}& a &0& & \cdots &0&\cdots  & 0& 0&0\\
  a & 0 & 0 & 0 & 0& &\cdots &0&\cdots &0& 0&0 \\
  -a_{4,1} & 0 &  -a_{4,3} & -a &0 & &\cdots&0&\cdots &0 & 0&0\\
  -a_{5,1} & 0 & -a_{5,3} & -a_{4,3} & -2a  &&\cdots &0 &\cdots&0 & 0&0 \\
 \boldsymbol{\cdot} & \boldsymbol{\cdot} & \boldsymbol{\cdot} & -a_{5,3} & -a_{4,3}  &\ddots& &\vdots &&\vdots & \vdots&\vdots \\
  \vdots & \vdots & \vdots &\vdots &\vdots  &\ddots& \ddots&\vdots &&\vdots & \vdots&\vdots \\
  -a_{i,1} & 0 & -a_{i,3} & -a_{i-1,3} & -a_{i-2,3}  &\cdots&-a_{4,3}& (3-i)a&\cdots&0 & 0&0\\
   \vdots  & \vdots  & \vdots &\vdots &\vdots&&\vdots &\vdots &&\vdots  & \vdots&\vdots \\
 -a_{n-1,1} & 0 & -a_{n-1,3}& -a_{n-2,3}& -a_{n-3,3}&\cdots &-a_{n-i+3,3} &-a_{n-i+2,3}&\cdots&-a_{4,3} &(4-n)a& 0\\
 -a_{n,1} & 0 & -a_{n,3}& -a_{n-1,3}& -a_{n-2,3}&\cdots &-a_{n-i+4,3}  &-a_{n-i+3,3}&\cdots&-a_{5,3} &-a_{4,3}& (3-n)a
\end{smallmatrix}\right].$$ 
In the remaining cases $a_{1,3}:=c.$
\item[(5)] If $b\neq-a,a\neq0,b\neq-c,c\neq0,$ then
\begin{equation}
\left\{
\begin{array}{l}
\displaystyle  \nonumber [e_1,e_{5}]=ae_1+a_{2,1}e_2+(b+c-a)e_3+a_{4,1}e_4,
[e_3,e_{5}]=ce_1+a_{2,3}e_2+be_3+A_{4,3}e_4,\\
\displaystyle [e_4,e_{5}]=(a+b)(e_4-e_2),[e_{5},e_{5}]=a_{2,5}e_2,[e_{5},e_1]=-ae_1+B_{2,1}e_2+(a-b-c)e_3-a_{4,1}e_4,\\
\displaystyle [e_5,e_{2}]=-2(b+c)e_2,[e_{5},e_3]=-ce_1+b_{2,3}e_2-be_3-A_{4,3}e_4,[e_{5},e_4]=(a-2c-b)e_2-\\
\displaystyle (a+b)e_4; B_{2,1}:=\frac{(2b-a+2c)a_{2,1}+2(b+c)a_{4,1}+(a-b-c)(a_{2,3}+b_{2,3})}{a}\,\,and \\
\displaystyle 
A_{4,3}:=\frac{2c(a_{2,1}+a_{4,1})-(a+c)a_{2,3}+(a-c)b_{2,3}}{2a},
\end{array} 
\right.
\end{equation} 
$\L_{e_{5}}=\left[\begin{smallmatrix}
 -a & 0 & -c & 0\\
  B_{2,1} & -2(b+c) & b_{2,3}& a-2c-b \\
  a-b-c & 0 & -b & 0\\
  -a_{4,1} & 0 &  -A_{4,3} & -a-b
\end{smallmatrix}\right].$
\item[(6)] If $b:=-a,a\neq0,a\neq c,c\neq0,$ then
\begin{equation}
\left\{
\begin{array}{l}
\displaystyle  \nonumber [e_1,e_{5}]=ae_1+a_{2,1}e_2+(c-2a)e_3+a_{4,1}e_4,
[e_3,e_{5}]=ce_1+a_{2,3}e_2-ae_3+A_{4,3}e_4,\\
\displaystyle[e_{5},e_{5}]=a_{2,5}e_2+a_{4,5}e_4,[e_{5},e_1]=-ae_1+B_{2,1}e_2+(2a-c)e_3-a_{4,1}e_4,\\
\displaystyle [e_5,e_{2}]=2(a-c)e_2,[e_{5},e_3]=-ce_1+b_{2,3}e_2+ae_3-A_{4,3}e_4,[e_{5},e_4]=2(a-c)e_2,\\
\displaystyle where\,\, B_{2,1}:=\frac{(2c-3a)a_{2,1}+2(c-a)a_{4,1}+(2a-c)(a_{2,3}+b_{2,3})}{a}\,\,and \\
\displaystyle 
A_{4,3}:=\frac{2c(a_{2,1}+a_{4,1})-(a+c)a_{2,3}+(a-c)b_{2,3}}{2a},
\end{array} 
\right.
\end{equation} 
$\L_{e_{5}}=\left[\begin{smallmatrix}
 -a & 0 & -c & 0\\
  B_{2,1} & 2(a-c) & b_{2,3}& 2(a-c) \\
  2a-c & 0 & a & 0\\
  -a_{4,1} & 0 &  -A_{4,3} & 0
\end{smallmatrix}\right].$
\item[(7)] If $b:=-c,c\neq0,a\neq c,a\neq0,$ then
\begin{equation}
\left\{
\begin{array}{l}
\displaystyle  \nonumber [e_1,e_{5}]=ae_1+a_{2,1}e_2-ae_3+a_{4,1}e_4,
[e_3,e_{5}]=ce_1+a_{2,3}e_2-ce_3+a_{4,3}e_4,\\
\displaystyle [e_4,e_{5}]=(c-a)(e_2-e_4),[e_{5},e_{5}]=a_{2,5}e_2,[e_{5},e_1]=-ae_1+\left(a_{2,3}-a_{2,1}+b_{2,3}\right)e_2+ae_3-\\
\displaystyle a_{4,1}e_4,[e_{5},e_3]=-ce_1+b_{2,3}e_2+ce_3-a_{4,3}e_4,[e_{5},e_4]=(c-a)(e_4-e_2),
\end{array} 
\right.
\end{equation} 
$\L_{e_{5}}=\left[\begin{smallmatrix}
 -a & 0 & -c & 0\\
  a_{2,3}-a_{2,1}+b_{2,3} & 0 & b_{2,3}& a-c\\
  a & 0 & c & 0\\
  -a_{4,1} & 0 &  -a_{4,3} & c-a
\end{smallmatrix}\right].$
\item[(8)] If $c:=a,b:=-a,a\neq0,$ then\footnote{The outer derivation $\L_{e_5}$ is nilpotent, so we remove this case from further consideration.}
\begin{equation}
\left\{
\begin{array}{l}
\displaystyle  \nonumber [e_1,e_{5}]=ae_1+a_{2,1}e_2-ae_3+\left(a_{4,3}+b_{4,3}-b_{4,1}\right)e_4,
[e_3,e_{5}]=ae_1+a_{2,3}e_2-ae_3+a_{4,3}e_4,\\
\displaystyle [e_{5},e_{5}]=a_{2,5}e_2+a_{4,5}e_4,[e_{5},e_1]=-ae_1+\left(a_{2,3}-a_{2,1}+b_{2,3}\right)e_2+ae_3+b_{4,1}e_4,\\
\displaystyle [e_{5},e_3]=-ae_1+b_{2,3}e_2+ae_3+b_{4,3}e_4,
\end{array} 
\right.
\end{equation} 
$\L_{e_{5}}=\left[\begin{smallmatrix}
 -a & 0 & -a & 0\\
  a_{2,3}-a_{2,1}+b_{2,3}& 0& b_{2,3}& 0 \\
  a & 0 & a & 0\\
  b_{4,1} & 0 &  b_{4,3} & 0
\end{smallmatrix}\right].$
\item[(9)] If $a=0,b=0,c\neq0,$ then
\begin{equation}
\left\{
\begin{array}{l}
\displaystyle  \nonumber [e_1,e_{5}]=a_{2,1}e_2+ce_3+a_{4,1}e_4,
[e_3,e_{5}]=ce_1+\left(2a_{2,1}+2a_{4,1}-b_{2,3}\right)e_2+a_{4,3}e_4,\\
\displaystyle [e_{5},e_{5}]=a_{2,5}e_2+a_{4,5}e_4,[e_{5},e_1]=\left(3a_{2,1}+4a_{4,1}-2b_{2,3}+2a_{4,3}\right)e_2-ce_3-a_{4,1}e_4,\\
\displaystyle [e_5,e_{2}]=-2ce_2,[e_{5},e_3]=-ce_1+b_{2,3}e_2-a_{4,3}e_4,[e_5,e_{4}]=-2ce_2,
\end{array} 
\right.
\end{equation} 
$\L_{e_{5}}=\left[\begin{smallmatrix}
 0 & 0 & -c & 0\\
  3a_{2,1}+4a_{4,1}-2b_{2,3}+2a_{4,3} & -2c & b_{2,3}& -2c\\
  -c & 0 & 0 & 0\\
  -a_{4,1} & 0 &  -a_{4,3} & 0
\end{smallmatrix}\right].$
\item[(10)] If $a=0,b\neq0,b\neq-c,c\neq0,$ then
\begin{equation}
\left\{
\begin{array}{l}
\displaystyle  \nonumber [e_1,e_{5}]=\left(\frac{a_{2,3}+b_{2,3}}{2}-a_{4,1}\right)e_2+(b+c)e_3+a_{4,1}e_4,
[e_3,e_{5}]=ce_1+a_{2,3}e_2+be_3+A_{4,3}e_4,\\
\displaystyle [e_4,e_{5}]=b(e_4-e_2),[e_{5},e_{5}]=a_{2,5}e_2,[e_{5},e_1]=b_{2,1}e_2-(b+c)e_3-a_{4,1}e_4,\\
\displaystyle [e_5,e_{2}]=-2(b+c)e_2,[e_{5},e_3]=-ce_1+b_{2,3}e_2-be_3-A_{4,3}e_4,[e_{5},e_4]=-(2c+b)e_2-be_4,\\
\displaystyle where\,\,A_{4,3}:=\frac{(2b+c)b_{2,3}-(2b+3c)a_{2,3}+2c(b_{2,1}-a_{4,1})}{4(b+c)},
\end{array} 
\right.
\end{equation} 
$\L_{e_{5}}=\left[\begin{smallmatrix}
 0 & 0 & -c & 0\\
  b_{2,1} & -2(b+c) & b_{2,3}& -2c-b \\
  -b-c & 0 & -b & 0\\
  -a_{4,1} & 0 &  -A_{4,3} & -b
\end{smallmatrix}\right].$
\item[(11)] If $a=0,b:=-c,c\neq0,$ then
\begin{equation}
\left\{
\begin{array}{l}
\displaystyle  \nonumber [e_1,e_{5}]=\left(a_{2,3}+b_{2,3}-b_{2,1}\right)e_2+a_{4,1}e_4,
[e_3,e_{5}]=ce_1+a_{2,3}e_2-ce_3+a_{4,3}e_4,\\
\displaystyle [e_4,e_{5}]=c(e_2-e_4),[e_{5},e_{5}]=a_{2,5}e_2,[e_{5},e_1]=b_{2,1}e_2-a_{4,1}e_4,[e_{5},e_3]=-ce_1+b_{2,3}e_2+\\
\displaystyle ce_3-a_{4,3}e_4,[e_{5},e_4]=c(e_4-e_2),
\end{array} 
\right.
\end{equation} 
$\L_{e_{5}}=\left[\begin{smallmatrix}
 0 & 0 & -c & 0\\
  b_{2,1} & 0 & b_{2,3}& -c \\
  0 & 0 &c & 0\\
  -a_{4,1} & 0 &  -a_{4,3} & c
\end{smallmatrix}\right].$
\end{enumerate}
\end{theorem}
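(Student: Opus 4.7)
The plan is to mirror the proof of Theorem \ref{TheoremRL4}, replacing the right Leibniz identity by the left Leibniz identity $\L_{e_t}\!\left([e_r,e_s]\right) = [\L_{e_t}(e_r),e_s]+[e_r,\L_{e_t}(e_s)]$ throughout. Starting from the general expansion $(\ref{BRLeibniz})$ with $a_{1,1}:=a$, $a_{3,3}:=b$ and (later) $a_{1,3}:=c$, I would run through an ordered sequence of triples analogous to the one tabulated in Table \ref{Right(L4)}. The formal symmetry between $(\ref{RLeibniz})$ and $(\ref{LLeibniz})$ guarantees that the same identities close up the system, but the roles of some structure constants switch: in particular the quantity which appeared in the $(2,1)^{\text{st}}$ entry of $\r_{e_{n+1}}$ as $A_{2,1}$ now appears in the $(2,1)^{\text{st}}$ entry of $\L_{e_{n+1}}$ as $B_{2,1}$, while $a_{2,3}+2a_{4,3}$ migrates from $[e_{n+1},e_3]$ (right) to the analogous slot on the left side.

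First I would apply the identities that only involve elements of the nilradical, namely $\L[e_1,e_1]$, $\L[e_3,e_i]$, $\L[e_3,e_3]$, $\L[e_1,e_3]$ and $\L[e_1,e_i]$, to force $a_{1,\cdot}=0$, $a_{k,2}=0$, $a_{2,2}=2b$ and to read off column-by-column the form of $\r_{e_{n+1}}|_N$ (and thus, by conjugate transposition on the off-diagonal entries, of $\L_{e_{n+1}}|_N$). Next, the identities $\L_{e_1}\!\left([e_{n+1},e_j]\right)$ for $j=1,3,i,n$ pin down $[e_{n+1},e_k]$ column by column; because $\L_{e_{n+1}}$ rather than $\r_{e_{n+1}}$ is now the derivation, these triples must be read with left multiplication acting on $[e_{n+1},\cdot]$, so the ordering differs slightly from Table \ref{Right(L4)} but the bookkeeping is the same. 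Finally, the identities $\L[e_{n+1},e_3]$, $\L[e_{n+1},e_{n+1}]$ and $\L_{e_1}\!\left([e_{n+1},e_{n+1}]\right)$ fix the remaining entries and produce the explicit formulas for $B_{2,1}$ and $A_{4,3}$ in each case.

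The case split is dictated by which of the denominators among $a$, $b$, $a+b$, $b+c$, $a-c$ (and their analogues for $(n\geq 5)$, where $b+a$ is replaced by $(n-3)a+b$) are allowed to vanish; this produces exactly the eleven cases $(1)$--$(11)$ stated. Within each case, I would track the constraints from the identities sequentially and then collect the brackets and write down $\L_{e_{n+1}}|_N$ in matrix form, as in the statement.

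The main obstacle will be purely organizational: managing the large family of simultaneous linear relations among $a_{i,j}$ and $b_{i,j}$ without dropping a constant, and verifying in case $(8)$ that the outer derivation $\L_{e_5}$ is nilpotent (so that $\{e_5\}$ cannot extend the nilradical and that case is discarded). I expect no genuinely new phenomena relative to Theorem \ref{TheoremRL4}: the left--right symmetry of $(\ref{RLeibniz})$ and $(\ref{LLeibniz})$ predicts that each case in the right-handed statement has a left-handed counterpart with the same shape and the same dichotomy of denominators, and the expressions for $B_{2,1}$ and $A_{4,3}$ emerge directly from the final two identities in exactly the forms displayed in the eleven cases.
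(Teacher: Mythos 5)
Your plan coincides with the paper's proof: the paper proves case (1) by tabulating an ordered sequence of left Leibniz identities (Table \ref{Left(L4)}), the exact analogue of Table \ref{Right(L4)}, and disposes of cases (2)--(11) by citing which rows of that table apply or must be replaced, with the same dichotomy of denominators and the same discarding of case (8) on nilpotency grounds. One concrete caution about the ``formal symmetry'' you invoke: it is not perfect, and your claim that the first block of identities forces $a_{2,2}=2b$ is where it breaks. On the left side the identity $\L_{e_1}\left([e_1,e_{n+1}]\right)$ yields $[e_2,e_{n+1}]=0$ outright (so $a_{k,2}=0$ for \emph{all} $k$, including $k=2$), and the eigenvalue $-2b$ surfaces instead in $[e_{n+1},e_2]=-2be_2$ via $\L[e_3,e_3]$; consequently the triples that pin down the $[e_{n+1},e_j]$ brackets are anchored differently (e.g.\ $\L_{e_3}\left([e_{n+1},e_3]\right)$, $\L_{e_3}\left([e_{n+1},e_1]\right)$) rather than being literal mirrors of rows $7$--$15$ of the right-handed table. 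This would self-correct once you actually run the computation, but it is the one point where transcribing the right-handed bookkeeping verbatim would lead you astray.
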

\begin{proof} 
\begin{enumerate}[noitemsep, topsep=0pt]
\item[(1)] The proof is off-loaded to Table \ref{Left(L4)}, when $(n\geq5)$. For $(n=4),$
we recalculate the applicable identities, which are $1.,2.,4.,6.-8.,10.-15.$
\item[(2)]  We repeat case $(1),$ except the identity $13.$ 
\item[(3)] If $(n\geq5),$ then we apply the identities given in Table \ref{Left(L4)},
except $14.$ and $15.$ applying instead: $\L_{e_3}[e_{n+1},e_{n+1}]=[\L_{e_3}(e_{n+1}),e_{n+1}]+[e_{n+1},\L_{e_3}(e_{n+1})]$
and $\L[e_1,e_{n+1}]=[\L(e_1),e_{n+1}]+[e_1,\L(e_{n+1})].$
For $(n=4),$ we apply the same identities as in case $(1)$ except $14.$ and $15.$
applying two identities given above.
\item[(4)] We apply the same identities as in case $(1).$
\item[(5)] We apply the following identities: $1.,2.,4.,6.-8.,10.-14.,\L_{e_3}[e_5,e_5]=[\L_{e_3}(e_5),e_5]+[e_5,\L_{e_3}(e_5)],15.$
\item[(6)] We apply the same identities as in case $(2)$ for $(n=4).$
\item[(7)] We apply the same identities as in case $(1)$ for $(n=4).$
\item[(8)] We apply the same identities as in case $(1)$ for $(n=4),$ except $13.$ and $15.$
\item[(9)] We apply the same identities as in case $(2)$ for $(n=4).$
\item[(10)] The same identities as in case $(5).$
\item[(11)] The same identities as in case $(5),$ except the identity $15.$
\end{enumerate}
\end{proof} 

\newpage
\begin{table}[h!]
\caption{Left Leibniz identities in case $(1)$ in Theorem \ref{TheoremLL4}, ($n\geq5$).}
\label{Left(L4)}
\begin{tabular}{lp{2.4cm}p{12cm}}
\hline
\scriptsize Steps &\scriptsize Ordered triple &\scriptsize
Result\\ \hline
\scriptsize $1.$ &\scriptsize $\L_{e_1}\left([e_1,e_{n+1}]\right)$ &\scriptsize
$[e_{2},e_{n+1}]=0$
$\implies$ $a_{k,2}=0,(1\leq k\leq n).$\\ \hline
\scriptsize $2.$ &\scriptsize $\L_{e_1}\left([e_{3},e_{n+1}]\right)$ &\scriptsize
 $a_{1,4}=0,a_{k,4}:=a_{k-1,3},(5\leq k\leq n),a_{3,1}:=a_{2,4}+a_{1,3}+2b,a_{3,4}=0,a_{4,4}:=a+b$ $\implies$ 
 $[e_1,e_{n+1}]=ae_1+a_{2,1}e_2+(a_{2,4}+a_{1,3}+2b)e_3+\sum_{k=4}^n{a_{k,1}e_k},[e_{4},e_{n+1}]=a_{2,4}e_2+(a+b)e_4+\sum_{k=5}^n{a_{k-1,3}e_k}.$ \\ \hline
 \scriptsize $3.$ &\scriptsize $\L_{e_1}\left([e_{i},e_{n+1}]\right)$ &\scriptsize
 $a_{1,i+1}=a_{3,i+1}=0\,\,and\,\,we\,\,had\,\,that\,\,a_{1,4}=a_{3,4}=0\implies a_{2,i+1}=a_{4,i+1}=0;a_{i+1,i+1}:=(i-2)a+b,a_{k,i+1}:=a_{k-1,i},(5\leq k\leq n,k\neq i+1,4\leq i\leq n-1)$ $\implies$ 
 $[e_{j},e_{n+1}]=\left((j-3)a+b\right)e_j+\sum_{k=j+1}^n{a_{k-j+3,3}e_k},(5\leq j\leq n).$ \\ \hline
 \scriptsize $4.$ &\scriptsize $\L_{e_3}\left([e_{n+1},e_3]\right)$ &\scriptsize
 $b_{1,2}=b_{3,2}=0,b_{k,2}=0,(5\leq k\leq n),b_{3,3}:=2a_{1,3}+b+b_{2,2},b_{4,2}:=a_{1,3}+b_{1,3}$ $\implies$ 
 $[e_{n+1},e_{2}]=b_{2,2}e_2+(a_{1,3}+b_{1,3})e_4,[e_{n+1},e_3]=b_{1,3}e_1+b_{2,3}e_2+(2a_{1,3}+b+b_{2,2})e_3+\sum_{k=4}^n{b_{k,3}e_k}.$ \\ \hline
 \scriptsize $5.$ &\scriptsize $\L_{e_3}\left([e_{n+1},e_{i}]\right)$ &\scriptsize
 $b_{1,i}=b_{3,i}=0,a_{1,3}=0$ $\implies$ 
 $[e_1,e_{n+1}]=ae_1+a_{2,1}e_2+(a_{2,4}+2b)e_3+\sum_{k=4}^n{a_{k,1}e_k},
 [e_3,e_{n+1}]=a_{2,3}e_2+be_3+\sum_{k=4}^n{a_{k,3}e_k},
 [e_{n+1},e_2]=b_{2,2}e_2+b_{1,3}e_4,
 [e_{n+1},e_3]=b_{1,3}e_1+b_{2,3}e_2+(b+b_{2,2})e_3+\sum_{k=4}^n{b_{k,3}e_k},
 [e_{n+1},e_{i}]=b_{2,i}e_2+\sum_{k=4}^n{b_{k,i}e_k},(4\leq i\leq n-1).$ \\ \hline
 \scriptsize $6.$ &\scriptsize $\L_{e_3}\left([e_{n+1},e_{n}]\right)$ &\scriptsize
 $b_{1,n}=b_{3,n}=0$ $\implies$ 
 $[e_{n+1},e_{n}]=b_{2,n}e_2+\sum_{k=4}^n{b_{k,n}e_k}.$\\ \hline
 \scriptsize $7.$ &\scriptsize $\L[e_{3},e_{3}]$ &\scriptsize
 $b_{1,3}=0\implies b_{2,2}:=-2b$ $\implies$ 
 $[e_{n+1},e_{2}]=-2be_2,[e_{n+1},e_3]=b_{2,3}e_2-be_3+\sum_{k=4}^n{b_{k,3}e_k}.$\\ \hline 
 \scriptsize $8.$ &\scriptsize $\L_{e_1}\left([e_{n+1},e_{3}]\right)$ &\scriptsize
 $b_{2,4}:=a_{2,4}+2a,b_{4,4}:=-a-b,b_{k,4}:=b_{k-1,3},(5\leq k\leq n)$ $\implies$ 
 $[e_{n+1},e_{4}]=\left(a_{2,4}+2a\right)e_2-\left(a+b\right)e_4+\sum_{k=5}^n{b_{k-1,3}e_k}.$ \\ \hline 
 \scriptsize $9.$ &\scriptsize $\L_{e_1}\left([e_{n+1},e_{i}]\right)$ &\scriptsize
 $b_{2,i+1}=b_{4,i+1}=0,b_{i+1,i+1}:=(2-i)a-b,b_{k,i+1}:=b_{k-1,i},(5\leq k\leq n,k\neq i+1,4\leq i\leq n-1)$ $\implies$ 
$[e_{n+1},e_j]=\left((3-j)a-b\right)e_j+\sum_{k=j+1}^n{b_{k-j+3,3}e_k},(5\leq j\leq n).$ \\ \hline
\scriptsize $10.$ &\scriptsize $\L_{e_3}\left([e_{n+1},e_{1}]\right)$ &\scriptsize
 $b_{1,1}:=-a,b_{3,1}:=a_{2,4}+2a,b_{k-1,3}:=-a_{k-1,3},(5\leq k\leq n)$ $\implies$ 
$[e_{n+1},e_{1}]=-ae_1+b_{2,1}e_2+(a_{2,4}+2a)e_3+\sum_{k=4}^n{b_{k,1}e_k},
[e_{n+1},e_{3}]=b_{2,3}e_2-be_3-\sum_{k=4}^{n-1}{a_{k,3}e_k}+b_{n,3}e_n,
[e_{n+1},e_{4}]=(a_{2,4}+2a)e_2-(a+b)e_4-\sum_{k=5}^n{a_{k-1,3}e_k},
[e_{n+1},e_i]=\left((3-i)a-b\right)e_i-\sum_{k=i+1}^n{a_{k-i+3,3}e_k},(5\leq i\leq n).$\\ \hline
\scriptsize $11.$ &\scriptsize $\L_{e_1}\left([e_{n+1},e_{1}]\right)$ &\scriptsize
 $a_{2,4}:=-a-b,b_{k-1,1}:=-a_{k-1,1},(5\leq k\leq n)$ $\implies$ 
 $[e_{1},e_{n+1}]=ae_1+a_{2,1}e_2+(b-a)e_3+\sum_{k=4}^n{a_{k,1}e_k},
 [e_{4},e_{n+1}]=(a+b)(e_4-e_2)+\sum_{k=5}^n{a_{k-1,3}e_k},
[e_{n+1},e_{1}]=-ae_1+b_{2,1}e_2+(a-b)e_3-\sum_{k=4}^{n-1}{a_{k,1}e_k}+b_{n,1}e_n,
[e_{n+1},e_{4}]=(a-b)e_2-(a+b)e_4-\sum_{k=5}^n{a_{k-1,3}e_k}.$ \\ \hline
\scriptsize $12.$ &\scriptsize $\L[e_{n+1},e_{1}]$ &\scriptsize
 $a_{1,n+1}=0,a_{k,n+1}=0,(3\leq k\leq n-1)$ $\implies$ 
 $[e_{n+1},e_{n+1}]=a_{2,n+1}e_2+a_{n,n+1}e_n.$ \\ \hline
 \scriptsize $13.$ &\scriptsize $\L[e_{n+1},e_{n+1}]$ &\scriptsize
 $a_{n,n+1}=0,(because\,\,b\neq(3-n)a)$ $\implies$ 
 $[e_{n+1},e_{n+1}]=a_{2,n+1}e_2.$ \\ \hline
 \scriptsize $14.$ &\scriptsize $\L[e_{3},e_{n+1}]$ &\scriptsize
$b_{n,3}:=-a_{n,3},(because\,\,a\neq0),b_{2,3}:=a_{2,3}+2a_{4,3},(because\,\,b\neq0)$ $\implies$ 
 $[e_{n+1},e_{3}]=(a_{2,3}+2a_{4,3})e_2-be_3-\sum_{k=4}^{n}{a_{k,3}e_k}.$ \\ \hline 
 \scriptsize $15.$ &\scriptsize $\L_{e_1}\left([e_{n+1},e_{n+1}]\right)$ &\scriptsize
 $b_{n,1}:=-a_{n,1},B_{2,1}:=\frac{(2b-a)a_{2,1}+2b\cdot a_{4,1}+2(a-b)(a_{2,3}+a_{4,3})}{a},(because\,\,a\neq0)$ $\implies$ 
 $[e_{n+1},e_{1}]=-ae_1+B_{2,1}e_2+(a-b)e_3-\sum_{k=4}^{n}{a_{k,1}e_k}.$ \\ \hline 
 \end{tabular}
\end{table}

\begin{theorem}\label{TheoremLL4Absorption} Applying the technique of ``absorption'' (see Section \ref{Solvable left Leibniz algebras}), we can further simplify the algebras 
in each of the cases in Theorem \ref{TheoremLL4} as follows:
\begin{enumerate}[noitemsep, topsep=0pt]
\allowdisplaybreaks
\item[(1)] If $a_{1,3}=0, b\neq-a,a\neq0,b\neq0,(n=4)$ or $b\neq(3-n)a,a\neq0,b\neq0,(n\geq5),$ then we have the following brackets for the algebra:
\begin{equation}
\left\{
\begin{array}{l}
\displaystyle  \nonumber [e_1,e_{n+1}]=ae_1+a_{2,1}e_2+(b-a)e_3,
[e_3,e_{n+1}]=a_{2,3}e_2+be_3+\sum_{k=5}^n{a_{k,3}e_k},\\
\displaystyle[e_4,e_{n+1}]=(a+b)(e_4-e_2)+\sum_{k=6}^n{a_{k-1,3}e_k},
[e_{i},e_{n+1}]=\left((i-3)a+b\right)e_{i}+\sum_{k=i+2}^n{a_{k-i+3,3}e_k},\\
\displaystyle [e_{n+1},e_1]=-ae_1+\mathcal{B}_{2,1}e_2+(a-b)e_3,[e_{n+1},e_2]=-2be_2,[e_{n+1},e_3]=a_{2,3}e_2-be_3-\\
\displaystyle \sum_{k=5}^n{a_{k,3}e_k},[e_{n+1},e_4]=(a-b)e_2-(a+b)e_4-\sum_{k=6}^n{a_{k-1,3}e_k}, [e_{n+1},e_i]=\left((3-i)a-b\right)e_i-\\
\displaystyle \sum_{k=i+2}^n{a_{k-i+3,3}e_k},(5\leq i\leq n);where\,\,\mathcal{B}_{2,1}:=\frac{(2b-a)a_{2,1}+2(a-b)a_{2,3}}{a},
\end{array} 
\right.
\end{equation} 
$$\L_{e_{n+1}}=\left[\begin{smallmatrix}
 -a & 0 & 0 & 0&0&0&\cdots && 0&0 & 0\\
  \mathcal{B}_{2,1} & -2b & a_{2,3}& a-b &0&0 & \cdots &&0  & 0& 0\\
  a-b & 0 & -b & 0 & 0&0 &\cdots &&0 &0& 0\\
  0 & 0 &  0 & -a-b &0 &0 &\cdots&&0 &0 & 0\\
 0 & 0 & -a_{5,3} & 0 & -2a-b  &0&\cdots & &0&0 & 0\\
  0 & 0 &\boldsymbol{\cdot} & -a_{5,3} & 0 &-3a-b&\cdots & &0&0 & 0\\
    0 & 0 &\boldsymbol{\cdot} & \boldsymbol{\cdot} & \ddots &0&\ddots &&\vdots&\vdots &\vdots\\
  \vdots & \vdots & \vdots &\vdots &  &\ddots&\ddots &\ddots &\vdots&\vdots & \vdots\\
 0 & 0 & -a_{n-2,3}& -a_{n-3,3}& \cdots&\cdots &-a_{5,3}&0&(5-n)a-b &0& 0\\
0 & 0 & -a_{n-1,3}& -a_{n-2,3}& \cdots&\cdots &\boldsymbol{\cdot}&-a_{5,3}&0 &(4-n)a-b& 0\\
 0 & 0 & -a_{n,3}& -a_{n-1,3}& \cdots&\cdots &\boldsymbol{\cdot}&\boldsymbol{\cdot}&-a_{5,3} &0& (3-n)a-b
\end{smallmatrix}\right].$$
\item[(2)] If $a_{1,3}=0,b:=-a,a\neq0,(n=4)$ or $b:=(3-n)a,a\neq0,(n\geq5),$
then the brackets for the algebra are as follows:  
\begin{equation}
\left\{
\begin{array}{l}
\displaystyle  \nonumber [e_1,e_{n+1}]=ae_1+a_{2,1}e_2-(n-2)ae_3,
[e_3,e_{n+1}]=a_{2,3}e_2+(3-n)ae_3+ \sum_{k=5}^n{a_{k,3}e_k},\\
\displaystyle[e_4,e_{n+1}]=(n-4)a(e_2-e_4)+\sum_{k=6}^n{a_{k-1,3}e_k},
[e_{i},e_{n+1}]=\left(i-n\right)ae_{i}+\sum_{k=i+2}^n{a_{k-i+3,3}e_k},\\
\displaystyle 
[e_{n+1},e_{n+1}]=a_{n,n+1}e_n,[e_{n+1},e_1]=-ae_1+\mathcal{B}_{2,1}e_2+(n-2)ae_3,[e_{n+1},e_2]=(2n-6)ae_2,\\
\displaystyle[e_{n+1},e_3]=a_{2,3}e_2+(n-3)ae_3-\sum_{k=5}^n{a_{k,3}e_k},[e_{n+1},e_4]=(n-2)ae_2+(n-4)ae_4-\\
\displaystyle \sum_{k=6}^n{a_{k-1,3}e_k},[e_{n+1},e_i]=\left(n-i\right)ae_i-\sum_{k=i+2}^n{a_{k-i+3,3}e_k},(5\leq i\leq n),\\
\displaystyle where\,\,\mathcal{B}_{2,1}:=(5-2n)a_{2,1}+2(n-2)a_{2,3},
\end{array} 
\right.
\end{equation}
$$\L_{e_{n+1}}=\left[\begin{smallmatrix}
 -a & 0 & 0 & 0&0&0&\cdots && 0&0 & 0\\
  \mathcal{B}_{2,1} & (2n-6)a & a_{2,3}& (n-2)a &0&0 & \cdots &&0  & 0& 0\\
  (n-2)a & 0 & (n-3)a & 0 & 0&0 &\cdots &&0 &0& 0\\
  0 & 0 &  0 & (n-4)a &0 &0 &\cdots&&0 &0 & 0\\
 0 & 0 & -a_{5,3} & 0 & (n-5)a  &0&\cdots & &0&0 & 0\\
  0 & 0 &\boldsymbol{\cdot} & -a_{5,3} & 0 &(n-6)a&\cdots & &0&0 & 0\\
    0 & 0 &\boldsymbol{\cdot} & \boldsymbol{\cdot} & \ddots &0&\ddots &&\vdots&\vdots &\vdots\\
  \vdots & \vdots & \vdots &\vdots &  &\ddots&\ddots &\ddots &\vdots&\vdots & \vdots\\
 0 & 0 & -a_{n-2,3}& -a_{n-3,3}& \cdots&\cdots &-a_{5,3}&0&2a &0& 0\\
0 & 0 & -a_{n-1,3}& -a_{n-2,3}& \cdots&\cdots &\boldsymbol{\cdot}&-a_{5,3}&0 &a& 0\\
 0 & 0 & -a_{n,3}& -a_{n-1,3}& \cdots&\cdots &\boldsymbol{\cdot}&\boldsymbol{\cdot}&-a_{5,3} &0& 0
\end{smallmatrix}\right].$$
\item[(3)] If $a_{1,3}=0,a=0,b\neq0,(n=4)$ or $a=0$ and $b\neq0,(n\geq5),$ then
\begin{equation}
\left\{
\begin{array}{l}
\displaystyle  \nonumber [e_1,e_{n+1}]=a_{2,3}e_2+be_3,
[e_3,e_{n+1}]=a_{2,3}e_2+be_3+\sum_{k=5}^n{a_{k,3}e_k},[e_4,e_{n+1}]=b\left(e_4-e_2\right)+\\
\displaystyle \sum_{k=6}^n{a_{k-1,3}e_k},
[e_{i},e_{n+1}]=be_{i}+\sum_{k=i+2}^n{a_{k-i+3,3}e_k},[e_{n+1},e_1]=b_{2,1}e_2-be_3,[e_{n+1},e_2]=-2be_2,\\
\displaystyle [e_{n+1},e_3]=a_{2,3}e_2-be_3-\sum_{k=5}^n{a_{k,3}e_k},[e_{n+1},e_4]=-b\left(e_2+e_4\right)-\sum_{k=6}^n{a_{k-1,3}e_k},\\
\displaystyle [e_{n+1},e_i]=-be_i-\sum_{k=i+2}^n{a_{k-i+3,3}e_k},(5\leq i\leq n),
\end{array} 
\right.
\end{equation} 
$$\L_{e_{n+1}}=\left[\begin{smallmatrix}
0 & 0 & 0 & 0&0&0&\cdots && 0&0 & 0\\
  b_{2,1} & -2b & a_{2,3}& -b &0&0 & \cdots &&0  & 0& 0\\
  -b & 0 & -b & 0 & 0&0 &\cdots &&0 &0& 0\\
  0 & 0 &  0 & -b &0 &0 &\cdots&&0 &0 & 0\\
 0 & 0 & -a_{5,3} & 0 & -b  &0&\cdots & &0&0 & 0\\
  0 & 0 &\boldsymbol{\cdot} & -a_{5,3} & 0 &-b&\cdots & &0&0 & 0\\
    0 & 0 &\boldsymbol{\cdot} & \boldsymbol{\cdot} & \ddots &0&\ddots &&\vdots&\vdots &\vdots\\
  \vdots & \vdots & \vdots &\vdots &  &\ddots&\ddots &\ddots &\vdots&\vdots & \vdots\\
 0 & 0 & -a_{n-2,3}& -a_{n-3,3}& \cdots&\cdots &-a_{5,3}&0&-b &0& 0\\
0 & 0 & -a_{n-1,3}& -a_{n-2,3}& \cdots&\cdots &\boldsymbol{\cdot}&-a_{5,3}&0 &-b& 0\\
 0 & 0 & -a_{n,3}& -a_{n-1,3}& \cdots&\cdots &\boldsymbol{\cdot}&\boldsymbol{\cdot}&-a_{5,3} &0&-b
\end{smallmatrix}\right].$$
\allowdisplaybreaks
\item[(4)] If $a_{1,3}=0,b=0,a\neq0,(n=4)$ or $b=0,a\neq0,(n\geq5),$ then
\begin{equation}
\left\{
\begin{array}{l}
\displaystyle  \nonumber [e_1,e_{n+1}]=ae_1+a_{2,1}e_2-ae_3,
[e_3,e_{n+1}]=a_{2,3}e_2+\sum_{k=5}^n{a_{k,3}e_k},[e_4,e_{n+1}]=a(e_4-e_2)+\\
\displaystyle\sum_{k=6}^n{a_{k-1,3}e_k},
[e_{i},e_{n+1}]=(i-3)ae_{i}+\sum_{k=i+2}^n{a_{k-i+3,3}e_k},[e_{n+1},e_{n+1}]=a_{2,n+1}e_2,\\
\displaystyle [e_{n+1},e_1]=-ae_1+\left(a_{2,3}-a_{2,1}+b_{2,3}\right)e_2+ae_3,[e_{n+1},e_3]=b_{2,3}e_2-\sum_{k=5}^n{a_{k,3}e_k},\\
\displaystyle [e_{n+1},e_4]=a(e_2-e_4)-\sum_{k=6}^n{a_{k-1,3}e_k},[e_{n+1},e_i]=(3-i)ae_i-\sum_{k=i+2}^n{a_{k-i+3,3}e_k},(5\leq i\leq n),
\end{array} 
\right.
\end{equation} 
$$\L_{e_{n+1}}=\left[\begin{smallmatrix}
 -a & 0 & 0 & 0&0&0&\cdots && 0&0 & 0\\
  a_{2,3}-a_{2,1}+b_{2,3} & 0 & b_{2,3}& a &0&0 & \cdots &&0  & 0& 0\\
  a & 0 &0 & 0 & 0&0 &\cdots &&0 &0& 0\\
  0 & 0 &  0 & -a &0 &0 &\cdots&&0 &0 & 0\\
 0 & 0 & -a_{5,3} & 0 & -2a  &0&\cdots & &0&0 & 0\\
  0 & 0 &\boldsymbol{\cdot} & -a_{5,3} & 0 &-3a&\cdots & &0&0 & 0\\
    0 & 0 &\boldsymbol{\cdot} & \boldsymbol{\cdot} & \ddots &0&\ddots &&\vdots&\vdots &\vdots\\
  \vdots & \vdots & \vdots &\vdots &  &\ddots&\ddots &\ddots &\vdots&\vdots & \vdots\\
 0 & 0 & -a_{n-2,3}& -a_{n-3,3}& \cdots&\cdots &-a_{5,3}&0&(5-n)a &0& 0\\
0 & 0 & -a_{n-1,3}& -a_{n-2,3}& \cdots&\cdots &\boldsymbol{\cdot}&-a_{5,3}&0 &(4-n)a& 0\\
 0 & 0 & -a_{n,3}& -a_{n-1,3}& \cdots&\cdots &\boldsymbol{\cdot}&\boldsymbol{\cdot}&-a_{5,3} &0& (3-n)a
\end{smallmatrix}\right].$$
In the remaining cases $a_{1,3}:=c.$
\item[(5)] If $b\neq-a,a\neq0,b\neq-c,c\neq0,$ then
\begin{equation}
\left\{
\begin{array}{l}
\displaystyle  \nonumber [e_1,e_{5}]=ae_1+\mathcal{A}_{2,1}e_2+(b+c-a)e_3,
[e_3,e_{5}]=ce_1+a_{2,3}e_2+be_3,[e_{4},e_5]=(a+b)e_4-\\
\displaystyle (a+b)e_2,[e_{5},e_1]=-ae_1+\mathcal{B}_{2,1}e_2+(a-b-c)e_3,[e_5,e_{2}]=-2(b+c)e_2,[e_{5},e_3]=-ce_1+\\
\displaystyle \mathcal{B}_{2,3}e_2-be_3,[e_5,e_{4}]=(a-2c-b)e_2-(a+b)e_4;where\,\,\mathcal{A}_{2,1}:=\frac{(a+c)a_{2,3}+(c-a)b_{2,3}}{2a},\\
\displaystyle \mathcal{B}_{2,1}:=\frac{(3a-2b-c)a_{2,3}+(a-2b-c)b_{2,3}}{2a}\,\,and\,\, 
\mathcal{B}_{2,3}:=\frac{(a+c)a_{2,3}+c\cdot b_{2,3}}{a},
\end{array} 
\right.
\end{equation} 
$\L_{e_{5}}=\left[\begin{smallmatrix}
 -a & 0 & -c & 0\\
 \mathcal{B}_{2,1} & -2(b+c) &  \mathcal{B}_{2,3}& a-2c-b \\
 a-b-c & 0 & -b & 0\\
 0 & 0 &  0 & -a-b
\end{smallmatrix}\right].$
\item[(6)] If $b:=-a,a\neq0,a\neq c,c\neq0,$ then
\begin{equation}
\left\{
\begin{array}{l}
\displaystyle  \nonumber [e_1,e_{5}]=ae_1+\mathcal{A}_{2,1}e_2+(c-2a)e_3,
[e_3,e_{5}]=ce_1+a_{2,3}e_2-ae_3,[e_{5},e_{5}]=a_{4,5}e_4,\\
\displaystyle [e_{5},e_1]=-ae_1+\mathcal{B}_{2,1}e_2+(2a-c)e_3,[e_5,e_2]=2(a-c)e_2,[e_{5},e_3]=-ce_1+\mathcal{B}_{2,3}e_2+ae_3,\\
\displaystyle [e_5,e_{4}]=2(a-c)e_2;where\,\,\mathcal{A}_{2,1}:=\frac{(a+c)a_{2,3}+(c-a)b_{2,3}}{2a},\mathcal{B}_{2,3}:=\frac{(a+c)a_{2,3}+c\cdot b_{2,3}}{a},\\
\displaystyle \mathcal{B}_{2,1}:=\frac{(5a-c)a_{2,3}+(3a-c)b_{2,3}}{2a},
\end{array} 
\right.
\end{equation} 
$\L_{e_{5}}=\left[\begin{smallmatrix}
 -a & 0 & -c & 0\\
  \mathcal{B}_{2,1} & 2(a-c) &\mathcal{B}_{2,3}& 2(a-c) \\
 2a-c & 0 &a & 0\\
  0 & 0 &  0 &0
\end{smallmatrix}\right].$
\item[(7)] If $b:=-c,c\neq0,a\neq c,a\neq0,$ then
\begin{equation}
\left\{
\begin{array}{l}
\displaystyle  \nonumber [e_1,e_{5}]=ae_1+a_{2,1}e_2-ae_3,
[e_3,e_{5}]=ce_1+a_{2,3}e_2-ce_3,[e_4,e_{5}]=(c-a)(e_2-e_4),\\
\displaystyle [e_{5},e_{5}]=a_{2,5}e_2,[e_{5},e_1]=-ae_1+\left(a_{2,3}-a_{2,1}+b_{2,3}\right)e_2+ae_3,[e_{5},e_3]=-ce_1+b_{2,3}e_2+ce_3,\\
\displaystyle [e_{5},e_4]=(c-a)(e_4-e_2),
\end{array} 
\right.
\end{equation} 
$\L_{e_{5}}=\left[\begin{smallmatrix}
 -a & 0 & -c & 0\\
  a_{2,3}-a_{2,1}+b_{2,3} & 0 & b_{2,3}& a-c\\
  a & 0 & c & 0\\
 0& 0 &  0 & c-a
\end{smallmatrix}\right].$
\item[(8)] If $a=0,b=0,c\neq0,$ then
\begin{equation}
\left\{
\begin{array}{l}
\displaystyle  \nonumber [e_1,e_{5}]=a_{2,1}e_2+ce_3,
[e_3,e_{5}]=ce_1+\left(2a_{2,1}-b_{2,3}\right)e_2,[e_{5},e_{5}]=a_{4,5}e_4,\\
\displaystyle [e_{5},e_1]=\left(3a_{2,1}-2b_{2,3}\right)e_2-ce_3,[e_5,e_{2}]=-2ce_2,[e_{5},e_3]=-ce_1+b_{2,3}e_2,[e_5,e_{4}]=-2ce_2,
\end{array} 
\right.
\end{equation} 
$\L_{e_{5}}=\left[\begin{smallmatrix}
 0 & 0 & -c & 0\\
  3a_{2,1}-2b_{2,3} & -2c & b_{2,3}& -2c\\
  -c & 0 & 0 & 0\\
  0& 0 & 0 & 0
\end{smallmatrix}\right].$
\item[(9)] If $a=0,b\neq0,b\neq-c,c\neq0,$ then
\begin{equation}
\left\{
\begin{array}{l}
\displaystyle  \nonumber [e_1,e_{5}]=\mathcal{A}_{2,1}e_2+(b+c)e_3,
[e_3,e_{5}]=ce_1+a_{2,3}e_2+be_3,[e_4,e_{5}]=b(e_4-e_2),\\
\displaystyle [e_{5},e_1]=\mathcal{B}_{2,1}e_2-(b+c)e_3,[e_5,e_{2}]=-2(b+c)e_2,[e_{5},e_3]=-ce_1+\mathcal{B}_{2,3}e_2-be_3,\\
\displaystyle [e_{5},e_4]=-(2c+b)e_2-be_4;where\,\,\mathcal{A}_{2,1}:=\frac{(4b+5c)a_{2,3}+c\cdot b_{2,3}}{4(b+c)},\\
\displaystyle \mathcal{B}_{2,1}:=\frac{(2b+3c)a_{2,3}-(2b+c)b_{2,3}}{4(b+c)}\,\,and\,\,\mathcal{B}_{2,3}:=\frac{(2b+3c)a_{2,3}+c\cdot b_{2,3}}{2(b+c)},
\end{array} 
\right.
\end{equation} 
$\L_{e_{5}}=\left[\begin{smallmatrix}
 0 & 0 & -c & 0\\
  \mathcal{B}_{2,1} & -2(b+c) & \mathcal{B}_{2,3}& -2c-b \\
  -b-c & 0 & -b & 0\\
 0 & 0 &0 & -b
\end{smallmatrix}\right].$

\item[(10)] If $a=0,b:=-c,c\neq0,$ then
\begin{equation}
\left\{
\begin{array}{l}
\displaystyle  \nonumber [e_1,e_{5}]=\left(a_{2,3}+b_{2,3}-b_{2,1}\right)e_2,
[e_3,e_{5}]=ce_1+a_{2,3}e_2-ce_3,[e_4,e_{5}]=c(e_2-e_4),\\
\displaystyle [e_{5},e_{5}]=a_{2,5}e_2,[e_{5},e_1]=b_{2,1}e_2,[e_{5},e_3]=-ce_1+b_{2,3}e_2+ce_3,[e_{5},e_4]=c(e_4-e_2),
\end{array} 
\right.
\end{equation} 
$\L_{e_{5}}=\left[\begin{smallmatrix}
 0 & 0 & -c & 0\\
  b_{2,1} & 0 & b_{2,3}& -c \\
  0 & 0 &c & 0\\
  0 & 0 &  0 & c
\end{smallmatrix}\right].$

\end{enumerate}
\end{theorem}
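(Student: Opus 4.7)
The plan is to mirror, case by case, the absorption procedure carried out in the proof of Theorem~\ref{TheoremRL4Absorption}, but adapted to the left Leibniz setting. In each of the eleven cases, the starting data from Theorem~\ref{TheoremLL4} gives the bracket system together with the matrix $\L_{e_{n+1}}$ (a derivation of the nilradical) and the companion operator $\r_{e_{n+1}}$ (not a derivation). Since $e_2$ and $e_n$ lie in $C(\mathcal{L}^4)$, any shift $e'_{n+1}=e_{n+1}+\sum_{k=1}^n d^k e_k$ leaves the nilradical brackets $(\ref{L4})$ unchanged, so we can freely use such shifts to clean up $\L_{e_{n+1}}$ and $\r_{e_{n+1}}$ simultaneously. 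The transformations I will apply in each case are exactly the analogues of those used in Theorem~\ref{TheoremRL4Absorption}, namely a shift $e_{n+1}\mapsto e_{n+1}-a_{4,3}e_1$ to eliminate the subdiagonal $a_{4,3}$ appearing in positions $(i,i-1)$, followed by $e_{n+1}\mapsto e_{n+1}+\sum_{k=3}^{n-1}a_{k+1,1}e_k$ to kill the first column entries below row $3$, and finally $e_{n+1}\mapsto e_{n+1}+\lambda e_2$ (or $\lambda e_n$ in the exceptional case $(2)$) to remove the central coefficient appearing in $[e_{n+1},e_{n+1}]$.

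The bookkeeping is slightly different from the right Leibniz case because the derivation is now $\L_{e_{n+1}}$, and the first shift affects the $(2,3)$ entry of $\L_{e_{n+1}}$ (which was $a_{2,3}+2a_{4,3}$) rather than that of $\r_{e_{n+1}}$. After each elementary shift I will rename the residual parameters by setting $a_{2,3}+a_{4,1}:=a_{2,3}$, $a_{2,1}+2a_{4,1}-a_{4,3}:=a_{2,1}$ (or the corresponding relabeling, depending on the case), so that the final formulas for $\mathcal{B}_{2,1}$, $\mathcal{A}_{2,1}$, $\mathcal{B}_{2,3}$, etc. match those listed in the statement. In cases $(5)$--$(11)$ (where $c:=a_{1,3}\neq0$) the presence of the $(3,1)$ entry forces an additional shift of the form $e'_5=e_5-A_{4,3}e_1+\mu e_2+a_{4,1}e_3$, with $\mu$ chosen to absorb the leftover coefficient of $e_2$ in $[e_5,e_5]$ when possible; the exact $\mu$ is the left-Leibniz analogue of the one displayed in the proof of Theorem~\ref{TheoremRL4Absorption}, and will be read off by solving a single linear equation.

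The hard part will be the accounting in cases $(5)$, $(6)$, $(9)$ and $(11)$, where the shift $e_5\mapsto e_5+ \text{(correction)} e_2 + a_{4,1}e_3$ affects simultaneously $\L_{e_5}$, $\r_{e_5}$ and the central bracket $[e_5,e_5]$; one must check that the coefficient of $e_2$ in $[e_5,e_5]$ does land in the image of the relevant scalar multiplication map so that it can actually be absorbed. In cases $(2)$, $(6)$ and $(8)$ the map $\operatorname{ad}_{e_n}$ on $e_{n+1}$ is trivial and the $a_{n,n+1}e_n$ (resp.\ $a_{4,5}e_4$) coefficient survives absorption, which is why these cases retain an extra parameter; verifying the survival amounts to observing the vanishing of the appropriate diagonal entry of $\L_{e_{n+1}}$, which is immediate from the matrix forms displayed in Theorem~\ref{TheoremLL4}.

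Putting the eleven cases together, for each case I will: (i) write the chosen shift; (ii) apply it to update $\L_{e_{n+1}}$, $\r_{e_{n+1}}$ and the brackets $[e_{n+1},e_1]$, $[e_{n+1},e_3]$, $[e_{n+1},e_{n+1}]$; (iii) rename variables to obtain the expressions $\mathcal{B}_{2,1}$, $\mathcal{A}_{2,1}$, $\mathcal{B}_{2,3}$ as in the statement; and (iv) read off the simplified algebra together with the displayed $\L_{e_{n+1}}$. Because the transformations are explicit and invertible and leave the nilradical intact, the resulting algebra is isomorphic to the one from Theorem~\ref{TheoremLL4}, proving the claim.
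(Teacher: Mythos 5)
Your proposal follows essentially the same route as the paper: the same three-stage absorption (shift by $-a_{4,3}e_1$ to clear the subdiagonal, shift by $\sum_{k=3}^{n-1}a_{k+1,1}e_k$ to clear the first column, then a central shift to clean up $[e_{n+1},e_{n+1}]$), the same renamings $a_{2,3}+a_{4,1}:=a_{2,3}$ etc., and the same combined shift $e_5\mapsto e_5-A_{4,3}e_1+\mu e_2+a_{4,1}e_3$ in the four-dimensional cases with $c\neq0$. One small correction: in case $(2)$ the paper still shifts by a multiple of $e_2$ to remove $a_{2,n+1}$ (the term $a_{n,n+1}e_n$ simply survives because the corresponding diagonal entry of $\L_{e_{n+1}}$ vanishes, as you note later), so the parenthetical ``or $\lambda e_n$'' is not what is actually done there.
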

\begin{proof}
\begin{enumerate}[noitemsep, topsep=0pt]
\item[(1)] The right (not a derivation)
and the left (a derivation) multiplication operators  restricted to the nilradical are given below:
$$\r_{e_{n+1}}=\left[\begin{smallmatrix}
 a & 0 & 0 & 0&0&&\cdots &0& \cdots&0 & 0&0 \\
  a_{2,1} & 0 & a_{2,3}& -a-b &0& & \cdots &0&\cdots  & 0& 0&0\\
  b-a & 0 & b & 0 & 0& &\cdots &0&\cdots &0& 0&0 \\
  a_{4,1} & 0 &  a_{4,3} & a+b &0 & &\cdots&0&\cdots &0 & 0&0\\
  a_{5,1} & 0 & a_{5,3} & a_{4,3} & 2a+b  &&\cdots &0 &\cdots&0 & 0&0 \\
 \boldsymbol{\cdot} & \boldsymbol{\cdot} & \boldsymbol{\cdot} & a_{5,3} & a_{4,3}  &\ddots& &\vdots &&\vdots & \vdots&\vdots \\
  \vdots & \vdots & \vdots &\vdots &\vdots  &\ddots& \ddots&\vdots &&\vdots & \vdots&\vdots \\
  a_{i,1} & 0 & a_{i,3} & a_{i-1,3} & a_{i-2,3}  &\cdots&a_{4,3}& (i-3)a+b&\cdots&0 & 0&0\\
   \vdots  & \vdots  & \vdots &\vdots &\vdots&&\vdots &\vdots &&\vdots  & \vdots&\vdots \\
 a_{n-1,1} & 0 & a_{n-1,3}& a_{n-2,3}& a_{n-3,3}&\cdots &a_{n-i+3,3} &a_{n-i+2,3}&\cdots&a_{4,3} &(n-4)a+b& 0\\
 a_{n,1} & 0 & a_{n,3}& a_{n-1,3}& a_{n-2,3}&\cdots &a_{n-i+4,3}  &a_{n-i+3,3}&\cdots&a_{5,3} &a_{4,3}& (n-3)a+b
\end{smallmatrix}\right],$$
$$\L_{e_{n+1}}=\left[\begin{smallmatrix}
 -a & 0 & 0 & 0&0&&\cdots &0& \cdots&0 & 0&0 \\
  B_{2,1} & -2b & a_{2,3}+2a_{4,3}& a-b &0& & \cdots &0&\cdots  & 0& 0&0\\
  a-b & 0 & -b & 0 & 0& &\cdots &0&\cdots &0& 0&0 \\
  -a_{4,1} & 0 &  -a_{4,3} & -a-b &0 & &\cdots&0&\cdots &0 & 0&0\\
  -a_{5,1} & 0 & -a_{5,3} & -a_{4,3} & -2a-b  &&\cdots &0 &\cdots&0 & 0&0 \\
 \boldsymbol{\cdot} & \boldsymbol{\cdot} & \boldsymbol{\cdot} & -a_{5,3} & -a_{4,3}  &\ddots& &\vdots &&\vdots & \vdots&\vdots \\
  \vdots & \vdots & \vdots &\vdots &\vdots  &\ddots& \ddots&\vdots &&\vdots & \vdots&\vdots \\
  -a_{i,1} & 0 & -a_{i,3} & -a_{i-1,3} & -a_{i-2,3}  &\cdots&-a_{4,3}& (3-i)a-b&\cdots&0 & 0&0\\
   \vdots  & \vdots  & \vdots &\vdots &\vdots&&\vdots &\vdots &&\vdots  & \vdots&\vdots \\
 -a_{n-1,1} & 0 & -a_{n-1,3}& -a_{n-2,3}& -a_{n-3,3}&\cdots &-a_{n-i+3,3} &-a_{n-i+2,3}&\cdots&-a_{4,3} &(4-n)a-b& 0\\
 -a_{n,1} & 0 & -a_{n,3}& -a_{n-1,3}& -a_{n-2,3}&\cdots &-a_{n-i+4,3}  &-a_{n-i+3,3}&\cdots&-a_{5,3} &-a_{4,3}& (3-n)a-b
\end{smallmatrix}\right].$$
\allowdisplaybreaks
\begin{itemize}
\item We start with applying the transformation $e^{\prime}_k=e_k,(1\leq k\leq n),e^{\prime}_{n+1}=e_{n+1}-a_{4,3}e_1$
to
remove $a_{4,3}$ in $\r_{e_{n+1}}$ and $-a_{4,3}$ in $\L_{e_{n+1}}$ from the $(i,i-1)^{st}$ positions, where $(4\leq i\leq n),$ 
 but it affects other entries as well,
such as
the entry in the $(2,1)^{st}$ position in $\r_{e_{n+1}}$ and $\L_{e_{n+1}}$ that
we change to $a_{2,1}-a_{4,3}$ and $B_{2,1}-a_{4,3},$ respectively.
It also changes the entry in the $(2,3)^{rd}$ position in $\L_{e_{n+1}}$ to 
$a_{2,3}.$
At the same time, it affects the coefficient in front of $e_2$ in the bracket $[e_{n+1},e_{n+1}],$ which we change back to $a_{2,n+1}$.
\item Then we apply the transformation $e^{\prime}_i=e_i,(1\leq i\leq n),e^{\prime}_{n+1}=e_{n+1}+\sum_{k=3}^{n-1}a_{k+1,1}e_{k}$
to remove $a_{k+1,1}$ in $\r_{e_{n+1}}$ and $-a_{k+1,1}$ in $\L_{e_{n+1}}$ from the entries in the $(k+1,1)^{st}$
positions, where $(3\leq k\leq n-1).$ The transformation changes the entry in the $(2,1)^{st}$ position in
$\r_{e_{n+1}}$ to $2a_{4,1}+a_{2,1}-a_{4,3},$ the entries in the $(2,3)^{rd}$ positions in $\r_{e_{n+1}}$
and $\L_{e_{n+1}}$ to $a_{2,3}+a_{4,1}.$ 
It also affects the coefficient in front of $e_2$ in $[e_{n+1},e_{n+1}],$ which
we rename back by $a_{2,n+1}.$ We assign $2a_{4,1}+a_{2,1}-a_{4,3}:=a_{2,1}$ and $a_{2,3}+a_{4,1}:=a_{2,3}.$ Then
$B_{2,1}-a_{4,3}:=\frac{(2b-a)a_{2,1}+2(a-b)a_{2,3}}{a}.$
\item Applying the transformation $e^{\prime}_j=e_j,(1\leq j\leq n),e^{\prime}_{n+1}=e_{n+1}+\frac{a_{2,n+1}}{2b}e_2,$
we 
remove the coefficient $a_{2,n+1}$ in front of $e_2$ in $[e_{n+1},e_{n+1}].$
\end{itemize}
\item[(2)] The right (not a derivation) and left (a derivation) multiplication operators restricted to the nilradical are as follows:
$$\r_{e_{n+1}}=\left[\begin{smallmatrix}
 a & 0 & 0 & 0&0&&\cdots &0& \cdots&0 & 0&0 \\
  a_{2,1} & 0 & a_{2,3}& (n-4)a &0& & \cdots &0&\cdots  & 0& 0&0\\
  (2-n)a & 0 & (3-n)a & 0 & 0& &\cdots &0&\cdots &0& 0&0 \\
  a_{4,1} & 0 &  a_{4,3} & (4-n)a &0 & &\cdots&0&\cdots &0 & 0&0\\
  a_{5,1} & 0 & a_{5,3} & a_{4,3} & (5-n)a  &&\cdots &0 &\cdots&0 & 0&0 \\
 \boldsymbol{\cdot} & \boldsymbol{\cdot} & \boldsymbol{\cdot} & a_{5,3} & a_{4,3}  &\ddots& &\vdots &&\vdots & \vdots&\vdots \\
  \vdots & \vdots & \vdots &\vdots &\vdots  &\ddots& \ddots&\vdots &&\vdots & \vdots&\vdots \\
  a_{i,1} & 0 & a_{i,3} & a_{i-1,3} & a_{i-2,3}  &\cdots&a_{4,3}& (i-n)a&\cdots&0 & 0&0\\
   \vdots  & \vdots  & \vdots &\vdots &\vdots&&\vdots &\vdots &&\vdots  & \vdots&\vdots \\
 a_{n-1,1} & 0 & a_{n-1,3}& a_{n-2,3}& a_{n-3,3}&\cdots &a_{n-i+3,3} &a_{n-i+2,3}&\cdots&a_{4,3} &-a& 0\\
 a_{n,1} & 0 & a_{n,3}& a_{n-1,3}& a_{n-2,3}&\cdots &a_{n-i+4,3}  &a_{n-i+3,3}&\cdots&a_{5,3} &a_{4,3}& 0
\end{smallmatrix}\right],$$
$$\L_{e_{n+1}}=\left[\begin{smallmatrix}
 -a & 0 & 0 & 0&0&&\cdots &0& \cdots&0 & 0&0 \\
  B_{2,1} & (2n-6)a & a_{2,3}+2a_{4,3}& (n-2)a &0& & \cdots &0&\cdots  & 0& 0&0\\
  (n-2)a & 0 & (n-3)a & 0 & 0& &\cdots &0&\cdots &0& 0&0 \\
  -a_{4,1} & 0 &  -a_{4,3} & (n-4)a &0 & &\cdots&0&\cdots &0 & 0&0\\
  -a_{5,1} & 0 & -a_{5,3} & -a_{4,3} & (n-5)a  &&\cdots &0 &\cdots&0 & 0&0 \\
 \boldsymbol{\cdot} & \boldsymbol{\cdot} & \boldsymbol{\cdot} & -a_{5,3} & -a_{4,3}  &\ddots& &\vdots &&\vdots & \vdots&\vdots \\
  \vdots & \vdots & \vdots &\vdots &\vdots  &\ddots& \ddots&\vdots &&\vdots & \vdots&\vdots \\
  -a_{i,1} & 0 & -a_{i,3} & -a_{i-1,3} & -a_{i-2,3}  &\cdots&-a_{4,3}& (n-i)a&\cdots&0 & 0&0\\
   \vdots  & \vdots  & \vdots &\vdots &\vdots&&\vdots &\vdots &&\vdots  & \vdots&\vdots \\
 -a_{n-1,1} & 0 & -a_{n-1,3}& -a_{n-2,3}& -a_{n-3,3}&\cdots &-a_{n-i+3,3} &-a_{n-i+2,3}&\cdots&-a_{4,3} &a& 0\\
 -a_{n,1} & 0 & -a_{n,3}& -a_{n-1,3}& -a_{n-2,3}&\cdots &-a_{n-i+4,3}  &-a_{n-i+3,3}&\cdots&-a_{5,3} &-a_{4,3}&0
\end{smallmatrix}\right].$$
\begin{itemize}
\item We apply the transformation $e^{\prime}_k=e_k,(1\leq k\leq n),e^{\prime}_{n+1}=e_{n+1}-a_{4,3}e_1$ to
remove $a_{4,3}$ in $\r_{e_{n+1}}$ and $-a_{4,3}$ in $\L_{e_{n+1}}$ from the $(i,i-1)^{st}$ positions, where $(4\leq i\leq n),$ 
 but it affects other entries as well,
such as
the entry in the $(2,1)^{st}$ position in $\r_{e_{n+1}}$ and $\L_{e_{n+1}}$ that
we change to $a_{2,1}-a_{4,3}$ and $B_{2,1}-a_{4,3},$ respectively.
It also changes the entry in the $(2,3)^{rd}$ position in $\L_{e_{n+1}}$ to 
$a_{2,3}.$
At the same time, it affects the coefficient in front of $e_2$ in the bracket $[e_{n+1},e_{n+1}],$ which we change back to $a_{2,n+1}$.

\item Then we apply the transformation $e^{\prime}_i=e_i,(1\leq i\leq n),e^{\prime}_{n+1}=e_{n+1}+\sum_{k=3}^{n-1}a_{k+1,1}e_{k}$
to remove $a_{k+1,1}$ in $\r_{e_{n+1}}$ and $-a_{k+1,1}$ in $\L_{e_{n+1}}$ from the entries in the $(k+1,1)^{st}$
positions, where $(3\leq k\leq n-1).$ It changes the entry in the $(2,1)^{st}$ position in
$\r_{e_{n+1}}$ to $2a_{4,1}+a_{2,1}-a_{4,3},$ the entries in the $(2,3)^{rd}$ positions in $\r_{e_{n+1}}$
and $\L_{e_{n+1}}$ to $a_{2,3}+a_{4,1}.$ 
It also affects the coefficient in front of $e_2$ in $[e_{n+1},e_{n+1}],$ which
we rename back by $a_{2,n+1}.$ We assign $2a_{4,1}+a_{2,1}-a_{4,3}:=a_{2,1}$ and $a_{2,3}+a_{4,1}:=a_{2,3}.$ Then
$B_{2,1}-a_{4,3}:=(5-2n)a_{2,1}+2(n-2)a_{2,3}.$
\item Finally the transformation $e^{\prime}_j=e_j,(1\leq j\leq n),e^{\prime}_{n+1}=e_{n+1}-\frac{a_{2,n+1}}{2(n-3)a}e_2$ 
removes the coefficient $a_{2,n+1}$ in front of $e_2$ in $[e_{n+1},e_{n+1}].$
\end{itemize}
\item[(3)] The right (not a derivation) and left (a derivation) multiplication operators 
restricted to the nilradical are given below:
$$\r_{e_{n+1}}=\left[\begin{smallmatrix}
 0 & 0 & 0 & 0&0&&\cdots &0& \cdots&0 & 0&0 \\
  a_{2,3}+a_{4,3}-a_{4,1} & 0 & a_{2,3}& -b &0& & \cdots &0&\cdots  & 0& 0&0\\
  b & 0 & b & 0 & 0& &\cdots &0&\cdots &0& 0&0 \\
  a_{4,1} & 0 &  a_{4,3} & b &0 & &\cdots&0&\cdots &0 & 0&0\\
  a_{5,1} & 0 & a_{5,3} & a_{4,3} & b  &&\cdots &0 &\cdots&0 & 0&0 \\
 \boldsymbol{\cdot} & \boldsymbol{\cdot} & \boldsymbol{\cdot} & a_{5,3} & a_{4,3}  &\ddots& &\vdots &&\vdots & \vdots&\vdots \\
  \vdots & \vdots & \vdots &\vdots &\vdots  &\ddots& \ddots&\vdots &&\vdots & \vdots&\vdots \\
  a_{i,1} & 0 & a_{i,3} & a_{i-1,3} & a_{i-2,3}  &\cdots&a_{4,3}& b&\cdots&0 & 0&0\\
   \vdots  & \vdots  & \vdots &\vdots &\vdots&&\vdots &\vdots &&\vdots  & \vdots&\vdots \\
 a_{n-1,1} & 0 & a_{n-1,3}& a_{n-2,3}& a_{n-3,3}&\cdots &a_{n-i+3,3} &a_{n-i+2,3}&\cdots&a_{4,3} &b& 0\\
 a_{n,1} & 0 & a_{n,3}& a_{n-1,3}& a_{n-2,3}&\cdots &a_{n-i+4,3}  &a_{n-i+3,3}&\cdots&a_{5,3} &a_{4,3}& b
\end{smallmatrix}\right],$$
$$\L_{e_{n+1}}=\left[\begin{smallmatrix}
 0 & 0 & 0 & 0&0&&\cdots &0& \cdots&0 & 0&0 \\
  b_{2,1} & -2b & a_{2,3}+2a_{4,3}&-b &0& & \cdots &0&\cdots  & 0& 0&0\\
  -b & 0 & -b & 0 & 0& &\cdots &0&\cdots &0& 0&0 \\
  -a_{4,1} & 0 &  -a_{4,3} & -b &0 & &\cdots&0&\cdots &0 & 0&0\\
  -a_{5,1} & 0 & -a_{5,3} & -a_{4,3} & -b  &&\cdots &0 &\cdots&0 & 0&0 \\
 \boldsymbol{\cdot} & \boldsymbol{\cdot} & \boldsymbol{\cdot} & -a_{5,3} & -a_{4,3}  &\ddots& &\vdots &&\vdots & \vdots&\vdots \\
  \vdots & \vdots & \vdots &\vdots &\vdots  &\ddots& \ddots&\vdots &&\vdots & \vdots&\vdots \\
  -a_{i,1} & 0 & -a_{i,3} & -a_{i-1,3} & -a_{i-2,3}  &\cdots&-a_{4,3}& -b&\cdots&0 & 0&0\\
   \vdots  & \vdots  & \vdots &\vdots &\vdots&&\vdots &\vdots &&\vdots  & \vdots&\vdots \\
 -a_{n-1,1} & 0 & -a_{n-1,3}& -a_{n-2,3}& -a_{n-3,3}&\cdots &-a_{n-i+3,3} &-a_{n-i+2,3}&\cdots&-a_{4,3} &-b& 0\\
 -a_{n,1} & 0 & -a_{n,3}& -a_{n-1,3}& -a_{n-2,3}&\cdots &-a_{n-i+4,3}  &-a_{n-i+3,3}&\cdots&-a_{5,3} &-a_{4,3}&-b
\end{smallmatrix}\right].$$
\begin{itemize}
\item Applying the transformation $e^{\prime}_k=e_k,(1\leq k\leq n),e^{\prime}_{n+1}=e_{n+1}-a_{4,3}e_1,$ we
remove $a_{4,3}$ in $\r_{e_{n+1}}$ and $-a_{4,3}$ in $\L_{e_{n+1}}$ from the $(i,i-1)^{st}$ positions, where $(4\leq i\leq n),$ 
 but the transformation affects other entries,
such as
the entry in the $(2,1)^{st}$ position in $\r_{e_{n+1}}$ and $\L_{e_{n+1}},$
that we change to $a_{2,3}-a_{4,1}$ and $b_{2,1}-a_{4,3},$ respectively.
It also changes the entry in the $(2,3)^{rd}$ position in $\L_{e_{n+1}}$ to 
$a_{2,3}.$
At the same time, it affects the coefficient in front of $e_2$ in the bracket $[e_{n+1},e_{n+1}],$ which we change back to $a_{2,n+1}$.
\item Then we apply the transformation $e^{\prime}_i=e_i,(1\leq i\leq n),e^{\prime}_{n+1}=e_{n+1}+\sum_{k=3}^{n-1}a_{k+1,1}e_{k}$
to remove $a_{k+1,1}$ in $\r_{e_{n+1}}$ and $-a_{k+1,1}$ in $\L_{e_{n+1}}$ from the entries in the $(k+1,1)^{st}$
positions, where $(3\leq k\leq n-1).$ This transformation changes the entry in the $(2,1)^{st}$ position in
$\r_{e_{n+1}}$ as well as the entries in the $(2,3)^{rd}$ positions in $\r_{e_{n+1}}$
and $\L_{e_{n+1}}$ to $a_{2,3}+a_{4,1}.$
It also affects the coefficient in front of $e_2$ in $[e_{n+1},e_{n+1}],$ which
we rename back by $a_{2,n+1}.$ We assign $a_{2,3}+a_{4,1}:=a_{2,3}$ and $b_{2,1}-a_{4,3}:=b_{2,1}.$
\item Finally applying the transformation $e^{\prime}_j=e_j,(1\leq j\leq n),e^{\prime}_{n+1}=e_{n+1}+\frac{a_{2,n+1}}{2b}e_2,$
we 
remove the coefficient $a_{2,n+1}$ in front of $e_2$ in $[e_{n+1},e_{n+1}].$
\end{itemize}
\item[(4)] The right (not a derivation) and  left (a derivation) multiplication operators 
restricted to the nilradical are as follows:
$$\r_{e_{n+1}}=\left[\begin{smallmatrix}
 a & 0 & 0 & 0&0&&\cdots &0& \cdots&0 & 0&0 \\
  a_{2,1} & 0 & a_{2,3}& -a &0& & \cdots &0&\cdots  & 0& 0&0\\
  -a & 0 & 0 & 0 & 0& &\cdots &0&\cdots &0& 0&0 \\
  a_{4,1} & 0 &  a_{4,3} & a &0 & &\cdots&0&\cdots &0 & 0&0\\
  a_{5,1} & 0 & a_{5,3} & a_{4,3} & 2a  &&\cdots &0 &\cdots&0 & 0&0 \\
 \boldsymbol{\cdot} & \boldsymbol{\cdot} & \boldsymbol{\cdot} & a_{5,3} & a_{4,3}  &\ddots& &\vdots &&\vdots & \vdots&\vdots \\
  \vdots & \vdots & \vdots &\vdots &\vdots  &\ddots& \ddots&\vdots &&\vdots & \vdots&\vdots \\
  a_{i,1} & 0 & a_{i,3} & a_{i-1,3} & a_{i-2,3}  &\cdots&a_{4,3}& (i-3)a&\cdots&0 & 0&0\\
   \vdots  & \vdots  & \vdots &\vdots &\vdots&&\vdots &\vdots &&\vdots  & \vdots&\vdots \\
 a_{n-1,1} & 0 & a_{n-1,3}& a_{n-2,3}& a_{n-3,3}&\cdots &a_{n-i+3,3} &a_{n-i+2,3}&\cdots&a_{4,3} &(n-4)a& 0\\
 a_{n,1} & 0 & a_{n,3}& a_{n-1,3}& a_{n-2,3}&\cdots &a_{n-i+4,3}  &a_{n-i+3,3}&\cdots&a_{5,3} &a_{4,3}&(n-3)a
\end{smallmatrix}\right],$$
$$\L_{e_{n+1}}=\left[\begin{smallmatrix}
 -a & 0 & 0 & 0&0&&\cdots &0& \cdots&0 & 0&0 \\
  a_{2,3}-a_{2,1}+b_{2,3} & 0 & b_{2,3}& a &0& & \cdots &0&\cdots  & 0& 0&0\\
  a & 0 & 0 & 0 & 0& &\cdots &0&\cdots &0& 0&0 \\
  -a_{4,1} & 0 &  -a_{4,3} & -a &0 & &\cdots&0&\cdots &0 & 0&0\\
  -a_{5,1} & 0 & -a_{5,3} & -a_{4,3} & -2a  &&\cdots &0 &\cdots&0 & 0&0 \\
 \boldsymbol{\cdot} & \boldsymbol{\cdot} & \boldsymbol{\cdot} & -a_{5,3} & -a_{4,3}  &\ddots& &\vdots &&\vdots & \vdots&\vdots \\
  \vdots & \vdots & \vdots &\vdots &\vdots  &\ddots& \ddots&\vdots &&\vdots & \vdots&\vdots \\
  -a_{i,1} & 0 & -a_{i,3} & -a_{i-1,3} & -a_{i-2,3}  &\cdots&-a_{4,3}& (3-i)a&\cdots&0 & 0&0\\
   \vdots  & \vdots  & \vdots &\vdots &\vdots&&\vdots &\vdots &&\vdots  & \vdots&\vdots \\
 -a_{n-1,1} & 0 & -a_{n-1,3}& -a_{n-2,3}& -a_{n-3,3}&\cdots &-a_{n-i+3,3} &-a_{n-i+2,3}&\cdots&-a_{4,3} &(4-n)a& 0\\
 -a_{n,1} & 0 & -a_{n,3}& -a_{n-1,3}& -a_{n-2,3}&\cdots &-a_{n-i+4,3}  &-a_{n-i+3,3}&\cdots&-a_{5,3} &-a_{4,3}& (3-n)a
\end{smallmatrix}\right].$$ 
\begin{itemize}
\item We continue with the transformation $e^{\prime}_k=e_k,(1\leq k\leq n),e^{\prime}_{n+1}=e_{n+1}-a_{4,3}e_1$ to
remove $a_{4,3}$ in $\r_{e_{n+1}}$ and $-a_{4,3}$ in $\L_{e_{n+1}}$ from the $(i,i-1)^{st}$ positions, where $(4\leq i\leq n),$ 
 but other entries are affected as well,
such as
the entry in the $(2,1)^{st}$ position in $\r_{e_{n+1}}$ and $\L_{e_{n+1}},$
that we change to $a_{2,1}-a_{4,3}$ and $a_{2,3}-a_{2,1}-a_{4,3}+b_{2,3},$ respectively.
The transformation also changes the entry in the $(2,3)^{rd}$ position in $\L_{e_{n+1}}$ to 
$b_{2,3}-2a_{4,3}$
and affects the coefficient in front of $e_2$ in the bracket $[e_{n+1},e_{n+1}],$ which we change back to $a_{2,n+1}$.
\item Applying the transformation $e^{\prime}_i=e_i,(1\leq i\leq n),e^{\prime}_{n+1}=e_{n+1}+\sum_{k=3}^{n-1}a_{k+1,1}e_{k},$
we remove $a_{k+1,1}$ in $\r_{e_{n+1}}$ and $-a_{k+1,1}$ in $\L_{e_{n+1}}$ from the entries in the $(k+1,1)^{st}$
positions, where $(3\leq k\leq n-1).$ This transformation changes the entry in the $(2,1)^{st}$ position in
$\r_{e_{n+1}}$ to $2a_{4,1}+a_{2,1}-a_{4,3},$ the entries in the $(2,3)^{rd}$ positions in $\r_{e_{n+1}}$
and $\L_{e_{n+1}}$ to $a_{2,3}+a_{4,1}$ and $a_{4,1}+b_{2,3}-2a_{4,3},$ respectively. 
It also affects the coefficient in front of $e_2$ in $[e_{n+1},e_{n+1}],$ that
we rename back by $a_{2,n+1}.$ We assign $2a_{4,1}+a_{2,1}-a_{4,3}:=a_{2,1},a_{2,3}+a_{4,1}:=a_{2,3}$
and $a_{4,1}+b_{2,3}-2a_{4,3}:=b_{2,3}.$ 
\end{itemize}
\item[(5)] We apply the transformation
$e^{\prime}_i=e_i,(1\leq i\leq 4),e^{\prime}_5=e_5-A_{4,3}e_1-
\frac{1}{2(b+c)}(B_{2,1}A_{4,3}+a_{2,1}A_{4,3}+2a_{4,1}A_{4,3}-A^2_{4,3}-a_{2,3}a_{4,1}-a^2_{4,1}-a_{4,1}b_{2,3}-a_{2,5})e_2+a_{4,1}e_3.$
Then we assign $a_{2,3}+a_{4,1}:=a_{2,3}$ and $b_{2,3}-2a_{2,1}-3a_{4,1}:=b_{2,3}.$
\item[(6)] One applies the transformation
$e^{\prime}_i=e_i,(1\leq i\leq 4),e^{\prime}_5=e_5-A_{4,3}e_1+
\frac{1}{2(a-c)}(B_{2,1}A_{4,3}+a_{2,1}A_{4,3}+2a_{4,1}A_{4,3}-A^2_{4,3}-a_{2,3}a_{4,1}-a^2_{4,1}-a_{4,1}b_{2,3}-a_{2,5})e_2+a_{4,1}e_3.$
Then we assign $a_{2,3}+a_{4,1}:=a_{2,3}$ and $b_{2,3}-2a_{2,1}-3a_{4,1}:=b_{2,3}.$
\item[(7)] We apply the transformation
$e^{\prime}_i=e_i,(1\leq i\leq 4),e^{\prime}_5=e_5-a_{4,3}e_1+a_{4,1}e_3$ and rename the coefficient in front of $e_2$
in $[e_5,e_5]$ back by $a_{2,5}.$ Then we assign $a_{2,1}+2a_{4,1}-a_{4,3}:=a_{2,1},a_{2,3}+a_{4,1}:=a_{2,3}$ and $b_{2,3}+a_{4,1}-2a_{4,3}:=b_{2,3}.$
\item[(8)] The transformation is as follows:
$e^{\prime}_i=e_i,(1\leq i\leq 4),e^{\prime}_5=e_5-a_{4,3}e_1+
\frac{1}{2c}(2a_{2,1}a_{4,1}-4a_{2,1}a_{4,3}+3a^2_{4,1}-6a_{4,1}a_{4,3}-a^2_{4,3}+2a_{4,3}b_{2,3}+a_{2,5})e_2+a_{4,1}e_3.$
We assign $a_{2,1}+2a_{4,1}-a_{4,3}:=a_{2,1}$ and $b_{2,3}+a_{4,1}-2a_{4,3}:=b_{2,3}.$
 \item[(9)] The transformation is
 $e^{\prime}_i=e_i,(1\leq i\leq 4),$
 $e^{\prime}_5=e_5-A_{4,3}e_1+
\frac{1}{4(b+c)}(2A^2_{4,3}-a_{2,3}A_{4,3}-2a_{4,1}A_{4,3}-2b_{2,1}A_{4,3}-b_{2,3}A_{4,3}+2a_{2,3}a_{4,1}+2a^2_{4,1}+2a_{4,1}b_{2,3}+2a_{2,5})e_2+a_{4,1}e_3.$
We assign $a_{2,3}+a_{4,1}:=a_{2,3}$ and $b_{2,3}+a_{4,1}-2b_{2,1}:=b_{2,3}.$
\item[(10)] We apply the transformation
$e^{\prime}_i=e_i,(1\leq i\leq 4),e^{\prime}_5=e_5-a_{4,3}e_1+a_{4,1}e_3$ and rename the coefficient in front of $e_2$
in $[e_5,e_5]$ back by $a_{2,5}.$ We assign $a_{2,3}+a_{4,1}:=a_{2,3},b_{2,1}-a_{4,3}:=b_{2,1}$
and $b_{2,3}+a_{4,1}-2a_{4,3}:=b_{2,3}.$
\end{enumerate}
\end{proof}
\allowdisplaybreaks
 \begin{theorem}\label{LL4(Change of Basis)} There are eight solvable
indecomposable left Leibniz algebras up to isomorphism with a codimension one nilradical
$\mathcal{L}^4,(n\geq4),$ which are given below:
\begin{equation}
\begin{array}{l}
\displaystyle \nonumber (i)\,\,\, l_{n+1,1}: [e_1,e_{n+1}]=e_1+(a-1)e_3,
[e_3,e_{n+1}]=ae_3,[e_4,e_{n+1}]=(a+1)(e_4-e_2),\\
\displaystyle[e_{i},e_{n+1}]=\left(a+i-3\right)e_{i},[e_{n+1},e_1]=-e_1-(a-1)e_3,[e_{n+1},e_2]=-2ae_2, [e_{n+1},e_3]=-ae_3,\\
\displaystyle [e_{n+1},e_4]=(1-a)e_2-(a+1)e_4, [e_{n+1},e_i]=\left(3-i-a\right)e_i,(5\leq i\leq n),\\
\displaystyle  \nonumber(ii)\,\,\, l_{n+1,2}:[e_1,e_{n+1}]=e_1+(2-n)e_3,
[e_3,e_{n+1}]=(3-n)e_3,[e_4,e_{n+1}]=(n-4)(e_2-e_4),\\
\displaystyle[e_{i},e_{n+1}]=\left(i-n\right)e_{i},[e_{n+1},e_{n+1}]=e_n,[e_{n+1},e_1]=-e_1+(n-2)e_3,[e_{n+1},e_2]=(2n-6)e_2,\\
\displaystyle [e_{n+1},e_3]=(n-3)e_3,[e_{n+1},e_4]=(n-2)e_2+(n-4)e_4,[e_{n+1},e_i]=\left(n-i\right)e_i,\\
\displaystyle (5\leq i\leq n),\\
\displaystyle (iii)\,\,\,l_{n+1,3}:
 [e_1,e_{n+1}]=e_3,[e_4,e_{n+1}]=-e_2,[e_{i},e_{n+1}]=e_{i}+\epsilon e_{i+2}+\sum_{k=i+3}^n{b_{k-i-2}e_k},\\
\displaystyle [e_{n+1},e_1]=-e_3,[e_{n+1},e_2]=-2e_2,[e_{n+1},e_4]=-e_2,[e_{n+1},e_i]=-e_i-\epsilon e_{i+2}-\sum_{k=i+3}^n{b_{k-i-2}e_k},\\
\displaystyle (\epsilon=0,1,3\leq i\leq n),\\
\displaystyle \nonumber(iv)\,\,\,\g_{n+1,4}: [e_1,e_{n+1}]=e_1-e_3,
[e_3,e_{n+1}]=fe_2,[e_4,e_{n+1}]=-e_2+e_4,[e_{i},e_{n+1}]=(i-3)e_{i},\\
\displaystyle 
[e_{n+1},e_{n+1}]=\epsilon e_2,[e_{n+1},e_1]=-e_1+\left(d+f\right)e_2+e_3,[e_{n+1},e_3]=de_2,\\
\displaystyle [e_{n+1},e_4]=e_2-e_4,[e_{n+1},e_i]=(3-i)e_i,(5\leq i\leq n;\epsilon=0,1; if\,\,\epsilon=0,\,\,then\,\,d^2+f^2\neq0),\\
\displaystyle \nonumber(v)\,\,\,l_{5,5}:[e_1,e_{5}]=ae_1+(b-a+1)e_3,
[e_3,e_{5}]=e_1+be_3,[e_{4},e_5]=(a+b)(e_4-e_2),\\
\displaystyle [e_{5},e_1]=-ae_1+(a-b-1)e_3,[e_5,e_{2}]=-2(b+1)e_2,[e_{5},e_3]=-e_1-be_3,\\
\displaystyle [e_5,e_4]=(a-b-2)e_2-(a+b)e_4,(if\,\,b=-1,then\,\,a\neq1),\\
\displaystyle \nonumber(vi)\,\,l_{5,6}:  [e_1,e_{5}]=ae_1+(1-2a)e_3,
[e_3,e_{5}]=e_1-ae_3,[e_{5},e_{5}]=e_4,[e_{5},e_1]=-ae_1+\\
\displaystyle (2a-1)e_3,[e_5,e_{2}]=2(a-1)e_2,[e_{5},e_3]=ae_3-e_1,[e_5,e_4]=2(a-1)e_2,(a\neq1),\\
\displaystyle \nonumber(vii)\,\,\g_{5,7}:[e_1,e_{5}]=a(e_1-e_3),
[e_3,e_{5}]=e_1+fe_2-e_3,[e_4,e_{5}]=(1-a)\left(e_2-e_4\right),\\
\displaystyle [e_{5},e_{5}]=\epsilon e_2,[e_{5},e_1]=-ae_1+\left(d+f\right)e_2+ae_3,[e_{5},e_3]=-e_1+de_2+e_3,\\
\displaystyle [e_{5},e_4]=(a-1)\left(e_2-e_4\right),(\epsilon=0,1;if\,\,\epsilon=0,then\,\,d^2+f^2\neq0;a\neq1),\\
\displaystyle  \nonumber(viii)\,\g_{5,8}: [e_1,e_{5}]=ce_2,
[e_3,e_{5}]=e_1-e_3,[e_4,e_{5}]=e_2-e_4,[e_{5},e_{5}]=\epsilon e_2,[e_{5},e_1]=\left(c+d\right)e_2,\\
\displaystyle [e_{5},e_3]=-e_1+\left(d+2c\right)e_2+e_3,[e_{5},e_4]=e_4-e_2,(c\neq0,\epsilon=0,1).
\end{array} 
\end{equation} 
\end{theorem}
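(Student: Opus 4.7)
The plan is to mirror the approach used for Theorem \ref{RL4(Change of Basis)}, starting from the eleven parametrized families produced by Theorem \ref{TheoremLL4Absorption} and, in each family, applying nilradical-preserving change-of-basis transformations to eliminate or scale the remaining parameters. Concretely, in every case I will display $\r_{e_{n+1}}$ (not a derivation here) and $\L_{e_{n+1}}$ (a derivation) restricted to $\mathcal{L}^4$, so that I can read off which off-diagonal entries the available transformations can kill.

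First I would process the four families with $a_{1,3}=0$. In case $(1)$ ($b\neq(3-n)a$, $a\neq0$, $b\neq0$), the transformation $e^{\prime}_i=e_i$ for $1\leq i\leq n-2$ replaced by $e^{\prime}_i=e_i-\tfrac{a_{k-i+3,3}}{(k-i)a}e_k$ kills the subdiagonal chain of $a_{5,3},\dots,a_{n,3}$ entries (introducing new $(k,1)$ entries that can be absorbed by a further shift of $e_{n+1}$ by a linear combination of the $e_k$). A shift of $e_1$ and $e_3$ by multiples of $e_2$ then removes $\mathcal{B}_{2,1}$, $a_{2,1}$, and $a_{2,3}$, and rescaling $e_{n+1}$ by $1/a$ normalizes $a$ to $1$; this yields $l_{n+1,1}$ depending on a single parameter $a:=b/a$, with a limiting $b=\tfrac{1}{2}$ subcase when the denominator $a-2b$ vanishes. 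Case $(2)$ proceeds identically except the trace condition $b=(3-n)a$ forces an extra central contribution $a_{n,n+1}e_n$ in $[e_{n+1},e_{n+1}]$; scaling $a_{n,n+1}$ to $1$ by $e^{\prime}_j=(re^{i\phi})^{j/(n-2)}e_j$, $e^{\prime}_k=(re^{i\phi})^{(k-2)/(n-2)}e_k$ gives $l_{n+1,2}$. Case $(3)$ yields $l_{n+1,3}$, with a residual scalar $a_{5,3}$ which is scaled to $\epsilon\in\{0,1\}$ by the same modulus-phase trick (and $a_{6,3},\dots,a_{n,3}$ become free parameters $b_1,\dots,b_{n-5}$). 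Case $(4)$ gives a family that turns out to be a limiting case of $l_{n+1,1}$ with $b=0$ when its defining parameters vanish, but produces the new left-and-right algebra $\g_{n+1,4}$ when they do not.

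Next I would handle the six families with $a_{1,3}=c\neq0$ (cases $(5)$--$(10)$, all in dimension $n=4$). The strategy for each is: shift $e_1,e_3$ by scalar multiples of $e_2$ (and in some cases $e_5$ by $A_{4,3}e_1+a_{4,1}e_3$ plus an $e_2$ correction) to kill the parameters $\mathcal{A}_{2,1}$, $\mathcal{B}_{2,1}$, $\mathcal{B}_{2,3}$ (or their case-specific analogues), then rescale $e_5$ by $1/c$ to normalize the $e_1$-coefficient of $[e_3,e_5]$. Cases $(5)$ and $(9)$ collapse to $l_{5,5}$ (including the limiting $b=-1$ with $a\neq1$ so that the derivation is not nilpotent). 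Cases $(6)$ and $(8)$ collapse to $l_{5,6}$ after the modulus-phase scaling of the nonzero $e_4$-coefficient of $[e_5,e_5]$. Cases $(7)$ and $(10)$ give $\g_{5,7}$ or $\g_{5,8}$ depending on whether the free coefficient $a_{2,1}$ vanishes, after the analogous scaling of the $e_2$-coefficient of $[e_5,e_5]$ to $\epsilon\in\{0,1\}$.

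The main obstacle is bookkeeping rather than conceptual: each elementary change of basis simultaneously modifies several entries of $\r_{e_{n+1}}$, $\L_{e_{n+1}}$, and the squares $[e_{n+1},e_{n+1}]$, so one has to chain the transformations in a carefully chosen order and constantly rename the resulting coefficients, and one must verify at the very end that the eight normal forms are pairwise non-isomorphic (the characteristic sequences, centers, and the structure of $[e_{n+1},e_{n+1}]$ distinguish them, and Remark \ref{Remark{g_{5,7}}}-style internal isomorphism checks handle the remaining ambiguities). A secondary subtle point is tracking which limiting values of the continuous parameters cause $\L_{e_{n+1}}$ itself to become nilpotent, since those cases must be excluded to preserve the codimension-one solvable structure; this is what forces the side conditions ``$a\neq 1$ when $b=-1$'' in $l_{5,5}$ and ``$a\neq 1$'' in $l_{5,6}$ and $\g_{5,7}$.
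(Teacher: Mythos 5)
Your proposal follows essentially the same route as the paper's proof: starting from the normal forms of Theorem \ref{TheoremLL4Absorption} (note there are ten such families, not eleven, since the nilpotent case was already discarded), displaying $\r_{e_{n+1}}$ and $\L_{e_{n+1}}$, chaining nilradical-preserving changes of basis to kill the subdiagonal chain $a_{5,3},\dots,a_{n,3}$ and the residual $e_2$-coefficients, rescaling to normalize $a$ (or $c$), and merging limiting cases, with cases $(5),(9)$ collapsing to $l_{5,5}$, cases $(6),(8)$ to $l_{5,6}$, and cases $(7),(10)$ to $\g_{5,7}$ and $\g_{5,8}$, exactly as the paper does. Apart from that minor miscount and the deferred bookkeeping, the plan matches the paper's argument.
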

\vskip 20pt    
\begin{proof}
One applies the change of basis transformations keeping the nilradical $\mathcal{L}^4$ given in $(\ref{L4})$ unchanged.
\begin{enumerate}[noitemsep, topsep=1pt]
\allowdisplaybreaks
\item[(1)] We have the right (not a derivation) and the left (a derivation) multiplication operators restricted to the nilradical are as follows: 

$\r_{e_{n+1}}=\left[\begin{smallmatrix}
 a & 0 & 0 & 0&0&0&\cdots && 0&0 & 0\\
  a_{2,1} & 0 & a_{2,3}& -a-b &0&0 & \cdots &&0  & 0& 0\\
  b-a & 0 & b & 0 & 0&0 &\cdots &&0 &0& 0\\
  0 & 0 &  0 & a+b &0 &0 &\cdots&&0 &0 & 0\\
 0 & 0 & a_{5,3} & 0 & 2a+b  &0&\cdots & &0&0 & 0\\
  0 & 0 &\boldsymbol{\cdot} & a_{5,3} & 0 &3a+b&\cdots & &0&0 & 0\\
    0 & 0 &\boldsymbol{\cdot} & \boldsymbol{\cdot} & \ddots &0&\ddots &&\vdots&\vdots &\vdots\\
  \vdots & \vdots & \vdots &\vdots &  &\ddots&\ddots &\ddots &\vdots&\vdots & \vdots\\
 0 & 0 & a_{n-2,3}& a_{n-3,3}& \cdots&\cdots &a_{5,3}&0&(n-5)a+b &0& 0\\
0 & 0 & a_{n-1,3}& a_{n-2,3}& \cdots&\cdots &\boldsymbol{\cdot}&a_{5,3}&0 &(n-4)a+b& 0\\
 0 & 0 & a_{n,3}& a_{n-1,3}& \cdots&\cdots &\boldsymbol{\cdot}&\boldsymbol{\cdot}&a_{5,3} &0& (n-3)a+b
\end{smallmatrix}\right],$

$\L_{e_{n+1}}=\left[\begin{smallmatrix}
 -a & 0 & 0 & 0&0&0&\cdots && 0&0 & 0\\
  \mathcal{B}_{2,1} & -2b & a_{2,3}& a-b &0&0 & \cdots &&0  & 0& 0\\
  a-b & 0 & -b & 0 & 0&0 &\cdots &&0 &0& 0\\
  0 & 0 &  0 & -a-b &0 &0 &\cdots&&0 &0 & 0\\
 0 & 0 & -a_{5,3} & 0 & -2a-b  &0&\cdots & &0&0 & 0\\
  0 & 0 &\boldsymbol{\cdot} & -a_{5,3} & 0 &-3a-b&\cdots & &0&0 & 0\\
    0 & 0 &\boldsymbol{\cdot} & \boldsymbol{\cdot} & \ddots &0&\ddots &&\vdots&\vdots &\vdots\\
  \vdots & \vdots & \vdots &\vdots &  &\ddots&\ddots &\ddots &\vdots&\vdots & \vdots\\
 0 & 0 & -a_{n-2,3}& -a_{n-3,3}& \cdots&\cdots &-a_{5,3}&0&(5-n)a-b &0& 0\\
0 & 0 & -a_{n-1,3}& -a_{n-2,3}& \cdots&\cdots &\boldsymbol{\cdot}&-a_{5,3}&0 &(4-n)a-b& 0\\
 0 & 0 & -a_{n,3}& -a_{n-1,3}& \cdots&\cdots &\boldsymbol{\cdot}&\boldsymbol{\cdot}&-a_{5,3} &0& (3-n)a-b
\end{smallmatrix}\right].$
\begin{itemize}[noitemsep, topsep=0pt]
\allowdisplaybreaks 
\item We apply the transformation $e^{\prime}_1=e_1,e^{\prime}_2=e_2,e^{\prime}_i=e_i-\frac{a_{k-i+3,3}}{(k-i)a}e_k,(3\leq i\leq n-2,
i+2\leq k\leq n,n\geq5),e^{\prime}_{j}=e_{j},(n-1\leq j\leq n+1),$ where $k$ is fixed, renaming all the affected entries back.
This transformation removes $a_{5,3},a_{6,3},...,a_{n,3}$ in $\r_{e_{n+1}}$ and $-a_{5,3},-a_{6,3},...,-a_{n,3}$ in $\L_{e_{n+1}}.$
Besides it introduces the entries in the $(5,1)^{st},(6,1)^{st},...,(n,1)^{st}$ positions in $\r_{e_{n+1}}$ and $\L_{e_{n+1}},$ 
which we set to be $a_{5,1},a_{6,1},...,a_{n,1}$ and $-a_{5,1},-a_{6,1},...,-a_{n,1},$ respectively.

\noindent $(I)$ Suppose $b\neq\frac{a}{2}.$
\item Applying the transformation $e^{\prime}_1=e_1+\frac{1}{2b-a}\left(\mathcal{B}_{2,1}+\frac{(b-a)a_{2,3}}{b}\right)e_2,e^{\prime}_2=e_2,
e^{\prime}_3=e_3+\frac{a_{2,3}}{b}e_2,e^{\prime}_{i}=e_{i},
(4\leq i\leq n,n\geq4),e^{\prime}_{n+1}=e_{n+1}-a_{5,1}e_2+\sum_{k=4}^{n-1}a_{k+1,1}e_{k},$ we
remove $a_{2,1}$ and $\mathcal{B}_{2,1}$ from the $(2,1)^{st}$ positions in $\r_{e_{n+1}}$ and $\L_{e_{n+1}},$
respectively. This transformation also removes $a_{2,3}$
from the $(2,3)^{rd}$ positions in $\r_{e_{n+1}}$ and $\L_{e_{n+1}}$ as well as 
$a_{k+1,1}$ in $\r_{e_{n+1}}$ and $-a_{k+1,1}$ in $\L_{e_{n+1}}$ from the entries in the $(k+1,1)^{st}$
positions, where $(4\leq k\leq n-1)$.
(For $n=4,$ we direct to the Remark \ref{Remark{a_{5,1}}}.)

\item Then we scale $a$ to unity applying the transformation $e^{\prime}_i=e_i,(1\leq i\leq n,n\geq4),e^{\prime}_{n+1}=\frac{e_{n+1}}{a}.$ 
Renaming $\frac{b}{a}$ by $b,$ we obtain a continuous family of Leibniz algebras:
\begin{equation}
\left\{
\begin{array}{l}
\displaystyle  [e_1,e_{n+1}]=e_1+(b-1)e_3,
[e_3,e_{n+1}]=be_3,[e_4,e_{n+1}]=(b+1)(e_4-e_2),\\
\displaystyle[e_{i},e_{n+1}]=\left(b+i-3\right)e_{i},[e_{n+1},e_1]=-e_1-(b-1)e_3,\\
\displaystyle [e_{n+1},e_2]=-2be_2,[e_{n+1},e_3]=-be_3,[e_{n+1},e_4]=(1-b)e_2-(b+1)e_4, \\
\displaystyle [e_{n+1},e_i]=\left(3-i-b\right)e_i,(5\leq i\leq n),(b\neq0,b\neq\frac{1}{2},b\neq3-n).
\end{array} 
\right.
\label{l_{n+1,1}}
\end{equation} 
\noindent $(II)$ Suppose $b:=\frac{a}{2}.$ We have that $\mathcal{B}_{2,1}=a_{2,3}.$
\item We apply the transformation $e^{\prime}_1=e_1+\frac{a_{2,1}+a_{2,3}}{a}e_2,e^{\prime}_2=e_2,
e^{\prime}_3=e_3+\frac{2a_{2,3}}{a}e_2,e^{\prime}_{i}=e_{i},
(4\leq i\leq n,n\geq4),e^{\prime}_{n+1}=e_{n+1}-a_{5,1}e_2+\sum_{k=4}^{n-1}a_{k+1,1}e_{k}$
to remove $a_{2,3}$ from the $(2,1)^{st},(2,3)^{rd}$ positions in $\L_{e_{n+1}}$
and from the $(2,3)^{rd}$ position in $\r_{e_{n+1}}.$
This transformation also removes $a_{2,1}$ from the $(2,1)^{st}$ position in $\r_{e_{n+1}}$ as well as
$a_{k+1,1}$ and $-a_{k+1,1}$ from the entries in the $(k+1,1)^{st}$
positions in $\r_{e_{n+1}}$ and $\L_{e_{n+1}},$ respectively, where $(4\leq k\leq n-1)$. (For $n=4,$ we refer to Remark \ref{Remark{a_{5,1}}}.)
\item To scale $a$ to unity, we apply the transformation $e^{\prime}_i=e_i,(1\leq i\leq n,n\geq4),e^{\prime}_{n+1}=\frac{e_{n+1}}{a}$ 
and obtain a limiting case of $(\ref{l_{n+1,1}})$ with $b=\frac{1}{2}$ given below:
\begin{equation}
\left\{
\begin{array}{l}
\displaystyle \nonumber [e_1,e_{n+1}]=e_1-\frac{e_3}{2},
[e_3,e_{n+1}]=\frac{e_3}{2},[e_4,e_{n+1}]=\frac{3}{2}(e_4-e_2),[e_{i},e_{n+1}]=\left(i-\frac{5}{2}\right)e_{i},\\
\displaystyle[e_{n+1},e_1]=-e_1+\frac{e_3}{2},[e_{n+1},e_2]=-e_2,[e_{n+1},e_3]=-\frac{e_3}{2},[e_{n+1},e_4]=\frac{e_2}{2}-\frac{3e_4}{2},\\
\displaystyle [e_{n+1},e_i]=\left(\frac{5}{2}-i\right)e_i,(5\leq i\leq n).
\end{array} 
\right.
\end{equation} 
\end{itemize}
\item[(2)] We have the right (not a derivation) and the left (a derivation) multiplication operators restricted to the nilradical are as follows:
$$\r_{e_{n+1}}\left[\begin{smallmatrix}
 a & 0 & 0 & 0&0&0&\cdots && 0&0 & 0\\
 a_{2,1} & 0 & a_{2,3}& (n-4)a &0&0 & \cdots &&0  & 0& 0\\
  (2-n)a & 0 & (3-n)a & 0 & 0&0 &\cdots &&0 &0& 0\\
  0 & 0 &  0 & (4-n)a &0 &0 &\cdots&&0 &0 & 0\\
 0 & 0 & a_{5,3} & 0 & (5-n)a  &0&\cdots & &0&0 & 0\\
  0 & 0 &\boldsymbol{\cdot} & a_{5,3} & 0 &(6-n)a&\cdots & &0&0 & 0\\
    0 & 0 &\boldsymbol{\cdot} & \boldsymbol{\cdot} & \ddots &0&\ddots &&\vdots&\vdots &\vdots\\
  \vdots & \vdots & \vdots &\vdots &  &\ddots&\ddots &\ddots &\vdots&\vdots & \vdots\\
 0 & 0 & a_{n-2,3}& a_{n-3,3}& \cdots&\cdots &a_{5,3}&0&-2a &0& 0\\
0 & 0 & a_{n-1,3}& a_{n-2,3}& \cdots&\cdots &\boldsymbol{\cdot}&a_{5,3}&0 &-a& 0\\
 0 & 0 & a_{n,3}& a_{n-1,3}& \cdots&\cdots &\boldsymbol{\cdot}&\boldsymbol{\cdot}&a_{5,3} &0&0
\end{smallmatrix}\right],$$
 $$\L_{e_{n+1}}=\left[\begin{smallmatrix}
 -a & 0 & 0 & 0&0&0&\cdots && 0&0 & 0\\
  \mathcal{B}_{2,1} & (2n-6)a & a_{2,3}& (n-2)a &0&0 & \cdots &&0  & 0& 0\\
  (n-2)a & 0 & (n-3)a & 0 & 0&0 &\cdots &&0 &0& 0\\
  0 & 0 &  0 & (n-4)a &0 &0 &\cdots&&0 &0 & 0\\
 0 & 0 & -a_{5,3} & 0 & (n-5)a  &0&\cdots & &0&0 & 0\\
  0 & 0 &\boldsymbol{\cdot} & -a_{5,3} & 0 &(n-6)a&\cdots & &0&0 & 0\\
    0 & 0 &\boldsymbol{\cdot} & \boldsymbol{\cdot} & \ddots &0&\ddots &&\vdots&\vdots &\vdots\\
  \vdots & \vdots & \vdots &\vdots &  &\ddots&\ddots &\ddots &\vdots&\vdots & \vdots\\
 0 & 0 & -a_{n-2,3}& -a_{n-3,3}& \cdots&\cdots &-a_{5,3}&0&2a &0& 0\\
0 & 0 & -a_{n-1,3}& -a_{n-2,3}& \cdots&\cdots &\boldsymbol{\cdot}&-a_{5,3}&0 &a& 0\\
 0 & 0 & -a_{n,3}& -a_{n-1,3}& \cdots&\cdots &\boldsymbol{\cdot}&\boldsymbol{\cdot}&-a_{5,3} &0& 0
\end{smallmatrix}\right].$$

\begin{itemize}[noitemsep, topsep=0pt]
\allowdisplaybreaks 
\item We apply the transformation $e^{\prime}_1=e_1,e^{\prime}_2=e_2,e^{\prime}_i=e_i-\frac{a_{k-i+3,3}}{(k-i)a}e_k,(3\leq i\leq n-2,
i+2\leq k\leq n,n\geq5),e^{\prime}_{j}=e_{j},(n-1\leq j\leq n+1),$ where $k$ is fixed, renaming all the affected entries back.
This transformation removes $a_{5,3},a_{6,3},...,a_{n,3}$ in $\r_{e_{n+1}}$ and $-a_{5,3},-a_{6,3},...,-a_{n,3}$ in $\L_{e_{n+1}}.$
Moreover it introduces the entries in the $(5,1)^{st},(6,1)^{st},...,(n,1)^{st}$ positions in $\r_{e_{n+1}}$ and $\L_{e_{n+1}},$ 
which we set to be $a_{5,1},a_{6,1},...,a_{n,1}$ and $-a_{5,1},-a_{6,1},...,-a_{n,1},$ respectively.

\item Applying the transformation $e^{\prime}_1=e_1+\frac{1}{(5-2n)a}\left(\mathcal{B}_{2,1}+\frac{(2-n)a_{2,3}}{3-n}\right)e_2,e^{\prime}_2=e_2,
e^{\prime}_3=e_3+\frac{a_{2,3}}{(3-n)a}e_2,e^{\prime}_{i}=e_{i},
(4\leq i\leq n,n\geq4),e^{\prime}_{n+1}=e_{n+1}-a_{5,1}e_2+\sum_{k=4}^{n-1}a_{k+1,1}e_{k},$ we
remove $a_{2,1}$ and $\mathcal{B}_{2,1}$ from the $(2,1)^{st}$ positions in $\r_{e_{n+1}}$ and $\L_{e_{n+1}},$
respectively. We also remove $a_{2,3}$
from the $(2,3)^{rd}$ positions in $\r_{e_{n+1}}$ and $\L_{e_{n+1}}$ as well as 
$a_{k+1,1}$ in $\r_{e_{n+1}}$ and $-a_{k+1,1}$ in $\L_{e_{n+1}}$ from the entries in the $(k+1,1)^{st}$
positions, where $(4\leq k\leq n-1)$. (See Remark \ref{Remark{a_{5,1}}}.)

\item To scale $a$ to unity, we apply the transformation $e^{\prime}_i=e_i,(1\leq i\leq n),e^{\prime}_{n+1}=\frac{e_{n+1}}{a}$ 
renaming the coefficient $\frac{a_{n,n+1}}{a^2}$ in front of $e_n$ in $[e_{n+1},e_{n+1}]$ back by $a_{n,n+1}.$ We obtain a Leibniz algebra given below:
\begin{equation}
\left\{
\begin{array}{l}
\displaystyle  \nonumber [e_1,e_{n+1}]=e_1+(2-n)e_3,
[e_3,e_{n+1}]=(3-n)e_3,[e_4,e_{n+1}]=(n-4)(e_2-e_4),\\
\displaystyle
[e_{i},e_{n+1}]=\left(i-n\right)e_{i},[e_{n+1},e_{n+1}]=a_{n,n+1}e_n,[e_{n+1},e_1]=-e_1+(n-2)e_3,\\
\displaystyle 
[e_{n+1},e_2]=(2n-6)e_2,[e_{n+1},e_3]=(n-3)e_3,[e_{n+1},e_4]=(n-2)e_2+(n-4)e_4,\\
\displaystyle [e_{n+1},e_i]=\left(n-i\right)e_i,(5\leq i\leq n),
\end{array} 
\right.
\end{equation}
 \end{itemize}
If $a_{n,n+1}=0,$ then we have a limiting case of (\ref{l_{n+1,1}}) with $b=3-n$. If $a_{n,n+1}\neq0,$
 then we scale it to $1.$ It gives us the algebra $l_{n+1,2}.$
 \item[(3)] We have the right (not a derivation) and the left (a derivation) multiplication operators restricted to the nilradical are as follows:
$$\r_{e_{n+1}}=\left[\begin{smallmatrix}
 0 & 0 & 0 & 0&0&0&\cdots && 0&0 & 0\\
  a_{2,3} & 0 & a_{2,3}& -b &0&0 & \cdots &&0  & 0& 0\\
  b & 0 & b & 0 & 0&0 &\cdots &&0 &0& 0\\
  0 & 0 &  0 & b &0 &0 &\cdots&&0 &0 & 0\\
 0 & 0 & a_{5,3} & 0 & b  &0&\cdots & &0&0 & 0\\
  0 & 0 &\boldsymbol{\cdot} & a_{5,3} & 0 &b&\cdots & &0&0 & 0\\
    0 & 0 &\boldsymbol{\cdot} & \boldsymbol{\cdot} & \ddots &0&\ddots &&\vdots&\vdots &\vdots\\
  \vdots & \vdots & \vdots &\vdots &  &\ddots&\ddots &\ddots &\vdots&\vdots & \vdots\\
 0 & 0 & a_{n-2,3}& a_{n-3,3}& \cdots&\cdots &a_{5,3}&0&b &0& 0\\
0 & 0 & a_{n-1,3}& a_{n-2,3}& \cdots&\cdots &\boldsymbol{\cdot}&a_{5,3}&0 &b& 0\\
 0 & 0 & a_{n,3}& a_{n-1,3}& \cdots&\cdots &\boldsymbol{\cdot}&\boldsymbol{\cdot}&a_{5,3} &0& b
\end{smallmatrix}\right],$$
 $$\L_{e_{n+1}}=\left[\begin{smallmatrix}
0 & 0 & 0 & 0&0&0&\cdots && 0&0 & 0\\
  b_{2,1} & -2b & a_{2,3}& -b &0&0 & \cdots &&0  & 0& 0\\
  -b & 0 & -b & 0 & 0&0 &\cdots &&0 &0& 0\\
  0 & 0 &  0 & -b &0 &0 &\cdots&&0 &0 & 0\\
 0 & 0 & -a_{5,3} & 0 & -b  &0&\cdots & &0&0 & 0\\
  0 & 0 &\boldsymbol{\cdot} & -a_{5,3} & 0 &-b&\cdots & &0&0 & 0\\
    0 & 0 &\boldsymbol{\cdot} & \boldsymbol{\cdot} & \ddots &0&\ddots &&\vdots&\vdots &\vdots\\
  \vdots & \vdots & \vdots &\vdots &  &\ddots&\ddots &\ddots &\vdots&\vdots & \vdots\\
 0 & 0 & -a_{n-2,3}& -a_{n-3,3}& \cdots&\cdots &-a_{5,3}&0&-b &0& 0\\
0 & 0 & -a_{n-1,3}& -a_{n-2,3}& \cdots&\cdots &\boldsymbol{\cdot}&-a_{5,3}&0 &-b& 0\\
 0 & 0 & -a_{n,3}& -a_{n-1,3}& \cdots&\cdots &\boldsymbol{\cdot}&\boldsymbol{\cdot}&-a_{5,3} &0&-b
\end{smallmatrix}\right].$$
\begin{itemize}[noitemsep, topsep=0pt]
\allowdisplaybreaks 
\item Applying the transformation $e^{\prime}_1=e_1+\frac{b_{2,1}+a_{2,3}}{2b}e_2,e^{\prime}_2=e_2,
e^{\prime}_3=e_3+\frac{a_{2,3}}{b}e_2,e^{\prime}_{i}=e_{i},
(4\leq i\leq n+1),$
we
remove $b_{2,1}$ from the $(2,1)^{st}$ position in $\L_{e_{n+1}}$ and 
$a_{2,3}$ from the $(2,1)^{st}$ position in $\r_{e_{n+1}}$ and from the $(2,3)^{rd}$ positions in
$\r_{e_{n+1}}$ and $\L_{e_{n+1}}$ keeping other entries unchanged.

\item To scale $b$ to unity, we apply the transformation $e^{\prime}_i=e_i,(1\leq i\leq n),e^{\prime}_{n+1}=\frac{e_{n+1}}{b}.$ 
Then we rename $\frac{a_{5,3}}{b},\frac{a_{6,3}}{b},...,\frac{a_{n,3}}{b}$ by $a_{5,3},a_{6,3},...,a_{n,3},$ respectively.
We obtain a Leibniz algebra
\begin{equation}
\left\{
\begin{array}{l}
\displaystyle  \nonumber [e_1,e_{n+1}]=e_3,[e_4,e_{n+1}]=-e_2,[e_{i},e_{n+1}]=e_{i}+\sum_{k=i+2}^n{a_{k-i+3,3}e_k},[e_{n+1},e_1]=-e_3,\\
\displaystyle [e_{n+1},e_2]=-2e_2,[e_{n+1},e_4]=-e_2, [e_{n+1},e_i]=-e_i-\sum_{k=i+2}^n{a_{k-i+3,3}e_k},(3\leq i\leq n),
\end{array} 
\right.
\end{equation} 
 If $a_{5,3}\neq0,(n\geq5),$ then we scale it  to $1.$ We also rename all the affected entries back
and then we rename $a_{6,3},...,a_{n,3}$ by $b_1,...,b_{n-5},$ respectively.
We combine with the case when $a_{5,3}=0$ and obtain a Leibniz algebra $l_{n+1,3}.$
\begin{remark}
If $n=4,$ then $\epsilon=0.$
\end{remark}
\end{itemize}
\item[(4)] We have the right (not a derivation) and the left (a derivation) multiplication operators restricted to the nilradical are as follows:
$$\r_{e_{n+1}}=\left[\begin{smallmatrix}
 a & 0 & 0 & 0&0&0&\cdots && 0&0 & 0\\
 a_{2,1} & 0 & a_{2,3}& -a &0&0 & \cdots &&0  & 0& 0\\
  -a & 0 & 0 & 0 & 0&0 &\cdots &&0 &0& 0\\
  0 & 0 &  0 & a &0 &0 &\cdots&&0 &0 & 0\\
 0 & 0 & a_{5,3} & 0 & 2a  &0&\cdots & &0&0 & 0\\
  0 & 0 &\boldsymbol{\cdot} & a_{5,3} & 0 &3a&\cdots & &0&0 & 0\\
    0 & 0 &\boldsymbol{\cdot} & \boldsymbol{\cdot} & \ddots &0&\ddots &&\vdots&\vdots &\vdots\\
  \vdots & \vdots & \vdots &\vdots &  &\ddots&\ddots &\ddots &\vdots&\vdots & \vdots\\
 0 & 0 & a_{n-2,3}& a_{n-3,3}& \cdots&\cdots &a_{5,3}&0&(n-5)a &0& 0\\
0 & 0 & a_{n-1,3}& a_{n-2,3}& \cdots&\cdots &\boldsymbol{\cdot}&a_{5,3}&0 &(n-4)a& 0\\
 0 & 0 & a_{n,3}& a_{n-1,3}& \cdots&\cdots &\boldsymbol{\cdot}&\boldsymbol{\cdot}&a_{5,3} &0& (n-3)a
\end{smallmatrix}\right],$$
$$\L_{e_{n+1}}=\left[\begin{smallmatrix}
 -a & 0 & 0 & 0&0&0&\cdots && 0&0 & 0\\
  a_{2,3}-a_{2,1}+b_{2,3} & 0 & b_{2,3}& a &0&0 & \cdots &&0  & 0& 0\\
  a & 0 &0 & 0 & 0&0 &\cdots &&0 &0& 0\\
  0 & 0 &  0 & -a &0 &0 &\cdots&&0 &0 & 0\\
 0 & 0 & -a_{5,3} & 0 & -2a  &0&\cdots & &0&0 & 0\\
  0 & 0 &\boldsymbol{\cdot} & -a_{5,3} & 0 &-3a&\cdots & &0&0 & 0\\
    0 & 0 &\boldsymbol{\cdot} & \boldsymbol{\cdot} & \ddots &0&\ddots &&\vdots&\vdots &\vdots\\
  \vdots & \vdots & \vdots &\vdots &  &\ddots&\ddots &\ddots &\vdots&\vdots & \vdots\\
 0 & 0 & -a_{n-2,3}& -a_{n-3,3}& \cdots&\cdots &-a_{5,3}&0&(5-n)a &0& 0\\
0 & 0 & -a_{n-1,3}& -a_{n-2,3}& \cdots&\cdots &\boldsymbol{\cdot}&-a_{5,3}&0 &(4-n)a& 0\\
 0 & 0 & -a_{n,3}& -a_{n-1,3}& \cdots&\cdots &\boldsymbol{\cdot}&\boldsymbol{\cdot}&-a_{5,3} &0& (3-n)a
\end{smallmatrix}\right].$$
\begin{itemize}[noitemsep, topsep=0pt]
\allowdisplaybreaks 
\item We apply the transformation $e^{\prime}_1=e_1,e^{\prime}_2=e_2,e^{\prime}_i=e_i-\frac{a_{k-i+3,3}}{(k-i)a}e_k,(3\leq i\leq n-2,
i+2\leq k\leq n,n\geq5),e^{\prime}_{j}=e_{j},(n-1\leq j\leq n+1),$ where $k$ is fixed renaming all the affected entries back.
This transformation removes $a_{5,3},a_{6,3},...,a_{n,3}$ in $\r_{e_{n+1}}$ and $-a_{5,3},-a_{6,3},...,-a_{n,3}$ in $\L_{e_{n+1}}.$
Besides it introduces the entries in the $(5,1)^{st},(6,1)^{st},...,(n,1)^{st}$ positions in $\r_{e_{n+1}}$ and $\L_{e_{n+1}},$ 
which we set to be $a_{5,1},a_{6,1},...,a_{n,1}$ and $-a_{5,1},-a_{6,1},...,-a_{n,1},$ respectively.

\item Applying the transformation $e^{\prime}_1=e_1+\frac{a_{2,1}}{a}e_2,e^{\prime}_{i}=e_{i},
(2\leq i\leq n,n\geq4),e^{\prime}_{n+1}=e_{n+1}+\sum_{k=4}^{n-1}a_{k+1,1}e_{k},$ we
remove $a_{2,1}$ from the $(2,1)^{st}$ position in $\r_{e_{n+1}}$.
This transformation changes the entry in the $(2,1)^{st}$ position in $\L_{e_{n+1}}$ to $a_{2,3}+b_{2,3}.$
It also removes
$a_{k+1,1}$ and $-a_{k+1,1}$ from the entries in the $(k+1,1)^{st}$
positions, where $(4\leq k\leq n-1)$ in $\r_{e_{n+1}}$ and $\L_{e_{n+1}},$ respectively.\footnote{Except this transformation it is the same as case $(4)$
for the right Leibniz algebras.}

\item We assign $a_{2,3}:=d$ and $b_{2,3}:=f$
and
then we scale $a$ to unity applying the transformation $e^{\prime}_i=e_i,(1\leq i\leq n,n\geq4),e^{\prime}_{n+1}=\frac{e_{n+1}}{a}.$ 
Renaming $\frac{d}{a},\frac{f}{a}$ and $\frac{a_{2,n+1}}{a^2}$ by $d,f$ and $a_{2,n+1},$ respectively, we obtain a Leibniz algebra, which is right and left at the same time
and a limiting case of (\ref{l_{n+1,1}}) with $b=0,$ when $d=f=a_{2,n+1}=0$:
\begin{equation}
\left\{
\begin{array}{l}
\displaystyle  \nonumber [e_1,e_{n+1}]=e_1-e_3,
[e_3,e_{n+1}]=de_2,[e_4,e_{n+1}]=e_4-e_2,[e_{i},e_{n+1}]=(i-3)e_{i},\\
\displaystyle [e_{n+1},e_{n+1}]=a_{2,n+1}e_2,[e_{n+1},e_1]=-e_1+\left(d+f\right)e_2+e_3,[e_{n+1},e_3]=fe_2,\\
\displaystyle [e_{n+1},e_4]=e_2-e_4,[e_{n+1},e_i]=(3-i)e_i,(5\leq i\leq n).
\end{array} 
\right.
\end{equation} 
Altogether (\ref{l_{n+1,1}}) and all its limiting
cases after replacing $b$ with $a$ give us a Leibniz algebra $l_{n+1,1}.$
It remains to consider a continuous family of Leibniz algebras given below
 and scale any nonzero entries as much as possible.
\begin{equation}
\left\{
\begin{array}{l}
\displaystyle  \nonumber [e_1,e_{n+1}]=e_1-e_3,
[e_3,e_{n+1}]=de_2,[e_4,e_{n+1}]=e_4-e_2,[e_{i},e_{n+1}]=(i-3)e_{i},\\
\displaystyle 
[e_{n+1},e_{n+1}]=a_{2,n+1}e_2,[e_{n+1},e_1]=-e_1+\left(d+f\right)e_2+e_3,[e_{n+1},e_3]=fe_2,\\
\displaystyle [e_{n+1},e_4]=e_2-e_4,[e_{n+1},e_i]=(3-i)e_i,(a_{2,n+1}^2+d^2+f^2\neq0,5\leq i\leq n),
\end{array} 
\right.
\end{equation} 
 If $a_{2,n+1}\neq0,$ then we scale it to $1.$ We also rename all the affected entries back.
Then we combine with the case when $a_{2,n+1}=0$ and obtain a right and left Leibniz algebra $\g_{n+1,4}.$
\end{itemize}
\item[(5)] Applying the transformation
$e^{\prime}_1=e_1+\frac{(2a-b)a_{2,3}-b\cdot b_{2,3}}{2a(b+c)}e_2,e^{\prime}_2=e_2,$
$e^{\prime}_3=e_3+\frac{(2a+c)a_{2,3}+c\cdot b_{2,3}}{2a(b+c)}e_2,$\\
$e^{\prime}_4=e_4,e^{\prime}_5=\frac{e_5}{c}$ and renaming
$\frac{a}{c}$ and $\frac{b}{c}$ by $a$ and $b,$ respectively, we obtain a continuous family of Leibniz algebras given below:
\begin{equation}
\left\{
\begin{array}{l}
\displaystyle  [e_1,e_{5}]=ae_1+(b-a+1)e_3,
[e_3,e_{5}]=e_1+be_3,[e_{4},e_5]=(a+b)(e_4-e_2),\\
\displaystyle [e_{5},e_1]=-ae_1+(a-b-1)e_3,[e_5,e_{2}]=-2(b+1)e_2,[e_{5},e_3]=-e_1-be_3,\\
\displaystyle [e_5,e_4]=(a-b-2)e_2-(a+b)e_4,(b\neq-a,a\neq0,b\neq-1).
\end{array} 
\right.
\label{l_{5,5}}
\end{equation} 
\item[(6)] We
apply the transformation
$e^{\prime}_1=e_1+\frac{3a_{2,3}+b_{2,3}}{2(c-a)}e_2,
e^{\prime}_2=e_2,$
$e^{\prime}_3=e_3+\frac{(2a+c)a_{2,3}+c\cdot b_{2,3}}{2a(c-a)}e_2,$\\
$e^{\prime}_4=e_4,e^{\prime}_5=\frac{e_5}{c}$
and rename
$\frac{a}{c}$ and $\frac{a_{4,5}}{c^2}$ by $a$ and $a_{4,5},$ respectively,
to obtain a Leibniz algebra given below:
\begin{equation}
\left\{
\begin{array}{l}
\displaystyle  [e_1,e_{5}]=ae_1+(1-2a)e_3,
[e_3,e_{5}]=e_1-ae_3,[e_{5},e_{5}]=a_{4,5}e_4,\\
\displaystyle [e_{5},e_1]=-ae_1+(2a-1)e_3,[e_5,e_{2}]=2(a-1)e_2,[e_{5},e_3]=ae_3-e_1,\\
\displaystyle [e_5,e_4]=2(a-1)e_2,(a\neq0,a\neq1),
\end{array} 
\label{g1}
\right.
\end{equation} 
which is a limiting case of $(\ref{l_{5,5}})$ with $b:=-a$ when $a_{4,5}=0.$
If $a_{4,5}\neq0,$ then
we scale it  to $1$
and obtain a continuous family of Leibniz algebras:
\begin{equation}
\left\{
\begin{array}{l}
\displaystyle  [e_1,e_{5}]=ae_1+(1-2a)e_3,
[e_3,e_{5}]=e_1-ae_3,[e_{5},e_{5}]=e_4,\\
\displaystyle [e_{5},e_1]=-ae_1+(2a-1)e_3,[e_5,e_{2}]=2(a-1)e_2,[e_{5},e_3]=ae_3-e_1,\\
\displaystyle [e_5,e_4]=2(a-1)e_2,(a\neq0,a\neq1).
\end{array} 
\right.
\label{l_{5,6}}
\end{equation} 
\item[(7)] We apply the transformation
$e^{\prime}_1=e_1+\frac{a_{2,1}}{a}e_2,e^{\prime}_i=e_i,(2\leq i\leq 5)$
and assign $d:=\frac{a\cdot b_{2,3}+c\cdot a_{2,1}}{a},$
$f:=\frac{a\cdot a_{2,3}-c\cdot a_{2,1}}{a}.$ Then this case becomes the same as case $(7)$ of Theorem \ref{RL4(Change of Basis)}.
\item[(8)] Applying the transformation
$e^{\prime}_1=e_1+\frac{2a_{2,1}-b_{2,3}}{c}e_2,e^{\prime}_2=e_2,
e^{\prime}_3=e_3+\frac{a_{2,1}}{c}e_2,e^{\prime}_4=e_4,e^{\prime}_5=\frac{e_5}{c}$
and renaming $\frac{a_{4,5}}{c^2}$ back by $a_{4,5},$ we obtain a Leibniz algebra:
\begin{equation}
\left\{
\begin{array}{l}
\displaystyle  \nonumber [e_1,e_{5}]=e_3,
[e_3,e_{5}]=e_1,[e_{5},e_{5}]=a_{4,5}e_4,[e_{5},e_1]=-e_3,[e_5,e_{2}]=-2e_2,[e_{5},e_3]=-e_1,\\
\displaystyle [e_5,e_{4}]=-2e_2,
\end{array} 
\right.
\end{equation} 
which is a limiting case of $(\ref{g1})$ with $a=0.$ If $a_{4,5}\neq0,$ then
we scale $a_{4,5}$ to $1$ and obtain a limiting case of $(\ref{l_{5,6}})$
with $a=0.$ Altogether $(\ref{l_{5,6}})$ and all its limiting cases give us the algebra $l_{5,6}.$
\item[(9)] One applies the transformation 
$e^{\prime}_1=e_1+\frac{(3b+4c)a_{2,3}-b\cdot b_{2,3}}{4(b+c)^2}e_2,
e^{\prime}_2=e_2,e^{\prime}_3=e_3+\frac{(4b+5c)a_{2,3}+c\cdot b_{2,3}}{4(b+c)^2}e_2,e^{\prime}_4=e_4,
e^{\prime}_5=\frac{e_5}{c}.$ Renaming $\frac{b}{c}$ by $b,$ we obtain a Leibniz algebra given below:
\begin{equation}
\left\{
\begin{array}{l}
\displaystyle  \nonumber [e_1,e_{5}]=(b+1)e_3,
[e_3,e_{5}]=e_1+be_3,[e_{4},e_5]=b(e_4-e_2),[e_{5},e_1]=(-b-1)e_3,\\
\displaystyle [e_5,e_{2}]=-2(b+1)e_2,[e_{5},e_3]=-e_1-be_3,[e_5,e_{4}]=-(b+2)e_2-be_4,(b\neq0,b\neq-1),
\end{array} 
\right.
\end{equation}
which is a limiting case of $(\ref{l_{5,5}})$ with $a=0.$ Altogether $(\ref{l_{5,5}})$ and all its
limiting cases give us the algebra $l_{5,5}.$ 
\item[(10)] We apply the transformation
$e^{\prime}_1=e_1+\frac{a_{2,3}}{c}e_2,e^{\prime}_i=e_i,(2\leq i\leq 4),e^{\prime}_5=\frac{e_5}{c}$
and rename $\frac{b_{2,1}}{c},\frac{b_{2,3}}{c},\frac{a_{2,3}}{c}$ and $\frac{a_{2,5}}{c^2}$ by
$b_{2,1},b_{2,3},a_{2,3}$ and $a_{2,5},$ respectively. Then we assign $a_{2,1}:=a_{2,3}+b_{2,3}-b_{2,1}$
and this case becomes the same as case $(10)$ of Theorem \ref{RL4(Change of Basis)}.
 \end{enumerate}
\end{proof}
\subsubsection{Codimension two and three solvable extensions of $\mathcal{L}^4$}
The non-zero inner derivations of $\mathcal{L}^4,(n\geq4)$ are
given by
 \[
\L_{e_1}=\left[\begin{smallmatrix}
0&0 & 0 & 0 & \cdots & 0 & 0  & 0 \\
1&0 & 2 & 0 & \cdots & 0 & 0  & 0 \\
0&0 & 0 & 0 & \cdots & 0 & 0 & 0 \\
 0&0 & -1 & 0 & \cdots & 0 & 0 & 0 \\
0& 0 & 0 & -1 & \cdots & 0 & 0 & 0 \\
\vdots& \vdots  & \vdots  & \vdots  & \ddots & \vdots & \vdots & \vdots\\
 0& 0 & 0 & 0&\cdots & -1 & 0 &0\\
  0& 0 & 0 & 0&\cdots & 0 & -1 &0
\end{smallmatrix}\right],\L_{e_3}=\left[\begin{smallmatrix}
0&0 & 0 & 0 & \cdots & 0 \\
0&0 & 1 & 0 & \cdots & 0  \\
0&0 & 0 & 0 & \cdots & 0 \\
 1&0 & 0 & 0 & \cdots & 0\\
0& 0 & 0 & 0 & \cdots & 0\\
\vdots& \vdots  & \vdots  & \vdots  &  & \vdots\\
  0& 0 & 0 & 0&\cdots & 0
\end{smallmatrix}\right],
\L_{e_i}=E_{i+1,1}=\left[\begin{smallmatrix} 0 & 0&0&\cdots &  0 \\
 0& 0&0&\cdots &  0 \\
  0 & 0&0&\cdots &  0 \\
    0 & 0&0&\cdots &  0 \\
 \vdots &\vdots &\vdots& & \vdots\\
 1 &0&0& \cdots & 0\\
  \vdots &\vdots &\vdots& & \vdots\\
  \boldsymbol{\cdot} & 0&0&\cdots &  0
 \end{smallmatrix}\right]\,(4\leq i\leq n-1),\] where $E_{i+1,1}$ is the $n\times n$ matrix that has $1$ in the 
$(i+1,1)^{st}$
position and all other entries are zero. According to Remark \ref{NumberOuterDerivations}, we have at most two outer derivations.

\paragraph{Codimension two solvable extensions of $\mathcal{L}^4,(n=4)$}
We consider the same cases as in Section \ref{Twodim(n=4)} and follow the General approach given in Section \ref{Two&Three}.

\noindent (1) (a) One could set $\left(
\begin{array}{c}
  a^1 \\
 b^1\\
 c^1
\end{array}\right)=\left(
\begin{array}{c}
  1\\
 a\\
 0
\end{array}\right)$ and $\left(
\begin{array}{c}
  a^2 \\
 b^2\\
 c^2
\end{array}\right)=\left(
\begin{array}{c}
  1\\
 b\\
 1
\end{array}\right),(a\neq-1,a\neq0,b\neq-1).$ Therefore the vector space of outer derivations as $4\times 4$
matrices is as follows:
$$\L_{e_{5}}=\begin{array}{llll} \left[\begin{matrix}
 -1 & 0 & 0 & 0\\
 \frac{3-2a}{2}a_{2,3}+\frac{1-2a}{2}b_{2,3} & -2a & a_{2,3}& 1-a\\
  1-a & 0 & -a & 0 \\
  0 & 0 &  0 & -1-a
 \end{matrix}\right]
\end{array},$$
$$\L_{e_{6}}=\begin{array}{llll} \left[\begin{matrix}
 -1 & 0 & -1 & 0\\
(1-b)\alpha_{2,3}-b\cdot\beta_{2,3} & -2(b+1) & 2\alpha_{2,3}+\beta_{2,3}& -1-b\\
  -b & 0 & -b & 0 \\
  0 & 0 &  0 & -1-b
    \end{matrix}\right]
\end{array}.$$
\noindent $(i)$ Considering $\L_{[e_5,e_6]},$ we obtain that $a:=1.$ Since $b\neq-1,$ we have that 
$\alpha_{2,3}:=\left(b+\frac{1}{2}\right)a_{2,3}-\frac{b_{2,3}}{2}$ and it follows that $\beta_{2,3}:=\left(\frac{1}{2}-b\right)a_{2,3}+\frac{3}{2}b_{2,3}.$ 
It implies that $\L_{[e_5,e_6]}=0.$ As a result,
$$\L_{e_{5}}=\begin{array}{llll} \left[\begin{matrix}
 -1 & 0 & 0 & 0\\
 \frac{a_{2,3}-b_{2,3}}{2} & -2 & a_{2,3}& 0\\
  0 & 0 & -1 & 0 \\
  0 & 0 &  0 & -2
 \end{matrix}\right]
\end{array},$$
$$\L_{e_{6}}=\begin{array}{llll} \left[\begin{matrix}
 -1 & 0 & -1 & 0\\
\frac{a_{2,3}}{2}-\left(b+\frac{1}{2}\right)b_{2,3} & -2(b+1) & \left(b+\frac{3}{2}\right)a_{2,3}+\frac{b_{2,3}}{2}& -1-b\\
  -b & 0 & -b & 0 \\
  0 & 0 &  0 & -1-b
    \end{matrix}\right]
\end{array},(b\neq-1)\footnote{We notice that $\L_{e_{6}}$ is nilpotent if $b=-1.$}.$$
Further, we find the following commutators:
\allowdisplaybreaks
\begin{equation}
\left\{
\begin{array}{l}
\displaystyle  \nonumber \L_{[e_{1},e_{5}]}=\L_{e_1},\L_{[e_{2},e_5]}=0,\L_{[e_{3},e_5]}=\L_{e_3},\L_{[e_{i},e_5]}=0,\L_{[e_{1},e_6]}=\L_{e_1}+
b\L_{e_3},\\
\displaystyle \L_{[e_{2},e_6]}=0,\L_{[e_{3},e_{6}]}=\L_{e_1}+b\L_{e_3}, \L_{[e_{i},e_6]}=0,(b\neq-1,4\leq i\leq 6).
\end{array} 
\right.
\end{equation} 
\noindent $(ii)$ We include a linear combination of $e_2$ and $e_4$:
\begin{equation}
\left\{
\begin{array}{l}
\displaystyle  \nonumber [e_{1},e_{5}]=e_1+c_{2,1}e_2+c_{4,1}e_4,[e_{2},e_5]=c_{2,2}e_2+c_{4,2}e_4,
[e_{3},e_5]=c_{2,3}e_2+e_3+c_{4,3}e_4,\\
\displaystyle [e_{i},e_5]=c_{2,i}e_2+c_{4,i}e_4,[e_{1},e_6]=e_1+d_{2,1}e_2+
be_3+d_{4,1}e_4,[e_{2},e_6]=d_{2,2}e_2+d_{4,2}e_4,\\
\displaystyle [e_{3},e_{6}]=e_1+d_{2,3}e_2+be_3+d_{4,3}e_4, [e_{i},e_6]=d_{2,i}e_2+d_{4,i}e_4,(b\neq-1,4\leq i\leq 6).
\end{array} 
\right.
\end{equation} 
Besides we have the brackets from $\mathcal{L}^4$ and from outer derivations $\L_{e_{5}}$ and $\L_{e_{6}}$ as well.

\noindent $(iii)$ We satisfy the right Leibniz identity shown in Table \ref{LeftCodimTwo(L4,(n=4))}.

\begin{table}[h!]
\caption{Left Leibniz identities in case (1) (a) with a nilradical $\mathcal{L}^4,(n=4)$.}
\label{LeftCodimTwo(L4,(n=4))}
\begin{tabular}{lp{2.4cm}p{12cm}}
\hline
\scriptsize Steps &\scriptsize Ordered triple &\scriptsize
Result\\ \hline
\scriptsize $1.$ &\scriptsize $\L_{e_1}\left([e_{1},e_{5}]\right)$ &\scriptsize
$[e_{2},e_5]=0$
$\implies$ $c_{2,2}=c_{4,2}=0.$\\ \hline
\scriptsize $2.$ &\scriptsize $\L_{e_1}\left([e_{1},e_{6}]\right)$ &\scriptsize
$[e_{2},e_6]=0$
$\implies$ $d_{2,2}=d_{4,2}=0.$\\ \hline
\scriptsize $3.$ &\scriptsize $\L_{e_1}\left([e_{3},e_{5}]\right)$ &\scriptsize
$c_{2,4}:=-2,c_{4,4}:=2$
$\implies$  $[e_{4},e_5]=2\left(e_4-e_2\right).$ \\ \hline
\scriptsize $4.$ &\scriptsize $\L_{e_1}\left([e_{3},e_{6}]\right)$ &\scriptsize
$d_{2,4}:=-b-1,d_{4,4}:=b+1$ 
$\implies$  $[e_{4},e_6]=(b+1)\left(e_4-e_2\right).$ \\ \hline
\scriptsize $5.$ &\scriptsize $\L_{e_{5}}\left([e_{5},e_{5}]\right)$ &\scriptsize
$c_{4,5}=0$
$\implies$  $[e_{5},e_{5}]=c_{2,5}e_{2}.$\\ \hline
\scriptsize $6.$ &\scriptsize $\L_{e_6}\left([e_{6},e_{6}]\right)$ &\scriptsize
$b\neq-1\implies$ $d_{4,6}=0$ 
$\implies$  $[e_{6},e_6]=d_{2,6}e_2.$   \\ \hline  
\scriptsize $7.$ &\scriptsize $\L_{e_3}\left([e_{5},e_{5}]\right)$ &\scriptsize
$c_{2,3}:=a_{2,3},c_{4,3}=0$  
$\implies$  $[e_{3},e_5]=a_{2,3}e_2+e_3.$   \\ \hline
\scriptsize $8.$ &\scriptsize $\L_{e_{1}}\left([e_{5},e_{5}]\right)$ &\scriptsize
 $c_{2,1}:=\frac{a_{2,3}-b_{2,3}}{2},$ $c_{4,1}=0$ 
$\implies$  $[e_{1},e_{5}]=e_{1}+\frac{a_{2,3}-b_{2,3}}{2}e_2.$\\ \hline
\scriptsize $9.$ &\scriptsize $\L_{e_{1}}\left([e_{5},e_{6}]\right)$ &\scriptsize
$d_{2,1}:=\left(b+\frac{1}{2}\right)a_{2,3}-\frac{b_{2,3}}{2},d_{4,1}=0$ 
$\implies$ $[e_{1},e_{6}]=e_1+\left(\left(b+\frac{1}{2}\right)a_{2,3}-\frac{b_{2,3}}{2}\right)e_2+be_3.$   \\ \hline
\scriptsize $10.$ &\scriptsize $\L_{e_{5}}\left([e_{3},e_{6}]\right)$ &\scriptsize
$d_{2,3}:=\left(b+\frac{1}{2}\right)a_{2,3}-\frac{b_{2,3}}{2},d_{4,3}=0$
$\implies$ $[e_{3},e_{6}]=e_1+\left(\left(b+\frac{1}{2}\right)a_{2,3}-\frac{b_{2,3}}{2}\right)e_2+be_3.$ \\ \hline
\scriptsize $11.$ &\scriptsize $\L_{e_5}\left([e_{6},e_{5}]\right)$ &\scriptsize
 $d_{4,5}:=-c_{4,6}$ $\implies$ $c_{4,6}:=(b+1)c_{2,5}-c_{2,6}$
$\implies$  $[e_5,e_6]=d_{2,5}e_2+\left(c_{2,6}-(b+1)c_{2,5}\right)e_4,[e_{6},e_5]=c_{2,6}e_2+\left((b+1)c_{2,5}-c_{2,6}\right)e_4$\\ \hline
\scriptsize $12.$ &\scriptsize $\L_{e_{5}}\left([e_{6},e_{6}]\right)$ &\scriptsize
$d_{2,6}:=(b+1)\left(c_{2,6}+d_{2,5}\right)-(b+1)^2c_{2,5}$
$\implies$ $[e_{6},e_6]=\left((b+1)\left(c_{2,6}+d_{2,5}\right)-(b+1)^2c_{2,5}\right)e_2.$\\ \hline
\end{tabular}
\end{table}
We obtain that $\L_{e_5}$ and $\L_{e_6}$ restricted to the nilradical do not change, but the remaining brackets are as follows:
\begin{equation}
\left\{
\begin{array}{l}
\displaystyle  \nonumber [e_{1},e_{5}]=e_1+\frac{a_{2,3}-b_{2,3}}{2}e_2,
[e_{3},e_5]=a_{2,3}e_2+e_3,[e_{4},e_5]=2\left(e_4-e_2\right),[e_5,e_5]=c_{2,5}e_2,\\
\displaystyle [e_6,e_5]=c_{2,6}e_2+\left((b+1)c_{2,5}-c_{2,6}\right)e_4,
[e_{1},e_6]=e_1+\left(\left(b+\frac{1}{2}\right)a_{2,3}-\frac{b_{2,3}}{2}\right)e_2+
be_3,\\
\displaystyle [e_{3},e_{6}]=e_1+\left(\left(b+\frac{1}{2}\right)a_{2,3}-\frac{b_{2,3}}{2}\right)e_2+
be_3, [e_{4},e_6]=(b+1)\left(e_4-e_2\right),\\
\displaystyle [e_5,e_6]=d_{2,5}e_2+\left(c_{2,6}-(b+1)c_{2,5}\right)e_4,[e_{6},e_6]=\left((b+1)\left(c_{2,6}+d_{2,5}\right)-(b+1)^2c_{2,5}\right)e_2,\\
\displaystyle (b\neq-1).
\end{array} 
\right.
\end{equation} 
Altogether the nilradical $\mathcal{L}^4$ $(\ref{L4}),$ the outer derivations $\L_{e_{5}}$ and $\L_{e_{6}}$
and the remaining brackets given above define a continuous family of Leibniz algebras
depending on the parameters.

\noindent $(iv)\&(v)$ We apply the following transformation: $e^{\prime}_1=e_1+\frac{a_{2,3}-b_{2,3}}{2}e_2,
e^{\prime}_2=e_2,e^{\prime}_3=e_3+a_{2,3}e_2,e^{\prime}_4=e_4,e^{\prime}_5=e_5+\frac{c_{2,5}}{2}e_2,
e^{\prime}_6=e_6+\frac{d_{2,5}}{2}e_2+\frac{c_{2,6}-(b+1)c_{2,5}}{2}e_4$
and obtain a Leibniz algebra $l_{6,2}$ given below:
\begin{equation}
\left\{
\begin{array}{l}
\displaystyle  \nonumber [e_1,e_5]=e_1,[e_3,e_5]=e_3,[e_4,e_5]=2(e_4-e_2),[e_{5},e_{1}]=-e_1,[e_5,e_2]=-2e_2,[e_{5},e_3]=-e_3,\\
\displaystyle 
[e_{5},e_4]=-2e_4, [e_1,e_6]=e_1+be_3,[e_3,e_6]=e_1+be_3,[e_4,e_6]=(b+1)\left(e_4-e_2\right),\\
\displaystyle
[e_{6},e_1]=-e_1-be_3,[e_6,e_2]=-2(b+1)e_2,[e_{6},e_{3}]=-e_1-be_3,[e_{6},e_4]=-(b+1)\left(e_2+e_4\right),\\
\displaystyle (b\neq-1).
\end{array} 
\right.
\end{equation}
\begin{remark}
We notice that if $b=-1$ in the algebra, then the outer derivation $\L_{e_6}$ is nilpotent.
\end{remark}
\noindent (1) (b) We set $\left(
\begin{array}{c}
  a^1 \\
 b^1\\
 c^1
\end{array}\right)=\left(
\begin{array}{c}
  1\\
 2\\
 c
\end{array}\right)$ and $\left(
\begin{array}{c}
  a^2 \\
 b^2\\
 c^2
\end{array}\right)=\left(
\begin{array}{c}
  1\\
 1\\
 d
\end{array}\right),(c\neq-2,d\neq-1).$ Therefore the vector space of outer derivations as $4\times 4$
matrices is as follows:
$$\L_{e_{5}}=\begin{array}{llll} \left[\begin{matrix}
 -1 & 0 & -c & 0\\
-\frac{c+1}{2}a_{2,3}-\frac{c+3}{2}b_{2,3} & -2(c+2) & (c+1)a_{2,3}+c\cdot b_{2,3}& -1-2c\\
 -c-1 & 0 & -2 & 0 \\
  0 & 0 &  0 & -3
\end{matrix}\right]
\end{array},$$
$$\L_{e_{6}}=\begin{array}{llll} \left[\begin{matrix}
 -1 & 0 & -d & 0\\
\frac{1-d}{2}\alpha_{2,3}-\frac{d+1}{2}\beta_{2,3} & -2(d+1) & (d+1)\alpha_{2,3}+d\cdot\beta_{2,3}& -2d\\
  -d & 0 & -1 & 0 \\
  0 & 0 &  0 & -2
  \end{matrix}\right]
\end{array}.$$
\noindent $(i)$ Considering $\L_{[e_5,e_6]},$ we obtain that $d=0$ and we have
the system of equations: 
$$\left\{ \begin{array}{ll}
b_{2,3}:=\left(\frac{c}{2}+1\right)\beta_{2,3}-\left(\frac{c}{2}+1\right)\alpha_{2,3} {,}  \\
\left(2c+3\right)\beta_{2,3}-\alpha_{2,3}-\left(c+1\right)a_{2,3}-\left(c+3\right)b_{2,3}=0{.}
\end{array}
\right. $$ 
There are the following two cases:
\begin{enumerate}[noitemsep, topsep=0pt]
\item[(I)] If $c\neq-1,$ then $a_{2,3}:=\frac{c+4}{2}\alpha_{2,3}-\frac{c}{2}\beta_{2,3}$
and $\L_{[e_5,e_6]}=0.$ Consequently,
$$\L_{e_{5}}=\begin{array}{llll} \left[\begin{matrix}
 -1 & 0 & -c & 0\\
\frac{\alpha_{2,3}}{2}-\frac{2c+3}{2}\beta_{2,3} & -2(c+2) & \frac{3c+4}{2}\alpha_{2,3}+\frac{c}{2}\beta_{2,3}& -2c-1\\
 -c-1 & 0 & -2 & 0 \\
  0 & 0 &  0 & -3
 \end{matrix}\right]
\end{array},$$
$$\L_{e_{6}}=\begin{array}{llll} \left[\begin{matrix}
 -1 & 0 & 0 & 0\\
\frac{\alpha_{2,3}-\beta_{2,3}}{2} & -2 & \alpha_{2,3}& 0\\
  0 & 0 & -1 & 0 \\
  0 & 0 &  0 & -2
   \end{matrix}\right]
\end{array},(c\neq-2)$$
\noindent and we find the commutators given below:
\allowdisplaybreaks
\begin{equation}
\left\{
\begin{array}{l}
\displaystyle  \nonumber \L_{[e_{1},e_{5}]}=\L_{e_1}+(c+1)\L_{e_3},\L_{[e_{2},e_5]}=0,\L_{[e_{3},e_5]}=c\L_{e_1}+2\L_{e_3},\L_{[e_{i},e_5]}=0,\\
\displaystyle \L_{[e_{1},e_6]}=\L_{e_1},\L_{[e_{2},e_6]}=0,\L_{[e_{3},e_{6}]}=\L_{e_3}, \L_{[e_{i},e_6]}=0,(c\neq-2,4\leq i\leq 6).
\end{array} 
\right.
\end{equation} 
\item[(II)] If $c=-1,$ then $b_{2,3}:=\frac{\beta_{2,3}-\alpha_{2,3}}{2}$
and $\L_{[e_5,e_6]}=0.$ As a result,
$$\L_{e_{5}}=\begin{array}{llll} \left[\begin{matrix}
 -1 & 0 & 1 & 0\\
\frac{\alpha_{2,3}-\beta_{2,3}}{2} & -2& \frac{\alpha_{2,3}-\beta_{2,3}}{2}& 1\\
0 & 0 & -2 & 0 \\
  0 & 0 &  0 & -3
 \end{matrix}\right]
\end{array},
\L_{e_{6}}=\begin{array}{llll} \left[\begin{matrix}
 -1 & 0 & 0 & 0\\
\frac{\alpha_{2,3}-\beta_{2,3}}{2} & -2 & \alpha_{2,3}& 0\\
  0 & 0 & -1 & 0 \\
  0 & 0 &  0 & -2
   \end{matrix}\right]
\end{array}$$
 and we have the following commutators:
\allowdisplaybreaks
\begin{equation}
\left\{
\begin{array}{l}
\displaystyle  \nonumber \L_{[e_{1},e_{5}]}=\L_{e_1},\L_{[e_{2},e_5]}=0,\L_{[e_{3},e_5]}=2\L_{e_3}-\L_{e_1},\L_{[e_{i},e_5]}=0,\\
\displaystyle \L_{[e_{1},e_6]}=\L_{e_1},\L_{[e_{2},e_6]}=0,\L_{[e_{3},e_{6}]}=\L_{e_3}, \L_{[e_{i},e_6]}=0,(4\leq i\leq 6).
\end{array} 
\right.
\end{equation} 
\end{enumerate}
\noindent $(ii)$ We combine cases (I) and (II), include a linear combination of $e_2$ and $e_4$,
and obtain the following:
\allowdisplaybreaks
\begin{equation}
\left\{
\begin{array}{l}
\displaystyle  \nonumber [e_{1},e_{5}]=e_1+a_{2,1}e_2+(c+1)e_3+a_{4,1}e_4,[e_{2},e_5]=a_{2,2}e_2+a_{4,2}e_4,[e_{3},e_5]=ce_1+a_{2,3}e_2+\\
\displaystyle 2e_3+a_{4,3}e_4,[e_{i},e_5]=a_{2,i}e_2+a_{4,i}e_4,[e_{1},e_6]=e_1+b_{2,1}e_2+b_{4,1}e_4,[e_{2},e_6]=b_{2,2}e_2+b_{4,2}e_4,\\
\displaystyle [e_{3},e_{6}]=b_{2,3}e_2+e_3+b_{4,3}e_4,[e_{i},e_6]=b_{2,i}e_2+b_{4,i}e_4,(c\neq-2,4\leq i\leq 6).
\end{array} 
\right.
\end{equation}
Besides we have the brackets from $\mathcal{L}^4$ and from outer derivations $\L_{e_{5}}$ and $\L_{e_{6}}$ as well.

\noindent $(iii)$ To satisfy the left Leibniz identity, we refer to the identities given in Table \ref{LeftCodimTwo(L4,(n=4))}. as much as possible.
The identities we apply are the following: $1.-6.,$
$\L_{e_3}{\left([e_6,e_6]\right)}=[\L_{e_3}(e_6),e_6]+[e_6,\L_{e_3}(e_6)],$
$\L_{e_1}\left([e_6,e_6]\right)=[\L_{e_1}(e_6),e_6]+[e_6,\L_{e_1}(e_6)],$ $\L_{e_1}\left([e_6,e_5]\right)=[\L_{e_1}(e_6),e_5]+[e_6,\L_{e_1}(e_5)],
\L_{e_3}\left([e_6,e_5]\right)=[\L_{e_3}(e_6),e_5]+[e_6,\L_{e_3}(e_5)],$ $11.$
 and $12.$
We have that $\L_{e_5}$ and $\L_{e_6}$ restricted to the nilradical do not change, but the remaining brackets are as follows:
\begin{equation}
\left\{
\begin{array}{l}
\displaystyle  \nonumber [e_{1},e_{5}]=e_1+\left(\left(c+\frac{3}{2}\right)\alpha_{2,3}-\frac{\beta_{2,3}}{2}\right)e_2+(c+1)e_3,\\
\displaystyle [e_{3},e_5]=ce_1+\left(\left(\frac{c}{2}+2\right)\alpha_{2,3}-\frac{c}{2}\beta_{2,3}\right)e_2+2e_3,[e_{4},e_5]=3\left(e_4-e_2\right),\\
\displaystyle [e_5,e_5]=(c+2)\left(a_{2,6}+a_{4,6}\right)e_2,[e_6,e_5]=a_{2,6}e_2+a_{4,6}e_4,[e_{1},e_6]=e_1+\frac{\alpha_{2,3}-\beta_{2,3}}{2}e_2,\\
\displaystyle [e_{3},e_{6}]=\alpha_{2,3}e_2+e_3, [e_{4},e_6]=2\left(e_4-e_2\right),[e_5,e_6]=\left((c+2)b_{2,6}+a_{4,6}\right)e_2-a_{4,6}e_4,\\
\displaystyle [e_{6},e_6]=b_{2,6}e_2,(c\neq-2).
\end{array} 
\right.
\end{equation} 
Altogether the nilradical $\mathcal{L}^4$ $(\ref{L4}),$ the outer derivations $\L_{e_{5}}$ and $\L_{e_{6}}$
and the remaining brackets given above define a continuous family of Leibniz algebras.

\noindent $(iv)\&(v)$ Applying the transformation: $e^{\prime}_1=e_1+\frac{\alpha_{2,3}-\beta_{2,3}}{2}e_2,
e^{\prime}_2=e_2,e^{\prime}_3=e_3+\alpha_{2,3}e_2,e^{\prime}_4=e_4,e^{\prime}_5=e_5+\frac{a_{2,6}}{2}e_2+\frac{a_{4,6}}{2}e_4,
e^{\prime}_6=e_6+\frac{b_{2,6}}{2}e_2,$ we obtain a Leibniz algebra $l_{6,3}$:
\begin{equation}
\left\{
\begin{array}{l}
\displaystyle  \nonumber [e_1,e_5]=e_1+(c+1)e_3,[e_3,e_5]=ce_1+2e_3,[e_{4},e_5]=3\left(e_4-e_2\right),[e_{5},e_{1}]=-e_1-(c+1)e_3,\\
\displaystyle [e_5,e_2]=-2(c+2)e_2,[e_{5},e_3]=-ce_1-2e_3,[e_5,e_4]=(-2c-1)e_2-3e_4,[e_1,e_6]=e_1,\\
\displaystyle [e_3,e_6]=e_3, [e_{4},e_6]=2\left(e_4-e_2\right),[e_{6},e_1]=-e_1,[e_6,e_2]=-2e_2,[e_{6},e_{3}]=-e_3,\\
\displaystyle  [e_6,e_4]=-2e_4,(c\neq-2).
\end{array} 
\right.
\end{equation} 

\noindent (1) (c) We set $\left(
\begin{array}{c}
  a^1 \\
 b^1\\
 c^1
\end{array}\right)=\left(
\begin{array}{c}
  a\\
 1\\
 0
\end{array}\right)$ and $\left(
\begin{array}{c}
  a^2 \\
 b^2\\
 c^2
\end{array}\right)=\left(
\begin{array}{c}
  b\\
 0\\
 1
\end{array}\right),(a,b\neq0,a\neq-1).$ Therefore the vector space of outer derivations as $4\times 4$
matrices is as follows:
$$\L_{e_{5}}=\begin{array}{llll} \left[\begin{matrix}
 -a & 0 & 0 & 0\\
\frac{(3a-2)a_{2,3}+(a-2)b_{2,3}}{2a} & -2 & a_{2,3}& a-1\\
 a-1 & 0 & -1 & 0 \\
  0 & 0 &  0 & -a-1
\end{matrix}\right]
\end{array},$$
$$\L_{e_{6}}=\begin{array}{llll} \left[\begin{matrix}
 -b & 0 & -1 & 0\\
\frac{(3b-1)\alpha_{2,3}+(b-1)\beta_{2,3}}{2b} & -2 & \frac{(b+1)\alpha_{2,3}+\beta_{2,3}}{b}& b-2\\
  b-1 & 0 & 0 & 0 \\
  0 & 0 &  0 & -b
  \end{matrix}\right]
\end{array}.$$
\noindent $(i)$ Considering $\L_{[e_5,e_6]},$ we obtain that $a:=1$ and we have
the system of equations: 
$$\left\{ \begin{array}{ll}
\beta_{2,3}:=\frac{3b\cdot a_{2,3}+b\cdot b_{2,3}}{2}-(b+1)\alpha_{2,3} {,}  \\
(b-3)\alpha_{2,3}+\frac{b-3}{2}\left(b_{2,3}-a_{2,3}\right)=0{.}
\end{array}
\right. $$ 
There are the following two cases:
\begin{enumerate}[noitemsep, topsep=0pt]
\item[(I)] If $b\neq3,$ then $\alpha_{2,3}:=\frac{a_{2,3}-b_{2,3}}{2}\implies$ $\beta_{2,3}:=\frac{(2b-1)a_{2,3}+(2b+1)b_{2,3}}{2}$
and $\L_{[e_5,e_6]}=0.$ As a result,
$$\hspace*{-0.6cm}\L_{e_{5}}=\begin{array}{llll} \left[\begin{matrix}
 -1 & 0 & 0 & 0\\
\frac{a_{2,3}-b_{2,3}}{2} & -2 & a_{2,3}& 0\\
 0 & 0 & -1 & 0 \\
  0 & 0 &  0 & -2
\end{matrix}\right]
\end{array},\L_{e_{6}}=\begin{array}{llll} \left[\begin{matrix}
 -b & 0 & -1 & 0\\
\frac{b\cdot a_{2,3}+(b-2)b_{2,3}}{2} & -2 & \frac{3a_{2,3}+b_{2,3}}{2}& b-2\\
  b-1 & 0 & 0 & 0 \\
  0 & 0 &  0 & -b
  \end{matrix}\right]
\end{array},(b\neq0).$$
\noindent Further, we find the following:
\allowdisplaybreaks
\begin{equation}
\left\{
\begin{array}{l}
\displaystyle  \nonumber \L_{[e_{1},e_{5}]}=\L_{e_1},\L_{[e_{2},e_5]}=0,\L_{[e_{3},e_5]}=\L_{e_3},\L_{[e_{i},e_5]}=0,\L_{[e_{1},e_6]}=b\L_{e_1}+(1-b)\L_{e_3},\\
\displaystyle \L_{[e_{2},e_6]}=0,\L_{[e_{3},e_{6}]}=\L_{e_1}, \L_{[e_{i},e_6]}=0,(b\neq0,4\leq i\leq 6).
\end{array} 
\right.
\end{equation} 
\item[(II)] If $b:=3,$ then $\beta_{2,3}:=\frac{9a_{2,3}+3b_{2,3}}{2}-4\alpha_{2,3}$ and $\L_{[e_5,e_6]}=0$ as well as $\L_{e_5},\L_{e_6}$ are as follows: 
$$\L_{e_{5}}=\begin{array}{llll} \left[\begin{matrix}
 -1 & 0 & 0 & 0\\
\frac{a_{2,3}-b_{2,3}}{2} & -2 & a_{2,3}& 0\\
 0 & 0 & -1 & 0 \\
  0 & 0 &  0 & -2
\end{matrix}\right]
\end{array},\L_{e_{6}}=\begin{array}{llll} \left[\begin{matrix}
 -3 & 0 & -1 & 0\\
\frac{3a_{2,3}+b_{2,3}}{2} & -2 & \frac{3a_{2,3}+b_{2,3}}{2}& 1\\
  2 & 0 & 0 & 0 \\
  0 & 0 &  0 & -3
  \end{matrix}\right]
\end{array}.$$
We have the following commutators:
\allowdisplaybreaks
\begin{equation}
\left\{
\begin{array}{l}
\displaystyle  \nonumber \L_{[e_{1},e_{5}]}=\L_{e_1},\L_{[e_{2},e_5]}=0,\L_{[e_{3},e_5]}=\L_{e_3},\L_{[e_{i},e_5]}=0,\L_{[e_{1},e_6]}=3\L_{e_1}-2\L_{e_3},\\
\displaystyle \L_{[e_{2},e_6]}=0,\L_{[e_{3},e_{6}]}=\L_{e_1}, \L_{[e_{i},e_6]}=0,(4\leq i\leq 6).
\end{array} 
\right.
\end{equation} 
\end{enumerate}
\noindent $(ii)$ We combine cases (I) and (II) and include a linear combination of $e_2$ and $e_4:$
\allowdisplaybreaks
\begin{equation}
\left\{
\begin{array}{l}
\displaystyle  \nonumber [e_{1},e_{5}]=e_1+c_{2,1}e_2+c_{4,1}e_4,[e_{2},e_5]=c_{2,2}e_2+c_{4,2}e_4,[e_{3},e_5]=c_{2,3}e_2+e_3+c_{4,3}e_4,\\
\displaystyle [e_{i},e_5]=c_{2,i}e_2+c_{4,i}e_4,[e_{1},e_6]=be_1+d_{2,1}e_2+(1-b)e_3+d_{4,1}e_4,[e_{2},e_6]=d_{2,2}e_2+\\
\displaystyle d_{4,2}e_4,[e_{3},e_{6}]=e_1+d_{2,3}e_2+d_{4,3}e_4,[e_{i},e_6]=d_{2,i}e_2+d_{4,i}e_4,(b\neq0,4\leq i\leq 6).
\end{array} 
\right.
\end{equation}
Besides we have the brackets from $\mathcal{L}^4$ and from outer derivations $\L_{e_{5}}$ and $\L_{e_{6}}$ as well.

\noindent $(iii)$ To satisfy the left Leibniz identity, we mostly apply the identities given in Table \ref{LeftCodimTwo(L4,(n=4))}:
$1.-5.,\L_{e_6}\left([e_6,e_5]\right)=[\L_{e_6}(e_6),e_5]+[e_6,\L_{e_6}(e_5)],7.-12.$ 
We have that $\L_{e_5}$ and $\L_{e_6}$ restricted to the nilradical do not change, but the remaining brackets are the following:
\begin{equation}
\left\{
\begin{array}{l}
\displaystyle  \nonumber [e_{1},e_{5}]=e_1+\frac{a_{2,3}-b_{2,3}}{2}e_2, [e_{3},e_5]=a_{2,3}e_2+e_3,[e_{4},e_5]=2\left(e_4-e_2\right),[e_5,e_5]=c_{2,5}e_2,\\
\displaystyle [e_6,e_5]=c_{2,6}e_2+\left(c_{2,5}-c_{2,6}\right)e_4,[e_{1},e_6]=be_1+\frac{(2-b)a_{2,3}-b\cdot b_{2,3}}{2}e_2+
(1-b)e_3,\\
\displaystyle [e_{3},e_{6}]=e_1+\frac{a_{2,3}-b_{2,3}}{2}e_2, [e_{4},e_6]=b\left(e_4-e_2\right),
[e_5,e_6]=d_{2,5}e_2+\left(c_{2,6}-c_{2,5}\right)e_4,\\
\displaystyle [e_{6},e_6]=\left(d_{2,5}-c_{2,5}+c_{2,6}\right)e_2,(b\neq0).
\end{array} 
\right.
\end{equation} 
Altogether the nilradical $\mathcal{L}^4$ $(\ref{L4}),$ the outer derivations $\L_{e_{5}}$ and $\L_{e_{6}}$
and the remaining brackets given above define a continuous family of Leibniz algebras
depending on the parameters.

\noindent $(iv)\&(v)$ We apply the transformation: $e^{\prime}_1=e_1+\frac{a_{2,3}-b_{2,3}}{2}e_2,
e^{\prime}_2=e_2,e^{\prime}_3=e_3+a_{2,3}e_2,e^{\prime}_4=e_4,e^{\prime}_5=e_5+\frac{c_{2,5}}{2}e_2,
e^{\prime}_6=e_6+\frac{d_{2,5}}{2}e_2+\frac{c_{2,6}-c_{2,5}}{2}e_4$ and obtain a Leibniz algebra $l_{6,4}$:
\begin{equation}
\left\{
\begin{array}{l}
\displaystyle  \nonumber [e_1,e_5]=e_1,[e_3,e_5]=e_3,[e_{4},e_5]=2\left(e_4-e_2\right),[e_{5},e_{1}]=-e_1,[e_5,e_2]=-2e_2,[e_{5},e_3]=-e_3,\\
\displaystyle [e_5,e_4]=-2e_4,[e_1,e_6]=be_1+(1-b)e_3,[e_3,e_6]=e_1,[e_{4},e_6]=b\left(e_4-e_2\right),\\
\displaystyle [e_{6},e_1]=-be_1+(b-1)e_3,[e_6,e_2]=-2e_2,[e_{6},e_{3}]=-e_1,[e_6,e_4]=(b-2)e_2-be_4,(b\neq0).
\end{array} 
\right.
\end{equation} 
\paragraph{Codimension two solvable extensions of $\mathcal{L}^4,(n\geq5)$}
One could set $\left(
\begin{array}{c}
  a \\
 b
\end{array}\right)=\left(
\begin{array}{c}
  1\\
2
\end{array}\right)$ and $\left(
\begin{array}{c}
 \alpha \\
 \beta
\end{array}\right)=\left(
\begin{array}{c}
  1\\
 1
\end{array}\right).$
Therefore the vector space of outer derivations as $n \times n$ matrices is as follows:
{ $$\L_{e_{n+1}}=\left[\begin{smallmatrix}
 -1 & 0 & 0 & 0&0&0&\cdots && 0&0 & 0\\
  3a_{2,1}-2a_{2,3} & -4 & a_{2,3}& -1 &0&0 & \cdots &&0  & 0& 0\\
  -1 & 0 & -2 & 0 & 0&0 &\cdots &&0 &0& 0\\
  0 & 0 &  0 & -3 &0 &0 &\cdots&&0 &0 & 0\\
 0 & 0 & -a_{5,3} & 0 & -4  &0&\cdots & &0&0 & 0\\
  0 & 0 &\boldsymbol{\cdot} & -a_{5,3} & 0 &-5&\cdots & &0&0 & 0\\
    0 & 0 &\boldsymbol{\cdot} & \boldsymbol{\cdot} & \ddots &0&\ddots &&\vdots&\vdots &\vdots\\
  \vdots & \vdots & \vdots &\vdots &  &\ddots&\ddots &\ddots &\vdots&\vdots & \vdots\\
 0 & 0 & -a_{n-2,3}& -a_{n-3,3}& \cdots&\cdots &-a_{5,3}&0&3-n &0& 0\\
0 & 0 & -a_{n-1,3}& -a_{n-2,3}& \cdots&\cdots &\boldsymbol{\cdot}&-a_{5,3}&0 &2-n& 0\\
 0 & 0 & -a_{n,3}& -a_{n-1,3}& \cdots&\cdots &\boldsymbol{\cdot}&\boldsymbol{\cdot}&-a_{5,3} &0& 1-n
\end{smallmatrix}\right],$$}
{ $$ \L_{e_{n+2}}=\left[\begin{smallmatrix}
 -1 & 0 & 0 & 0&0&0&\cdots && 0&0 & 0\\
  \alpha_{2,1} & -2 & \alpha_{2,3}& 0 &0&0 & \cdots &&0  & 0& 0\\
  0 & 0 & -1 & 0 & 0&0 &\cdots &&0 &0& 0\\
  0 & 0 &  0 & -2 &0 &0 &\cdots&&0 &0 & 0\\
 0 & 0 & -\alpha_{5,3} & 0 & -3  &0&\cdots & &0&0 & 0\\
  0 & 0 &\boldsymbol{\cdot} & -\alpha_{5,3} & 0 &-4&\cdots & &0&0 & 0\\
    0 & 0 &\boldsymbol{\cdot} & \boldsymbol{\cdot} & \ddots &0&\ddots &&\vdots&\vdots &\vdots\\
  \vdots & \vdots & \vdots &\vdots &  &\ddots&\ddots &\ddots &\vdots&\vdots & \vdots\\
 0 & 0 & -\alpha_{n-2,3}& -\alpha_{n-3,3}& \cdots&\cdots &-\alpha_{5,3}&0&4-n &0& 0\\
0 & 0 & -\alpha_{n-1,3}& -\alpha_{n-2,3}& \cdots&\cdots &\boldsymbol{\cdot}&-\alpha_{5,3}&0 &3-n& 0\\
 0 & 0 & -\alpha_{n,3}& -\alpha_{n-1,3}& \cdots&\cdots &\boldsymbol{\cdot}&\boldsymbol{\cdot}&-\alpha_{5,3} &0& 2-n
\end{smallmatrix}\right].$$}

\noindent $(i)$ Considering $\L_{[e_{n+1},e_{n+2}]}$ and comparing with
$\sum_{i=1}^n{c_i\L_{e_i}},$ we deduce that 
$\alpha_{i,3}:=a_{i,3},(5\leq i\leq n),$ $\alpha_{2,3}:=\frac{a_{2,3}}{2}$ and
$\alpha_{2,1}:=a_{2,1}-\frac{a_{2,3}}{2}.$ As a result, the outer derivation $\L_{e_{n+2}}$
changes as follows:
$$\L_{e_{n+2}}=\left[\begin{smallmatrix}
 -1 & 0 & 0 & 0&0&0&\cdots && 0&0 & 0\\
  a_{2,1}-\frac{a_{2,3}}{2} & -2 & \frac{a_{2,3}}{2}& 0 &0&0 & \cdots &&0  & 0& 0\\
  0 & 0 & -1 & 0 & 0&0 &\cdots &&0 &0& 0\\
  0 & 0 &  0 & -2 &0 &0 &\cdots&&0 &0 & 0\\
 0 & 0 & -a_{5,3} & 0 & -3  &0&\cdots & &0&0 & 0\\
  0 & 0 &\boldsymbol{\cdot} & -a_{5,3} & 0 &-4&\cdots & &0&0 & 0\\
    0 & 0 &\boldsymbol{\cdot} & \boldsymbol{\cdot} & \ddots &0&\ddots &&\vdots&\vdots &\vdots\\
  \vdots & \vdots & \vdots &\vdots &  &\ddots&\ddots &\ddots &\vdots&\vdots & \vdots\\
 0 & 0 & -a_{n-2,3}& -a_{n-3,3}& \cdots&\cdots &-a_{5,3}&0&4-n &0& 0\\
0 & 0 & -a_{n-1,3}& -a_{n-2,3}& \cdots&\cdots &\boldsymbol{\cdot}&-a_{5,3}&0 &3-n& 0\\
 0 & 0 & -a_{n,3}& -a_{n-1,3}& \cdots&\cdots &\boldsymbol{\cdot}&\boldsymbol{\cdot}&-a_{5,3} &0& 2-n
\end{smallmatrix}\right].$$
Altogether we find the following commutators:
\allowdisplaybreaks
\begin{equation}
\left\{
\begin{array}{l}
\displaystyle  \nonumber \L_{[e_{1},e_{n+1}]}=\L_{e_1}+\L_{e_3},\L_{[e_{2},e_{n+1}]}=0,\L_{[e_{i},e_{n+1}]}=(i-1)\L_{e_i}+\sum_{k=i+2}^{n-1}{a_{k-i+3,3}\L_{e_k}},\\
\displaystyle \L_{[e_{j},e_{n+1}]}=0,(n\leq j\leq n+1),\L_{[e_{n+2},e_{n+1}]}=\sum_{k=4}^{n-1}{a_{k+1,3}\L_{e_{k}}},\L_{[e_{1},e_{n+2}]}=\L_{e_1},\\
\displaystyle \L_{[e_{2},e_{n+2}]}=0,\L_{[e_{i},e_{n+2}]}=(i-2)\L_{e_i}+\sum_{k=i+2}^{n-1}{a_{k-i+3,3}\L_{e_k}},(3\leq i\leq n-1),\\
\displaystyle \L_{[e_{n},e_{n+2}]}=0,\L_{[e_{n+1},e_{n+2}]}=-\sum_{k=4}^{n-1}{a_{k+1,3}\L_{e_{k}}},\L_{[e_{n+2},e_{n+2}]}=0.
\end{array} 
\right.
\end{equation} 
\noindent $(ii)$ We include a linear combination of $e_2$ and $e_n:$
\begin{equation}
\left\{
\begin{array}{l}
\displaystyle  \nonumber [e_{1},e_{n+1}]=e_1+c_{2,1}e_2+e_3+c_{n,1}e_n,[e_{2},e_{n+1}]=c_{2,2}e_2+c_{n,2}e_n,[e_{i},e_{n+1}]=c_{2,i}e_2+\\
\displaystyle(i-1)e_i+\sum_{k=i+2}^{n-1}{a_{k-i+3,3}e_k}+c_{n,i}e_n,[e_{j},e_{n+1}]=c_{2,j}e_2+c_{n,j}e_n,(n\leq j\leq n+1),\\
\displaystyle [e_{n+2},e_{n+1}]=c_{2,n+2}e_2+\sum_{k=4}^{n-1}{a_{k+1,3}e_{k}}+
c_{n,n+2}e_n,[e_{1},e_{n+2}]=e_1+d_{2,1}e_2+d_{n,1}e_n,\\
\displaystyle [e_{2},e_{n+2}]=d_{2,2}e_2+d_{n,2}e_n, [e_{i},e_{n+2}]=d_{2,i}e_2+(i-2)e_i+\sum_{k=i+2}^{n-1}{a_{k-i+3,3}e_k}+d_{n,i}e_n,\\
\displaystyle (3\leq i\leq n-1),[e_{n},e_{n+2}]=d_{2,n}e_2+d_{n,n}e_n,[e_{n+1},e_{n+2}]=d_{2,n+1}e_2-\sum_{k=4}^{n-1}{a_{k+1,3}e_{k}}+\\
\displaystyle d_{n,n+1}e_n, [e_{n+2},e_{n+2}]=d_{2,n+2}e_2+d_{n,n+2}e_n.
\end{array} 
\right.
\end{equation} 
Besides we have the brackets from $\mathcal{L}^4$ and from outer derivations $\L_{e_{n+1}}$ and $\L_{e_{n+2}}$ as well.

\noindent $(iii)$ We satisfy the right Leibniz identity shown in Table \ref{LeftCodimTwo(L4)}. We notice that $\L_{e_{n+1}}$ and $\L_{e_{n+2}}$ restricted to the nilradical do not change, but the remaining brackets are as follows:
\begin{equation}
\left\{
\begin{array}{l}
\displaystyle  \nonumber [e_{1},e_{n+1}]=e_1+a_{2,1}e_2+e_3,[e_{3},e_{n+1}]=a_{2,3}e_2+2e_3+\sum_{k=5}^n{a_{k,3}e_k},
[e_{4},e_{n+1}]=3\left(e_4-e_2\right)+\\
\displaystyle \sum_{k=6}^n{a_{k-1,3}e_k},
 [e_{i},e_{n+1}]=(i-1)e_i+\sum_{k=i+2}^{n}{a_{k-i+3,3}e_k},[e_{n+1},e_{n+1}]=\left(2c_{2,n+2}+2a_{5,3}\right)e_2,\\
\displaystyle [e_{n+2},e_{n+1}]=c_{2,n+2}e_2+\sum_{k=4}^{n-1}{a_{k+1,3}e_{k}}+
c_{n,n+2}e_n,[e_{1},e_{n+2}]=e_1+\left(a_{2,1}-\frac{a_{2,3}}{2}\right)e_2,\\
\displaystyle [e_{3},e_{n+2}]=\frac{a_{2,3}}{2}e_2+e_3+\sum_{k=5}^n{a_{k,3}e_k},
[e_{4},e_{n+2}]=2\left(e_4-e_2\right)+\sum_{k=6}^n{a_{k-1,3}e_k},\\
\displaystyle [e_{i},e_{n+2}]=(i-2)e_i+\sum_{k=i+2}^{n}{a_{k-i+3,3}e_k},
 (5\leq i\leq n),[e_{n+1},e_{n+2}]=d_{2,n+1}e_2-\sum_{k=4}^{n-1}{a_{k+1,3}e_{k}}-\\
\displaystyle c_{n,n+2}e_n, [e_{n+2},e_{n+2}]=\frac{d_{2,n+1}-a_{5,3}}{2}e_2.
\end{array} 
\right.
\end{equation} 

\begin{table}[h!]
\caption{Left Leibniz identities in the codimension two nilradical $\mathcal{L}^4,(n\geq5)$.}
\label{LeftCodimTwo(L4)}
\begin{tabular}{lp{2.4cm}p{12cm}}
\hline
\scriptsize Steps &\scriptsize Ordered triple &\scriptsize
Result\\ \hline
\scriptsize $1.$ &\scriptsize $\L_{e_1}\left([e_{1},e_{n+1}]\right)$ &\scriptsize
$[e_{2},e_{n+1}]=0$
$\implies$ $c_{2,2}=c_{n,2}=0.$\\ \hline
\scriptsize $2.$ &\scriptsize $\L_{e_1}\left([e_{1},e_{n+2}]\right)$ &\scriptsize
$[e_{2},e_{n+2}]=0$
$\implies$ $d_{2,2}=d_{n,2}=0.$\\ \hline
\scriptsize $3.$ &\scriptsize $\L_{e_1}\left([e_{3},e_{n+1}]\right)$ &\scriptsize
$c_{2,4}:=-3,c_{n,4}:=a_{n-1,3},$ where $a_{4,3}=0$ 
$\implies$  $[e_{4},e_{n+1}]=3(e_4-e_2)+\sum_{k=6}^{n}{a_{k-1,3}e_k}.$ \\ \hline
\scriptsize $4.$ &\scriptsize $\L_{e_1}\left([e_{i},e_{n+1}]\right)$ &\scriptsize
$c_{2,i+1}=0,c_{n,i+1}:=a_{n-i+2,3},(4\leq i\leq n-2),$ where $a_{4,3}=0$ 
$\implies$  $[e_{j},e_{n+1}]=\left(j-1\right)e_j+\sum_{k=j+2}^{n}{a_{k-j+3,3}e_k},(5\leq j\leq n-1).$ \\ \hline
\scriptsize $5.$ &\scriptsize $\L_{e_{1}}\left([e_{n-1},e_{n+1}]\right)$ &\scriptsize
$c_{2,n}=0,$ $c_{n,n}:=n-1$
$\implies$  $[e_{n},e_{n+1}]=\left(n-1\right)e_{n}.$ Altogether with $4.,$
  $[e_{i},e_{n+1}]=\left(i-1\right)e_i+\sum_{k=i+2}^{n}{a_{k-i+3,3}e_k},(5\leq i\leq n).$  \\ \hline
 \scriptsize $6.$ &\scriptsize $\L_{e_1}\left([e_{3},e_{n+2}]\right)$ &\scriptsize
$d_{2,4}:=-2,d_{n,4}:=a_{n-1,3},$ where $a_{4,3}=0$ 
$\implies$  $[e_{4},e_{n+2}]=2(e_4-e_2)+\sum_{k=6}^{n}{a_{k-1,3}e_k}.$   \\ \hline  
\scriptsize $7.$ &\scriptsize $\L_{e_1}\left([e_{i},e_{n+2}]\right)$ &\scriptsize
$d_{2,i+1}=0,d_{n,i+1}:=a_{n-i+2,3},(4\leq i\leq n-2),$ where $a_{4,3}=0$ 
$\implies$  $[e_{j},e_{n+2}]=\left(j-2\right)e_j+\sum_{k=j+2}^{n}{a_{k-j+3,3}e_k},(5\leq j\leq n-1).$   \\ \hline
\scriptsize $8.$ &\scriptsize $\L_{e_{1}}\left([e_{n-1},e_{n+2}]\right)$ &\scriptsize
 $d_{2,n}=0,$ $d_{n,n}:=n-2$ 
$\implies$  $[e_{n},e_{n+2}]=\left(n-2\right)e_{n}.$ Combining with $7.,$
  $[e_{i},e_{n+2}]=\left(i-2\right)e_i+\sum_{k=i+2}^{n}{a_{k-i+3,3}e_k},(5\leq i\leq n).$   \\ \hline
\scriptsize $9.$ &\scriptsize $\L_{e_{n+1}}\left([e_{n+1},e_{n+1}]\right)$ &\scriptsize
$c_{n,n+1}=0$ 
$\implies$  $[e_{n+1},e_{n+1}]=c_{2,n+1}e_2.$   \\ \hline
\scriptsize $10.$ &\scriptsize $\L_{e_{n+2}}\left([e_{n+2},e_{n+2}]\right)$ &\scriptsize
$d_{n,n+2}=0$
$\implies$ $[e_{n+2},e_{n+2}]=d_{2,n+2}e_2.$ \\ \hline
\scriptsize $11.$ &\scriptsize $\L_{e_{3}}\left([e_{n+1},e_{n+1}]\right)$ &\scriptsize
 $c_{2,3}:=a_{2,3}, c_{n,3}:=a_{n,3}$
$\implies$  $[e_{3},e_{n+1}]=a_{2,3}e_2+2e_3+\sum_{k=5}^n{a_{k,3}e_k}.$\\ \hline
\scriptsize $12.$ &\scriptsize $\L_{e_{3}}\left([e_{n+2},e_{n+2}]\right)$ &\scriptsize
$d_{2,3}:=\frac{a_{2,3}}{2},$ $d_{n,3}:=a_{n,3}$ 
$\implies$  $[e_{3},e_{n+2}]=\frac{a_{2,3}}{2}e_2+e_3+\sum_{k=5}^n{a_{k,3}e_k}.$   \\ \hline
\scriptsize $13.$ &\scriptsize $\L_{e_{1}}\left([e_{n+1},e_{n+1}]\right)$ &\scriptsize
$c_{2,1}:=a_{2,1},c_{n,1}=0$
$\implies$  $[e_{1},e_{n+1}]=e_1+a_{2,1}e_2+e_3$. \\ \hline
\scriptsize $14.$ &\scriptsize $\L_{e_{1}}\left([e_{n+2},e_{n+2}]\right)$ &\scriptsize
$d_{2,1}:=a_{2,1}-\frac{a_{2,3}}{2},d_{n,1}=0$
$\implies$ $[e_{1},e_{n+2}]=e_1+\left(a_{2,1}-\frac{a_{2,3}}{2}\right)e_2.$ \\ \hline
\scriptsize $15.$ &\scriptsize $\L_{e_{n+1}}\left([e_{n+2},e_{n+1}]\right)$ &\scriptsize
$c_{2,n+1}:=2c_{2,n+2}+2a_{5,3},d_{n,n+1}:=-c_{n,n+2}$
$\implies$ $[e_{n+1},e_{n+1}]=\left(2c_{2,n+2}+2a_{5,3}\right)e_2,
[e_{n+1},e_{n+2}]=d_{2,n+1}e_2-\sum_{k=4}^{n-1}a_{k+1,3}e_k-c_{n,n+2}e_n.$ \\ \hline
\scriptsize $16.$ &\scriptsize $\L_{e_{n+1}}\left([e_{n+2},e_{n+2}]\right)$ &\scriptsize
$d_{2,n+2}:=\frac{d_{2,n+1}-a_{5,3}}{2}$
$\implies$ $[e_{n+2},e_{n+2}]=\frac{d_{2,n+1}-a_{5,3}}{2}e_2.$\\ \hline
\end{tabular}
\end{table}
Altogether the nilradical $\mathcal{L}^4$ $(\ref{L4}),$ the outer derivations $\L_{e_{n+1}}$ and $\L_{e_{n+2}}$
and the remaining brackets given above define a continuous family of the solvable left Leibniz algebras.
Then we apply the technique of ``absorption'' according to step $(iv)$.
\begin{itemize}[noitemsep, topsep=0pt]
\item We start with the transformation $e^{\prime}_i=e_i,(1\leq i\leq n,n\geq5),e^{\prime}_{n+1}=e_{n+1}+\frac{c_{2,n+2}}{2}e_2,
e^{\prime}_{n+2}=e_{n+2}+\frac{d_{2,n+1}-a_{5,3}}{4}e_2.$
This transformation removes the coefficients $c_{2,n+2}$ and $\frac{d_{2,n+1}-a_{5,3}}{2}$ in front of $e_2$ in
$[e_{n+2},e_{n+1}]$ and $[e_{n+2},e_{n+2}],$ respectively,
and changes the coefficient in front of $e_2$ in $[e_{n+1},e_{n+1}]$
and $[e_{n+1},e_{n+2}]$ to $2a_{5,3}$ and $a_{5,3},$ correspondingly.
\item Then we apply the transformation
$e^{\prime}_i=e_i,(1\leq i\leq n,n\geq5),e^{\prime}_{n+1}=e_{n+1}+\frac{a_{5,3}}{2}e_4,
e^{\prime}_{n+2}=e_{n+2}$ to remove the coefficients $a_{5,3}$ and $2a_{5,3}$
in front of $e_2$ in $[e_{n+1},e_{n+2}]$ and $[e_{n+1},e_{n+1}]$, respectively. At the same time this transformation removes
$a_{5,3}$ and $-a_{5,3}$ in front of $e_4$ in $[e_{n+2},e_{n+1}]$ and  $[e_{n+1},e_{n+2}],$ respectively, and changes the coefficients in front of $e_{k},(6\leq k\leq n-1)$
in $[e_{n+2},e_{n+1}]$ and $[e_{n+1},e_{n+2}]$ to $a_{k+1,3}-\frac{a_{5,3}}{2}a_{k-1,3}$ and 
$\frac{a_{5,3}}{2}a_{k-1,3}-a_{k+1,3},$ respectively. It also affects the coefficients in front $e_n,(n\geq6)$ in
$[e_{n+2},e_{n+1}]$ and $[e_{n+1},e_{n+2}],$ which we rename back by $c_{n,n+2}$ and $-c_{n,n+2},$
respectively. The following entries are introduced by the transformation: $-\frac{a_{5,3}}{2}$ and $\frac{a_{5,3}}{2}$ in the $(5,1)^{st},(n\geq5)$ position in $\r_{e_{n+1}}$ and $\L_{e_{n+1}},$
respectively.
\item
Applying the transformation $e^{\prime}_j=e_j,(1\leq j\leq n+1,n\geq6),
e^{\prime}_{n+2}=e_{n+2}-\sum_{k=5}^{n-1}{\frac{A_{k+1,3}}{k-1}e_k},$
where $A_{6,3}:=a_{6,3}$ and $A_{k+1,3}:=a_{k+1,3}-\frac{1}{2}a_{5,3}a_{k-1,3}-\sum_{i=7}^k{\frac{A_{i-1,3}a_{k-i+5,3}}{i-3}},$
$(6\leq k\leq n-1,n\geq7),$
we remove the coefficients $a_{6,3}$ and $-a_{6,3}$ in front of $e_5$ in $[e_{n+2},e_{n+1}]$
and $[e_{n+1},e_{n+2}],$ respectively. Besides the transformation removes $a_{k+1,3}-\frac{a_{5,3}}{2}a_{k-1,3}$ and 
$\frac{a_{5,3}}{2}a_{k-1,3}-a_{k+1,3}$
in front of $e_k,(6\leq k\leq n-1)$ in $[e_{n+2},e_{n+1}]$ and $[e_{n+1},e_{n+2}],$ respectively.
This transformation introduces $\frac{a_{6,3}}{4}$ and $-\frac{a_{6,3}}{4}$ in the $(6,1)^{st},(n\geq6)$ position
as well as
$\frac{A_{k+1,3}}{k-1}$ and $\frac{A_{k+1,3}}{1-k}$ in the $(k+1,1)^{st},(6\leq k\leq n-1)$ position in $\r_{e_{n+2}}$
and $\L_{e_{n+2}},$ respectively. It also affects the coefficients in front $e_n,(n\geq7)$ in
$[e_{n+2},e_{n+1}]$ and $[e_{n+1},e_{n+2}],$ which we rename back by $c_{n,n+2}$ and $-c_{n,n+2},$
respectively.
\item Finally applying the transformation $e^{\prime}_i=e_i,(1\leq i\leq n+1,n\geq5),
e^{\prime}_{n+2}=e_{n+2}-\frac{c_{n,n+2}}{n-1}e_n,$
we remove $c_{n,n+2}$ and $-c_{n,n+2}$ in front of $e_n$
in $[e_{n+2},e_{n+1}]$ and $[e_{n+1},e_{n+2}],$ respectively,
without affecting other entries. 

We obtain that $\L_{e_{n+1}}$ and $\L_{e_{n+2}}$ are as follows:
 \end{itemize}
{$$\L_{e_{n+1}}=\left[\begin{smallmatrix}
 -1 & 0 & 0 & 0&0&0&\cdots && 0&0 & 0\\
  3a_{2,1}-2a_{2,3} & -4 & a_{2,3}& -1 &0&0 & \cdots &&0  & 0& 0\\
  -1 & 0 & -2 & 0 & 0&0 &\cdots &&0 &0& 0\\
  0 & 0 &  0 & -3 &0 &0 &\cdots&&0 &0 & 0\\
 \frac{a_{5,3}}{2} & 0 & -a_{5,3} & 0 & -4  &0&\cdots & &0&0 & 0\\
  0 & 0 &\boldsymbol{\cdot} & -a_{5,3} & 0 &-5&\cdots & &0&0 & 0\\
    0 & 0 &\boldsymbol{\cdot} & \boldsymbol{\cdot} & \ddots &0&\ddots &&\vdots&\vdots &\vdots\\
  \vdots & \vdots & \vdots &\vdots &  &\ddots&\ddots &\ddots &\vdots&\vdots & \vdots\\
 0 & 0 & -a_{n-2,3}& -a_{n-3,3}& \cdots&\cdots &-a_{5,3}&0&3-n &0& 0\\
0 & 0 & -a_{n-1,3}& -a_{n-2,3}& \cdots&\cdots &\boldsymbol{\cdot}&-a_{5,3}&0 &2-n& 0\\
 0 & 0 & -a_{n,3}& -a_{n-1,3}& \cdots&\cdots &\boldsymbol{\cdot}&\boldsymbol{\cdot}&-a_{5,3} &0& 1-n
\end{smallmatrix}\right],$$}
$$\L_{e_{n+2}}=\left[\begin{smallmatrix}
 -1 & 0 & 0 & 0&0&0&0&\cdots && 0&0 & 0\\
a_{2,1}-\frac{a_{2,3}}{2} & -2 & \frac{a_{2,3}}{2}& 0 &0&0 & 0&\cdots &&0  & 0& 0\\
  0 & 0 & -1 & 0 & 0&0 &0&\cdots &&0 &0& 0\\
  0 & 0 &  0 & -2 &0 &0 &0&\cdots&&0 &0 & 0\\
  0 & 0 & -a_{5,3} & 0 & -3  &0&0&\cdots & &0&0 & 0\\
-\frac{a_{6,3}}{4} & 0 &\boldsymbol{\cdot} & -a_{5,3} & 0 &-4&0&\cdots & &0&0 & 0\\
  -\frac{A_{7,3}}{5} & 0 &\boldsymbol{\cdot} & \boldsymbol{\cdot} & -a_{5,3}&0 &-5& &&\vdots&\vdots &\vdots\\
      -\frac{A_{8,3}}{6} & 0 &\boldsymbol{\cdot} & \boldsymbol{\cdot} &\boldsymbol{\cdot} &-a_{5,3}&0 &\ddots&&\vdots&\vdots &\vdots\\
  \vdots & \vdots & \vdots &\vdots &  &&\ddots&\ddots &\ddots &\vdots&\vdots & \vdots\\
  \frac{A_{n-2,3}}{4-n} & 0 & -a_{n-2,3}& -a_{n-3,3}& \cdots&\cdots&\cdots &-a_{5,3}&0&4-n &0& 0\\
    \frac{A_{n-1,3}}{3-n} & 0 & -a_{n-1,3}& -a_{n-2,3}& \cdots&\cdots&\cdots &\boldsymbol{\cdot}&-a_{5,3}&0 &3-n& 0\\
 \frac{A_{n,3}}{2-n} & 0 & -a_{n,3}&-a_{n-1,3}& \cdots&\cdots &\cdots&\boldsymbol{\cdot}&\boldsymbol{\cdot}&-a_{5,3} &0&2-n
\end{smallmatrix}\right],(n\geq5).$$
The remaining brackets are given below:
\begin{equation}
\left\{
\begin{array}{l}
\displaystyle  \nonumber [e_{1},e_{n+1}]=e_1+a_{2,1}e_2+e_3-\frac{a_{5,3}}{2}e_5,[e_{3},e_{n+1}]=a_{2,3}e_2+2e_3+\sum_{k=5}^n{a_{k,3}e_k},\\
\displaystyle [e_{4},e_{n+1}]=3\left(e_4-e_2\right)+\sum_{k=6}^n{a_{k-1,3}e_k},
 [e_{i},e_{n+1}]=(i-1)e_i+\sum_{k=i+2}^{n}{a_{k-i+3,3}e_k},\\
\displaystyle [e_{1},e_{n+2}]=e_1+\left(a_{2,1}-\frac{a_{2,3}}{2}\right)e_2+\sum_{k=6}^n{\frac{A_{k,3}}{k-2}e_k},[e_{3},e_{n+2}]=\frac{a_{2,3}}{2}e_2+e_3+\sum_{k=5}^n{a_{k,3}e_k},\\
\displaystyle 
[e_{4},e_{n+2}]=2\left(e_4-e_2\right)+\sum_{k=6}^n{a_{k-1,3}e_k},[e_{i},e_{n+2}]=(i-2)e_i+\sum_{k=i+2}^{n}{a_{k-i+3,3}e_k},(5\leq i\leq n).
\end{array} 
\right.
\end{equation} 
\noindent $(v)$ Finally we apply the following two change of basis transformations:
\begin{itemize}[noitemsep, topsep=0pt]
\item $e^{\prime}_1=e_1+\left(a_{2,1}-\frac{a_{2,3}}{2}\right)e_2,e^{\prime}_2=e_2,
e^{\prime}_3=e_3+\frac{a_{2,3}}{2}e_2,
e^{\prime}_i=e_i-\sum_{k=i+2}^n{\frac{B_{k-i+3,3}}{k-i}e_k},$
$(3\leq i\leq n-2),e^{\prime}_{n-1}=e_{n-1},e^{\prime}_{n}=e_{n},e^{\prime}_{n+1}=e_{n+1},e^{\prime}_{n+2}=e_{n+2},$
where $B_{j,3}:=a_{j,3}-\sum_{k=7}^j{\frac{B_{k-2,3}a_{j-k+5,3}}{k-5}},(5\leq j\leq n)$.\\
This transformation removes $a_{5,3},a_{6,3},...,a_{n,3}$ in $\r_{e_{n+1}},$ $\r_{e_{n+2}}$
and $-a_{5,3},-a_{6,3},...,-a_{n,3}$ in $\L_{e_{n+1}},\L_{e_{n+2}}.$ It also removes $-\frac{a_{5,3}}{2}$
and $\frac{a_{5,3}}{2}$ from the $(5,1)^{st}$ positions in $\r_{e_{n+1}}$ and $\L_{e_{n+1}},$ respectively.
Besides it removes $a_{2,1}$ and $3a_{2,1}-2a_{2,3}$ from the $(2,1)^{st}$ positions in $\r_{e_{n+1}}$
and $\L_{e_{n+1}},$ respectively, as well as $a_{2,1}-\frac{a_{2,3}}{2}$ from the $(2,1)^{st}$ positions in $\r_{e_{n+2}}$
and $\L_{e_{n+2}}.$
The transformation also removes $a_{2,3}$ from the $(2,3)^{rd}$ positions in $\r_{e_{n+1}},$ $\L_{e_{n+1}}$ and
$\frac{a_{2,3}}{2}$ from the same positions in $\r_{e_{n+2}},$ $\L_{e_{n+2}}.$
It introduces the entries in the $(i,1)^{st}$ positions in $\r_{e_{n+1}}$ and $\L_{e_{n+1}},$
that we set to be $a_{i,1}$ and $-a_{i,1},(6\leq i\leq n),$ respectively. The transformation affects the entries in the $(j,1)^{st},(8\leq j\leq n)$
positions in $\r_{e_{n+2}}$ and $\L_{e_{n+2}},$ but we rename all the entries in the $(i,1)^{st}$ positions
by $\frac{i-3}{i-2}a_{i,1}$ in $\r_{e_{n+2}}$ and by $\frac{3-i}{i-2}a_{i,1},(6\leq i\leq n)$ in $\L_{e_{n+2}}.$
\item Applying the transformation $e_k^{\prime}=e_k,(1\leq k\leq n),e^{\prime}_{n+1}=e_{n+1}+\sum_{k=5}^{n-1}{a_{k+1,1}e_k},
e^{\prime}_{n+2}=e_{n+2}+\sum_{k=5}^{n-1}{\frac{k-2}{k-1}a_{k+1,1}e_k},$
we remove $a_{i,1}$ in $\r_{e_{n+1}}$ and $-a_{i,1}$ in $\L_{e_{n+1}}$ as well as
$\frac{i-3}{i-2}a_{i,1}$ and $\frac{3-i}{i-2}a_{i,1},(6\leq i\leq n)$ in $\r_{e_{n+2}}$ and $\L_{e_{n+2}},$ respectively. 
\end{itemize}
We obtain a Leibniz algebra $l_{n+2,1}$ given below:
\begin{equation}
\left\{
\begin{array}{l}
\displaystyle  \nonumber [e_1,e_{n+1}]=e_1+e_3,[e_3,e_{n+1}]=2e_3,
[e_4,e_{n+1}]=3(e_4-e_2),
 [e_{i},e_{n+1}]=(i-1)e_i,\\
\displaystyle [e_{n+1},e_{1}]=-e_1-e_3,[e_{n+1},e_{2}]=-4e_2,[e_{n+1},e_3]=-2e_3, [e_{n+1},e_4]=-e_2-3e_4,\\
\displaystyle [e_{n+1},e_{i}]=(1-i)e_i,[e_{1},e_{n+2}]=e_1,[e_{3},e_{n+2}]=e_3,[e_{4},e_{n+2}]=2(e_4-e_2),\\
\displaystyle [e_{i},e_{n+2}]=(i-2)e_i,(5\leq i\leq n,n\geq5),[e_{n+2},e_1]=-e_1,[e_{n+2},e_{2}]=-2e_2,\\
 \displaystyle[e_{n+2},e_{j}]=(2-j)e_j,(3\leq j\leq n).\end{array} 
\right.
\end{equation} 
 We summarize a result 
in the following theorem: 
 \begin{theorem}\label{LCodim2L4} There are four solvable
indecomposable left Leibniz algebras up to isomorphism with a codimension two nilradical
$\mathcal{L}^4,(n\geq4),$ which are given below:
\begin{equation}
\begin{array}{l}
\displaystyle  \nonumber (i)\,l_{n+2,1}: [e_1,e_{n+1}]=e_1+e_3,[e_3,e_{n+1}]=2e_3,
[e_4,e_{n+1}]=3(e_4-e_2),
 [e_{i},e_{n+1}]=(i-1)e_i,\\
\displaystyle [e_{n+1},e_{1}]=-e_1-e_3,[e_{n+1},e_{2}]=-4e_2,[e_{n+1},e_3]=-2e_3, [e_{n+1},e_4]=-e_2-3e_4,\\
\displaystyle [e_{n+1},e_{i}]=(1-i)e_i,[e_{1},e_{n+2}]=e_1,[e_{3},e_{n+2}]=e_3,[e_{4},e_{n+2}]=2(e_4-e_2),\\
\displaystyle [e_{i},e_{n+2}]=(i-2)e_i,(5\leq i\leq n,n\geq5),[e_{n+2},e_1]=-e_1,[e_{n+2},e_{2}]=-2e_2,\\
\displaystyle  [e_{n+2},e_{j}]=(2-j)e_j,(3\leq j\leq n),\\
\displaystyle  (ii)\,l_{6,2}: [e_1,e_5]=e_1,[e_3,e_5]=e_3,[e_4,e_5]=2(e_4-e_2),[e_{5},e_{1}]=-e_1,[e_5,e_2]=-2e_2,\\
\displaystyle [e_{5},e_3]=-e_3,
[e_{5},e_4]=-2e_4, [e_1,e_6]=e_1+be_3,[e_3,e_6]=e_1+be_3,[e_4,e_6]=(b+1)e_4-\\
\displaystyle(b+1)e_2,
[e_{6},e_1]=-e_1-be_3,[e_6,e_2]=-2(b+1)e_2,[e_{6},e_{3}]=-e_1-be_3,\\
\displaystyle [e_{6},e_4]=-(b+1)\left(e_2+e_4\right),(b\neq-1),\\
\displaystyle (iii)\, l_{6,3}: [e_1,e_5]=e_1+(c+1)e_3,[e_3,e_5]=ce_1+2e_3,[e_{4},e_5]=3\left(e_4-e_2\right),[e_{5},e_{1}]=-e_1-\\
\displaystyle (c+1)e_3,[e_5,e_2]=-2(c+2)e_2,[e_{5},e_3]=-ce_1-2e_3,[e_5,e_4]=(-2c-1)e_2-3e_4,\\
\displaystyle [e_1,e_6]=e_1,[e_3,e_6]=e_3, [e_{4},e_6]=2\left(e_4-e_2\right),[e_{6},e_1]=-e_1,[e_6,e_2]=-2e_2,[e_{6},e_{3}]=-e_3,\\
\displaystyle  [e_6,e_4]=-2e_4,(c\neq-2),\\
\displaystyle  (iv)\,l_{6,4}:[e_1,e_5]=e_1,[e_3,e_5]=e_3,[e_{4},e_5]=2\left(e_4-e_2\right),[e_{5},e_{1}]=-e_1,[e_5,e_2]=-2e_2,\\
\displaystyle [e_{5},e_3]=-e_3,[e_5,e_4]=-2e_4,[e_1,e_6]=be_1+(1-b)e_3,[e_3,e_6]=e_1,[e_{4},e_6]=b\left(e_4-e_2\right),\\
\displaystyle [e_{6},e_1]=-be_1+(b-1)e_3,[e_6,e_2]=-2e_2,[e_{6},e_{3}]=-e_1,[e_6,e_4]=(b-2)e_2-be_4,(b\neq0).
\end{array} 
\end{equation} 
\end{theorem}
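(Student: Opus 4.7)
The plan is to mirror the proof of Theorem \ref{RCodim2L4} but with the left multiplication operators $\L_{e_{n+1}},\L_{e_{n+2}}$ replacing $\r_{e_{n+1}},\r_{e_{n+2}}$, and the left Leibniz identity replacing the right one, as dictated by the footnote in the ``General approach'' paragraph of Section \ref{Two&Three}. I will split the argument into two cases according to whether $n=4$ or $n\geq 5$, because in dimension four there are non-trivial restrictions coming from the three-dimensional parameter vector $(a,b,c)$ in the classification of Theorem \ref{TheoremLL4}, while for $n\geq 5$ the parameter space is two-dimensional and admits a single normalization.

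For $n=4$, I would first note that, exactly as in the right case, the sub-cases $(2)$, $(3)$ and $(4)$ in Theorem \ref{TheoremLL4} force $a\gamma-\alpha c=0$ (or $b\gamma-\beta c=0$) when one compares $\L_{[e_{n+1},e_{n+2}]}$ to $c_1\L_{e_1}+c_3\L_{e_3}$; this is incompatible with the required linear independence of the outer derivations and rules out those branches. Only the generic sub-case survives, and within it I would handle the three natural choices of a basis of the two-dimensional space of outer derivations: $(a^1,b^1,c^1)=(1,a,0),(a^2,b^2,c^2)=(1,b,1)$; $(1,2,c),(1,1,d)$; and $(a,1,0),(b,0,1)$. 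For each choice I would first compute $\L_{[e_5,e_6]}$ and require it to lie in the inner-derivation space, which pins down the parameters up to one remaining relation (this mirrors exactly the step where we obtained $a:=1$, $\alpha_{2,3}$ in terms of $a_{2,3},b_{2,3}$, etc., in the right case). I then introduce the unknowns in $[e_5,e_i],[e_6,e_i]$ as a linear combination of $e_2,e_4$, satisfy the left Leibniz identity via the analogue of Table \ref{LeftCodimTwo(L4,(n=4))} (the list of identities to apply is unchanged up to the switch $\r\leftrightarrow\L$), and finally apply absorption and a basis change that fixes $\mathcal{L}^4$. This yields the algebras $l_{6,2},l_{6,3},l_{6,4}$.

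For $n\geq 5$, by nil-independence I may set $(a,b)=(1,2)$ and $(\alpha,\beta)=(1,1)$. The only serious bookkeeping is the commutator $\L_{[e_{n+1},e_{n+2}]}$: comparing it with a combination of the inner derivations $\L_{e_1},\L_{e_3},\L_{e_i}$ determines all the $\alpha_{i,3}$ in terms of the $a_{i,3}$, together with $\alpha_{2,3}=a_{2,3}/2$ and $\alpha_{2,1}=a_{2,1}-a_{2,3}/2$, just as in the right case. Then I include the free parameters $c_{2,j},c_{n,j},d_{2,j},d_{n,j}$ in the brackets $[e_{n+k},e_i]$ and run through the left-Leibniz analogue of Table \ref{RightCodimTwo(L4)}; the structure of the table is identical, with each $\r_{e_t}([\cdot,\cdot])$ replaced by $\L_{e_t}([\cdot,\cdot])$ and the signs/positions flipped accordingly. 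After this the absorption step in four sub-steps (absorbing $c_{2,n+2},d_{2,n+1}\pm a_{5,3}$ into $e_{n+1},e_{n+2}$; then absorbing $a_{5,3}e_4$; then the higher $A_{k+1,3}$ via $e_k$; finally $c_{n,n+2}$ via $e_n$) and the final change-of-basis transformation (the two-stage one using $B_{j,3}$ coefficients) reduce the family to the single algebra $l_{n+2,1}$.

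The main obstacle will be the absorption and final basis change for $n\geq 5$: the recursive definition of the $A_{k+1,3}$ and $B_{j,3}$ must be transcribed carefully so that one does not inadvertently reintroduce the coefficients $a_{5,3},\dots,a_{n,3}$ or break the normalized form of $\L_{e_{n+1}},\L_{e_{n+2}}$. This is where the proof is most error-prone, but the computation is formally identical to the one already carried out for the right Leibniz case in Theorem \ref{RCodim2L4}, with the transpose-and-sign-flip relating $\r$ to $\L$; once the ``switch'' is performed mechanically on each step, the same sequence of transformations works and the final normal forms listed in the statement are obtained.
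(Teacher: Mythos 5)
Your proposal follows essentially the same route as the paper: for $n=4$ it rules out the non-generic branches of Theorem \ref{TheoremLL4} by the same linear-dependence argument on $\L_{[e_5,e_6]}$, works the three basis choices for the outer derivations through the left-Leibniz analogue of the right-case tables, and then absorbs and changes basis to reach $l_{6,2},l_{6,3},l_{6,4}$; for $n\geq5$ it uses the normalization $(1,2),(1,1)$, the commutator computation fixing $\alpha_{i,3},\alpha_{2,3},\alpha_{2,1}$, the left version of Table \ref{RightCodimTwo(L4)}, and the same four-stage absorption plus two-stage basis change to obtain $l_{n+2,1}$. This is exactly the paper's argument, so the plan is sound; the only caveat is that the correctness ultimately rests on carrying out the (routine but lengthy) identity and absorption computations you have deferred.
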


\newpage


\begin{thebibliography}{00}

\bibitem{AOR} Albeverio, S., Omirov, B. A., Rakhimov, I. S. (2006).
 Classification of 4-dimensional nilpotent complex Leibniz algebras. {\it Extracta Math.} {\bf 21 }(3):197--210.

 \bibitem{AO} Ayupov, Sh. A., Omirov, B. A. (1998).
 On Leibniz algebras. {\it Algebra and operator theory.} Kluwer Acad. Publ., Dordrecht, pp. 1--12. 
 
 \bibitem{Ba}  Barnes, D. W. (2012). On Levi's theorem for Leibniz algebras. {\it Bull. Aust. Math. Soc.} {\bf 86}(2):184--185.

  
\bibitem{B}  Bloch, A. M. (1965). On a generalization of the concept of Lie algebra. {\it Dokl. Akad. Nauk SSSR} {\bf 18}(3):471--473.

\bibitem{BDHS} Bosko-Dunbar, L., Dunbar, J. D., Hird, J. T., Stagg, K. (2015).
Solvable Leibniz algebras with Heisenberg nilradical. {\it  Comm. Algebra.} {\bf 43}(6):2272--2281.

\bibitem{CGGO} Camacho, L. M., G\'{o}mez, J. R., Gonz\'{a}lez, A. J., Omirov, B. A. (2009).
Naturally graded quasi-filiform Leibniz algebras. {\it J. Symbolic Comput.} {\bf 44}(5):527--539.

\bibitem{COM} Camacho, L. M., Omirov, B. A., Masutova, K. K. (2016).
Solvable Leibniz Algebras with Filiform Nilradical.  {\it Bull. Malays. Math. Sci. Soc.} {\bf 39}(1):283--303.

\bibitem{CLOK} Casas, J. M., Ladra, M., Omirov, B. A., Karimjanov, I. A. (2013).
Classification of solvable Leibniz algebras with null-filiform nilradical. {\it Linear Multilinear Algebra.} {\bf 61}(6):758--774.

\bibitem{CL} Casas, J. M., Ladra, M., Omirov, B. A., Karimjanov, I. A. (2013).
Classification of solvable Leibniz algebras with naturally graded filiform nilradical. {\it Linear Algebra Appl.} {\bf 438}(7):2973--3000.

\bibitem{C} Cuvier, C. (1994).
Alg\`{e}bres de Leibnitz: d\'{e}finitions, propri\'{e}t\'{e}s.  {\it Ann. Sci. \'{E}cole Norm. Sup. (4)} {\bf 27}(1):1--45.

\bibitem{D} Demir, I., Misra, K. C., Stitzinger, E. (2014).
On some structures of Leibniz algebras. {\it Recent advances in representation theory, quantum groups, algebraic geometry, and related topics.
Contemp. Math.} {\bf 623}:41--54.

\bibitem{KK} Karimjanov, I. A., Khudoyberdiyev, A. K., Omirov, B. A. (2015).
Solvable Leibniz algebras with triangular nilradicals.
{\it Linear Algebra Appl.} {\bf 466}:530--546.

\bibitem{KL} Khudoyberdiyev, A. K., Ladra, M., Omirov, B. A. (2014).
On solvable Leibniz algebras whose nilradical is a direct sum of null-filiform algebras.
{\it Linear Multilinear Algebra.} {\bf 62}(9):1220--1239.

\bibitem{LadraMO} Ladra, M., Masutova, K. K., Omirov, B. A. (2016).
Corrigendum to ``Classification of solvable Leibniz algebras with naturally graded filiform nilradical'' [Linear Algebra Appl. 438 (7) (2013) 2973-3000].
{\it Linear Algebra Appl.} {\bf 507}:513--517.

\bibitem{Lo} Loday, J.-L. (1992). Cyclic homology. {\it Grundl. Math. Wiss.}
 {\bf 301}. {\it Springer, Berlin.}
 
 \bibitem{L} Loday, J.-L. (1993). Une version non commutative des alg\`ebres de Lie: les alg\`ebres de Leibniz. {\it Enseign. Math. (2)}
 {\bf 39}(3-4):269--293.
 
 \bibitem{LP} Loday J.-L., Pirashvili T., (1993). Universal enveloping algebras of Leibniz algebras and (co)homology. {\it Math. Ann.}
 {\bf 296}(1):139--158.


\bibitem{Mub3} Mubarakzyanov, G. M. (1963). On solvable Lie
algebras. {\it Izv. Vysshikh Uchebn. Zavedenii Mat.}  {\bf 1}(32):114--123.

\bibitem{O} Omirov, B. A., Rakhimov, I. S., Turdibaev, R. M. (2013). On description of Leibniz algebras corresponding to $\s\l_2$.
{\it Algebr. Represent. Theory.} {\bf 16}(5):1507--1519.

\bibitem{Shab} Shabanskaya, A. (2011). Classification of Six
Dimensional Solvable Indecomposable Lie Algebras with a codimension
one nilradical over $\mathbb{R}$. {\it (Doctoral dissertation, the
University of Toledo).}

\bibitem{ST} Shabanskaya, A., Thompson, G. (2013). Six-dimensional Lie algebras with a five-dimensional nilradical.
{\it Journal of Lie Theory.} {\bf 23}(2):313--355.

\bibitem{ShA} Shabanskaya, A. (2017). Right and left solvable extensions of an associative Leibniz algebra.
{\it Comm. Algebra.} {\bf 45}(6):2633--2661. 

\bibitem{Sha} Shabanskaya, A.  (2017). Solvable extensions of naturally graded quasi-filiform Leibniz algebras
of second type $\mathcal{L}^1$
and $\mathcal{L}^3$.
{\it Comm. Algebra.} {\bf 45}(10):4492--4520. 

\bibitem{ShabA} Shabanskaya, A.  (2018). Solvable extensions of the naturally graded quasi-filiform Leibniz algebra
of second type $\mathcal{L}^2.$
{\it Comm. Algebra.} {\bf 46}(11):5008--5033.

\end{thebibliography}
\end{document}